\documentclass[a4paper]{amsart}
\usepackage[headings]{fullpage}
\usepackage{amsmath, amsfonts, amsthm}
\usepackage{amssymb, latexsym}  
\usepackage{hyperref}
\usepackage{cmap}

\newtheorem{thm}{Theorem}[section]
\newtheorem{lemma}[thm]{Lemma}
\newtheorem{cor}[thm]{Corollary}
\newtheorem{claim}{Claim}[thm]
\newtheorem{prop}[thm]{Proposition}
\newtheorem{fact}[thm]{Fact}

\theoremstyle{definition}
\newtheorem{definition}[thm]{Definition}
\newtheorem{example}[thm]{Example}

\theoremstyle{remark}
\newtheorem*{remark}{Remark}
\newtheorem*{Q}{Question}

\newcommand\diagonal{\bigtriangleup}

\newcommand\forces{\Vdash}
\newcommand\s{\subseteq}
\newcommand\sq{\sqsubseteq}
\newcommand{\sqleft}[1]{\mathrel{_{#1}{\sqsubseteq}}}
\newcommand{\sqx}{\sqleft{\chi}}
\newcommand{\sql}{\sqleft{\lambda}}

\newcommand{\sqaz}{\sqleft{\aleph_0}}
\newcommand{\nsqleft}[1]{\mathrel{_{#1}{\not\sqsubseteq}}}
\newcommand{\nsqx}{\nsqleft{\chi}}
\newcommand\bks{\setminus}
\newcommand\br{\blacktriangleright}
\DeclareMathOperator{\suc}{succ}

\newcommand{\colonnn}{\mathbin{\vbox{\baselineskip=3pt\lineskiplimit=0pt\hbox{.}\hbox{.}\hbox{.}}}}

\fboxsep0.1mm
\newcommand\sd{\framebox[3mm][l]{$\diamondsuit$}\hspace{0.5mm}{}}
\newcommand\sqc{\framebox[3mm][l]{$\clubsuit$}\hspace{0.5mm}{}} 
\newcommand\fsd{\framebox[2.9mm][l]{$\diamondsuit$}\hspace{0.5mm}{}}

\renewcommand{\restriction}{\mathbin\upharpoonright}    

\DeclareMathOperator{\height}{ht}   
\DeclareMathOperator{\cf}{cf}
\DeclareMathOperator{\col}{Col}
\DeclareMathOperator{\dom}{dom}
\DeclareMathOperator{\rng}{Im}
\DeclareMathOperator{\otp}{otp}
\DeclareMathOperator{\acc}{acc}
\DeclareMathOperator{\add}{Add}
\DeclareMathOperator{\nacc}{nacc}

\newcommand\axiomfont[1]{\textsf{\textup{#1}}}
\newcommand\zfc{\axiomfont{ZFC}}
\newcommand\gch{\axiomfont{GCH}}
\newcommand\mm{\axiomfont{MM}}
\newcommand\pfa{\axiomfont{PFA}}

\newcommand\sh{\axiomfont{SH}}
\newcommand\on{\axiomfont{ON}}

\newcommand\ap{\textup{AP}}
\newcommand\ch{\textup{CH}}
\newcommand\ns{\textup{NS}}

\DeclareMathOperator{\p}{P}

\subjclass[2010]{Primary 03E05; Secondary 03E65, 03E35, 05C05}
\keywords{Souslin tree construction, microscopic approach, unified approach, parameterized proxy principle,
coherence relation, slim tree, complete tree, coherent tree, diamond principle, square principle, axioms, consistency.}

\begin{document}
\begin{abstract} We propose a parameterized proxy principle from which $\kappa$-Souslin trees with various additional features can be constructed, regardless of the identity of $\kappa$.
We then introduce \emph{the microscopic approach}, which is a simple method for deriving trees from instances of the proxy principle.
As a demonstration, we give a construction of a coherent $\kappa$-Souslin tree that applies also for $\kappa$ inaccessible.

We then carry out a systematic study of the consistency of instances of the proxy principle, distinguished by the vector of parameters serving as its input.
Among other things, it will be shown that all known $\diamondsuit$-based constructions of $\kappa$-Souslin trees
may be redirected through this new proxy principle.
\end{abstract}
\author{Ari Meir Brodsky}
\author{Assaf Rinot}
\address{Department of Mathematics, Bar-Ilan University, Ramat-Gan 5290002, Israel.}
\urladdr{http://u.math.biu.ac.il/~brodska/}
\urladdr{http://www.assafrinot.com}

\title[Microscopic approach. Part I]{A Microscopic approach to Souslin-tree constructions. Part I}

\maketitle
\section{Introduction}
Recall that the real line is that unique {separable}, dense linear ordering with no endpoints in which
every bounded set has a least upper bound. A problem posed by Mikhail Souslin around the year 1920 \cite{Souslin} asks whether
the term \emph{separable} in the above characterization may be weakened to \emph{ccc}. (A linear order is said to be \emph{separable}
if it has a countable dense subset. It is \emph{ccc} --- short for satisfying the \emph{countable chain condition} --- if every pairwise-disjoint family of open intervals is countable.)
The affirmative answer to Souslin's problem is known as Souslin's Hypothesis, and abbreviated \sh.\footnote{For more on the history of Souslin's problem, see the surveys \cite{MR0270322}, \cite{MR1711574} and \cite{MR2882649}.}

Amazingly enough, the resolution of this single problem led to key discoveries in set theory:
the notions of Aronszajn, Souslin and Kurepa trees \cite{kurepa1935ensembles},
forcing axioms and the method of iterated forcing \cite{MR0294139},
the diamond and square principles $\diamondsuit(S)$, $\square_\kappa$ \cite{MR309729},
and the theory of iteration without adding reals \cite{MR384542}.

\medskip

Kurepa \cite{kurepa1935ensembles} proved that \sh\ is equivalent to the assertion that every tree of size $\aleph_1$ contains either an uncountable chain
or an uncountable antichain. A counterexample tree is said to be a \emph{Souslin tree}. This concept admits a natural generalization to higher cardinals,
in the form of $\kappa$-Souslin trees for regular uncountable cardinals $\kappa$.
There is a zoo of consistent constructions of $\kappa$-Souslin trees,
and these constructions often depend on whether $\kappa$ is a successor of regular, a successor of singular of countable cofinality,
a successor of singular of uncountable cofinality, or an inaccessible. This poses challenges, since, in applications, it is often needed that the constructed Souslin trees
have additional features (such as rigidity, homogeneity, or admitting/omitting an ascent path),
and this necessitates revisiting each of the relevant constructions and tailoring it to the new need.
Let us exemplify.

\begin{example} The behavior of $\kappa$-Aronszajn and $\kappa$-Souslin trees depends heavily on the identity of $\kappa$:\footnote{All tree-related terminology will be defined in Section~2 below.}
\begin{itemize}
\item (Aronszajn, see \cite[\S9.5, Theorem~6 and footnote~3, p.~96]{kurepa1935ensembles}) There exists a $\kappa$-Aronszajn tree, for $\kappa=\aleph_1$;
\item (Specker, \cite{MR0039779}) If \gch\ holds, then for every regular cardinal $\lambda$, there exists a special $\lambda^+$-Aronszajn tree;
\item (Magidor-Shelah, \cite{MgSh:324}) \gch\ is consistent with the nonexistence of any $\lambda^+$-Aronszajn tree at some singular cardinal $\lambda$ (modulo large cardinals);
\item (Erd\H{o}s-Tarski, \cite{MR0167422}) If $\kappa$ is a weakly compact cardinal, then there exists no $\kappa$-Aronszajn tree;
\item (Jensen, see \cite{MR384542}) The existence of an $\omega_1$-Souslin tree is independent of \gch;
\item (Baumgarner-Malitz-Reinhardt, \cite{MR0314621}) Any $\omega_1$-Aronszajn tree can be made special in some cofinality-preserving extension;
\item (Devlin, \cite{MR732661}) If $V=L$, then for every uncountable cardinal $\lambda$, there exists a $\lambda^+$-Souslin tree
that {remains non-special} in any cofinality-preserving extension.
\end{itemize}
\end{example}

\begin{example} Consistent constructions of $\kappa$-Souslin trees that depend on the identity of $\kappa$:\footnote{See the appendix below for a list of combinatorial principles, and some of their known interactions.}
\begin{itemize}
\item (Jensen, \cite{MR309729})
Suppose that $\lambda$ is a regular cardinal.

If {$\lambda^{<\lambda}=\lambda$ and $\diamondsuit(E^{\lambda^+}_\lambda)$ holds}, then there exists a  $\lambda^+$-Souslin tree;\footnote{
Here, $E^\beta_\alpha$ denotes the set of all ordinals below $\beta$ whose cofinality is equal to $\alpha$.
The sets  $E^\beta_{\geq\alpha}$, $E^\beta_{<\alpha}$ and $E^\beta_{\neq\alpha}$ are  defined in a similar fashion.}

\item (Jensen, \cite{MR309729})
Suppose that $\lambda$ is a singular cardinal.

If $\gch+\square_\lambda$ holds, then there exists a $\lambda^+$-Souslin tree;
\item (Baumgartner, see \cite{MR732661}, building on Laver \cite{LvSh:104}) {$\gch+\square_{\aleph_1}$} implies the existence of an $\aleph_2$-Souslin tree
that remains non-special in any cofinality-preserving extension;
\item (Cummings, \cite{MR1376756}) Suppose that $\lambda$ is a regular uncountable cardinal.

If $\sd_{\lambda}$ holds, and $\lambda^{<\lambda}=\lambda$, then there exists a $\lambda$-complete $\lambda^+$-Souslin tree
that remains non-special in any cofinality-preserving extension;
\item (Cummings, \cite{MR1376756})  Suppose that $\lambda$ is a singular cardinal of {countable cofinality}.

If $\square_\lambda+\ch_\lambda$ holds, and $\mu^{\aleph_1}<\lambda$ for all $\mu<\lambda$,
then there exists a $\lambda^+$-Souslin tree that remains non-special in any cofinality-preserving extension;\footnote{We write  $\ch_\lambda$ for the assertion that $2^\lambda=\lambda^+$.}

\item (Cummings, \cite{MR1376756})  Suppose that $\lambda$ is a singular cardinal of {uncountable cofinality}.

If $\square_\lambda+\ch_\lambda$ holds, and $\mu^{\aleph_0}<\lambda$ for all $\mu<\lambda$,
then there exists a $\lambda^+$-Souslin tree that remains non-special in any cofinality-preserving extension;
\item (Jensen, see \cite{MR384542}) $\diamondsuit(\omega_1)$ entails a homogeneous $\omega_1$-Souslin tree;
\item (Rinot, \cite{rinot11}) Suppose that $\lambda$ is a singular cardinal.

If $\square_\lambda+\ch_\lambda$ holds, then there exists a homogeneous $\lambda^+$-Souslin tree.
\end{itemize}
\end{example}

The focus of the present research project, of which this paper is a core component,
is on developing new foundations for constructing $\kappa$-Souslin trees.
Specifically, we propose a single parameterized proxy principle from which $\kappa$-Souslin trees with various additional features can be constructed, regardless of the identity of $\kappa$.

In this paper and in the next one \cite{rinot23} (being Part~I and Part~II, respectively) we establish, among other things, that all known $\diamondsuit$-based constructions of $\kappa$-Souslin trees
may be redirected through this new proxy principle. This means that any $\kappa$-Souslin tree with additional features that will be shown to follow from the proxy principle
will automatically be known to hold in many unrelated models.

But the parameterized principle gives us more:

$\br$ It suggests a way of calibrating how fine is a particular class of Souslin trees, by pinpointing the weakest vector of parameters sufficient for the proxy principle to enable construction of a member of this class.

For instance, the existence of a \emph{coherent} Souslin tree entails the existence of a \emph{free} one (see
\cite{MR2697978},\cite{ShZa:610},\cite{MR2725894}),
while it is consistent that there exists a free $\kappa$-Souslin tree, but not a coherent one, for $\kappa=\aleph_1$ \cite{MR2697978}.
This is also consistently true for $\kappa=\aleph_2$, as can be verified by the model of \cite[Theorem 4.4]{MR631563}. And, indeed,
the vector of parameters sufficient to construct a free $\kappa$-Souslin tree is weaker than the corresponding one for a coherent $\kappa$-Souslin tree.

$\br$ It allows to compare and amplify previous results.

Recall that it is a longstanding open problem whether $\gch$ entails the existence of an $\aleph_2$-Souslin tree,
and a similar problem is open concerning $\lambda^+$-Souslin trees for $\lambda$ singular (see \cite{MR2162107} and \cite[Question 14]{rinot_s01}).
A milestone result in this vein is the result from \cite{MR485361} and its improvement \cite{KjSh:449}.
In recent years, new weak forms of $\diamondsuit$ at the level of $\lambda^+$ for $\lambda=\cf(\lambda)>\aleph_0$ were proposed and shown to entail $\lambda^+$-Souslin trees.
This includes the club-guessing principle $\curlywedge^*(\chi,S)$ from \cite{MR2320769} and the \emph{reflected-diamond} principle $\langle T\rangle_S$ from \cite{rinot09}.\footnote{See the appendix.}
In this paper, we put all of these principles under a single umbrella by computing the corresponding vector of parameters that holds in each of the previously studied configurations.
From this and the constructions we present in a future paper, it follows, for example, that the Gregory configuration \cite{MR485361} suffices for the construction of a \emph{specializable} $\lambda^+$-Souslin tree,
and the K\"onig-Larson-Yoshinobu configuration \cite{MR2320769} suffices for the construction of a free $\lambda^+$-Souslin tree.

$\br$ It allows to obtain completely new results.

Here, and mostly in the other papers of this project, we develop a very simple method for deriving Souslin trees from the proxy principle --- the \emph{microscopic approach}.
This approach involves devising a library of miniature actions and an apparatus for recursively invoking them at the right timing against a witness to the proxy principle.
The outcome will always be a tree, but its features depend on which actions where invoked along the way, and to which vector of parameters of the proxy principle the witness was given.
Once the construction of Souslin trees becomes so simple, it is then easier to carry out considerably more complex constructions, and this has already been demonstrated in \cite{rinot20},
where we gave the first example of a Souslin tree whose reduced powers behave independently of each other, proving, e.g.,
that $V=L$ entails an $\aleph_7$-Souslin tree whose reduced $\aleph_n$-power is $\aleph_7$-Aronszajn iff $n\in \{0,1,4,5\}$.

And there is another line of new results --- finding new configurations in which there exist Souslin trees,
by finding new configurations in which the proxy principle holds.
In Part~II, we shall extend Gregory's theorem \cite{MR485361} from dealing with successor of regulars to dealing also with successor of singulars.
We shall also prove that Prikry forcing over a measurable cardinal $\lambda$ validates the proxy principle, and hence introduces a $\lambda^+$-Souslin tree.

\medskip

In the next subsection, we define the proxy principle in its full generality, but before that, we would like to give two simplified versions of it, $\boxtimes^-(S)$ and $\boxtimes(S)$,
which may be thought of as generic versions of the principle $\square(\kappa)$. For this, let us set up some notation, as follows.
Suppose that $D$ is a set of ordinals.
Write $\acc^+(D) := \{\alpha<\sup(D)\mid \sup (D\cap\alpha) = \alpha>0 \}$, $\acc(D) :=D\cap\acc^+(D)$ and $\nacc(D) := D \setminus \acc^+(D)$.
For any  $j < \otp(D)$, denote by $D(j)$ the unique $\delta\in D$ for which $\otp(D\cap\delta)=j$,
e.g., $D(0) = \min(D)$.
For any ordinal $\sigma$, write
\begin{align*}
\suc_\sigma(D) &:= \{ \delta \in D \mid \otp(D \cap \delta) = j+1 \text{ for some } j < \sigma \}\\
&=\{ D(j+1)\mid j<\sigma\ \&\ j+1<\otp(D)\}.
\end{align*}
Notice that always $\suc_\sigma(D) \subseteq \nacc(D) \setminus \{\min(D)\}$,
and if $D$ is a closed subset of  $\sup(D)$, then $\suc_\sigma(D) = \nacc(D) \setminus \{\min(D)\}$
provided that $\sigma \geq \otp(D) > 0$.

\begin{definition}
For any regular uncountable cardinal $\kappa$ and any stationary $S\s\kappa$,
$\boxtimes^-(S)$ asserts
the existence of a sequence $\langle C_\alpha\mid\alpha<\kappa\rangle$ such that:
\begin{itemize}
\item $C_\alpha$ is a club subset of $\alpha$ for every limit ordinal $\alpha<\kappa$;
\item $C_{\bar\alpha}=C_\alpha\cap\bar\alpha$ for every ordinal $\alpha < \kappa$ and every $\bar\alpha\in\acc(C_\alpha)$;
\item for every cofinal $A\s\kappa$, there exist stationarily many $\alpha\in S$ such that $\sup(\nacc(C_\alpha)\cap A)=\alpha$.
\end{itemize}
\end{definition}

In Section 2, we shall present a construction of a \emph{slim} $\kappa$-Souslin tree from $\boxtimes^-(\kappa)+\diamondsuit(\kappa)$.
We hope that the reader would find this construction appealing for its simplicity and uniform nature, allowing, e.g., a single construction of a $\kappa$-Souslin tree that is relevant in $L$ to any regular uncountable cardinal
that is not weakly compact. As it is trivial to prove that $\diamondsuit(\omega_1)\Rightarrow\clubsuit(\omega_1)\Rightarrow \boxtimes^-(\omega_1)$,
we think that this exposition is altogether preferable even at the level of $\omega_1$.

We shall also prove that modulo a standard arithmetic hypothesis, $\boxtimes^-(E^\kappa_{\ge\chi})+\diamondsuit(\kappa)$ entails a $\chi$-complete $\kappa$-Souslin tree.
Note that unlike the classical approach that derived a $\chi$-complete $\kappa$-Souslin tree from $\diamondsuit(E^\kappa_{\ge\chi})$,
here we settle for $\diamondsuit(\kappa)$. To appreciate this difference, we mention that the model of Example~\ref{prop112} below
witnesses that $\boxtimes(E^{\aleph_2}_{\aleph_1})+\diamondsuit(\omega_2)$ is consistent together with the failure of $\diamondsuit(E^{\aleph_2}_{\aleph_1})$.

But we haven't yet defined the stronger principle $\boxtimes(S)$. We do so now:

\begin{definition}
For any regular uncountable cardinal $\kappa$ and any stationary $S\s\kappa$,
$\boxtimes(S)$ asserts
the existence of a sequence $\langle C_\alpha\mid\alpha<\kappa\rangle$ such that:
\begin{itemize}
\item $C_\alpha$ is a club subset of $\alpha$ for every limit ordinal $\alpha<\kappa$;
\item $C_{\bar\alpha}=C_\alpha\cap\bar\alpha$ for every ordinal $\alpha < \kappa$ and every $\bar\alpha\in\acc(C_\alpha)$;
\item for every sequence $\langle A_i\mid i<\kappa\rangle$ of cofinal subsets of $\kappa$, there exist stationarily many $\alpha\in S$ such that
$\sup\{\beta<\alpha\mid \suc_\omega(C_\alpha\setminus\beta)\s A_i\}=\alpha$ for all $i<\alpha$.
\end{itemize}
\end{definition}

Furthermore,
in Section 2, we shall present a construction of a coherent $\kappa$-Souslin tree from $\boxtimes(\kappa)+\diamondsuit(\kappa)$.
In Section 3, we shall prove that $\square_\lambda+\ch_\lambda$ entails $\boxtimes(\lambda^+)$ for every singular cardinal $\lambda$,
and that $\diamondsuit(\omega_1)$ entails $\boxtimes(\omega_1)$. Note that neither of the two implications is trivial.

In the next paper, Part~II, we shall introduce the considerably weaker principle $\boxtimes^*(S)$ (a relative of Jensen's weak square principle $\square^*$),
which still suffices to entail the existence of a (plain) $\kappa$-Souslin tree, in the presence of $\diamondsuit(\kappa)$.

\subsection{The Proxy Principle}

\begin{definition}[Proxy principle]\label{proxy}
Suppose that:
\begin{itemize}
\item
$\kappa$ is any regular
uncountable cardinal;
\item
$\nu$ and $\mu$ are cardinals such that $2 \leq \nu \leq \mu \leq \kappa$;
\item
$\mathcal R$ is a binary relation over $[\kappa]^{<\kappa}$;
\item
$\theta$ is a cardinal such that $1 \leq \theta \leq \kappa$;
\item
$\mathcal S$ is a nonempty collection of stationary subsets of $\kappa$;
\item
$\sigma$ is an ordinal $\leq \kappa$;
and
\item
$\mathcal E$ is an equivalence relation over a subset of $\mathcal P(\kappa)$.
\end{itemize}
The principle
$\p^-(\kappa, \mu, \mathcal R, \theta, \mathcal S,  \nu,\sigma,\mathcal E)$
asserts the existence of a sequence
$\left< \mathcal C_\alpha \mid \alpha < \kappa \right>$
such that:
\begin{itemize}
\item for every limit $\alpha < \kappa$, $\mathcal C_\alpha$ is a collection of club subsets of $\alpha$;
\item for every ordinal $\alpha < \kappa$, $0 < \left| \mathcal C_\alpha \right| < \mu$, and $C \mathrel{\mathcal E}D$ for all $C,D\in\mathcal C_\alpha$;
\item for every ordinal $\alpha < \kappa$, every $C \in \mathcal C_\alpha$, and every $\bar\alpha \in \acc(C)$, there exists $D \in \mathcal C_{\bar\alpha}$ such that $D \mathrel{\mathcal R} C$;
\item
for every sequence $\langle A_i \mid i < \theta \rangle$ of cofinal subsets of $\kappa$,
and every $S \in \mathcal S$, there exist stationarily many $\alpha \in S$ for which:
\begin{itemize}
\item $\left| \mathcal C_\alpha \right| < \nu$; and
\item
for every $C \in \mathcal C_\alpha$ and $i < \min\{\alpha, \theta\}$:
\[
\sup\{ \beta \in C \mid \suc_\sigma (C \setminus \beta) \subseteq A_i \} = \alpha.
\]
\end{itemize}
\end{itemize}

We shall sometimes say that \emph{the sequence has width ${<\mu}$},
and refer to the \emph{$\mathcal R$-coherence} of the sequence.

\begin{definition} $\p(\kappa, \mu, \mathcal R, \theta, \mathcal S,  \nu,\sigma,\mathcal E)$
asserts that both $\p^-(\kappa, \mu, \mathcal R, \theta, \mathcal S,  \nu,\sigma,\mathcal E)$
and $\diamondsuit(\kappa)$ hold.
\end{definition}

We will often shorten the statement of the proxy principle
(both $\p^-(\kappa, \dots)$ and $\p(\kappa, \dots)$)
when some of the parameters take on their weakest useful values,
as follows:
\begin{itemize}
\item
If we omit $\mathcal E$, then $\mathcal E= (\mathcal P(\kappa))^2$.
\item
If we also omit $\sigma$,
then $\sigma = 1$.
\item
If we also omit $\nu$,
then $\nu = \mu$.
\item
If we also omit $\mathcal S$, then $\mathcal S = \{\kappa\}$.
\item
If we also omit $\theta$, then $\theta = 1$.
\end{itemize}
\end{definition}

\begin{definition}
The binary relations $\mathcal R$ over $[\kappa]^{<\kappa}$ used as the third parameter
in the proxy principle include $\sqsubseteq$, $\sqsubseteq^*$, $\sqx$, $\sqx^*$, and $\sqsubseteq_\chi$,
which we define as follows, where $\chi \leq \kappa$ can be any ordinal:
\begin{itemize}
\item
$D \sqsubseteq C$ iff
there exists some ordinal $\beta < \kappa$ such that $D = C \cap \beta$,
that is, $C$ \emph{end-extends} $D$;
\item
$D \sqsubseteq^* C$ iff there exists $\gamma < \sup(D)$ such that
$D \setminus \gamma \sqsubseteq C \setminus \gamma$,
that is, $C$ end-extends $D$ modulo a subset bounded below $\sup D$;
\item $D \sqx C$ iff (($D \sqsubseteq C$) or ($\cf(\sup(D))<\chi$));
\item $D \sqx^* C$ iff (($D \sqsubseteq^* C$) or ($\cf(\sup(D)) <\chi$));
\item $D \sqsubseteq_\chi C$ iff (($D \sqsubseteq C$) or ($\otp(C)<\chi$ and $\nacc(C)$ consists only of successor ordinals)).
\end{itemize}
\end{definition}

\begin{definition}
The equivalence relations $\mathcal E$ over a subset of $\mathcal P(\kappa)$ used as the eighth parameter
in the proxy principle include the default $(\mathcal P(\kappa))^2$, as well as $=^*$ and $\mathcal E_\chi$,
which we define as follows, where $\chi \leq \kappa$ can be any ordinal:
\begin{itemize}
\item $D=^*C$ iff there exists some $\gamma<\sup(D)$ such that $D\setminus\gamma=C\setminus\gamma$;
\item $D\mathrel{\mathcal E_\chi}C$ iff ($(\otp(D)\le\chi)$ and $(\otp(C)\le\chi)$).
\end{itemize}
\end{definition}

We shall establish below that by an appropriate choice of a vector of parameters, the proxy  principles $\p^-$ and $\p$ allow to express
many of the combinatorial principles considered in the appendix, including $\clubsuit_w(S)$, $\diamondsuit(S)$, $\langle\lambda\rangle^-_S$, $\square_\lambda$, $\boxminus_{\lambda,\ge\chi}$, $\sd_\lambda$, and $\square_\lambda+\ch_\lambda$.

\medskip

At some point, we realized that our paper is growing large and decided to split it into two parts.
This paper, being Part~I, concentrates solely on the case where the second parameter of the proxy principle
takes its strongest possible value, that is, $\mu=2$. Part~II concentrates on the case $2<\mu\le\kappa$.

What is so nice about the case $\mu=2$, is that for every ordinal $\alpha < \kappa$, $\mathcal C_\alpha$ is a singleton, say $\{C_\alpha\}$,
and the parameter $\nu$ conveys no additional information.
Therefore, in this case, it makes sense to simplify the notation and say that a sequence $\langle C_\alpha \mid \alpha < \kappa\rangle$ witnesses
$\p^-(\kappa, 2, \mathcal R, \theta, \mathcal S, 2, \sigma, \mathcal E)$
whenever:
\begin{itemize}
\item
for every limit $\alpha < \kappa$, $C_\alpha$ is a club subset of $\alpha$, and $C_\alpha\in\dom(\mathcal E)$;
\item
for all ordinals $\alpha < \kappa$
and $\bar\alpha \in \acc(C_\alpha)$,
we have
$C_{\bar\alpha} \mathrel{\mathcal R} C_\alpha$;
\item
for every sequence $\langle A_i \mid i < \theta \rangle$ of cofinal subsets of $\kappa$,
and every $S \in \mathcal S$, there exist stationarily many $\alpha \in S$ satisfying,
for every $i < \min\{\alpha, \theta\}$:
\[
\sup\{ \beta \in C_\alpha \mid \suc_\sigma (C_\alpha \setminus \beta) \subseteq A_i \} = \alpha.
\]
\end{itemize}

To conclude this subsection, let us point out how the simplified axioms from the previous subsection
may be defined using the proxy principle $\p^-$.

\begin{definition} For a regular uncountable cardinal $\kappa$ and a stationary subset $S\s\kappa$:
\begin{itemize}
\item $\boxtimes(S)$ asserts that $\p^-(\kappa,2,{\sq},\kappa,\{S\},2,\omega)$ holds;
\item $\boxtimes^-(S)$ asserts that $\p^-(\kappa,2,{\sq},1,\{S\},2,1)$ holds;
\item $\boxtimes^*(S)$ asserts that $\p^-(\kappa,\kappa,{\sqleft{\chi}^*},1,\{S\},\kappa,1)$ holds for $\chi:=\min\{\cf(\alpha)\mid \alpha\in S\text{ limit}\}$.
\end{itemize}
\end{definition}

We similarly define $\boxtimes_\lambda(S), \boxtimes^-_\lambda(S),\boxtimes^*_\lambda(S)$ by letting the eighth parameter be $\mathcal E_\lambda$ in each respective part of the above definition.

\subsection{Sample Corollaries} To give an idea of the flavor of consequences the results of this paper entail, we state here a few sample corollaries.
We remind the reader that the definitions of all relevant combinatorial principles may be found in the appendix section below.

Our first corollary lists sufficient conditions for the proxy principle to hold
with its parameters taking on their strongest useful values:

\begin{cor}\label{sample1}
$\p(\kappa,2,{\sq},\kappa,\mathcal S,2,\omega)$ holds,
assuming any of the following:
\begin{enumerate}
\item $\kappa=\aleph_1, \mathcal S=\{S\}, S\s\omega_1$ and $\diamondsuit(S)$ holds;
\item $\kappa=\lambda^+$, $\lambda$ is a singular cardinal, $\mathcal S=\{E^{\lambda^+}_{\cf(\lambda)}\}$ and $\square_\lambda+\ch_\lambda$ holds;
\item $\kappa=\lambda^+$, $\lambda$ is a regular uncountable cardinal, $\mathcal S=\{E^{\lambda^+}_\lambda\}$, and $\sd_\lambda$ holds;
\item $\kappa=\lambda^+$, $\lambda$ is not subcompact, $\mathcal S=\{E^{\lambda^+}_{\cf(\lambda)}\}$ and $V$ is a Jensen-type extender model of the form $L[E]$;
\item $\kappa$ is a regular uncountable cardinal that is not weakly compact, $\mathcal S=\{E^\kappa_{\ge\chi}\mid \chi<\kappa\ \&\ \forall\lambda<\kappa(\lambda^{<\chi}<\kappa)\}$ and $V=L$;
\item $\kappa=\lambda^+$, $\lambda$ is regular uncountable, $\mathcal S=\{S\s E^{\lambda^+}_\lambda\mid S\text{ is stationary}\}$ and $V=W^{\add(\lambda,1)}$,\footnote{Here, $\add(\lambda,1)$ stands for the notion of forcing for adding a Cohen subset to $\lambda$.}
where $$W\models \zfc+\square_\lambda+\ch_\lambda+ \lambda^{<\lambda}=\lambda.$$
\end{enumerate}
\end{cor}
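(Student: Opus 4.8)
The plan is to unwind the principle and then, case by case, to reduce the corollary to the consistency results proved later in this paper together with a handful of standard facts. Recall first that $\p(\kappa,2,{\sq},\kappa,\mathcal S,2,\omega)$ is, by definition, the conjunction of $\diamondsuit(\kappa)$ with $\p^-(\kappa,2,{\sq},\kappa,\mathcal S,2,\omega)$, and that when $\mathcal S=\{S\}$ this second conjunct is precisely $\boxtimes(S)$. So in every case it will suffice to exhibit (i)~a $\diamondsuit(\kappa)$-sequence, and (ii)~a single $\sq$-coherent $C$-sequence $\langle C_\alpha\mid\alpha<\kappa\rangle$ that simultaneously witnesses, for each $S\in\mathcal S$, the reflection clause: for every sequence $\langle A_i\mid i<\kappa\rangle$ of cofinal subsets of $\kappa$ there are stationarily many $\alpha\in S$ with $\sup\{\beta\in C_\alpha\mid\suc_\omega(C_\alpha\setminus\beta)\s A_i\}=\alpha$ for all $i<\alpha$. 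Part~(ii) is the substance, and it is exactly what the consistency results of Section~3 (and of the sections following it) deliver from the combinatorial hypotheses in the list; part~(i) will, in each case, be either immediate or a consequence of the cardinal arithmetic via Shelah's theorem that $2^\lambda=\lambda^+$ implies $\diamondsuit(\lambda^+)$ for every uncountable~$\lambda$.

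For~(1): $\diamondsuit(S)$ for stationary $S\s\omega_1$ trivially entails $\diamondsuit(\omega_1)$, and the argument of Section~3 establishing $\diamondsuit(\omega_1)\Rightarrow\boxtimes(\omega_1)$, run off a witness to $\diamondsuit(S)$ rather than to $\diamondsuit(\omega_1)$, delivers $\boxtimes(S)$ --- the sharpening in which the good $\alpha$'s lie in $S$. For~(2): $\lambda$ is singular, hence uncountable, so $\ch_\lambda$ gives $\diamondsuit(\lambda^+)$ by Shelah, while Section~3 gives $\square_\lambda+\ch_\lambda\Rightarrow\boxtimes(E^{\lambda^+}_{\cf(\lambda)})$ (the sharpening of ``$\boxtimes(\lambda^+)$'' in which the good $\alpha$'s are confined to $E^{\lambda^+}_{\cf(\lambda)}$, where the nontrivial $C_\alpha$'s naturally sit). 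For~(3): $\sd_\lambda$ entails $\diamondsuit$ restricted to the stationary set $E^{\lambda^+}_\lambda$, hence $\diamondsuit(\lambda^+)$, and the translation of $\sd_\lambda$ into the proxy principle carried out in Section~3 yields $\boxtimes(E^{\lambda^+}_\lambda)$.

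Cases~(4)--(6) are then obtained by locating the appropriate hypothesis in the given universe and invoking~(1)--(3) (or their analogue for inaccessible $\kappa$). A Jensen-type extender model $L[E]$ satisfies the $\gch$, hence $\ch_\lambda$ for every $\lambda$, and by the Schimmerling--Zeman theorem it satisfies $\square_\lambda$ whenever $\lambda$ is not subcompact; when $\lambda$ is singular this is precisely the configuration of~(2), and when $\lambda$ is regular uncountable the fine structure of $L[E]$ furnishes in addition a $\sd_\lambda$-sequence, placing us in the configuration of~(3) --- note that $E^{\lambda^+}_{\cf(\lambda)}=E^{\lambda^+}_\lambda$ there, and that the degenerate case $\lambda=\aleph_0$ falls under~(1) --- which yields~(4). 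For~(5), $V=L$ gives, for every regular uncountable $\kappa$ that is not weakly compact, Jensen's $\square(\kappa)$ together with $\diamondsuit(\kappa)$ and the full $\gch$, and the construction of Section~3 refines this into a single sequence witnessing $\p^-(\kappa,2,{\sq},\kappa,\mathcal S,2,\omega)$ for the indicated family $\mathcal S$; the arithmetic proviso $\forall\lambda<\kappa\,(\lambda^{<\chi}<\kappa)$ entering the definition of the members of $\mathcal S$ is exactly what allows the reflection to be confined to $E^\kappa_{\ge\chi}$. For~(6), let $\mathbb P=\add(\lambda,1)$, computed in $W$: since $W\models\lambda^{<\lambda}=\lambda$, the poset $\mathbb P$ is ${<}\lambda$-directed-closed and of size $\lambda$, hence $\lambda^+$-cc, so it preserves all cardinals and cofinalities, keeps $2^\lambda=\lambda^+$ (whence $\diamondsuit(\lambda^+)$ holds in $V$ by Shelah), preserves the ground-model $\square_\lambda$-sequence, and preserves the stationarity of every subset of $E^{\lambda^+}_\lambda$; the generic Cohen subset of $\lambda$ is then used to build, over this $\square_\lambda$-structure, a $\sq$-coherent sequence whose $\suc_\omega$-blocks meet --- by a density argument that passes through names for the $\langle A_i\rangle$'s using the $\lambda^+$-cc --- every cofinal set of the extension cofinally often, and do so at stationarily many points of every prescribed stationary $S\s E^{\lambda^+}_\lambda$ at once.

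The corollary is thus essentially bookkeeping once the main consistency theorems are available; the real difficulty lies in those theorems. Proving $\diamondsuit(S)\Rightarrow\boxtimes(S)$, $\square_\lambda+\ch_\lambda\Rightarrow\boxtimes(E^{\lambda^+}_{\cf(\lambda)})$, and their kin with all parameters at full strength --- in particular with $\sigma=\omega$, so that the clause constrains the first $\omega$ successor-steps of $C_\alpha$ past $\beta$ rather than a single point, and with $\theta=\kappa$, so that one reflects against $\kappa$-many cofinal sets at once --- while keeping the sequence honestly $\sq$-coherent rather than merely $\sq^*$-coherent, forces the witnessing clubs to carry many accumulation points, so that the coherence demand $C_{\bar\alpha}=C_\alpha\cap\bar\alpha$ for $\bar\alpha\in\acc(C_\alpha)$ becomes a genuine constraint to be negotiated against the guessing; this is the crux. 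Within the corollary proper, the one step that needs care is~(6): one must verify not merely that Cohen forcing preserves $\square_\lambda$, but that a suitable generic can be harnessed to impose the strong, uniform-over-$\mathcal S$ reflection pattern on top of the preserved structure, and that this survives the passage to names for the cofinal sets of the extension.
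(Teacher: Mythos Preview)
Your proposal is correct and follows essentially the same approach as the paper: both treat the corollary as bookkeeping, reducing each clause to a theorem proved later (the paper simply cites Theorem~\ref{thm32} for~(1) and~(3), Corollary~\ref{cor64} for~(2), Theorem~\ref{thm67} for~(6), and handles~(4) and~(5) by case-splitting on the nature of $\lambda$). Two small points where the paper is more precise than your sketch: for~(4) with $\lambda$ regular uncountable, the source for $\sd_\lambda$ in $L[E]$ is Kypriotakis's thesis~\cite[Theorem~2.3]{MR2713313}; and for~(5) the paper explicitly separates the successor case (reduced to~(4), since no cardinal in $L$ is subcompact) from the inaccessible case (handled by Fact~\ref{thm66}, whose full proof with $\theta=\kappa$ is deferred to a forthcoming paper---Section~3 itself only sketches the weaker $\theta<\kappa$ version in Theorem~\ref{thm39}).
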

\begin{proof}
\begin{enumerate}
\item By Theorem \ref{thm32}.
\item By Corollary \ref{cor64}.
\item By Theorem \ref{thm32}.
\item For $\kappa=\aleph_1$, appeal to Clause~(1); For $\kappa=\lambda^+$ where $\lambda$ is singular, appeal to Clause~(2) and \cite[Theorem 0.1]{MR2081183}. Finally, if $\kappa$ is a successor of a regular uncountable cardinal $\lambda$ that is not subcompact,
then appeal to Clause~(3) and \cite[Theorem 2.3]{MR2713313}, taking $D=E^{\lambda^+}_\lambda$ there.
\item For $\kappa$ successor, this follows from Clause~(4) and the fact that no cardinal in $L$ is subcompact. For $\kappa$ inaccessible not weakly compact, appeal to Fact~\ref{thm66}.
\item By Theorem~\ref{thm67}.
\qedhere
\end{enumerate}
\end{proof}

\begin{cor}\label{sample2} Suppose that $\ch_\lambda$ holds for a regular uncountable cardinal $\lambda$, and $S\s E^{\lambda^+}_\lambda$ is stationary.
Then $(1)\Rightarrow(2)\Rightarrow(3)\Rightarrow(4)\Leftrightarrow(5)\Leftarrow(6)$:
\begin{enumerate}
\item $\diamondsuit(S)$;
\item $\curlywedge^-(\chi,S)$ for all uncountable cardinals $\chi\le\lambda$;
\item $\curlywedge^-(\chi,S)$ for some uncountable cardinal $\chi\le\lambda$;
\item $\langle\lambda\rangle^-_S$;
\item $\p(\lambda^+,2,{\sql},\lambda^+,\{S\},2,\omega,\mathcal E_\lambda)$;
\item $V=W^{\add(\lambda,1)}$, where $W\models\zfc+\ch_\lambda+\lambda^{<\lambda}=\lambda$.
\end{enumerate}
\end{cor}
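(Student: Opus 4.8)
The plan is to establish the chain of implications one link at a time, drawing on the structure theory of the club-guessing principles $\curlywedge^-(\chi,S)$ and of $\langle\lambda\rangle^-_S$ from \cite{MR2320769} and \cite{rinot09}, together with the translations between those principles and instances of $\p^-$ recorded earlier in the excerpt. For $(1)\Rightarrow(2)$: $\diamondsuit(S)$ outright guesses subsets of $\lambda^+$ along a stationary set, so for a fixed uncountable $\chi\le\lambda$ one feeds the diamond sequence into a standard club-guessing scaffold on $E^{\lambda^+}_\lambda$, obtaining a $\curlywedge^-(\chi,S)$-sequence; this is the easy, purely bookkeeping step. For $(2)\Rightarrow(3)$ there is nothing to prove, since $\lambda$ itself is an uncountable cardinal $\le\lambda$ (and any witness for all $\chi$ is a witness for some $\chi$).

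The substantive work is in $(3)\Rightarrow(4)$ and $(4)\Leftrightarrow(5)$. For $(3)\Rightarrow(4)$: given a $\curlywedge^-(\chi,S)$-sequence for a single uncountable $\chi\le\lambda$, one must manufacture a $\langle\lambda\rangle^-_S$-sequence; here I would follow the route of amalgamating the club-guessing data with the trees-on-$\chi$ that $\ch_\lambda$ and $\lambda=\cf(\lambda)>\omega$ make available, exactly as in the proof that $\curlywedge^-$ entails the reflected-diamond principle (cf.\ the appendix and \cite{rinot09}), using $\ch_\lambda$ to enumerate the relevant bounded subsets. The equivalence $(4)\Leftrightarrow(5)$ is where one must be careful about the parameter vector: the $\mathcal E_\lambda$ in the eighth slot forces each $C_\alpha$ to have order type $\le\lambda$, the relation $\sql$ permits the coherence to fail exactly at points of small cofinality, and $\theta=\lambda^+$ together with $\sigma=\omega$ is the full strength corresponding to the "$\boxtimes$"-flavour. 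One direction unpacks a $\p(\lambda^+,2,\sql,\lambda^+,\{S\},2,\omega,\mathcal E_\lambda)$-witness, discards the diamond component of $\p$ versus $\p^-$ carefully (note both $(4)$ and $(5)$ are stated with $\ch_\lambda$ in force, and $\ch_\lambda$ at a regular $\lambda$ is enough to recover $\diamondsuit(\lambda^+)$, hence $\diamondsuit(\lambda^+)$ is automatic and $\p^-$ upgrades to $\p$), and reads off a $\langle\lambda\rangle^-_S$-sequence from the $\mathcal R$-coherent, club-guessing sequence; the converse builds the $\mathcal C_\alpha$'s by the microscopic-style recursion of Section~2 applied to a $\langle\lambda\rangle^-_S$-witness, threading the $\otp\le\lambda$ constraint through limit stages.

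Finally $(6)\Rightarrow(4)$ (equivalently $(6)\Rightarrow(5)$): in $W$ we have $\square_\lambda+\ch_\lambda+\lambda^{<\lambda}=\lambda$ is not assumed — only $\ch_\lambda+\lambda^{<\lambda}=\lambda$ — so one argues directly in the $\add(\lambda,1)$-extension that the generic Cohen subset of $\lambda$, sliced appropriately along a partition of $S\s E^{\lambda^+}_\lambda$ into $\lambda^+$ stationary pieces, yields a $\langle\lambda\rangle^-_S$-sequence; this is a genericity/density argument of the kind used to obtain $\diamondsuit$ from adding a Cohen subset, and it is cited essentially from the appendix's treatment of $\langle\lambda\rangle^-_S$ (the relevant model is the one isolated in \cite{rinot09}). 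The main obstacle I expect is bookkeeping the exact parameter vector in $(4)\Leftrightarrow(5)$: one must verify that the weak coherence relation $\sql$ is precisely what the $\langle\lambda\rangle^-_S$ data can support at nonaccumulation points of small cofinality, and that no strengthening of $\mathcal R$ or of $\sigma$ sneaks in — everything else reduces to known lemmas already available from the cited literature and from Section~2.
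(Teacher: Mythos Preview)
Your outline tracks the paper's structure closely: each implication is indeed dispatched by citing a dedicated result (Fact~\ref{factRinot09} for $(1)\Rightarrow(2)$, Theorem~\ref{thm312} for $(3)\Rightarrow(4)$, Corollary~\ref{c55} for $(4)\Leftrightarrow(5)$, and Theorem~\ref{thm313} for $(6)\Rightarrow(5)$). However, two points in your plan are misdirected.

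First, for $(3)\Rightarrow(4)$ you appeal to ``exactly as in the proof that $\curlywedge^-$ entails the reflected-diamond principle (cf.\ the appendix and \cite{rinot09}).'' That result (Fact~\ref{factRinot09}) only covers the cases $\chi<\lambda$ or $\chi=\lambda$ a successor; it says nothing when $\chi=\lambda$ is inaccessible. Since $(3)$ only gives you $\curlywedge^-(\chi,S)$ for \emph{some} $\chi$, you cannot exclude this case, and the paper supplies a genuinely new argument (Theorem~\ref{thm312}) to handle it. The proof there is not bookkeeping: it threads a careful recursion through the $\curlywedge^-$ filtration, builds auxiliary clubs $C^i_\delta$ via functions $f^i_\delta:\delta\to[\delta]^{<\chi}$, and then runs a pigeonhole argument over $i<\lambda$ to stabilize on a single index. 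Your plan, as written, has a gap precisely at the inaccessible case.

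Second, for $(4)\Rightarrow(5)$ you propose to ``build the $\mathcal C_\alpha$'s by the microscopic-style recursion of Section~2.'' Section~2 constructs Souslin trees \emph{from} instances of $\p$; it does not produce $\p$-sequences. The actual argument (Theorem~\ref{t39}) is a coding construction: one uses the $\langle\lambda\rangle^-_S$ data together with a bijection $\pi:\lambda^+\times\lambda^+\leftrightarrow\lambda^+$ and a surjection $f_\delta:\lambda\to\delta$ with cofinal fibers to insert, between consecutive points of $C_\delta$, closed blocks $F^\delta_\beta$ of order-type $\le\sigma+1$ whose nonaccumulation points land in the correct $A_i$. The $\sql$-coherence is then free because $\mathcal E_\lambda$ forces $\otp(D_\delta)\le\lambda$, but the hitting property requires this explicit interpolation, not a tree-style recursion.
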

\begin{proof} $(1)\Rightarrow(2)$ By Fact~\ref{factRinot09}.

$(3)\Rightarrow(4)$ By Theorem \ref{thm312}.

$(4)\Leftrightarrow(5)$ By Corollary~\ref{c55}.

$(6)\Rightarrow(5)$ By Theorem~\ref{thm313}.
\end{proof}

\begin{cor}\label{c115} Suppose that $\boxminus_{\lambda,\ge\chi}+\ch_\lambda$ holds for given infinite cardinals $\cf(\chi)=\chi\le\theta<\lambda$. Then:
\begin{enumerate}
\item $\p(\lambda^+,2,{\sq_\chi},\theta,\{E^{\lambda^+}_\eta\mid \aleph_0 \leq \cf(\eta) = \eta < \lambda\},2,\omega,{\mathcal E_\lambda})$ holds;
\item If $\lambda$ is singular, then $\p(\lambda^+,2,{\sq_\chi},\lambda^+,\{E^{\lambda^+}_{\cf(\lambda)}\},2,\omega,\mathcal E_\lambda)$ holds;
\item If $\lambda$ is regular, then $\p(\lambda^+,2,{\sqx^*},\chi,\{E^{\lambda^+}_{\lambda}\})$ holds.
\end{enumerate}
\end{cor}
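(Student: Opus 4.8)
The plan is to reduce all three parts to a single \emph{stretching} construction applied to a $\boxminus_{\lambda,\ge\chi}$-sequence. Since $\chi$ is an infinite cardinal with $\chi\le\theta<\lambda$, the cardinal $\lambda$ is uncountable, so $\ch_\lambda$ yields $\diamondsuit(\lambda^+)$ (see the appendix); this is precisely the second conjunct of each of the instances $\p(\lambda^+,\dots)$ in the statement, so in every case it will remain only to produce a witness to the corresponding instance of $\p^-$. I would fix a witness $\langle C_\alpha\mid\alpha<\lambda^+\rangle$ to $\boxminus_{\lambda,\ge\chi}$ --- each $C_\alpha$ a club in $\alpha$ of order type $\le\lambda$, so that the parameter $\mathcal E_\lambda$ comes for free, with the sequence $\sq_\chi$-coherent --- together with a $\diamondsuit(\lambda^+)$-sequence, which a routine coding lets me read as a sequence $\langle\langle A^\alpha_i\mid i<\min\{\alpha,\theta\}\rangle\mid\alpha<\lambda^+\rangle$ guessing initial segments of arbitrary $\theta$-indexed sequences of cofinal subsets of $\lambda^+$.

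The engine is a recursion on $\alpha<\lambda^+$ producing a stretched sequence $\langle C'_\alpha\mid\alpha<\lambda^+\rangle$. At a stage $\alpha$ where $\langle A^\alpha_i\rangle$ happens to consist of sets cofinal in $\alpha$ and $\cf(\alpha)$ leaves room for the active requirements, I replace $C_\alpha$ by a club $C'_\alpha\s\alpha$ of order type $\le\lambda$, obtained from $C_\alpha$ by splicing into its non-accumulation points --- cofinally often, with the indices $i$ interleaved by a fixed bookkeeping function --- blocks of $\omega$ ordinals drawn from the sets $A^\alpha_i$ (single ordinals when $\sigma=1$); at every other $\alpha$ I keep $C'_\alpha:=C_\alpha$, modulo the restriction forced at an earlier stage. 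The recursion is arranged so that for $\bar\alpha\in\acc(C'_\alpha)$ the configuration at $\bar\alpha$ is the restriction of the one at $\alpha$: for $\cf(\bar\alpha)\ge\chi$ this is an honest end-extension, $C'_{\bar\alpha}\sq C'_\alpha$ --- degrading to $C'_{\bar\alpha}\sqsubseteq^*C'_\alpha$ when $\lambda$ is regular, since there the splicing at cofinality-$\lambda$ points leaves a bounded error --- while for $\cf(\bar\alpha)<\chi$ the set $C'_{\bar\alpha}$ has order type $<\chi$ with non-accumulation points all successor ordinals, so that $C'_{\bar\alpha}\sq_\chi C'_\alpha$ (respectively $C'_{\bar\alpha}\sqx^*C'_\alpha$) holds via the second disjunct of the relation. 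Hence the first three clauses of $\p^-$ survive. For the last clause: given $\langle A_i\mid i<\theta\rangle$ of cofinal subsets of $\lambda^+$ and given $S\in\mathcal S$, the $\diamondsuit$-property makes stationary the set of $\alpha$ for which $A^\alpha_i=A_i\cap\alpha$ for all active $i$ with each $A_i\cap\alpha$ cofinal in $\alpha$; intersecting with $S$ and with the set of $\alpha$ of the right cofinality, one is left with stationarily many $\alpha\in S$ at which the splicing was executed against the true sets, so that $\sup\{\beta\in C'_\alpha\mid\suc_\sigma(C'_\alpha\setminus\beta)\s A_i\}=\alpha$ for every active $i$.

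This gives part (1) directly ($\sigma=\omega$, $\mathcal R=\sq_\chi$, $\mathcal S$ the displayed collection of sets of regular cofinality $<\lambda$, $\theta$ the given bound), and part (2) by the same argument specialised to $\lambda$ singular and $S=E^{\lambda^+}_{\cf(\lambda)}$, using that the selected $\alpha$ carry a club of order type $\lambda$ (of cofinality $\cf(\lambda)$), with ample room for the $<\lambda$ requirements actually active among the $\theta=\lambda^+$ many. Part (3) ($\lambda$ regular) then follows from part (1) by the evident monotonicity of the proxy principle --- $\sq_\chi\s\sqx^*$, the value $\sigma=1$ is weaker than $\sigma=\omega$, $\theta=\chi$ is at most the $\theta$ of part (1), the trivial equivalence relation is coarser than $\mathcal E_\lambda$, and stationarily many $\alpha<\lambda^+$ already lie in $E^{\lambda^+}_{\aleph_0}$, which belongs to part (1)'s collection --- or else by running the engine directly, which is easier here because $\sigma=1$ asks only for single spliced ordinals and the ``$\sqx^*$''-slack absorbs the recursion's limit-stage discrepancies. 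The main obstacle, and what fixes the exact parameters, is the bookkeeping of this stretching: simultaneously preserving $\sq_\chi$- (or $\sqx^*$-) coherence, the bound $\otp(C'_\alpha)\le\lambda$, and the cofinal reappearance of the $\omega$-blocks of every active $A^\alpha_i$, throughout a recursion of length $\lambda^+$, while keeping the number of requirements active at $\alpha$ within $\cf(\alpha)$ --- and it is precisely this last point that forces the choice of $\mathcal S$, of $\theta$, and of the retreat from $\sq_\chi$ to $\sqx^*$ when $\lambda$ is regular.
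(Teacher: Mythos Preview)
Your proposal has two concrete gaps.

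First, you have misread the relation $\sq_\chi$. By definition, $D\sq_\chi C$ holds iff $D\sq C$ or ($\otp(C)<\chi$ and $\nacc(C)$ consists only of successor ordinals): the escape clause concerns the \emph{parent} $C$, not the child $D$. So when $\bar\alpha\in\acc(C'_\alpha)$ with $\cf(\bar\alpha)<\chi$, the fact that $C'_{\bar\alpha}$ is short and has successor nacc points is irrelevant; what matters is whether $C'_\alpha$ is short, and at the ordinals where you are doing the work it certainly is not. (The analogous clause for $\sqx^*$ \emph{does} refer to $\cf(\sup(D))$, so your remark is correct there, but not for $\sq_\chi$.) Thus for parts (1) and (2) you actually need honest end-extension $C'_{\bar\alpha}\sq C'_\alpha$ whenever $\otp(C'_\alpha)\ge\chi$, which brings us to the real difficulty: your splicing at stage $\alpha$ is driven by the $\diamondsuit$-guesses $\langle A^\alpha_i\rangle$, while at the earlier stage $\bar\alpha$ it was driven by $\langle A^{\bar\alpha}_i\rangle$, and nothing in plain $\diamondsuit(\lambda^+)$ forces these to cohere along the $\boxminus$-tree. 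A recursion on $\alpha$ cannot repair this, because the constraint flows downward (from $\alpha$ to $\bar\alpha$), not upward. The paper's route through Theorems~\ref{thm38} and~\ref{boxminus-succ} handles exactly this by first manufacturing (via the machinery of \cite{rinot11}) a \emph{joint} $\square$-with-$\diamondsuit$ structure $\langle(C_\alpha^\bullet,Z_\alpha)\rangle$ in which the guesses themselves are coherent along accumulation points, and only then performs the splice.

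Second, your reduction of part (3) to part (1) by monotonicity is simply wrong: part (3) requires the hitting set $S$ to be $E^{\lambda^+}_\lambda$, whereas every member of the collection $\mathcal S$ in part (1) is contained in $E^{\lambda^+}_{<\lambda}$; these are disjoint, so no monotonicity applies. The paper obtains part (3) from Corollary~\ref{c93}, which rests on Lemma~\ref{l11}, an $\omega$-length iterated refinement in the style of Kojman--Shelah that manufactures a nonreflecting stationary $S\subseteq E^{\lambda^+}_\chi$ together with a $\sqx^*$-coherent sequence on which $\diamondsuit$-guesses land in $\nacc(C_\gamma)$ for $\gamma\in E^{\lambda^+}_\lambda$. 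This is substantially more delicate than a single splice, and your fallback ``run the engine directly'' does not address the new obstacle: at $\gamma\in E^{\lambda^+}_\lambda$ the club $C_\gamma$ has order-type $\lambda$ and genuine $\sq$-coherence is in force at \emph{every} accumulation point, so any modification of $C_\gamma$ propagates down the entire tree below it.
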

\begin{proof}
(1) By Theorem~\ref{thm38}(1) and Theorem~\ref{boxminus-succ}(1).

(2) By Theorem \ref{thm38}(3).

(3) By Corollary~\ref{c93}(2).
\end{proof}

Note that if $\chi=\aleph_0$, then $\boxminus_{\lambda,\ge\chi}$, $\sq_{\chi}$ and $\sqleft{\chi}^*$ are respectively equivalent to $\square_\lambda$, $\sq$ and $\sq^*$.

\begin{cor} Suppose that $\square_\lambda+\ch_\lambda$ holds for a given uncountable cardinal $\lambda$.

Then $\p(\lambda^+,2,{\sq^*},1,\{E^{\lambda^+}_{\cf(\lambda)}\},2,\omega)$ holds.
\end{cor}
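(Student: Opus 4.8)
The plan is to split according to the cofinality of $\lambda$ and, in each case, to obtain the asserted instance of $\p$ as a weakening---in the parameters $\mathcal R$ and $\theta$---of a stronger instance already in hand. Recall that $\p(\lambda^+,\dots)$ is by definition $\p^-(\lambda^+,\dots)$ conjoined with $\diamondsuit(\lambda^+)$, and that $\diamondsuit(\lambda^+)$ follows from $\ch_\lambda$ (by Shelah, since $\lambda^+>\aleph_1$); so throughout it suffices to exhibit a witness to the relevant instance of $\p^-$, the diamond component being available once and for all.

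\emph{The singular case.} Suppose $\cf(\lambda)<\lambda$. By Corollary~\ref{sample1}(2), $\square_\lambda+\ch_\lambda$ already yields the considerably stronger $\p(\lambda^+,2,{\sq},\lambda^+,\{E^{\lambda^+}_{\cf(\lambda)}\},2,\omega)$; fix a witness $\langle C_\alpha\mid\alpha<\lambda^+\rangle$. Two trivial observations finish this case. First, every $\sq$-pair is a $\sq^*$-pair (take the exceptional ordinal to be $0$), so $\sq$-coherence entails $\sq^*$-coherence. Second, given a single cofinal $A\s\lambda^+$, one may feed the guessing clause the constant sequence $\langle A\mid i<\lambda^+\rangle$ and read off, for stationarily many $\alpha\in E^{\lambda^+}_{\cf(\lambda)}$, that $\sup\{\beta\in C_\alpha\mid\suc_\omega(C_\alpha\setminus\beta)\s A\}=\alpha$. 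Hence the very same sequence witnesses $\p^-(\lambda^+,2,{\sq^*},1,\{E^{\lambda^+}_{\cf(\lambda)}\},2,\omega)$.

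\emph{The regular case.} Suppose $\cf(\lambda)=\lambda$, so the target set is $E^{\lambda^+}_\lambda$. Here there is genuine work. The inputs I would use are a $\square_\lambda$-sequence $\langle D_\alpha\mid\alpha<\lambda^+\text{ limit}\rangle$---coherent in the strong sense $D_{\bar\alpha}=D_\alpha\cap\bar\alpha$ for $\bar\alpha\in\acc(D_\alpha)$, with $\otp(D_\alpha)\le\lambda$---and a $\diamondsuit(E^{\lambda^+}_\lambda)$-sequence $\langle Z_\alpha\mid\alpha\in E^{\lambda^+}_\lambda\rangle$, the latter available from $\ch_\lambda$ by Shelah's diamond theorem, the cofinality $\lambda=\cf(\lambda)$ being non-exceptional precisely because $\lambda$ is \emph{regular}. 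The structural fact that makes the skeleton usable is that accumulation points of the $D_\alpha$'s avoid $E^{\lambda^+}_\lambda$: if $\bar\alpha\in\acc(D_\alpha)$ then $\otp(D_\alpha\cap\bar\alpha)<\otp(D_\alpha)\le\lambda$, whence $\cf(\bar\alpha)<\lambda$. Thus one may set $C_\alpha:=D_\alpha$ for every limit $\alpha\notin E^{\lambda^+}_\lambda$ without interference, and at each $\alpha\in E^{\lambda^+}_\lambda$ replace $D_\alpha$ by a club $C_\alpha$ of order type $\lambda$, steered by $Z_\alpha$ (which for stationarily many such $\alpha$ equals $A\cap\alpha$ and is cofinal in $\alpha$), whose accumulation points lie in $\acc(D_\alpha)$, which carries at cofinally many of those accumulation points $\beta$ its next $\omega$ members inside $A$, and which still satisfies $C_\alpha\cap\bar\alpha=^*D_{\bar\alpha}$ for every $\bar\alpha\in\acc(C_\alpha)$. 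One then weakens $\mathcal R$ and $\theta$ as in the singular case. (Alternatively, one could invoke Corollary~\ref{c115}(3) with $\chi=\aleph_0$---legitimate by the remark immediately preceding the present corollary, since $\aleph_0<\lambda$---to obtain the $\sigma=1$ variant $\p(\lambda^+,2,{\sq^*},\aleph_0,\{E^{\lambda^+}_\lambda\})$ and then upgrade $\sigma$ from $1$ to $\omega$; but that upgrade meets the same obstruction and is not a mere weakening.)

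\emph{The main obstacle.} Everything delicate sits in the regular case: reconciling the $\suc_\omega$-guessing with $\sq^*$-coherence against the skeleton. To have a cofinal-in-$\alpha$ supply of good accumulation points $\beta$, the inserted $\omega$-blocks drawn from $A$ must themselves be cofinal in $\alpha$; but then at any $\bar\alpha\in\acc(C_\alpha)$ that is a limit of the block positions, $C_\alpha\cap\bar\alpha$ differs from $D_{\bar\alpha}=D_\alpha\cap\bar\alpha$ on an unbounded subset of $\bar\alpha$, so $\sq^*$-coherence fails there. No naive thinning-out of $D_\alpha$, nor perturbation confined to its gaps, survives this. The way out that I would pursue is to interleave the guessing blocks with long stretches on which $C_\alpha$ tracks $D_\alpha$ verbatim, selecting the sparse accumulation points of $C_\alpha$ by recursion so that below each one of them only boundedly many blocks have been inserted while the blocks still remain cofinal in $\alpha$ globally---or, failing a clean form of that, to define the clubs at points of $E^{\lambda^+}_\lambda$ by a minimal walk along $\langle D_\alpha\mid\alpha<\lambda^+\rangle$ and let the coherence of the walk carry the $\sq^*$-coherence. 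Making this bookkeeping coexist with the guessing is the crux.
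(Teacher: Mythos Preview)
Your singular case is fine and matches the paper's approach (the paper cites Corollary~\ref{cor64}, which is what underlies the clause of Corollary~\ref{sample1} you invoke).

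Your regular case, however, contains a genuine error at the outset. You claim that $\diamondsuit(E^{\lambda^+}_\lambda)$ follows from $\ch_\lambda$ by Shelah's theorem, writing that ``the cofinality $\lambda=\cf(\lambda)$ [is] non-exceptional precisely because $\lambda$ is regular.'' This is exactly backwards: Shelah's result (Fact~\ref{fact922}) gives $\diamondsuit(S)$ only for stationary $S\subseteq E^{\lambda^+}_{\neq\cf(\lambda)}$, and when $\lambda$ is regular, $\cf(\lambda)=\lambda$, so $E^{\lambda^+}_\lambda$ is precisely the excluded cofinality. Indeed, Example~\ref{prop112} exhibits a model of $\square_{\aleph_1}+\ch_{\aleph_1}$ in which $\diamondsuit(E^{\aleph_2}_{\aleph_1})$ fails, so your hypothesis is not available. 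Since the entire construction you sketch is steered by the sequence $\langle Z_\alpha\mid\alpha\in E^{\lambda^+}_\lambda\rangle$, this is fatal to the approach, independently of the coherence obstacle you correctly identify later.

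The paper's route for regular $\lambda$ is to invoke Corollary~\ref{c93}(1) with $\chi=\aleph_0$; since $\boxminus_{\lambda,\geq\aleph_0}$ coincides with $\square_\lambda$ and $\sqleft{\aleph_0}^*$ is $\sq^*$, this yields $\p(\lambda^+,2,{\sq^*},1,\{E^{\lambda^+}_\lambda\},2,\omega)$ directly. The underlying engine is Lemma~\ref{l11} (a Kojman--Shelah-style recursion), whose key feature is that the diamond used there lives on a stationary $S'\subseteq E^{\lambda^+}_{\aleph_0}$---a set of \emph{small} cofinality, where Shelah's theorem does apply. The $\omega$-blocks at a level $\alpha\in E^{\lambda^+}_\lambda$ are then assembled not from a guess attached to $\alpha$ itself, but from clubs $D_\beta$ attached to nonaccumulation points $\beta\in\nacc(C_\alpha)\cap S'$ of countable cofinality. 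You came close to this in your parenthetical remark, but you cited Corollary~\ref{c115}(3) (which rests on Corollary~\ref{c93}(2), giving $\sigma=1$) rather than Corollary~\ref{c93}(1), which already has $\sigma=\chi=\omega$ and needs no upgrade.
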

\begin{proof} By Corollary~\ref{c93}(1) with $\chi = \aleph_0$ for $\lambda$ regular, and Corollary~\ref{cor64} for $\lambda$ singular.
\end{proof}
\begin{cor} If $\ch_\lambda$ holds for a given regular uncountable cardinal $\lambda$,
and there exists a nonreflecting stationary subset of $E^{\lambda^+}_{<\lambda}$, then
$\p(\lambda^+,2,{\sql}^*,\theta,\{E^{\lambda^+}_{\lambda}\},2,\omega)$ holds for all regular cardinals
$\theta <\lambda$.
\end{cor}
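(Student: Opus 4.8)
The plan is to split off the $\diamondsuit(\lambda^+)$ conjunct of $\p$ and then furnish a witness to $\p^-$ by a soft recursion. Since $\lambda$ is regular uncountable and $\ch_\lambda$ holds, $\diamondsuit(E^{\lambda^+}_\lambda)$ holds; fix a witnessing sequence $\langle Z_\delta\mid\delta\in E^{\lambda^+}_\lambda\rangle$. As $E^{\lambda^+}_\lambda$ is a stationary set of limit ordinals, this already delivers $\diamondsuit(\lambda^+)$, so what is left is to establish $\p^-(\lambda^+,2,{\sql}^*,\theta,\{E^{\lambda^+}_\lambda\},2,\omega)$ for an arbitrary regular $\theta<\lambda$. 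After fixing a bijection $\lambda^+\times\lambda^+\leftrightarrow\lambda^+$ and using $\theta<\lambda$, I may assume $\langle Z_\delta\rangle$ guesses, on a stationary subset of $E^{\lambda^+}_\lambda$, any prescribed sequence $\langle A_i\mid i<\theta\rangle$ of subsets of $\lambda^+$. The hypothesis furthermore hands us a nonreflecting stationary $T\s E^{\lambda^+}_{<\lambda}$.

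The construction hinges on the weakness of the relation ${\sql}^*$: whenever $\bar\alpha\in\acc(C_\alpha)$ has $\cf(\bar\alpha)<\lambda$, the demand $C_{\bar\alpha}\mathrel{{\sql}^*}C_\alpha$ is met automatically. I would therefore build $\langle C_\alpha\mid\alpha<\lambda^+\rangle$ subject to the single structural constraint $\otp(C_\alpha)\le\lambda$ for every limit $\alpha$: the accumulation points of such a $C_\alpha$ are precisely $\{C_\alpha(j)\mid j<\otp(C_\alpha)\text{ limit}\}$, each of cofinality $\cf(j)<\lambda$, so $\acc(C_\alpha)\s E^{\lambda^+}_{<\lambda}$ and the $\mathcal R$-coherence clause is discharged for free. (Using that $T$ is nonreflecting one may in addition keep $\acc(C_\alpha)\cap T=\emptyset$ throughout, mirroring the role of this set in Gregory's original tree construction; but ${\sql}^*$ does not require it.) Concretely: for a limit $\alpha\notin E^{\lambda^+}_\lambda$ let $C_\alpha$ be any club in $\alpha$ of order-type $\cf(\alpha)$. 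Fix once and for all, for each $\alpha\in E^{\lambda^+}_\lambda$, an increasing sequence $\langle\delta^\alpha_\xi\mid\xi<\lambda\rangle$ cofinal in $\alpha$, together with a partition $\langle L_i\mid i<\theta\rangle$ of the limit ordinals below $\lambda$ into $\theta$ pieces, each cofinal in $\lambda$. If $Z_\alpha$ codes a sequence $\langle A^\alpha_i\mid i<\theta\rangle$ all of whose terms are cofinal in $\alpha$, then build $C_\alpha$ of order-type $\lambda$ by recursion: at a limit stage $j\in L_i$ with supremum-so-far $\gamma<\alpha$, put $C_\alpha(j):=\gamma$ and let $C_\alpha(j+1),C_\alpha(j+2),\dots$ enumerate the first $\omega$ members of $A^\alpha_i$ lying above $\max\{\gamma,\delta^\alpha_j\}$; take unions at limit stages. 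For every other $\alpha\in E^{\lambda^+}_\lambda$ let $C_\alpha$ be an arbitrary club of order-type $\lambda$.

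For the verification: given cofinal $\langle A_i\mid i<\theta\rangle$, the set of $\alpha<\lambda^+$ for which $A_i\cap\alpha$ is cofinal in $\alpha$ for every $i<\theta$ contains a club, and on a stationary subset of $E^{\lambda^+}_\lambda$ one additionally has $Z_\alpha$ coding $\langle A_i\cap\alpha\mid i<\theta\rangle$; for such an $\alpha$, and any $i<\theta$ (note $\min\{\alpha,\theta\}=\theta$ since $\alpha\ge\lambda$), every $\beta=C_\alpha(j)$ with $j\in L_i$ satisfies $\suc_\omega(C_\alpha\setminus\beta)=\{C_\alpha(j+1),C_\alpha(j+2),\dots\}\s A_i\cap\alpha\s A_i$, while these $\beta$'s are cofinal in $\alpha$; hence $\sup\{\beta\in C_\alpha\mid\suc_\omega(C_\alpha\setminus\beta)\s A_i\}=\alpha$, as required. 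The two points I expect to need genuine care are: the opening appeal to $\ch_\lambda\Rightarrow\diamondsuit(E^{\lambda^+}_\lambda)$, which uses that $\lambda$ is regular uncountable and fails at $\lambda=\aleph_0$; and the bookkeeping inside the recursion that simultaneously keeps $\otp(C_\alpha)=\lambda$, drives $C_\alpha$ up to $\alpha$ (the $\delta^\alpha_\xi$'s are there for this), and threads each of the $\theta$ sets $A^\alpha_i$ cofinally often in the $\suc_\omega$-sense. By contrast, the coherence of the sequence --- typically the heart of such arguments, and the classical reason one invokes a nonreflecting stationary set here --- is essentially free, dissolved by ${\sql}^*$ together with the order-type bound.
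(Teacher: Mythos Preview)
Your opening step contains a fatal gap: the claim that $\ch_\lambda$ entails $\diamondsuit(E^{\lambda^+}_\lambda)$ for $\lambda$ regular uncountable is false. Shelah's theorem (Fact~\ref{fact922}) yields $\diamondsuit(S)$ only for stationary $S\subseteq E^{\lambda^+}_{\neq\cf(\lambda)}$, and since $\lambda$ is regular, $E^{\lambda^+}_\lambda=E^{\lambda^+}_{\cf(\lambda)}$ is exactly the excluded cofinality. The paper's Example~\ref{prop112} even exhibits a model in which $\diamondsuit(\omega_2)$ (hence $\ch_{\aleph_1}$) holds while $\diamondsuit(E^{\aleph_2}_{\aleph_1})$ fails.

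This is not a peripheral issue but the core of the problem, and it explains the role of the nonreflecting stationary set you dismiss as optional. If $\diamondsuit(E^{\lambda^+}_\lambda)$ were available, the hypothesis on nonreflection would indeed be superfluous (and Theorem~\ref{thm616a} would give a stronger conclusion). The actual argument, carried out in Theorem~\ref{thm317}, works as follows: $\ch_\lambda$ gives $\diamondsuit(S)$ on the nonreflecting $S\subseteq E^{\lambda^+}_{<\lambda}$; then the Kojman--Shelah machinery (invoking nonreflection of $S$ and of $E^{\lambda^+}_\lambda$) produces a family $\langle C_\alpha\mid\alpha\in E^{\lambda^+}_\lambda\rangle$ that is $\sqsubseteq^*$-coherent at accumulation points of cofinality $\lambda$ and whose nonaccumulation points hit the diamond-guessing set in $S$ cofinally often. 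This \emph{lifts} guessing from cofinality ${<}\lambda$ (where it is free) to cofinality $\lambda$ (where it is not). You are right that ${\sql}^*$-coherence comes for free once $\otp(C_\alpha)\le\lambda$; what does not come for free is the hitting property at points of $E^{\lambda^+}_\lambda$, and that is precisely what the nonreflecting set buys.
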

\begin{proof} By Theorem~\ref{thm317}.
\end{proof}

So far, it seems like all of our hypotheses are in the spirit of ``$V=L$''. The next model shows that the proxy principle is also consistent with strong forcing axioms.
\begin{cor}\label{c113} Assuming the consistency of a supercompact cardinal,
there exists a model of \zfc\ that satisfies simultaneously:
\begin{enumerate}
\item Martin's Maximum holds, and hence:
\begin{enumerate}
\item $\square^*_\lambda$ fails for every singular cardinal $\lambda$ of countable cofinality;
\item $\square_{\lambda,\aleph_1}$ fails for every regular uncountable cardinal $\lambda$;
\end{enumerate}
\item $\p(\lambda^+,2,{\sq_{\aleph_2}},\lambda^+,\{E^{\lambda^+}_{\cf(\lambda)}\},2,\omega, \mathcal E_\lambda)$ holds for every singular cardinal $\lambda$;
\item $\p(\lambda^+,2,{\sql},\lambda^+,\{E^{\lambda^+}_\lambda\},2,\omega, \mathcal E_\lambda)$ holds for every regular uncountable cardinal $\lambda$.
\end{enumerate}
\end{cor}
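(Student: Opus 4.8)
The plan is to build the model along the standard route to Martin's Maximum. Start with a ground model $W$ of $\gch$ carrying a supercompact cardinal $\kappa$, apply Laver's preparation to make the supercompactness of $\kappa$ indestructible under $\kappa$-directed-closed forcing, and then run the Foreman--Magidor--Shelah revised-countable-support iteration of semiproper forcings of length $\kappa$. In the resulting extension $V$ one has $\mm$ and $\kappa=\aleph_2$; and since $\mm$ implies both $\sch$ and $2^{\aleph_1}=\aleph_2$ while the iteration is $\kappa$-c.c.\ of size $\kappa$ over a model of $\gch$, one also has $2^\lambda=\lambda^+$ for every $\lambda\ge\aleph_1$ and $\lambda^{<\lambda}=\lambda$ for every regular $\lambda\ge\aleph_2$. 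Clause~(1) is just $\mm$, and clauses~(1a) and~(1b) are the classical refutations --- by Foreman--Magidor--Shelah of $\square^*_\lambda$ for $\lambda$ singular of countable cofinality, and by Todorcevic of $\square_{\lambda,\aleph_1}$ for $\lambda$ regular uncountable.

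For clause~(3), fix a regular uncountable $\lambda$. As $\ch_\lambda$ holds, it is enough --- by the implication $(1)\Rightarrow(5)$ of Corollary~\ref{sample2}, with $S:=E^{\lambda^+}_\lambda$ --- to have a $\diamondsuit$-sequence on $E^{\lambda^+}_\lambda$ (or, by the implication $(4)\Rightarrow(5)$, the reflected-diamond principle $\langle\lambda\rangle^-_{E^{\lambda^+}_\lambda}$). For $\lambda\ge\aleph_2$ the former is delivered by the usual argument from $\ch_\lambda+\lambda^{<\lambda}=\lambda$. The case $\lambda=\aleph_1$ is genuinely different, since $\aleph_1^{<\aleph_1}=2^{\aleph_0}=\aleph_2$ under $\mm$, so $\diamondsuit(E^{\aleph_2}_{\aleph_1})$ is not a consequence of cardinal arithmetic alone (indeed it may fail even in the presence of $2^{\aleph_1}=\aleph_2$); here one instead appeals to the fact that $\langle\aleph_1\rangle^-_{E^{\aleph_2}_{\aleph_1}}$ holds in the model --- a feature that $\mm$ supplies (via the stationary reflection it guarantees at $\aleph_2$, together with $\ch_{\aleph_1}$), and for which this particular model is chosen.

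For clause~(2), fix a singular cardinal $\lambda$. Then $\lambda\ge\aleph_\omega>\aleph_2$, so the triple $\chi=\theta:=\aleph_2<\lambda$ is admissible in Corollary~\ref{c115}; by part~(2) of that Corollary, clause~(2) follows once we know that $V\models\boxminus_{\lambda,\ge\aleph_2}$ (its companion hypothesis $\ch_\lambda$ being already secured). To obtain $\boxminus_{\lambda,\ge\aleph_2}$ at every singular $\lambda$, I would interpolate one more forcing $\mathbb{P}$ between the Laver preparation and the semiproper iteration: a reverse-Easton iteration that, at each singular cardinal $\lambda'\ge\kappa$ in turn, adjoins by the standard ${<}(\lambda')^+$-strategically-closed, $(\lambda')^{++}$-c.c.\ forcing a single club-sequence on $(\lambda')^+$, of order types $\le\lambda'$, that is $\sqsubseteq_\kappa$-coherent. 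Since $\mathbb{P}$ is trivial below $\kappa^{+\omega}$ and has highly closed factors, it is $\kappa$-directed-closed, so $\kappa$ remains supercompact after it; and the usual closure/chain-condition bookkeeping keeps $2^{\lambda'}=(\lambda')^+$ intact. The subsequent semiproper iteration is $\kappa$-c.c., hence leaves unchanged every club $C_\alpha$ --- and its set of accumulation points --- for $\alpha<(\lambda')^+$ with $\lambda'\ge\kappa$; so the $\sqsubseteq_\kappa$-coherent sequences added by $\mathbb{P}$ persist into $V$, and as $\kappa=\aleph_2^V$ they witness $\boxminus_{\lambda,\ge\aleph_2}$ for every singular $\lambda$. (If, as I suspect, $\boxminus_{\lambda,\ge\aleph_2}$ already holds at every singular $\lambda$ in the canonical $\mm$-model --- from the standard analysis of squares in the presence of large-cardinal collapses --- then $\mathbb{P}$ is superfluous.)

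The main obstacle is precisely verifying that these auxiliary principles genuinely coexist with $\mm$ in the model: that $\sqsubseteq_\kappa$-coherence is weak enough not to clash with the supercompactness of $\kappa$ --- equivalently, that $\mathbb{P}$ really does adjoin its generic sequence, with the stated closure and chain condition --- that the $\sqsubseteq_\kappa$-coherent sequences (and, for $\lambda=\aleph_1$, the reflected diamond) survive the semiproper iteration, and, in the $\aleph_1$-case, that $\mm$ indeed delivers $\langle\aleph_1\rangle^-_{E^{\aleph_2}_{\aleph_1}}$. Once these points are settled, everything else --- the construction of the $\mm$-model, the cardinal arithmetic, the clause~(1) refutations, and the two bookkeeping-free applications of Corollaries~\ref{sample2} and~\ref{c115} --- is routine.
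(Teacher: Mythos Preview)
Your overall architecture matches the paper's: both follow the Cummings--Magidor template of preparing the universe above a Laver-indestructible supercompact $\kappa$ before running the $\mm$ iteration, and your interpolated $\mathbb P$ is essentially the iteration of Baumgartner's posets $Q(\kappa,\lambda)$ that the paper uses at singulars $\lambda>\kappa$. But the paper's preparatory iteration does one more thing that yours omits: it also forces with $\add(\lambda^+,1)$ at every \emph{regular} $\lambda\ge\kappa$, precisely in order to secure $\diamondsuit(E^{\lambda^+}_\lambda)$ in the final model.

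This is exactly where your proposal breaks. You assert that for regular $\lambda\ge\aleph_2$, $\diamondsuit(E^{\lambda^+}_\lambda)$ ``is delivered by the usual argument from $\ch_\lambda+\lambda^{<\lambda}=\lambda$.'' No such argument exists: Shelah's theorem (Fact~\ref{fact922}) yields $\diamondsuit(S)$ only for $S\subseteq E^{\lambda^+}_{\neq\cf(\lambda)}$, and it is consistent with $\gch$ that $\diamondsuit(E^{\lambda^+}_\lambda)$ fails --- the paper's own Example~\ref{prop112} exhibits this at $\lambda=\aleph_1$ via the Steinhorn--King forcing, and nothing about $\lambda\ge\aleph_2$ changes the picture. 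Since your ground model $W$ is an arbitrary $\gch$ model with a supercompact, neither the Laver preparation nor the $\kappa$-cc $\mm$ iteration will manufacture $\diamondsuit(E^{\lambda^+}_\lambda)$ above $\kappa$ if it was absent in $W$; so clause~(3) is unprovable in your model as described. The fix is the paper's: throw the Cohen forcings $\add(\lambda^+,1)$ into the preparatory iteration alongside your $\mathbb P$.

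For $\lambda=\aleph_1$ the paper also takes a more direct route than the one you sketch: rather than arguing for $\langle\aleph_1\rangle^-_{E^{\aleph_2}_{\aleph_1}}$ via stationary reflection, it simply cites Baumgartner's theorem that $\pfa$ (hence $\mm$) outright implies $\diamondsuit(E^{\aleph_2}_{\aleph_1})$, after which Theorem~\ref{thm616a} applies uniformly to all regular uncountable $\lambda$.
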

\begin{proof} The definitions of $\square^*_\lambda$ and $\square_{\lambda,\aleph_1}$ may be found in \cite{MR2811288},
and the model we construct is a slight variation of the model $V_3$ from Section~3 of that paper.
Specifically, we start by working in the model $V_1$ from \cite[$\S3$]{MR2811288} so that
$\kappa$ is a supercompact cardinal indestructible under $(<\kappa)$-directed-closed notions of forcing, and $\ch_\lambda$ holds for all cardinals $\lambda\ge\kappa$.
Now we do an iteration of length $\on$ with Easton support;
for each singular cardinal $\lambda>\kappa$, we force with Baumgartner's poset $Q(\kappa,\lambda)$,\footnote{See \cite[$\S3$]{MR2811288} or the proof of Corollary~\ref{cor116} below.}
and for each regular cardinal $\lambda\ge\kappa$, we force with Cohen's poset $\add(\lambda^+,1)$.
The resulting model $V_2$ satisfies:
\begin{itemize}
\item Cardinals and cofinalities are preserved;
\item $\kappa$ is supercompact;
\item $\ch_\lambda$ holds for all cardinals $\lambda\ge\kappa$;
\item $\boxminus_{\lambda,\ge\kappa}$ holds for every singular cardinal $\lambda>\kappa$;
\item $\diamondsuit(E^{\lambda^+}_\lambda)$ holds for every regular cardinal $\lambda\ge\kappa$.
\end{itemize}
Now we force over $V_2$ with the standard forcing for the consistency of \mm. The
forcing poset is semi-proper and \emph{$\kappa$-cc} with cardinality $\kappa$.
After forcing we get a model $V_3$ in which:
\begin{itemize}
\item $\aleph_1$ is preserved, and $\kappa$ is the new $\aleph_2$;
\item  \mm\ holds. In particular, $2^{\aleph_0}=2^{\aleph_1}=\aleph_2$;
\item $\ch_\lambda$ holds for all cardinals $\lambda\ge\aleph_2$;
\item $\boxminus_{\lambda,\ge\aleph_2}$ holds for every singular cardinal $\lambda$;
\item $\diamondsuit(E^{\lambda^+}_\lambda)$ holds for every regular cardinal $\lambda\ge\aleph_2$.
\end{itemize}

By \mm\ and \cite{MR2811288}, Clauses (a) and (b) hold.
Since \mm\ implies \pfa,
we get from \cite[$\S7$]{MR776640} that $\diamondsuit(E^{\lambda^+}_\lambda)$ holds also for $\lambda=\aleph_1$.

$\br$ By  Theorem~\ref{thm616a}, we infer that $\p(\lambda^+,2,{\sql},\lambda^+,\{E^{\lambda^+}_\lambda\},2,\omega,\mathcal E_\lambda)$ holds for every regular uncountable cardinal $\lambda$.

$\br$ By Theorem~\ref{thm38}, we infer that $\p(\lambda^+,2,{\sq_{\aleph_2}},\lambda^+,\{E^{\lambda^+}_{\cf(\lambda)}\},2,\omega,\mathcal E_\lambda)$ holds
for every singular cardinal $\lambda$.
\end{proof}

Here is another anti-``$V=L$'' scenario.

\begin{cor} If $\lambda$ is a successor of a regular cardinal $\theta$, and $\ns\restriction E^{\lambda}_{\theta}$ is saturated,
then $\ch_\lambda$ entails $\p(\lambda^+,2,{\sql}^*,\theta,\{E^{\lambda^+}_{\lambda}\},2,\theta)$.

\end{cor}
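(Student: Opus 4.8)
The plan is to split off the club‑guessing content through two routine reductions, and then extract that club guessing from the generic ultrapower of the saturated ideal.

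\emph{Reductions.} Since $\lambda=\theta^+\ge\aleph_1$, the hypothesis $\ch_\lambda$ yields $\diamondsuit(\lambda^+)$ by Shelah's theorem, so it suffices to establish $\p^-(\lambda^+,2,{\sql}^*,\theta,\{E^{\lambda^+}_\lambda\},2,\theta)$. I would look for a witnessing sequence $\langle C_\alpha\mid\alpha<\lambda^+\rangle$ in which $\otp(C_\alpha)=\cf(\alpha)$ for every limit $\alpha<\lambda^+$; for such a sequence the ${\sql}^*$‑coherence requirement is free of charge, since if $\bar\alpha\in\acc(C_\alpha)$ then $\bar\alpha=C_\alpha(\xi)$ for some limit $\xi<\otp(C_\alpha)=\cf(\alpha)\le\theta^+$, so $\cf(\bar\alpha)=\cf(\xi)\le\theta<\lambda$, and the second clause of the definition of ${\sql}^*$ applies no matter what $C_{\bar\alpha}$ is. Thus the problem reduces to producing, for each $\alpha\in E^{\lambda^+}_\lambda$, a club $C_\alpha\subseteq\alpha$ of order type $\lambda$ (the $C_\alpha$ for $\alpha\notin E^{\lambda^+}_\lambda$ being immaterial) such that for every sequence $\langle A_i\mid i<\theta\rangle$ of cofinal subsets of $\lambda^+$ there are stationarily many $\alpha\in E^{\lambda^+}_\lambda$ with $\sup\{\beta\in C_\alpha\mid\suc_\theta(C_\alpha\setminus\beta)\subseteq A_i\}=\alpha$ for all $i<\theta$.

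\emph{Club guessing from the ideal.} Force with $\mathcal P(\lambda)/(\ns\restriction E^\lambda_\theta)$ and pass to the generic ultrapower $j\colon V\to M$, which is well‑founded by saturation; by standard facts $j$ has critical point $\lambda$, $j(\lambda)=\lambda^+$, $\lambda^+$ is preserved, ${}^\lambda M\cap V[G]\subseteq M$, and $j(\theta)=\theta$ with $j$ fixing each $i<\theta$. If the constructed $\langle C_\alpha\rangle$ failed, fix in $V$ a sequence $\langle A_i\mid i<\theta\rangle$ of cofinal subsets of $\lambda^+$ and a club $E\subseteq\lambda^+$ admitting no good $\alpha\in E\cap E^{\lambda^+}_\lambda$, and set $\delta:=\sup(j[\lambda^+])$. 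Then $\delta<j(\lambda^+)$, $j[E]$ is cofinal in $\delta$ so $\delta\in j(E)$, and $\cf^M(\delta)=\lambda^+=j(\lambda)$, whence $\delta\in j(E)\cap E^{j(\lambda^+)}_{j(\lambda)}$ in $M$. By elementarity there is $i<\theta$ with $\sup\{\beta\in C^M_\delta\mid\suc_\theta(C^M_\delta\setminus\beta)\subseteq j(A_i)\}<\delta$, where $C^M_\delta:=j(\langle C_\alpha\rangle)(\delta)$. Since $A_i$ is cofinal in $\lambda^+$, $j[A_i]$ (hence $j(A_i)$) is cofinal in $\delta$, and $\lambda$‑closure of $M$ should then force $C^M_\delta$ to contain an initial $\theta$‑block inside $j(A_i)$ past cofinally many of its points — the desired contradiction.

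\emph{The main obstacle.} For that last implication the sequence $\langle C_\alpha\mid\alpha\in E^{\lambda^+}_\lambda\rangle$ cannot be arbitrary: one must arrange that its $j$‑image at the seed point $\delta$ is transparent. The route I would take is to define $C_\alpha$ canonically along a continuous, internally approachable $\in$‑chain $\langle N^\alpha_\xi\mid\xi<\lambda\rangle$ of elementary submodels of $(H_{\lambda^{++}},{\in},{<^*})$ with $\sup_\xi(N^\alpha_\xi\cap\lambda^+)=\alpha$, placing the non‑accumulation points of $C_\alpha$ among the ordinals $N^\alpha_\xi\cap\lambda^+$; this datum commutes with $j$, so $C^M_\delta$ is read off from $j(\langle N^\alpha_\xi\rangle)(\delta)$, whose union is an elementary submodel of $j(H_{\lambda^{++}})$ having $\delta$ — and hence all of $\delta$, in particular $j[A_i]$ — as a subset, which is what pins down the behaviour of the $\theta$‑blocks. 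Making this bookkeeping precise while keeping $\otp(C_\alpha)=\lambda$ and accommodating the comparatively demanding value $\sigma=\theta$ (so that \emph{whole} length‑$\theta$ blocks of $\nacc(C_\alpha)$ must fall inside each $A_i$) is the technical heart. A more combinatorial alternative is to start from any candidate and run the usual club‑guessing trimming against counterexample clubs, invoking precipitousness of the ideal to see that the trimming stabilizes in fewer than $\lambda$ steps (so the stabilized sequence works, as fewer than $\lambda$ clubs of $\lambda^+$ still intersect in a club). Pairing the witness so obtained with the $\diamondsuit(\lambda^+)$‑sequence yields $\p(\lambda^+,2,{\sql}^*,\theta,\{E^{\lambda^+}_\lambda\},2,\theta)$.
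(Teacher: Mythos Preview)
Your approach differs substantially from the paper's and, as written, does not close. The paper does not build a $\p^-$-witness directly via a generic ultrapower. Instead it argues by a dichotomy: if some stationary subset of $E^{\lambda^+}_{<\lambda}$ fails to reflect, then the earlier theorem on nonreflecting stationary sets already gives the conclusion; otherwise every stationary subset of $E^{\lambda^+}_\theta$ reflects, and in that case the paper proves $\diamondsuit(E^{\lambda^+}_\lambda)$, which via the implication $\diamondsuit(S)\Rightarrow\p(\lambda^+,2,{\sql},\lambda^+,\{S\},2,\sigma,\mathcal E_\lambda)$ yields far more than required. The derivation of $\diamondsuit(E^{\lambda^+}_\lambda)$ uses only the antichain bound inherent in saturation, not any embedding: starting from a $\diamondsuit(E^{\lambda^+}_\theta)$-sequence $\langle Z_\beta\rangle$ (available from $\ch_\lambda$), for each $\delta\in E^{\lambda^+}_\lambda$ set $\mathcal S_\delta:=\{Z\subseteq\delta\mid\{\beta\in E^\delta_\theta:Z_\beta=Z\cap\beta\}\text{ is stationary in }\delta\}$; distinct $Z,Z'\in\mathcal S_\delta$ give almost-disjoint stationary subsets of $E^\delta_\theta$, and since $\cf(\delta)=\lambda$, saturation of $\ns\restriction E^\lambda_\theta$ forces $|\mathcal S_\delta|\le\lambda$. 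Reflection supplies the hitting, so $\langle\mathcal S_\delta\rangle$ is a $\diamondsuit^-(E^{\lambda^+}_\lambda)$-sequence, whence $\diamondsuit(E^{\lambda^+}_\lambda)$.

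Your own strategy leaves the decisive step open. You correctly observe that $\langle C_\alpha\rangle$ cannot be arbitrary for the contradiction to materialize at $\delta=\sup j[\lambda^+]$, but neither proposed fix is carried out. The internally-approachable-chain idea tells you that $C^M_\delta$ is read off from a chain of submodels of $j(H_{\lambda^{++}})$, but that alone does not place \emph{$\theta$-blocks} of $\nacc(C^M_\delta)$ inside $j(A_i)$ for a prescribed $i$: the chain is fixed before the $A_i$ are named, so elementarity cannot steer successive nonaccumulation points into $j(A_i)$ in runs of length $\theta$. The trimming alternative is also problematic: the saturated ideal lives on $\lambda$, while the trimming is against clubs of $\lambda^+$; precipitousness on $\lambda$ gives no obvious handle on why a length-$\lambda$ trimming of $\lambda^+$-clubs should stabilize, and the observation that fewer than $\lambda$ clubs in $\lambda^+$ intersect in a club needs no ideal hypothesis and does not by itself produce a contradiction. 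In short, your reduction (coherence is free when $\otp(C_\alpha)=\cf(\alpha)$) is correct, but the core hitting property remains unproved.
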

\begin{proof} This is Theorem~\ref{cor329}.
\end{proof}

To justify each of the eight parameters of the proxy principle, for each parameter (except for the second and sixth, which always have value $2$ in this paper), we give an example where one value fails and another one holds.
Note that the techniques given here give rise to many more models distinguishing various vectors of parameters.
We also remark that a thorough study of implications between various vectors will be carried out in Part~II.
\begin{example}[Distinguishing the 1st parameter] The following statements are mutually consistent:
\begin{itemize}
\item $\p(\aleph_1,2,{\sq})$ fails;
\item $\p(\aleph_2,2,{\sq})$ holds.
\end{itemize}
\end{example}
\begin{proof} Force over $L$ with a \emph{ccc} poset to get a model in which Martin's axiom holds and $2^{\aleph_0}=\aleph_2$.
As Martin's axiom holds, there are no $\aleph_1$-Souslin trees, and hence $\p(\aleph_1,2,{\sq})$,
which is the same as $\boxtimes^-(\kappa) + \diamondsuit(\kappa)$, fails by Proposition~\ref{prop23}.
In the extension, as we have $\square_{\aleph_1}+\ch_{\aleph_1}$, by Corollary~\ref{thm61},
$\p(\aleph_2, 2, {\sq}, 1, \{\aleph_2\}, 2, \omega_2, \mathcal E_{\omega_1})$ holds, let alone
$\p(\aleph_2, 2, {\sq})$.
\end{proof}

\begin{example}[Distinguishing the 3rd parameter] Relative to a weakly compact cardinal,
the following statements are mutually consistent:
\begin{itemize}
\item $\p(\aleph_2,2,{\sqleft{\aleph_0}})$ fails;
\item $\p(\aleph_2,2,{\sqleft{\aleph_1}})$ holds.
\end{itemize}
\end{example}
\begin{proof} Suppose that in $V$, $\kappa$ is weakly compact, and let $\mathbb P:=\col(\aleph_1,{<\kappa})$ be Levy's notion of forcing for collapsing $\kappa$ to $\omega_2$.
Note that as $\kappa$ is strongly inaccessible,
for every $\alpha<\kappa$, the collection $\mathcal N_\alpha:=\{ \tau\in V_{\alpha+1}\mid \tau\text{ is a }\mathbb P\text{-name}\}$ has size $<\kappa$.

Let $G$ be $\mathbb P$-generic over $V$, and work in $V[G]$.

For all $\alpha<\kappa$, let $\mathcal A_\alpha:=\{ \tau_G\mid \tau\in\mathcal N_\alpha\}\cap\mathcal P(\alpha)$.
Since $\mathbb P$ has the \emph{$\kappa$-cc}, $\langle \mathcal A_\alpha\mid \alpha<\kappa\rangle$ forms a $\diamondsuit^+(\aleph_2)$-sequence,
and in particular, $\diamondsuit(E^{\aleph_2}_{\aleph_1})$ holds.\footnote{See \cite{MR597342} for the definition of $\diamondsuit^+(\kappa)$
and the proof that it implies $\diamondsuit(S)$ for every stationary $S\s\kappa$.}
So, by Corollary~\ref{sample2}, $\p(\aleph_2,2,{\sqleft{\aleph_1}},\aleph_2,\{E^{\aleph_2}_{\aleph_1}\},2,\omega, \mathcal E_{\omega_1})$ holds,
let alone $\p(\aleph_2,2,{\sqleft{\aleph_1}})$.

As for the first bullet, by \cite[Theorem 5]{MR830071}, $\square(\aleph_2)$ fails.
And then, by Lemma~\ref{l1023}, $\p(\aleph_2,2,{\sq})$ fails as well.
\end{proof}

\begin{example}[Distinguishing the 3rd parameter]\label{prop116} Relative to a supercompact cardinal, the following statements are mutually consistent for some uncountable limit cardinals $\kappa<\lambda$:
\begin{itemize}
\item $\p(\lambda^+,2,{\sq_\theta})$ fails for all $\theta<\kappa$;
\item $\p(\lambda^+,2,{\sq_\kappa})$ holds.
\end{itemize}
\end{example}
\begin{proof} Work in the model from Corollary~\ref{cor116}. Then $\kappa$ is supercompact and $\p(\lambda^+,2,{\sq_\kappa},\lambda^+,\{\lambda^+\},2,1,\mathcal E_\lambda)$ holds,
for $\lambda=\kappa^{+\omega}$.
In particular, $\p(\lambda^+,2,{\sq_\kappa})$ holds, establishing the second bullet.

As for the first bullet, as $\kappa$ is supercompact,
we have that any pair of stationary subsets of $E^{\lambda^+}_{<\kappa}$ reflect simultaneously.
It now follows from Lemma~\ref{prop118} that  $\p^-(\lambda^+,2,{\sq_\theta})$ must fail for all $\theta<\kappa$.
\end{proof}

\begin{example}[Distinguishing the 4th parameter]\label{prop114} The following statements are mutually consistent:
\begin{itemize}
\item $\p^-(\aleph_2,2,{\sqleft{\aleph_1}},\aleph_1,\{\aleph_2\},2,2,\mathcal E_{\omega_1})$ fails;
\item $\p^-(\aleph_2,2,{\sqleft{\aleph_1}},\aleph_0,\{\aleph_2\},2,2,\mathcal E_{\omega_1})$ holds.
\end{itemize}
\end{example}
\begin{proof} Let $\mathbb P$ be, in $L$, the forcing notion from \cite{MR3307877}.
Then $\mathbb P$ is a $\sigma$-closed, cofinality-preserving notion of forcing,
and in $L^{\mathbb P}$, for every sequence $\langle C_\delta\mid \delta\in E^{\aleph_2}_{\aleph_1}\rangle$
of local clubs of order-type $\omega_1$, there exists some club $D\s\omega_2$
for which $\sup\{ \beta<\delta\mid \suc_2(C_\delta\setminus\beta)\s D\}<\delta$ for all $\delta\in E^{\aleph_2}_{\aleph_1}$.

Work in the extension. Towards a contradiction, suppose that $\p^-(\aleph_2,2,{\sqleft{\aleph_1}},\aleph_1,\{\aleph_2\},2,2,\mathcal E_{\omega_1})$ holds,
as witnessed by $\langle C_\delta\mid \delta<\omega_2\rangle$. Let $D$ be a club such that $\sup\{ \beta<\delta\mid \suc_2(C_\delta\setminus\beta)\s D\}<\delta$ for all $\delta\in E^{\aleph_2}_{\aleph_1}$.
Let $\langle A_i \mid i<\omega_1\rangle$ be some partition of $D$ into stationary sets.
Then, there must exist some limit ordinal $\delta<\omega_2$ such that $\sup\{ \beta<\delta\mid \suc_2(C_\delta\setminus\beta)\s A_i\}=\delta$ for all $i<\omega_1$.
As $A_i\cap A_j=\emptyset$ for all $i<j<\omega_1$, we infer that $|C_\delta|\ge\aleph_1$.
Recalling that the eighth parameter is $\mathcal E_{\omega_1}$, we conclude that $\cf(\delta)=\omega_1$,
thus, yielding a contradiction, and establishing the first bullet.

As $L^{\mathbb P}$ is a $\sigma$-closed, cofinality-preserving extension of $L$, we have that $\square_{\aleph_1}+\diamondsuit(\omega_1)$ holds in the extension,
let alone $\square_{\aleph_1}+\clubsuit(\omega_1)+(\aleph_2)^{\aleph_0}=\aleph_2$.
By $\square_{\aleph_1}+\clubsuit(\omega_1)$ and the main result of \cite{MR1218206}, there exists a sequence $\langle S_\alpha\mid \alpha\in E^{\aleph_2}_{\aleph_0}\rangle$ such that for every $\alpha\in E^{\aleph_2}_{\aleph_0}$, $S_\alpha$ is a cofinal subset of $\alpha$ of order-type $\omega$,
and such that for every uncountable $X\s\omega_2$, there exists some $\alpha\in E^{\aleph_2}_{\aleph_0}$ for which $S_\alpha\s X$.
Using the fact that $(\aleph_2)^{\aleph_0}=\aleph_2$, it is now easy to build a witness to $\p^-(\aleph_2,2,{\sqleft{\aleph_1}},\aleph_0,\{E^{\aleph_2}_{\aleph_0}\},2,2,\mathcal E_{\omega_1})$,
establishing the second bullet.
\end{proof}

\begin{example}[Distinguishing the 5th parameter] The following statements are mutually consistent:
\begin{itemize}
\item $\p^-(\aleph_2,2,{\sqleft{\aleph_1}},1,\{E^{\aleph_2}_{\aleph_1}\},2,2,\mathcal E_{\omega_1})$ fails;
\item $\p^-(\aleph_2,2,{\sqleft{\aleph_1}},1,\{E^{\aleph_2}_{\aleph_0}\},2,2,\mathcal E_{\omega_1})$ holds.
\end{itemize}
\end{example}
\begin{proof} This is the same model and the virtually the same proof as Example~\ref{prop114}.
\end{proof}

\begin{example}[Distinguishing the 7th parameter]\label{prop112} The following statements are mutually consistent:
\begin{itemize}
\item $\p(\aleph_2,2,{\sq},\aleph_2,\{E^{\aleph_2}_{\aleph_1}\},2,\omega_1,\mathcal E_{\omega_1})$ fails;
\item $\p(\aleph_2,2,{\sq},\aleph_2,\{E^{\aleph_2}_{\aleph_1}\},2,\omega,\mathcal E_{\omega_1})$ holds.
\end{itemize}
\end{example}
\begin{proof} Let $\mathbb P$ be, in $L$, the forcing notion from \cite{MR597452},
so that $L^{\mathbb P}\models \neg\diamondsuit(E^{\aleph_2}_{\aleph_1})$.
Then  $\mathbb P$ is $\sigma$-closed, $\omega_1$-distributive, and has the \emph{$\aleph_3$-cc}.
So $L$ and $L^{\mathbb P}$ share the same cardinals structure, $\gch$ holds in the extension,
and so does $\square_{\aleph_1}$.
Now, force with $\add(\omega_1,1)$ over $L^{\mathbb P}$. By Theorem \ref{thm67a}(2),
 $\p(\aleph_2,2,{\sq},\aleph_2,\{E^{\aleph_2}_{\aleph_1}\},2,\omega,\mathcal E_{\omega_1})$ holds in the extension.
Since $\add(\omega_1,1)$ has the \emph{$\aleph_2$-cc}, a well-known argument of Kunen
entails that $\diamondsuit(E^{\aleph_2}_{\aleph_1})$ remains failing in the extension, and then Theorem~\ref{6.22} finishes the proof.
\end{proof}

\begin{example}[Distinguishing the 8th parameter] Relative to a Mahlo cardinal, the following statements are mutually consistent for some cardinal $\kappa$:
\begin{itemize}
\item $\p(\kappa,2,{\sq},1,\{\kappa\},2,1,\mathcal E_{\chi})$ fails for all $\chi<\kappa$;
\item $\p(\kappa,2,{\sq},1,\{\kappa\},2,1,\mathcal E_{\kappa})$ holds.
\end{itemize}
\end{example}
\begin{proof} Work in $L$, and let $\kappa$ be a Mahlo cardinal that is not weakly compact (say, the first Mahlo).
Since $\{ \alpha<\kappa\mid \cf(\alpha)=\alpha\}$ is cofinal in $\kappa$,
$\p^-(\kappa,2,{\sq},1,\{\kappa\},2,1,\mathcal E_{\chi})$ fails for all $\chi<\kappa$.
On the other hand, by Theorem~\ref{thm39}, $\p(\kappa,2,{\sq},1,\{\kappa\},2,\kappa)$ holds.
\end{proof}

\subsection{Organization of this paper} In Section 2, we fix some terminology and notation,
demonstrate the microscopic approach by constructing various $\kappa$-Souslin trees from instances of the proxy principle,
and highlight the differences between the classical approach and the new one.

Sections 3 to 6 are the heart of the matter, were we build the bridge between the old foundations and the new one.
That is, we establish instances of the proxy principle from various hypotheses that were previously known to entail $\kappa$-Souslin trees.
The division between these sections is based on the third parameter of the proxy principle. Specifically, Section~3 deals with $\sq$, Section~4 with $\sq_\chi$,
Section~5 with $\sql$, and Section~6 deals with $\sqx^*$. Modulo arithmetic hypotheses, each of these coherence relations suffices for the construction of $\kappa$-Souslin trees (though, of varying quality).

Finally, Section~7 is an appendix that briefly provides the necessary background regarding the combinatorial principles used in this context.

\section{Constructing Souslin trees from the proxy principle}
\label{section:constructions}

In this section we demonstrate the microscopic approach by constructing various $\kappa$-Souslin trees from instances of the proxy principle.
We begin by recalling the relevant terminology and fixing some notation.

A \emph{tree} is a partially ordered set $(T,<_T)$ with the property that for every $x\in T$,
the downward cone $x_\downarrow:=\{ y\in T\mid y<_T x\}$ is well-ordered by $<_T$. The \emph{height} of $x\in T$, denoted $\height(x)$,
is the order-type of $(x_\downarrow,<_T)$. Then, the $\alpha^{\text{th}}$ level of $(T,<_T)$ is the set $T_\alpha:=\{x\in T\mid \height(x)=\alpha\}$.
We also write $T \restriction X := \{t \in T \mid \height(t) \in X \}$.
A tree  $(T,<_T)$ is said to be \emph{$\chi$-complete} if any $<_T$-increasing sequence of elements from $T$,
and of length ${<\chi}$, has an upper bound in $T$. On the other extreme,
the tree  $(T,<_T)$ is said to be \emph{slim} if $\left|T_\alpha\right| \leq \max \{\left|\alpha\right|, \aleph_0\}$ for every ordinal $\alpha$.
A tree $(T,<_T)$ is said to be \emph{normal} if for all ordinals $\alpha < \beta$
and every $x \in T_\alpha$, if $T_\beta\neq\emptyset$
then there exists some $y \in T_\beta$ such that $x <_T y$.
A tree $(T,<_T)$ is said to be \emph{splitting} if every node in $T$ admits at least two immediate successors.

Throughout, let $\kappa$ denote a regular uncountable cardinal.
A tree $(T,<_T)$ is a \emph{$\kappa$-tree} whenever $\{ \alpha\mid T_\alpha\neq\emptyset\}=\kappa$, and $|T_\alpha|<\kappa$ for all $\alpha<\kappa$.
A subset $B\s T$ is a \emph{cofinal branch} if $(B,<_T)$ is linearly ordered and  $\{ \height(t)\mid t\in B\}=\{\height(t)\mid t\in T\}$.
A \emph{$\kappa$-Aronszajn tree} is a $\kappa$-tree with no cofinal branches.
A \emph{$\kappa$-Souslin tree} is a $\kappa$-Aronszajn that has no antichains of size $\kappa$.

Of special interest is the case where $<_T$ is simply $\subset$, and $T$ is a downward closed subset of ${}^{<\kappa}\kappa$.
The trees that we construct here will be of this form.
In such a setup,
each node $t$ of the tree $T$ is a function $t:\alpha \to \kappa$ for some ordinal $\alpha < \kappa$,
and we require that if $t:\alpha \to\kappa$ is in $T$, then $t \restriction \beta \in T$ for every $\beta < \alpha$.
For any node $t \in T$, the height of $t$ in $T$ is just its domain, that is, $\height(t) = \dom(t)$,
and  its set of predecessors, $t_\downarrow$, is simply $\{ t \restriction \beta \mid \beta < \dom(t) \}$.
Note that $T_\alpha=T \cap {}^\alpha\kappa$ for every $\alpha<\kappa$.
Finally, any function $f : \kappa \to\kappa$ determines a cofinal branch through $^{<\kappa}\kappa$, namely
$\{ f \restriction \alpha \mid \alpha < \kappa \}$, which ought not to end up being a subset of $T$
if $T$ is to form a $\kappa$-Aronszajn tree.

The main advantage of this approach is the ease of completing a branch at a limit level.
Suppose that, during the process of constructing $T$, we have already inserted into $T$
a $\subset$-increasing sequence of nodes $\eta:=\langle t_\alpha \mid \alpha < \beta\rangle$ for some $\beta<\kappa$.
The (unique) limit of this sequence, which may or may not become a member of $T$,
is nothing but $\bigcup\rng(\eta)$, that is, $\bigcup_{\alpha<\beta}t_\alpha$.
Furthermore, compatibility of nodes in the tree is easily expressed:
For $x, y \in T$, $x$ and $y$ are compatible iff $x \cup y \in T$.

A subtree $T\s{}^{<\kappa}\kappa$ is \emph{prolific} if for every $\alpha<\kappa$ and every $t\in T\cap{}^\alpha\kappa$, we have $\{ t{}^\smallfrown\langle i\rangle \mid i<\max \{\omega, \alpha\}\}\s T$.
Notice that a prolific tree is always splitting. On the opposite extreme from prolific,
a $\kappa$-tree is said to be \emph{binary} if it is a downward-closed
subtree of the complete binary tree ${}^{<\kappa}2$.
A subtree $T\s{}^{<\kappa}\kappa$ is \emph{coherent} if for every $\alpha<\kappa$ and $s,t\in T\cap{}^\alpha\kappa$, the set $\{ \beta<\alpha\mid s(\alpha)\neq t(\alpha)\}$ is finite.

While classical constructions of $\kappa$-Souslin trees typically involve
a recursive process of determining a partial order $<_T$ over $\kappa$
by advising with a $\diamondsuit(\kappa)$-sequence,
here, the order is already known (being $\subset$),
and the recursive process involves the determination of a subset of ${}^{<\kappa}\kappa$.
For this reason, it is more convenient to work with the following variation of $\diamondsuit(\kappa)$:

\begin{definition}\label{def_Diamond_H_kappa}
$\diamondsuit(H_\kappa)$ asserts the existence of a partition $\langle R_i \mid  i < \kappa \rangle$ of $\kappa$
and a sequence $\langle \Omega_\beta \mid \beta < \kappa \rangle$ of subsets of $H_\kappa$
such that for every $p\in H_{\kappa^{+}}$,  $i<\kappa$, and $\Omega \subseteq H_\kappa$,
there exists an elementary submodel $\mathcal M\prec H_{\kappa^{+}}$ such that:
\begin{itemize}
\item $p\in\mathcal M$;
\item $\mathcal M\cap\kappa\in  R_i$;
\item $\mathcal M\cap \Omega=\Omega_{\mathcal M\cap\kappa}$.
\end{itemize}
\end{definition}

\begin{lemma}\label{Diamond_H_kappa}
$\diamondsuit(\kappa)$ is equivalent to $\diamondsuit(H_\kappa)$ for any regular uncountable cardinal $\kappa$.
\end{lemma}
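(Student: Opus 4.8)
The plan is to prove both directions of the equivalence between $\diamondsuit(\kappa)$ and $\diamondsuit(H_\kappa)$, with the forward direction being the substantive one. First I would recall the standard formulation of $\diamondsuit(\kappa)$: there is a sequence $\langle Z_\alpha \mid \alpha < \kappa\rangle$ with $Z_\alpha \subseteq \alpha$ such that $\{\alpha < \kappa \mid Z \cap \alpha = Z_\alpha\}$ is stationary for every $Z \subseteq \kappa$. The reverse direction ($\diamondsuit(H_\kappa) \Rightarrow \diamondsuit(\kappa)$) is easy: given the partition $\langle R_i \mid i < \kappa\rangle$ and the guessing sequence $\langle \Omega_\beta \mid \beta < \kappa\rangle$, fix $\Omega := Z \subseteq \kappa \subseteq H_\kappa$ and observe that for any $Z \subseteq \kappa$ the set of $\mathcal M \prec H_{\kappa^+}$ with $Z \in \mathcal M$, $\mathcal M \cap \kappa$ an ordinal, and $\mathcal M \cap Z = \Omega_{\mathcal M \cap \kappa}$ gives, via $\delta := \mathcal M \cap \kappa$, an ordinal with $Z \cap \delta = \Omega_\delta$; a standard argument (using that one may find such $\mathcal M$ with $\mathcal M \cap \kappa$ above any prescribed ordinal, or intersecting with a club) shows the set of such $\delta$ is stationary, so $\langle \Omega_\delta \cap \delta \mid \delta < \kappa\rangle$ (suitably massaged to land in $\mathcal P(\delta)$) is a $\diamondsuit(\kappa)$-sequence.

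For the forward direction, the plan is to start from a $\diamondsuit(\kappa)$-sequence and first upgrade it to guess bounded subsets of $\kappa$ together with a partition, i.e. to a $\diamondsuit^-$-type or $\diamondsuit(\kappa)$-with-partition sequence. Concretely, using a bijection $H_\kappa \leftrightarrow \kappa$ that restricts, for club-many $\delta$, to a bijection $H_\delta \leftrightarrow \delta$ (such a bijection exists by a Löwenheim–Skolem/closure argument since $|H_\delta| = |\delta|$ for all infinite $\delta < \kappa$), transfer the problem of guessing subsets $\Omega$ of $H_\kappa$ to guessing subsets of $\kappa$. The partition $\langle R_i \mid i < \kappa\rangle$ is obtained by a routine argument: from $\diamondsuit(\kappa)$ one can produce a single $\diamondsuit$-sequence that simultaneously guesses pairs (or codes a partition), so that stationarily often $\mathcal M \cap \kappa$ lands in any prescribed $R_i$; this is the standard fact that $\diamondsuit(\kappa)$ implies the existence of a partition of $\kappa$ into $\kappa$ stationary sets, refined so that the guessing can be arranged to hit a chosen cell.

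The main steps, in order, would be: (1) set up the coding bijection between $H_\kappa$ and $\kappa$ that behaves coherently on a club; (2) observe that elementary submodels $\mathcal M \prec H_{\kappa^+}$ with $\mathcal M \cap \kappa = \delta$ an ordinal correspond, via this bijection, to $\delta$ being a closure point, so that $\mathcal M \cap \Omega$ is coded by $(\text{code of }\Omega) \cap \delta$; (3) use $\diamondsuit(\kappa)$ (in its partition-guessing form) to produce the sequence $\langle \Omega_\beta \mid \beta < \kappa\rangle$ by decoding the guesses, and the partition $\langle R_i \mid i < \kappa\rangle$; (4) verify the three bullet points of Definition~\ref{def_Diamond_H_kappa} by, given $p \in H_{\kappa^+}$, $i < \kappa$, $\Omega \subseteq H_\kappa$, taking a continuous elementary chain and extracting $\mathcal M$ at a point $\delta$ where the guess is correct, $\delta \in R_i$, and $p \in \mathcal M$ — which is possible because the relevant set of $\delta$ is stationary and the chain condition lets us also demand $p \in \mathcal M$.

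The main obstacle I expect is step (2)/(4): making precise the correspondence between "$\mathcal M \cap \kappa$ is an ordinal $\delta$ and $\mathcal M \cap \Omega$ is determined by $\delta$" on the one hand, and the purely combinatorial guessing of a bounded subset of $\kappa$ on the other. One must be careful that $\mathcal M \cap H_\kappa = H_\delta$ (not merely that $\mathcal M \cap \kappa = \delta$) for the relevant $\mathcal M$, which requires choosing $\mathcal M$ to contain enough parameters (the coding bijection among them) and to be suitably closed; this is where elementarity and the club of closure points of the bijection do the work. Once that identification is in place, the verification is the standard reflection argument, and the only remaining care is ensuring the partition cell $R_i$ can be hit simultaneously with correct guessing and with $p \in \mathcal M$, which follows from the fact that the set of good $\delta$ remains stationary after intersecting with a club.
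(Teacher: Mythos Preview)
Your overall strategy matches the paper's: fix a bijection $\phi$ between $\kappa$ and $H_\kappa$, code $\Omega$ and the index $i$ into a single subset of $\kappa$, guess that subset with $\diamondsuit(\kappa)$, and decode inside an elementary submodel taken from a continuous chain. However, your steps (1)--(2) contain a genuine error. You claim the bijection can be chosen so that it restricts to a bijection $\delta \leftrightarrow H_\delta$ for club-many $\delta$, justified by ``$|H_\delta| = |\delta|$ for all infinite $\delta < \kappa$''. This is false in general: $|H_\mu| = 2^{<\mu}$ for infinite cardinals $\mu$, and while $\diamondsuit(\kappa)$ yields $2^{<\kappa} = \kappa$, it says nothing about $2^{<\mu}$ for $\mu < \kappa$. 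Likewise, your target equality $\mathcal M \cap H_\kappa = H_\delta$ (for $\delta = \mathcal M \cap \kappa$) need not hold.

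The correct move --- and this is exactly what the paper does --- is to put the bijection $\phi$ itself into $\mathcal M$. Elementarity then gives $\mathcal M \cap H_\kappa = \phi[\delta \setminus \{0\}]$: not $H_\delta$, but the $\phi$-image of $\delta$, which is precisely what is needed to recover $\mathcal M \cap \Omega$ from $\phi^{-1}[\Omega] \cap \delta$. The paper packages the coding concretely by defining a single function $f:\kappa\to\kappa$ whose value at $0$ records the target cell $i$ and whose nonzero values record membership in $\Omega$, and then applies $\diamondsuit(\kappa)$ to the set $\pi[f]\subseteq\kappa$ (for a fixed pairing bijection $\pi$). Once you replace every occurrence of ``$H_\delta$'' by ``$\phi[\delta]$'' and drop the spurious cardinality claim, your plan becomes the paper's proof.
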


\begin{proof}
\begin{description}
\item[($\impliedby$)]
Given $\langle R_i\mid i<\kappa\rangle$ and $\langle \Omega_\beta\mid \beta<\kappa\rangle$ as in the definition of $\diamondsuit(H_\kappa)$, let for all $\beta<\kappa$:
\[
Z_\beta:=\begin{cases}\Omega_\beta,&\text{if }\Omega_\beta\s\beta;\\\emptyset,&\text{otherwise.}\end{cases}
\]

To show that $\left< Z_\beta \mid \beta < \kappa \right>$ is a $\diamondsuit(\kappa)$-sequence,
consider any $Z\s\kappa$, and we must show that $\{ \beta < \kappa \mid Z \cap \beta = Z_\beta \}$ is stationary.
Thus, let $D\s\kappa$ be a club, and we must find some $\beta \in D$ such that $Z \cap \beta = Z_\beta$.
Put $p:=D$. As $p\in H_{\kappa^{+}}$, and $Z\s\kappa\s H_\kappa$,
we may pick $\mathcal M\prec H_{\kappa^{+}}$ with $D\in\mathcal M$ such that $\mathcal M\cap\kappa\in\kappa$ and $\mathcal M\cap Z=\Omega_{\mathcal M\cap\kappa}$.
Let $\beta:=\mathcal M\cap\kappa$.
Then $\Omega_\beta = \mathcal M\cap Z=\mathcal M\cap\kappa\cap Z=\beta\cap Z$
and hence $Z_\beta=\Omega_\beta=Z\cap\beta$.
As  $D$ is club in $\kappa$ and $D\in\mathcal M$, we have (by elementarity of $\mathcal M$)
$\beta=\sup(\mathcal M\cap\kappa)\in D$.

\item[($\implies$)]
By $\diamondsuit(\kappa)$, fix $\langle Z_\beta\mid\beta<\kappa\rangle$ such that $\{ \beta<\kappa\mid Z\cap\beta=Z_\beta\}$ is stationary for all $Z\s\kappa$.
Fix also bijections $\phi:\kappa\setminus\{0\}\leftrightarrow H_\kappa$ and $\pi:\kappa\times\kappa\leftrightarrow\kappa$. Let:
\begin{itemize}
\item $\Omega_\beta:=\{\phi(\alpha)\mid \pi(\alpha,1)\in Z_\beta, \alpha\neq 0\}$;
\item $R_0:=\kappa\setminus\bigcup_{0<i<\kappa}R_i$, where for nonzero $i<\kappa$:
\item $R_i:=\{\beta\in\acc(\kappa)\mid \{ j<\kappa \mid \pi(0,j)\in Z_\beta\}=\{i\}\}$.
\end{itemize}

To see that $\langle \Omega_\beta\mid \beta<\kappa\rangle$ and $\langle R_i\mid i<\kappa\rangle$ are as requested,
let $p\in H_{\kappa^{+}}$, $i<\kappa$, and $\Omega\s H_\kappa$ be arbitrary.
Define $f:\kappa\rightarrow\kappa$ by letting for all $\alpha<\kappa$:
\[
f(\alpha):=\begin{cases}i,&\text{if }\alpha=0;\\
1,&\text{if }\alpha>0\ \&\ \phi(\alpha)\in\Omega;\\
0,&\text{otherwise.}\end{cases}
\]
Let $p':=\{p,\phi,\pi,f\}$. Let $\langle M_\beta\mid \beta<\kappa\rangle$ be an $\epsilon$-chain of elementary submodels of $H_{\kappa^{+}}$, each of size $<\kappa$
and containing $p'$. Evidently, $E:=\{\beta<\kappa\mid M_\beta\cap\kappa=\beta\}$ is a club in $\kappa$.
Recalling that $f \subseteq \kappa \times \kappa$, so that $\pi[f] \subseteq \kappa$,
consider the stationary set of guesses $G:=\{ \beta<\kappa\mid \pi[f]\cap\beta=Z_\beta\}$,
and pick a nonzero $\beta\in E\cap G$.
We shall show that $\mathcal M = M_\beta$ satisfies the required properties.

By $\pi,f\in M_\beta$ and $M_\beta\cap\kappa=\beta$, we have $f[\beta]\s\beta$ and $\pi[\beta\times\beta]=\beta$. Thus, $\pi[f\restriction\beta]=\pi[f]\cap\beta=Z_\beta$.

\begin{claim} $\beta\in R_i$.
\end{claim}
\begin{proof}
Clearly $\beta \in \acc(\kappa)$ since $\beta = M_\beta \cap \kappa$.
Since $Z_\beta \subseteq \beta$, also $\pi^{-1}[Z_\beta] \subseteq \beta \times \beta$,
so that $\{ j<\kappa \mid \pi(0,j)\in Z_\beta\} \subseteq \beta$.
Also $i = f(0) < \beta$.
Furthermore, for all $j < \beta$, we have
\begin{align*}
\pi(0,j)\in Z_\beta &\iff f(0)=j  &&\text{since } \pi[f\restriction\beta] = Z_\beta  \\
                              &\iff j=i       &&\text{by definition of } f.
\end{align*}
By definition of $R_i$, it follows that
$\beta\in R_i$.
\end{proof}

\begin{claim}
$\phi[\beta \setminus \{0\}]=M_\beta\cap H_\kappa$.
\end{claim}
\begin{proof}
For all $x\in H_\kappa$, by $\phi\in M_\beta$, we have $x\in M_\beta$ iff $\phi^{-1}(x)\in M_\beta\cap\dom(\phi)$ iff $\phi^{-1}(x)\in\beta \setminus \{0\}$.
\end{proof}
\begin{claim} $M_\beta\cap\Omega=\Omega_{\beta}$
\end{claim}
\begin{proof}
We have
\begin{align*}
M_\beta\cap\Omega &= M_\beta\cap H_\kappa\cap\Omega  &&\text{since } \Omega\s H_\kappa  \\
                                  &= \phi[\beta \setminus \{0\}]\cap\Omega &&\text{by previous claim}          \\
                                  &= \{ \phi(\alpha)\mid 0<\alpha<\beta, f(\alpha)=1\} &&\text{by definition of } f  \\
                                  &= \{\phi(\alpha)\mid 0\neq \alpha, \pi(\alpha,1)\in Z_\beta\}
                                                                 &&\text{since } \pi[f\restriction\beta]=Z_\beta  \\
                                  &= \Omega_\beta            &&\text{by definition of } \Omega_\beta.
\qedhere
\end{align*}
\end{proof}
So $\mathcal M=M_\beta$ is as required.
\qedhere
\end{description}
\end{proof}

We commence with a simple construction using $\boxtimes^-(\kappa)$.

\begin{prop}\label{prop23} If $\kappa$ is a regular uncountable cardinal and $\boxtimes^-(\kappa)+\diamondsuit(\kappa)$ holds, then there exists a normal, binary, splitting, slim $\kappa$-Souslin tree.
\end{prop}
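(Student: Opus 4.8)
The plan is to construct the tree $T$ as a downward-closed subtree of ${}^{<\kappa}2$ by recursion on levels, using the $\boxtimes^-(\kappa)$-sequence $\langle C_\alpha\mid\alpha<\kappa\rangle$ to decide how to extend nodes through limit levels, and using the $\diamondsuit(\kappa)$-sequence (in the guise of $\diamondsuit(H_\kappa)$, via Lemma~\ref{Diamond_H_kappa}) to seal potential maximal antichains. First I would fix a $\diamondsuit(H_\kappa)$-sequence $\langle\Omega_\beta\mid\beta<\kappa\rangle$ together with the partition $\langle R_i\mid i<\kappa\rangle$; in fact for the plain (non-complete, non-coherent) case a single index $R_0=\kappa$ suffices, so really I just need a $\diamondsuit(\kappa)$-sequence $\langle Z_\beta\mid\beta<\kappa\rangle$ guessing subsets of $H_\kappa$. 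At stage $\alpha+1$ I extend every $t\in T_\alpha$ to both $t^\frown\langle 0\rangle$ and $t^\frown\langle 1\rangle$, keeping the tree binary and splitting; at a limit stage $\alpha$ the key move is: for each node $t\in T_\alpha \restriction C_\alpha$ that I wish to continue, I climb through the ladder $C_\alpha$ along a branch determined by previously-made choices, and at the top I decide whether to put the union into $T_\alpha$ based on whether $Z_\alpha$ codes a maximal antichain of $T\restriction\alpha$ that I need to catch — if so I choose the branch so as to pass through some node of that antichain, thereby bounding its size.

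The main steps, in order: (1) set up the recursion and a bookkeeping device assigning to each node $t\in T_\alpha$ and each $\beta\in\acc(C_\alpha)$ with $\beta>\height(t)$ a canonical ``continuation'' $\mathbf{b}(t,\beta)\in T_\beta$, defined coherently using the coherence $C_{\bar\alpha}=C_\alpha\cap\bar\alpha$ of the $\boxtimes^-$-sequence so that the continuations cohere along $C_\alpha$; (2) check that at each limit $\alpha$, the tree $T\restriction(\alpha+1)$ remains a normal slim binary splitting tree, where slimness $|T_\alpha|\le\max\{|\alpha|,\aleph_0\}$ follows because at limit $\alpha$ I only add those branches that are obtained by the recursion from the (at most $|\alpha|$-many) nodes below, one branch per node, reading off the single ladder $C_\alpha$; (3) the sealing argument: given a name for a maximal antichain $A$, use $\diamondsuit(\kappa)$ to find stationarily many $\alpha$ where $Z_\alpha$ correctly reflects $A\cap(T\restriction\alpha)$ and $T\restriction\alpha$, and where $A\cap(T\restriction\alpha)$ is already a maximal antichain of $T\restriction\alpha$ — this uses an elementary-submodel / club-of-closure-points argument — and then arrange at stage $\alpha$ that every node of $T_\alpha$ lies above some element of $A\cap(T\restriction\alpha)$; (4) conclude $T$ is $\kappa$-Aronszajn (no cofinal branch, since a cofinal branch would give an antichain of size $\kappa$ by splitting, or directly because we diagonalized against branches) and has no antichain of size $\kappa$, hence is $\kappa$-Souslin.

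The place I expect to fight hardest is step~(3) combined with the limit-stage construction: I must guarantee, simultaneously for \emph{every} node $t\in T_\alpha \restriction\{\text{relevant heights}\}$, that its designated continuation branch through $C_\alpha$ meets the antichain $A\cap(T\restriction\alpha)$, and I must do this \emph{coherently}, so that the choice made for $t$ is consistent with the choices forced by the ladders $C_{\bar\alpha}$ at accumulation points $\bar\alpha\in\acc(C_\alpha)$ — this is exactly where the third clause of $\boxtimes^-(\kappa)$ (for every cofinal $A$, stationarily many $\alpha\in\kappa$ with $\sup(\nacc(C_\alpha)\cap A')=\alpha$ for an appropriate derived cofinal set $A'$) earns its keep: I apply it to the cofinal set of heights of nodes in the antichain (or rather a cofinal set coded into the $\diamondsuit$-guess) to ensure that, cofinally often along $C_\alpha$, I hit a level where I can step onto the antichain. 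The delicate points are that the tree is binary (so I only have two choices at each successor step and must verify two choices always suffice to both stay inside $T$ and reach the antichain — handled by first extending within $T\restriction\alpha$ along a branch and only then, once above an antichain element, freely using $0$s), and that the resulting limit nodes genuinely form a set of size $\le|\alpha|$ so slimness is preserved. Everything else — normality, the successor steps, prolificity-free splitting into ${}^{<\kappa}2$, and the final Souslin conclusion — is routine bookkeeping once the limit construction is in place.
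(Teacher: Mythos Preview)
Your proposal conflates two incompatible strategies. You describe the branch $b^\alpha_x$ as ``determined by previously-made choices'' and defined ``coherently using the coherence $C_{\bar\alpha}=C_\alpha\cap\bar\alpha$'', but then say that at level $\alpha$ you will consult $Z_\alpha$ and, if it codes a maximal antichain, ``choose the branch so as to pass through some node of that antichain''. These cannot both hold: if $b^\alpha_x$ is determined coherently by data below $\alpha$, you cannot retroactively steer it at the top; and if you do steer it at the top (or selectively omit branches from $T_\alpha$ based on $Z_\alpha$), then for any later $\gamma$ with $\alpha\in\acc(C_\gamma)$ the sequence $b^\gamma_x$ will not know about your special action at $\alpha$, so $b^\gamma_x(\alpha)$ need not land in $T_\alpha$ and the recursion breaks. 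This is precisely Jensen's classical approach, and it only works when the sealing levels avoid $\bigcup_\gamma\acc(C_\gamma)$ --- a hypothesis $\boxtimes^-(\kappa)$ does not provide.

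The missing idea is that the diamond sequence must be consulted \emph{microscopically}, at each intermediate step, not at the top. Concretely: when passing from $b^\alpha_x(\beta^-)$ to $b^\alpha_x(\beta)$ for successive points $\beta^-<\beta$ of $C_\alpha$, look at $\Omega_\beta$ (the guess at $\beta$, not at $\alpha$) and, if it happens to code a maximal antichain in $T\restriction\beta$ compatible with $b^\alpha_x(\beta^-)$, extend through an element of it; otherwise extend arbitrarily. Because this rule depends only on $\beta$, $T_\beta$, $\Omega_\beta$, and $b^\alpha_x(\beta^-)$, it is the same whether you are building $b^\alpha_x$ or $b^\gamma_x$ for any $\gamma$ with $\beta\in C_\gamma$, so coherence is automatic. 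The sealing argument then runs: the set $B$ of $\beta$ where $\Omega_\beta$ correctly codes $A\cap(T\restriction\beta)$ as a maximal antichain is cofinal (by $\diamondsuit(H_\kappa)$), and $\boxtimes^-(\kappa)$ yields $\alpha$ with $\sup(\nacc(C_\alpha)\cap B)=\alpha$; at each such $\beta\in\nacc(C_\alpha)\cap B$ above $\height(x)$, the construction forced $b^\alpha_x(\beta)$ above some element of $A$. Your remark about ``freely using $0$s'' once above an antichain element is also off --- you must continue choosing extensions inside $T$ at every limit sublevel, and that is exactly what the coherent rule guarantees.
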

\begin{proof} Let $\langle C_\alpha\mid \alpha<\kappa\rangle$ be a witness to $\boxtimes^-(\kappa)$.
Let $\langle R_i \mid i<\kappa\rangle$ and $\langle \Omega_\beta\mid\beta<\kappa\rangle$ together witness $\diamondsuit(H_\kappa)$. Let $\lhd$ be some well-ordering of $H_\kappa$.
We shall recursively construct a sequence $\langle T_\alpha\mid \alpha<\kappa\rangle$ of levels
whose union will ultimately be the desired tree $T$.

Let $T_0:=\{\emptyset\}$, and for all $\alpha<\kappa$, let $T_{\alpha+1}:=\{ t{}^\smallfrown\langle0\rangle, t{}^\smallfrown\langle1\rangle\mid t\in T_\alpha\}$.

Next, suppose that $\alpha$ is a nonzero limit ordinal, and that $\langle T_\beta\mid \beta<\alpha\rangle$ has already been defined.
Constructing the level $T_\alpha$ involves deciding which branches
through $(T \restriction \alpha, \subset)$ will have their limits placed into the tree.
We need $T_\alpha$ to contain enough nodes to ensure that the tree is normal,
so the idea is to attach to each node $x \in T \restriction C_\alpha$ some node $\mathbf b^\alpha_x:\alpha\rightarrow2$  above it,
and then let $$T_\alpha := \{ \mathbf b^\alpha_x \mid x \in T \restriction C_\alpha\}.$$

Let $x\in T\restriction C_\alpha$ be arbitrary.
As $\mathbf b^\alpha_x$ will be the limit of some branch through $(T\restriction\alpha,\subset)$ and above $x$,
it makes sense to describe $\mathbf b^\alpha_x$ as the limit $\bigcup\rng(b^\alpha_x)$ of a sequence $b^\alpha_x\in\prod_{\beta\in C_\alpha\setminus\dom(x)}T_\beta$ such that:
\begin{itemize}
\item $b^\alpha_x(\dom(x))=x$;
\item $b_x^\alpha(\beta') \subset b_x^\alpha(\beta)$ for all $\beta'<\beta$ in $(C_\alpha\setminus\dom(x))$;
\item $b^\alpha_x(\beta)=\bigcup\rng(b^\alpha_x\restriction\beta)$ for all $\beta\in\acc(C_\alpha\setminus\dom(x))$.
\end{itemize}
Of course, we have to define $b^\alpha_x$ carefully, so that the resulting tree doesn't include large antichains.
We do this by recursion:

Let $b^\alpha_x(\dom(x)):=x$.
Next, suppose $\beta^-<\beta$ are successive points of $(C_\alpha\setminus\dom(x))$, and $b^\alpha_x(\beta^-)$ has already been defined.
In order to decide $b^\alpha_x(\beta)$, we advise with the following set:
$$Q^{\alpha, \beta}_x := \{ t\in T_\beta\mid \exists s\in \Omega_{\beta}[ (s\cup b^\alpha_x(\beta^-))\s t]\}.$$
Now, consider the two possibilities:
\begin{itemize}
\item If $Q^{\alpha,\beta}_x \neq \emptyset$, let $b^\alpha_x(\beta)$ be its $\lhd$-least element.
\item Otherwise, let $b^\alpha_x(\beta)$ be the $\lhd$-least element of $T_\beta$ that extends $b^\alpha_x(\beta^-)$.
Such an element must exist, as the level $T_\beta$ was constructed so as to preserve normality.
\end{itemize}

Finally, suppose $\beta \in \acc(C_\alpha\setminus\dom(x))$ and $b^\alpha_x\restriction\beta$ has already been defined.
As promised, we let $b^\alpha_x(\beta):=\bigcup\rng(b^\alpha_x\restriction\beta)$.
It is clear that $b^\alpha_x(\beta) \in {}^\beta 2$,
but we need more than that:

\begin{claim}\label{coherence}
$b^\alpha_x (\beta) \in T_\beta$.
\end{claim}

\begin{proof}
It suffices to prove that $b^\alpha_x \restriction \beta=b^\beta_x$, as this will imply that $b^\alpha_x(\beta)=\bigcup\rng(b^\beta_x)=\mathbf{b}^\beta_x\in T_\beta$.

First, note that since $\beta \in \acc(C_\alpha)$ and  $\langle C_\alpha \mid \alpha < \kappa \rangle$ is a $\boxtimes^-(\kappa$)-sequence,
we have $\dom(b^\beta_x) = C_\beta \setminus \dom(x) =
C_\alpha \cap \beta \setminus \dom(x) = \dom(b^\alpha_x) \cap \beta$. Call the latter by $d$.
Now, we prove by induction that
for every $\gamma \in d$,
the value of $b^\beta_x(\gamma)$ was determined in exactly the same way as $b^\alpha_x(\gamma)$:
\begin{itemize}
\item Clearly, $b^\beta_x(\min(d)) = x = b^\alpha_x(\min(d))$.
\item Suppose $\gamma^-<\gamma$ are successive points of $d$.
Notice that the definition of $Q^{\alpha, \gamma}_x$ depends only on
$b^\alpha_x(\gamma^-)$, $\Omega_\gamma$, and $T_\gamma$,
and so if $b^\alpha_x (\gamma^-) = b^\beta_x(\gamma^-)$,
then $Q^{\alpha, \gamma}_x = Q^{\beta, \gamma}_x$,
and hence $b^\alpha_x(\gamma) = b^\beta_x(\gamma)$.

\item For $\gamma \in \acc(d)$:
If the sequences are identical up to $\gamma$, then their limits must be identical. \qedhere
\end{itemize}
\end{proof}

This completes the definition of $b^\alpha_x$ for each $x\in T\restriction C_\alpha$,
and hence of the level $T_\alpha$.

Having constructed all levels of the tree, we then let
\[
T := \bigcup_{\alpha < \kappa} T_\alpha.
\]

Notice that for every $\alpha < \kappa$,
$T_\alpha$ is a subset of $^\alpha2$ of size $\le\max\{\aleph_0,|\alpha|\} < \kappa$.

\begin{claim}\label{c232}
Suppose $A \subseteq T$ is a maximal antichain.
Then the set
\[
B := \{ \beta \in R_0 \mid  A\cap(T\restriction\beta)= \Omega_\beta\text{ is a maximal antichain in }T\restriction\beta \}.
\]
is a stationary subset of $\kappa$.
\end{claim}

\begin{proof}
Let $D\s\kappa$ be an arbitrary club.
We must show that $D \cap B \neq \emptyset$.
Put $p := \{A,T,D\}$.
Using the fact that the sequences $\langle R_i \mid i<\kappa\rangle$ and $\langle \Omega_\beta\mid\beta<\kappa\rangle$
witness $\diamondsuit(H_\kappa)$,
pick $\mathcal M\prec H_{\kappa^{+}}$
with $p\in\mathcal M$ such that $\beta :=\mathcal M\cap\kappa$ is in $R_0$ and $\Omega_\beta=\mathcal M \cap A$.
Since $D\in\mathcal M$ and $D$ is club in $\kappa$, we have $\beta\in D$. We claim that $\beta\in B$.

For all $\alpha<\beta$, by $\alpha,T\in \mathcal M$, we have $T_\alpha\in \mathcal M$,
and by $\mathcal M\models |T_\alpha|<\kappa$,
we have $T_\alpha\s \mathcal M$. So $T\restriction\beta\s \mathcal M$.
As $\dom(z)\in \mathcal M$ for all $z \in T\cap \mathcal M$,
we conclude that $T\cap \mathcal M=T\restriction\beta$.
Thus, $\Omega_\beta=A\cap(T\restriction\beta)$.
As $H_{\kappa^{+}}\models A\text{ is a maximal antichain in }T$,
it follows by elementarity that
$\mathcal M \models A\text{ is a maximal antichain in }T$.
Since $T\cap \mathcal M=T\restriction \beta$,
we get that $A\cap(T\restriction\beta)$ is a maximal antichain in $T\restriction\beta$.
\end{proof}

It is clear that $(T, \subset)$ is a normal, binary, splitting, slim $\kappa$-tree.
We would like to prove that it is $\kappa$-Souslin.
As any splitting $\kappa$-tree with no antichains of size $\kappa$ also has no chains of size $\kappa$,
it suffices to prove the following.

\begin{claim}\label{c233}
If $A \subseteq T$ is a maximal antichain, then $|A|<\kappa$.
\end{claim}
\begin{proof}
Let $A \subseteq T$ be a maximal antichain.
By the previous claim,
$B := \{ \beta \in R_0 \mid  A\cap(T\restriction\beta)= \Omega_\beta\text{ is a maximal antichain in }T\restriction\beta \}$
is a cofinal subset of $\kappa$.
Thus we apply $\boxtimes^-(\kappa)$ to obtain
a limit ordinal
$\alpha < \kappa$ satisfying
\[
\sup (\nacc(C_\alpha) \cap B) = \alpha.
\]

We shall prove that $A \subseteq T \restriction \alpha$,
from which it follows that $|A|\le|\alpha|<\kappa$.

To see that $A \subseteq T \restriction \alpha$, consider any $z \in T \restriction (\kappa \setminus \alpha)$,
and we will show that $z \notin A$ by finding some element of $A \cap (T \restriction \alpha)$ compatible with $z$.

Since $\dom(z) \geq \alpha$,
we can let $y := z \restriction \alpha$. Then $y \in T_\alpha$ and $y \subseteq z$.
By construction, $y = \mathbf b^\alpha_x = \bigcup_{ \beta \in C_\alpha \setminus \dom(x) } b^\alpha_x (\beta)$ for some $x \in T \restriction C_\alpha$.
Fix $\beta \in \nacc(C_\alpha) \cap B$ with $\height(x) < \beta < \alpha$.
Denote $\beta^-:=\sup(C_\alpha\cap\beta)$.
Since $\beta \in B$,
we know that $\Omega_\beta = A \cap (T \restriction \beta)$ is a maximal antichain in $T \restriction \beta$,
and hence there is some $s \in \Omega_\beta$ compatible with $b^\alpha_x(\beta^-)$,
so that by normality of the tree, $Q^{\alpha, \beta}_x \neq \emptyset$.
It follows that we chose $b^\alpha_x(\beta)$ to extend some $s \in \Omega_\beta$.
Altogether, $s \subseteq b^\alpha_x(\beta) \subset \mathbf b^\alpha_x = y \subseteq z$.
Since $s$ is an element of the antichain $A$, the fact that $z$ extends $s$ implies that $z \notin A$.
\end{proof}
\end{proof}

Let us briefly compare the above construction with Jensen's classical one \cite{MR309729}, as rendered in  \cite[Theorem IV.2.4]{MR750828}.
Both constructions consist of a recursive process of determining the levels $T_\alpha$ of the ultimate tree $T$, and both constructions face the same two challenges:
\begin{enumerate}
\item Maintaining the ability to climb up through the levels while keeping their width small.
\item Sealing antichains so that if $A \subseteq T$ is a maximal antichain,
then there would be some level $\alpha$,
where every node placed into $T_\alpha$ is compatible with some element of $A \cap (T \restriction \alpha)$.
\end{enumerate}

In both constructions, challenge~(1) is addressed at limit stage $\alpha<\kappa$ by attaching, for cofinally  many nodes $x\in T\restriction\alpha$,
a canonical branch $b^\alpha_x$ that goes through $x$ and climbs all the way up to level $\alpha$. In both constructions,
the coherence of the sequence $\langle C_\alpha\mid\alpha<\kappa\rangle$ entails that
 $b^{\bar\alpha}_x=b^\alpha_x\restriction\bar\alpha$ whenever $\bar\alpha\in\acc(C_\alpha)$,
thereby ensuring that the recursive process of constructing $b^\alpha_x$ never gets stuck.
So where is the difference?

The difference is in the way that we seal antichains. In the above construction,
we let $T_\alpha := \{ \mathbf b^\alpha_x \mid x \in T \restriction C_\alpha\}$ uniformly for all limit levels $\alpha<\kappa$,
whereas in the classical one, there is some stationary set $E$ of levels $\alpha<\kappa$, were maximal antichains are sealed
by letting $T_\alpha$ be some carefully chosen subset of $\{ \mathbf b^\alpha_x \mid x \in T \restriction \alpha\}$. But, of course, the latter puts the analogue of Claim~\ref{coherence} in danger!
To overcome this, Jensen ensured that the ordinals in $E$ never occur as accumulation points of $C_\alpha$ for any $\alpha<\kappa$.
While being a successful remedy, it means that the classical approach is based on \emph{sealing antichains at the levels of some nonreflecting stationary set}.\footnote{ As a matter of fact,
we are not aware of any previous $\diamondsuit$-based construction whose sealing process does not center on a nonreflecting stationary set.
The only candidate we could find in the literature that may involve sealing at a reflecting set is Theorem 4 from \cite{MR836425}, but that theorem must be false in light of Theorem 3.1 of \cite{MgSh:324}.}

In the microscopic approach, sealing of antichains can be done at the levels coming from any stationary set.
For example, in \cite{rinot20}, we constructed a free $\aleph_{\omega+1}$-Souslin tree in a model of Martin's Maximum (\mm),\footnote{Indeed, in the model of Corollary~\ref{c113} above.}
where the sealing of antichains was done at levels $\alpha<\aleph_{\omega+1}$ of countable cofinality,
even though $\mm$ implies that every stationary subset of $E^{\aleph_{\omega+1}}_\omega$ reflects.

\begin{prop}\label{prop24} If $\kappa$ is a regular uncountable cardinal and $\chi < \kappa$ satisfies $\lambda^{<\chi}<\kappa$ for all $\lambda<\kappa$, then $\boxtimes^-(E^\kappa_{\ge\chi})+\diamondsuit(\kappa)$ entails a normal, binary, splitting, $\chi$-complete $\kappa$-Souslin tree.
\end{prop}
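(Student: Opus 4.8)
The plan is to rerun the construction of Proposition~\ref{prop23} with two changes: replace $\boxtimes^-(\kappa)$ by $\boxtimes^-(E^\kappa_{\ge\chi})$, and alter the recipe for limit levels of cofinality $<\chi$ so as to force $\chi$-completeness. Concretely, I would fix a witness $\langle C_\alpha\mid\alpha<\kappa\rangle$ to $\boxtimes^-(E^\kappa_{\ge\chi})$, witnesses $\langle R_i\mid i<\kappa\rangle$ and $\langle\Omega_\beta\mid\beta<\kappa\rangle$ to $\diamondsuit(H_\kappa)$ (Lemma~\ref{Diamond_H_kappa}), and a well-ordering $\lhd$ of $H_\kappa$, and recursively build the levels $T_\alpha$ of a downward-closed subtree $T\s{}^{<\kappa}2$. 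Successor levels are unchanged: $T_{\alpha+1}:=\{t{}^\smallfrown\langle0\rangle,t{}^\smallfrown\langle1\rangle\mid t\in T_\alpha\}$. At a limit level $\alpha$, for every $x\in T\restriction C_\alpha$ I would define the canonical branch $\mathbf b^\alpha_x$ exactly as in Proposition~\ref{prop23}, advising with the sets $Q^{\alpha,\beta}_x$ at the successor points of $C_\alpha$; the coherence clause of $\boxtimes^-$ yields $b^{\bar\alpha}_x=b^\alpha_x\restriction\bar\alpha$ for every $\bar\alpha\in\acc(C_\alpha)$, which is the exact analogue of Claim~\ref{coherence} and keeps this recursion from getting stuck. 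The only genuinely new ingredient is how to populate $T_\alpha$: if $\cf(\alpha)\geq\chi$, set $T_\alpha:=\{\mathbf b^\alpha_x\mid x\in T\restriction C_\alpha\}$ as before; but if $\cf(\alpha)<\chi$, let $T_\alpha$ consist of \emph{all} branches of $T\restriction\alpha$, i.e.\ all $b\in{}^\alpha2$ with $b\restriction\beta\in T$ for every $\beta<\alpha$. In either case $\{\mathbf b^\alpha_x\mid x\in T\restriction C_\alpha\}\s T_\alpha$.

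The verification rests on a simultaneous induction on $\alpha<\kappa$ of two facts. First, $|T_\alpha|\le(\max\{|\alpha|,\aleph_0\})^{<\chi}$, which is below $\kappa$ by the arithmetic hypothesis: at a limit $\alpha$ of cofinality ${\geq}\chi$ one counts canonical branches as in Proposition~\ref{prop23}, while at cofinality ${<}\chi$ a branch of $T\restriction\alpha$ is determined by its restrictions to a fixed cofinal set of order-type $\cf(\alpha)$, so there are at most $(\sup_{\beta<\alpha}|T_\beta|)^{\cf(\alpha)}\le((\max\{|\alpha|,\aleph_0\})^{<\chi})^{<\chi}=(\max\{|\alpha|,\aleph_0\})^{<\chi}$ of them. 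Second, $T\restriction(\alpha+1)$ is a normal tree and, when $\cf(\alpha)<\chi$, every node of $T\restriction\alpha$ lies on a branch of $T\restriction\alpha$ (so in particular $T_\alpha\neq\emptyset$): normality at a cofinality-${\geq}\chi$ limit level is witnessed by the canonical branches through the nodes on $C_\alpha$, exactly as in Proposition~\ref{prop23}, and at a cofinality-${<}\chi$ limit level $\alpha$ one produces a branch through a prescribed node $y$ by a sub-recursion of length $\cf(\alpha)$ along a continuous cofinal sequence $\langle\alpha_i\rangle$ in $\alpha$, invoking the inductive hypothesis at the strictly smaller ordinals $\alpha_i$ and at the sublimits $\sup_{i<j}\alpha_i$, which again have cofinality ${<}\chi$; the point is that a branch was deliberately inserted at every limit level below $\alpha$, so this sub-recursion never meets a dead end.

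Granting these, $(T,\subset)$ is a normal, binary, splitting $\kappa$-tree, and it is $\chi$-complete: any $\subset$-increasing sequence from $T$ of limit length $\gamma<\chi$ has domains with supremum some limit ordinal $\delta$ of cofinality $\le\gamma<\chi$, so its union is a branch of $T\restriction\delta$ and hence an element of $T_\delta\s T$ bounding the sequence (the successor-length case being immediate from normality). That $T$ is $\kappa$-Souslin follows the argument of Proposition~\ref{prop23} word for word: given a maximal antichain $A\s T$, the $\diamondsuit(H_\kappa)$-sequence makes $B:=\{\beta\in R_0\mid A\cap(T\restriction\beta)=\Omega_\beta\text{ is a maximal antichain in }T\restriction\beta\}$ stationary (Claim~\ref{c232}), applying $\boxtimes^-(E^\kappa_{\ge\chi})$ to the cofinal set $B$ produces $\alpha\in E^\kappa_{\ge\chi}$ with $\sup(\nacc(C_\alpha)\cap B)=\alpha$, and since $\cf(\alpha)\geq\chi$ we have $T_\alpha=\{\mathbf b^\alpha_x\mid x\in T\restriction C_\alpha\}$, so the sealing argument of Claim~\ref{c233} applies unchanged to give $A\s T\restriction\alpha$ and thus $|A|<\kappa$; as a splitting $\kappa$-tree with no antichain of size $\kappa$ has no chain of size $\kappa$, $T$ is $\kappa$-Aronszajn, hence $\kappa$-Souslin.

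I expect the main obstacle to be the second clause of the induction at limit levels of cofinality $<\chi$: one must check that declaring $T_\alpha$ to be \emph{all} branches of $T\restriction\alpha$ does not leave the level too sparse to preserve normality, and this works precisely because $T\restriction\alpha$ is not an arbitrary normal tree but one in which a branch has been forced through every node of every lower limit level, so that the climb used to certify normality cannot terminate prematurely. The cardinal arithmetic in the first clause is routine once $\lambda^{<\chi}<\kappa$ is invoked (using $(\lambda^{<\chi})^{<\chi}=\lambda^{<\chi}$), and the Souslin part needs no new idea, since all sealing happens at cofinality-${\geq}\chi$ levels, which are governed by the same canonical-branch machinery as in Proposition~\ref{prop23}.
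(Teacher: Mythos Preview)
Your proposal is correct and follows essentially the same approach as the paper's proof: the paper defines the canonical branches $\mathbf b^\alpha_x$ exactly as in Proposition~\ref{prop23}, sets $T_\alpha:=\{\mathbf b^\alpha_x\mid x\in T\restriction C_\alpha\}$ when $\cf(\alpha)\ge\chi$ and lets $T_\alpha$ be all branches through $T\restriction\alpha$ when $\cf(\alpha)<\chi$, then invokes the sealing argument of Claim~\ref{c233} at a level $\alpha\in E^\kappa_{\ge\chi}$ produced by $\boxtimes^-(E^\kappa_{\ge\chi})$. The paper is terser --- it simply asserts $|T_\alpha|<\kappa$ and that $\{\mathbf b^\alpha_x\mid x\in T\restriction C_\alpha\}\subseteq T_\alpha$ in both cases --- whereas you spell out the cardinal-arithmetic bound and the normality verification at cofinality-${<}\chi$ levels; these are exactly the details the paper leaves implicit.
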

\begin{proof} Let $\langle C_\alpha\mid \alpha<\kappa\rangle$ be a witness to $\boxtimes^-(E^\kappa_{\ge\chi})$.
Let $\langle R_i \mid i<\kappa\rangle$ and $\langle \Omega_\beta\mid\beta<\kappa\rangle$ together witness $\diamondsuit(H_\kappa)$. Let $\lhd$ be some well-ordering of $H_\kappa$.
As before, we shall recursively construct a sequence $\langle T_\alpha\mid \alpha<\kappa\rangle$ of levels
whose union will ultimately be the desired tree $T$.

Let $T_0:=\{\emptyset\}$, and for all $\alpha<\kappa$, let $T_{\alpha+1}:=\{ t{}^\smallfrown\langle0\rangle, t{}^\smallfrown\langle1\rangle\mid t\in T_\alpha\}$.
For each $x\in T\restriction C_\alpha$, define the sequence $b^\alpha_x\in\prod_{\beta\in C_\alpha\setminus\dom(x)}T_\beta$ exactly as in the proof of Proposition~\ref{prop23},
and denote its limit by $\mathbf b^\alpha_x$.
Now, there are two cases to consider:

$\br$ If $\cf(\alpha)<\chi$, let $T_\alpha$ consist of all branches through $(T\restriction\alpha,\subset)$.

$\br$ If $\cf(\alpha)\ge\chi$, let $T_\alpha:=\{ \mathbf b^\alpha_x\mid x\in T\restriction C_\alpha\}$.

Notice that in both cases, $\{ \mathbf b^\alpha_x\mid x\in T\restriction C_\alpha\}\s T_\alpha$, and $|T_\alpha|<\kappa$.

Having constructed all levels of the tree, we then let
\[
T := \bigcup_{\alpha < \kappa} T_\alpha.
\]

\begin{claim}
If $A \subseteq T$ is a maximal antichain, then $|A|<\kappa$.
\end{claim}
\begin{proof}
Let $A \subseteq T$ be a maximal antichain.
By Claim~\ref{c232},
$$B := \{ \beta \in R_0 \mid  A\cap(T\restriction\beta)= \Omega_\beta\text{ is a maximal antichain in }T\restriction\beta \}$$
is a cofinal subset of $\kappa$.
Thus we apply $\boxtimes^-(E^\kappa_{\ge\chi})$ to obtain
an ordinal $\alpha\in E^\kappa_{\ge\chi}$ satisfying
\[
\sup (\nacc(C_\alpha) \cap B) = \alpha.
\]

To see that $A \subseteq T \restriction \alpha$, consider any $z \in T \restriction (\kappa \setminus \alpha)$.
Let $y := z \restriction \alpha \in T_\alpha$.
By $\cf(\alpha)\ge\chi$, we have $y = \mathbf b^\alpha_x$ for some $x \in T \restriction C_\alpha$.
Then the same analysis of the proof of Claim~\ref{c233} entails the existence of $s\in A$ such that $s\subset\mathbf b^\alpha_x=y\s z$. In particular, $z\notin A$.
\end{proof}
So $(T,\subset)$ is a normal, binary, splitting, $\chi$-complete $\kappa$-Souslin tree.
\end{proof}

Note that the hypothesis of Proposition~\ref{prop24} involves the principle  $\diamondsuit(\kappa)$ rather than $\diamondsuit(E^\kappa_{\ge\chi})$,
and indeed, we did not advise with $\Omega_\beta$ when constructing the level $T_\beta$.
Rather, we advised with $\Omega_\beta$ at levels $\alpha > \beta$ for which $\beta \in \nacc(C_\alpha)$.
Specifically, if $\beta \in \nacc(C_\alpha)$,
$\Omega_\beta$ is a maximal antichain in $T \restriction \beta$,
and $\height(x) < \beta < \alpha$,
then the node $\mathbf b^\alpha_x$ placed into $T_\alpha$
must be compatible with some element of $\Omega_\beta$.
The proxy principle $\boxtimes^-(E^\kappa_{\ge\chi})$ then ensures that,
for any maximal antichain $A \subseteq T$, we can find some ordinal $\alpha\in E^\kappa_{\ge\chi}$ such that
$A$ was gradually sealed when building level $T_\alpha$, and therefore $A\s T\restriction\alpha$.

\medskip
A construction of a coherent Souslin tree at the level of a successor cardinal may be found in \cite{MR384542},\cite{MR1683897},\cite{MR830071}.
Here, we give a construction that applies also for inaccessible cardinals.

\begin{prop}\label{prop25} If $\kappa$ is a regular uncountable cardinal and $\boxtimes(\kappa)+\diamondsuit(\kappa)$ holds, then there exists a normal, slim, prolific, coherent $\kappa$-Souslin tree.
\end{prop}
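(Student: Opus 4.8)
The plan is to run the microscopic machinery of Propositions~\ref{prop23} and~\ref{prop24}, but now to produce a subtree $T\s{}^{<\kappa}\kappa$ that is \emph{prolific} and whose limit levels are entire $=^*$-equivalence classes (intersected with the legitimacy requirement imposed by downward closure), so that coherence holds by fiat; the extra strength of $\boxtimes(\kappa)$ over $\boxtimes^-(\kappa)$, i.e.\ $\sigma=\omega$ and $\theta=\kappa$, is precisely what will let us seal maximal antichains \emph{without} destroying that coherence.

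\textbf{Construction.} Fix a witness $\langle C_\alpha\mid\alpha<\kappa\rangle$ to $\p^-(\kappa,2,{\sq},\kappa,\{\kappa\},2,\omega)$, a pair $\langle R_i\mid i<\kappa\rangle$, $\langle\Omega_\beta\mid\beta<\kappa\rangle$ witnessing $\diamondsuit(H_\kappa)$ (legitimate by Lemma~\ref{Diamond_H_kappa}), and a well-ordering $\lhd$ of $H_\kappa$. Build the levels $\langle T_\alpha\mid\alpha<\kappa\rangle$ recursively. Put $T_0:=\{\emptyset\}$ and, at successors, $T_{\alpha+1}:=\{t{}^\smallfrown\langle i\rangle\mid t\in T_\alpha,\ i<\max\{\omega,\alpha\}\}$, which makes $T$ prolific while keeping $|T_{\alpha+1}|\le\max\{|\alpha|,\aleph_0\}$. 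At a nonzero limit $\alpha$, for each $x\in T\restriction C_\alpha$ define, exactly as in Proposition~\ref{prop23}, a $\subset$-increasing $b^\alpha_x\in\prod_{\beta\in C_\alpha\setminus\dom(x)}T_\beta$ with $b^\alpha_x(\dom(x))=x$ and $b^\alpha_x(\beta)=\bigcup\rng(b^\alpha_x\restriction\beta)$ at $\beta\in\acc(C_\alpha\setminus\dom(x))$; the one change is in the choice of the ``filler'' of $b^\alpha_x$ on an interval $[\beta^-,\beta)$ between successive points of $C_\alpha$: this filler is prescribed \emph{uniformly in $x$} so as to follow the coherent pattern already laid down on $[\beta^-,\beta)$ by the earlier levels (up to a finite correction matching $b^\alpha_x(\beta^-)$ to that pattern), \emph{except} that when $\beta$ is the $n$-th point of the current $\suc_\omega$-block of $C_\alpha$ and $\Omega_\beta$ is a maximal antichain in $T\restriction\beta$, the correction is additionally chosen, using normality of $T\restriction\beta$ (as in the role of $Q^{\alpha,\beta}_x$), to push $b^\alpha_x(\beta)$ above an element of $\Omega_\beta$, with $n$ supplying the room to diagonalise over the relevant finitely-many deviations. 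Writing $\mathbf b^\alpha_x:=\bigcup\rng(b^\alpha_x)$, set
\[
T_\alpha:=\bigl\{z\in{}^\alpha\kappa\ \big|\ (\exists x\in T\restriction C_\alpha)\ z=^*\mathbf b^\alpha_x\ \text{ and }\ (\forall\bar\alpha<\alpha)\ z\restriction\bar\alpha\in T\bigr\},
\]
and finally $T:=\bigcup_{\alpha<\kappa}T_\alpha$.

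\textbf{Basic properties.} $T$ is downward-closed by construction; it is prolific by the successor clause; $|T_\alpha|\le\max\{|\alpha|,\aleph_0\}$ since each member of $T_\alpha$ is coded by a finite set of coordinates together with their (boundedly many) values, so $T$ is a slim $\kappa$-tree; and $T$ is coherent because the uniform choice of fillers places all of $\{\mathbf b^\alpha_x\mid x\in T\restriction C_\alpha\}$, hence all of $T_\alpha$, inside a single $=^*$-class. The crux of normality is the analogue of Claim~\ref{coherence}: for $\bar\alpha\in\acc(C_\alpha)$ the recursion defining $b^{\bar\alpha}_x$ coincides step-by-step with that defining $b^\alpha_x$ below $\bar\alpha$ (using $C_{\bar\alpha}=C_\alpha\cap\bar\alpha$ to match domains and $Q$-sets), whence $\mathbf b^\alpha_x\restriction\bar\alpha=\mathbf b^{\bar\alpha}_x$; granting this (and that the filler rule keeps all restrictions legitimate), given $y\in T_\beta$ with $\beta<\alpha$ and any $x$, the node $z:=y{}^\smallfrown(\mathbf b^\alpha_x\restriction[\beta,\alpha))$ is $=^*\mathbf b^\alpha_x$ and lies in $T_\alpha$, and it extends $y$.

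\textbf{Souslinness and the main obstacle.} Let $A\s T$ be a maximal antichain. Claim~\ref{c232} goes through verbatim, so $B:=\{\beta\in R_0\mid A\cap(T\restriction\beta)=\Omega_\beta$ is a maximal antichain in $T\restriction\beta\}$ is stationary; using a bijection $\kappa\leftrightarrow\kappa\times\kappa$, spread $B$ into a sequence $\langle A_i\mid i<\kappa\rangle$ of cofinal subsets of $\kappa$ carrying the bookkeeping needed below. Apply $\boxtimes(\kappa)$ to obtain a limit $\alpha$ with $\sup\{\beta<\alpha\mid\suc_\omega(C_\alpha\setminus\beta)\s A_i\}=\alpha$ for every $i<\alpha$; then cofinally often in $\alpha$ the sealing step of the construction was executed, along a whole $\suc_\omega$-block of $C_\alpha$, against $\Omega_\beta=A\cap(T\restriction\beta)$. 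The point of spreading the sealing over the $\omega$-block is that, unlike in Propositions~\ref{prop23} and~\ref{prop24}, coherence forbids an outright ``jump'' into the antichain at a single level; instead one absorbs the antichain by finitely-many corrections at the successive points of the block, arranged so that \emph{every} finitely-modified member of $T_\alpha$ ends up above an element of $A$ with domain below $\alpha$. Granting this, one concludes as in Claim~\ref{c233} that $A\s T\restriction\alpha$ and hence $|A|\le\max\{|\alpha|,\aleph_0\}<\kappa$; as $T$ is splitting (being prolific), it also has no $\kappa$-branch, so $(T,\subset)$ is a normal, slim, prolific, coherent $\kappa$-Souslin tree. The genuine difficulty --- and the reason $\boxtimes(\kappa)$ is invoked rather than $\boxtimes^-(\kappa)$ --- is designing the limit-level filler rule so that it is simultaneously (i) independent of $x$ (forcing all of $T_\alpha$ into one $=^*$-class), (ii) compatible with restriction to $\acc(C_\alpha)$ (so the coherence lemma holds and the legitimacy clause in the definition of $T_\alpha$ is met), and (iii) capable, over each $\suc_\omega$-block, of sealing an arbitrary guessed maximal antichain against all the finitely-modified threads at once; clause (iii) is what forces $\sigma=\omega$, and tracking the $\kappa$-many antichains that must eventually be sealed is what forces $\theta=\kappa$.
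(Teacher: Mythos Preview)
Your proposal has the right overall shape but leaves the decisive mechanism unspecified, and in one place misidentifies which parameter of $\boxtimes(\kappa)$ is doing the work. You describe a ``uniform filler'' that is ``independent of $x$'' yet also admits a ``correction'' depending on $b^\alpha_x(\beta^-)$; these two requirements are in tension, and you never say how the rule is actually defined. More seriously, your sealing argument claims that a single $\suc_\omega$-block suffices to push \emph{every} finitely-modified member of $T_\alpha$ above an element of $A$, with ``$n$ supplying the room to diagonalise over the relevant finitely-many deviations.'' But for $\alpha>\omega$ there are uncountably many finite modifications of $\mathbf b^\alpha_\emptyset$, so an $\omega$-block cannot enumerate them; no amount of hand-waving about diagonalisation closes this gap. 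Relatedly, your final paragraph asserts that $\sigma=\omega$ is what makes the sealing work --- but the paper's own proof (and its subsequent Remark) shows that $\sigma=1$ already suffices; the essential parameter is $\theta=\kappa$, not $\sigma=\omega$.

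The paper's solution is cleaner and avoids your difficulty entirely. Rather than building a branch $b^\alpha_x$ for each $x$, one builds a \emph{single} branch $\mathbf b^\alpha_\emptyset$ along $C_\alpha$, and then sets $\mathbf b^\alpha_x:=x*\mathbf b^\alpha_\emptyset$ and $T_\alpha:=\{\mathbf b^\alpha_x\mid x\in T\restriction\alpha\}$ (no $=^*$-closure needed: coherence is automatic, since $\mathbf b^\alpha_x$ and $\mathbf b^\alpha_y$ differ only where $x$ and $y$ do). The sealing trick is to encode, at each $\beta\in\nacc(C_\alpha)$, a specific target node $\psi(\beta)\in H_\kappa$ via the partition $\langle R_i\rangle$: one asks whether $\psi(\beta)*b^\alpha_\emptyset(\beta^-)$ can be extended in $T_\beta$ above some $s\in\Omega_\beta$, and if so sets $b^\alpha_\emptyset(\beta):=b^\alpha_\emptyset(\beta^-)*t$ for the least such $t$. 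Then, given a maximal antichain $A$, one lets $B_i:=\{\beta\in R_i\mid\Omega_\beta=A\cap(T\restriction\beta)\text{ is maximal}\}$ and applies $\theta=\kappa$ to $\langle B_i\mid i<\kappa\rangle$: at the resulting $\alpha$, for each $x\in T\restriction\alpha$ one finds $i$ with $\phi(i)=x$ and then a single $\beta\in\nacc(C_\alpha)\cap B_i$ above $\height(x)$, where $\psi(\beta)=x$ forces $x*b^\alpha_\emptyset(\beta)$ to extend an element of $A$. So the many-$x$ problem is solved by the fourth parameter, not the seventh.
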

\begin{proof} Let $\langle C_\alpha\mid \alpha<\kappa\rangle$ be a witness to $\boxtimes(\kappa)$.
Without loss of generality, we may assume that $0\in C_\alpha$ for all $\alpha<\kappa$.
Let $\langle R_i \mid i<\kappa\rangle$ and $\langle \Omega_\beta\mid\beta<\kappa\rangle$ together witness $\diamondsuit(H_\kappa)$.
Let $\pi:\kappa\rightarrow\kappa$ be such that $\alpha\in R_{\pi(\alpha)}$ for all $\alpha<\kappa$.
By $\diamondsuit(\kappa)$, we have $\left|H_\kappa\right| =\kappa$,
thus let $\lhd$ be some well-ordering of $H_\kappa$ of order-type $\kappa$,
and let $\phi:\kappa\leftrightarrow H_\kappa$ witness the isomorphism $(\kappa,\in)\cong(H_\kappa,\lhd)$.
Put $\psi:=\phi\circ\pi$.

For two elements of $\eta,\tau$ of $H_\kappa$, we define $\eta*\tau$ to be the emptyset,
unless $\eta,\tau\in{}^{<\kappa}\kappa$ with $\dom(\eta)<\dom(\tau)$, in which case $\eta*\tau:\dom(\tau)\rightarrow\kappa$ is defined by stipulating:
$$(\eta*\tau)(\beta):=\begin{cases}\eta(\beta),&\text{if }\beta\in\dom(\eta);\\
\tau(\beta),&\text{otherwise.}\end{cases}$$

We shall now recursively construct a sequence $\langle T_\alpha\mid \alpha<\kappa\rangle$ of levels
whose union will ultimately be the desired tree $T$.

Let $T_0:=\{\emptyset\}$, and for all $\alpha<\kappa$, let $T_{\alpha+1}:=\{ t{}^\smallfrown\langle i\rangle\mid t\in T_\alpha, i<\max\{\omega,\alpha\}\}$.

Next, suppose that $\alpha$ is a nonzero limit ordinal, and that $\langle T_\beta\mid \beta<\alpha\rangle$ has already been defined.
Similar to the proof of Proposition~\ref{prop23}, to each node $x \in T \restriction\alpha$ we shall associate some node $\mathbf b^\alpha_x:\alpha\rightarrow\kappa$  above $x$,
and then let $T_\alpha := \{ \mathbf b^\alpha_x \mid x \in T \restriction \alpha\}$.

Unlike the proof of Proposition~\ref{prop23}, we first define $\mathbf b^\alpha_x$ for $x=\emptyset$.

Define $b^\alpha_\emptyset\in\prod_{\beta\in C_\alpha}T_\beta$ by recursion.
Let $b^\alpha_\emptyset(0):=\emptyset$.
Next, suppose $\beta^-<\beta$ are successive points of $C_\alpha$, and $b^\alpha_\emptyset(\beta^-)$ has already been defined.
In order to decide $b^\alpha_\emptyset(\beta)$, we advise with the following set:
$$Q^{\alpha, \beta} := \{ t\in T_\beta\mid \exists s\in \Omega_{\beta}[ (s\cup (\psi(\beta)*b^\alpha_\emptyset(\beta^-)))\s t]\}.$$
Now, consider the two possibilities:
\begin{itemize}
\item If $Q^{\alpha,\beta} \neq \emptyset$, let $t$ denote its $\lhd$-least element, and put $b^\alpha_\emptyset(\beta):=b^\alpha_\emptyset(\beta^-)* t$;
\item Otherwise, let $b^\alpha_\emptyset(\beta)$ be the $\lhd$-least element of $T_\beta$ that extends $b^\alpha_\emptyset(\beta^-)$.
\end{itemize}

Note that $Q^{\alpha,\beta}$ depends only on $T_\beta,\Omega_\beta,\psi(\beta)$ and $b^\alpha_\emptyset(\beta^-)$,
and hence for every ordinal $\gamma<\kappa$, if $C_\alpha\cap(\beta+1)=C_\gamma\cap(\beta+1)$, then $b^\alpha_\emptyset\restriction(\beta+1)=b^\gamma_\emptyset\restriction(\beta+1)$.
It follows that for all $\beta \in \acc(C_\alpha)$ such that $b^\alpha_\emptyset\restriction\beta$ has already been defined,
we may let $b^\alpha_\emptyset(\beta):=\bigcup\rng(b^\alpha_\emptyset\restriction\beta)$ and infer that $b^\alpha_\emptyset(\beta)=\mathbf b^\beta_\emptyset\in T_\beta$.
This completes the definition of $b^\alpha_\emptyset$ and its limit $\mathbf b^\alpha_\emptyset=\bigcup\rng(b^\alpha_\emptyset)$.

Next, for each $x\in T\restriction\alpha$, let $\mathbf b^\alpha_x:=x*\mathbf b^\alpha_\emptyset$.
This completes the definition of the level $T_\alpha$.

Having constructed all levels of the tree, we then let
\[
T := \bigcup_{\alpha < \kappa} T_\alpha.
\]

\begin{claim} For every $\alpha<\kappa$, every two nodes of $T_\alpha$ differ on a finite set.
\end{claim}
\begin{proof} Suppose not, and let $\alpha$ be the least counterexample. Clearly, $\alpha$ must be a limit nonzero ordinal.
Pick $x,y\in T\restriction\alpha$ such that $\mathbf b^\alpha_x$ differs from $\mathbf b^\alpha_y$ on an infinite set.
As $\mathbf b^\alpha_x=x*\mathbf b^\alpha_\emptyset$ and $\mathbf b^\alpha_y=y*\mathbf b^\alpha_\emptyset$,
it follows that $x$ and $y$ differ on an infinite set, contradicting the minimality of $\alpha$.
\end{proof}
In particular, for every $\alpha < \kappa$,
$T_\alpha$ is a subset of $^\alpha\kappa$ of size $\le\max\{\aleph_0,|\alpha|\} < \kappa$.
Thus, we are left with verifying that $(T,\subset)$ is $\kappa$-Souslin. That is, establishing the following.

\begin{claim}
If $A \subseteq T$ is a maximal antichain, then $|A|<\kappa$.
\end{claim}
\begin{proof}
Let $A \subseteq T$ be a maximal antichain.
By the same proof of Claim~\ref{c232}, for every $i<\kappa$, the set $$B_i := \{ \beta \in R_i \mid A\cap(T\restriction\beta)= \Omega_\beta\text{ is a maximal antichain in }T\restriction\beta\}$$
is stationary.
Thus, we apply $\boxtimes(\kappa)$ to the sequence $\langle B_i\mid i<\kappa\rangle$, and the club $D:=\{ \alpha<\kappa\mid T\restriction\alpha\s\phi[\alpha]\}$
to obtain
an ordinal $\alpha\in D$ such that for all $i<\alpha$:
\begin{equation}\label{eq1}
\sup (\nacc(C_\alpha) \cap B_i) = \alpha.
\end{equation}

To see that $A \subseteq T \restriction \alpha$, consider any $z \in T \restriction (\kappa \setminus \alpha)$.
Let $y := z \restriction \alpha \in T_\alpha$.
By construction, $y = \mathbf b^\alpha_x = x* \mathbf b^\alpha_\emptyset$ for some $x \in T \restriction \alpha$.
As $\alpha\in D$ and $x\in T\restriction\alpha$, we can fix $i<\alpha$ such that $\phi(i)=x$.

Fix $\beta \in \nacc(C_\alpha) \cap B_i$ with $\height(x) < \beta < \alpha$. Clearly, $\psi(\beta)=\phi(\pi(\beta))=\phi(i)=x$.
Since $\beta \in B_i$,
we know that $\Omega_\beta = A \cap (T \restriction \beta)$ is a maximal antichain in $T \restriction \beta$,
and hence $Q^{\alpha, \beta}\neq\emptyset$. Let $t:=\min(Q^{\alpha,\beta},\lhd)$ and  $\beta^-:=\sup(C_\alpha\cap\beta)$.
Then $b^\alpha_\emptyset(\beta)=b^\alpha_\emptyset(\beta^-)*t$, and there exists some $s\in \Omega_{\beta}$ such that $(s\cup (x*b^\alpha_\emptyset(\beta^-)))\s t$.
In particular, $x*b^\alpha_\emptyset(\beta)$ extends an element of $\Omega_\beta$.
Altogether, there exists some $s\in A\cap (T\restriction\beta)$ such that $s\s x*b^\alpha_\emptyset(\beta)\s x*\mathbf b^\alpha_\emptyset=\mathbf b^\alpha_x=y\s z$,
and hence $z\notin S$.
\end{proof}
\end{proof}

\begin{remark} As Equation~(\ref{eq1}) above makes explicit, the preceding proof did not utilize the full force of the axiom $\boxtimes(\kappa)$.
For an application of the full force of $\boxtimes(\kappa)$, we refer the reader to Section 6 of \cite{rinot20}.
\end{remark}

Proposition~\ref{prop25} partially demonstrates the \emph{microscopic approach}: there is a stationary set $S\s\kappa$, such that for every $\alpha\in S$, every node of $T_\alpha$
is determined by an element $x\in T\restriction\alpha$, a club $C\in\mathcal C_\alpha$ and some increasing and continuous sequence $b^C_y\in\prod_{\beta\in C\setminus\dom(y)}T_\beta$ for some $y\in T\restriction C$.
The move from $b^C_y(\beta^-)$ to its successor $b^C_y(\beta)$ depends only on
 $b^C_y(\beta^-)$, and the restriction of some structure
$$\mathcal M=(H(\kappa),\in,\lhd,\psi,T,\langle R_i\mid i<\kappa\rangle,\langle\Omega_\eta\mid\eta<\kappa\rangle,\ldots)$$
to level $\beta+1$.
As we are only allowed to ``look down'', the coherence (\`{a} la Claim~\ref{coherence}) is guaranteed.

Underlying that, we have a predefined library of \emph{actions}, each labeled by a member of $H_\kappa$, and each, given a restricted structure $\mathcal M\restriction(\gamma+1)$ and an element $z\in T\restriction\gamma$,
will determine an extension of $z$ that belongs to the top level of the normal tree $T\restriction(\gamma+1)$.
Fix at the outset a subset \verb"h" of $H_\kappa$. Then, the microscopic steps from $b^C_y(\beta^-)$ to  $b^C_y(\beta)$ are the outcome of feeding $\mathcal M\restriction (\beta+1)$ and $b^C_y(\beta^-)$
to the action $\psi(\beta)$ provided that $\beta\in\verb"h"$,
or feeding them to the default action (labeled by $\emptyset\in H_\kappa$) otherwise.
Needless to say that these microscopic steps do not know where they are heading, let alone are aware of $\alpha$.

Nota bene that for different choices of $\verb"h"$ and $\langle\mathcal C_\alpha\mid\alpha<\kappa\rangle$, the above machine will produce different $\kappa$-trees.
Of interest is the analysis of \verb"h" files that include two actions of contradictory purpose (e.g., one for making the tree rigid, and the other for making the tree homogeneous).

\section{\texorpdfstring{The coherence relation $\sq$}{Full coherence}}

The relation $\sq$ is the strongest coherence relation one can expect in this context.
Note that in Section~\ref{section:constructions}, a coherent $\kappa$-Souslin tree was derived from $\p(\kappa,2,{\sq},\kappa)$.

\begin{lemma}\label{square-equiv}
For any infinite cardinal $\lambda$,
$\p^{-}(\lambda^+,2,{\sq},1,\{\lambda^+\},2,0,\mathcal E_\lambda)$ is equivalent to $\square_\lambda$.
\end{lemma}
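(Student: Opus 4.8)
The plan is to observe that, once all eight parameters are substituted, the two principles describe literally the same family of sequences, so the proof is a careful transcription of definitions; the one genuinely useful remark is that the choice $\sigma=0$ makes the club-guessing clause vacuous.

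First I would recall that $\square_\lambda$ asserts the existence of a sequence $\langle C_\alpha\mid\alpha<\lambda^+\rangle$ with $C_\alpha$ a club in $\alpha$ and $\otp(C_\alpha)\le\lambda$ for every limit $\alpha$, and $C_{\bar\alpha}=C_\alpha\cap\bar\alpha$ whenever $\bar\alpha\in\acc(C_\alpha)$. Next, invoking the simplified reformulation of the case $\mu=2$ recorded above, I would spell out that a sequence $\langle C_\alpha\mid\alpha<\lambda^+\rangle$ witnesses $\p^{-}(\lambda^+,2,{\sq},1,\{\lambda^+\},2,0,\mathcal E_\lambda)$ exactly when: (a) for every limit $\alpha<\lambda^+$, $C_\alpha$ is a club in $\alpha$ with $C_\alpha\in\dom(\mathcal E_\lambda)$; (b) $C_{\bar\alpha}\mathrel{\sq}C_\alpha$ for every $\alpha<\lambda^+$ and every $\bar\alpha\in\acc(C_\alpha)$; and (c) for every cofinal $A\s\lambda^+$ there are stationarily many $\alpha<\lambda^+$ with $\sup\{\beta\in C_\alpha\mid\suc_0(C_\alpha\setminus\beta)\s A\}=\alpha$.

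Then I would match these up via three observations. \emph{First}, $\dom(\mathcal E_\lambda)=\{D\s\lambda^+\mid\otp(D)\le\lambda\}$ and $\mathcal E_\lambda$ is the total relation on this set, so (a)---together with the requirement that all members of $\mathcal C_\alpha$ be $\mathcal E_\lambda$-related, which the singleton $\mathcal C_\alpha$ meets automatically---amounts to ``$\otp(C_\alpha)\le\lambda$''. \emph{Second}, for $\bar\alpha\in\acc(C_\alpha)=C_\alpha\cap\acc^+(C_\alpha)$, write $C_{\bar\alpha}=C_\alpha\cap\beta$; since $\bar\alpha\in C_\alpha$ and $\sup(C_\alpha\cap\bar\alpha)=\bar\alpha$, one rules out $\beta>\bar\alpha$ (else $\bar\alpha\in C_{\bar\alpha}\s\bar\alpha$) and $\beta<\bar\alpha$ (else $\sup C_{\bar\alpha}\le\beta<\bar\alpha$), so $\beta=\bar\alpha$; hence (b) is equivalent to ``$C_{\bar\alpha}=C_\alpha\cap\bar\alpha$''. \emph{Third}, since $\sigma=0$ we have $\suc_0(D)=\emptyset$ for every $D$, so the set in (c) equals $C_\alpha$ and its supremum is $\alpha$ for every limit $\alpha$; thus (c) holds on the club of limit ordinals below $\lambda^+$, a fortiori on a stationary set, and therefore carries no information whatsoever.

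Finally I would dispatch the two implications. For $\square_\lambda\Rightarrow\p^{-}$, pad a $\square_\lambda$-sequence by $C_0:=\emptyset$ and $C_{\gamma+1}:=\{\gamma\}$ (order-type $\le\lambda$, no accumulation points), so that (a) and (b) hold by the first and second observations and (c) holds by the third. For $\p^{-}\Rightarrow\square_\lambda$, restrict a witnessing sequence to the limit ordinals and read off $\square_\lambda$ via the first and second observations. I expect no real obstacle; the only point that needs attention is the third observation---the collapse of the guessing clause under $\sigma=0$---without which the statement would be false and the argument would require genuine combinatorics.
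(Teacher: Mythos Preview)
Your proposal is correct and is precisely the ``straight-forward'' unpacking that the paper gestures at (its proof reads in full: ``Straight-forward, but see also Lemma~\ref{l24} below''). Your three observations---that $\mathcal E_\lambda$ encodes the order-type bound, that $\sq$-coherence at an accumulation point $\bar\alpha$ forces $C_{\bar\alpha}=C_\alpha\cap\bar\alpha$, and that $\sigma=0$ collapses the guessing clause---are exactly the content of the lemma, and you have identified the only nontrivial point (the vacuity of the last clause) correctly.
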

\begin{proof} Straight-forward, but see also Lemma~\ref{l24} below.
\end{proof}

\begin{lemma}\label{l1023}
$\p^{-}(\kappa,2,{\sq})$ entails $\square(\kappa)$ for every regular uncountable cardinal $\kappa$.
\end{lemma}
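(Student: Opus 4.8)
The plan is to observe that a witness $\langle C_\alpha \mid \alpha < \kappa\rangle$ to $\p^{-}(\kappa,2,{\sq})$ is, after discarding its entries at non-limit ordinals, already a $\square(\kappa)$-sequence — so the whole content of the lemma lies in the non-threadedness. Unravelling the conventions for the suppressed parameters, $\p^{-}(\kappa,2,{\sq})$ is the principle $\p^{-}(\kappa,2,{\sq},1,\{\kappa\},2,1,(\mathcal P(\kappa))^2)$ (that is, $\boxtimes^-(\kappa)$), so a witness satisfies: (i) $C_\alpha$ is a club in $\alpha$ for every limit $\alpha<\kappa$; (ii) $C_{\bar\alpha}\sq C_\alpha$ whenever $\bar\alpha\in\acc(C_\alpha)$, which, since $\sup(C_{\bar\alpha})=\bar\alpha=\sup(C_\alpha\cap\bar\alpha)$, forces $C_{\bar\alpha}=C_\alpha\cap\bar\alpha$; and (iii) for every cofinal $A\s\kappa$ there are stationarily many $\alpha<\kappa$ with $\sup\{\beta\in C_\alpha\mid \suc_1(C_\alpha\bks\beta)\s A\}=\alpha$. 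Here, for $\beta\in C_\alpha$, the set $\suc_1(C_\alpha\bks\beta)$ is the singleton of the successor of $\beta$ in $C_\alpha$. Clauses (i) and (ii) are verbatim the first two requirements in the definition of a $\square(\kappa)$-sequence, so it remains only to show that such a sequence cannot be threaded.

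So I would argue by contradiction: suppose $D\s\kappa$ is a club witnessing threadedness, i.e. $C_\alpha=D\cap\alpha$ for every $\alpha\in\acc(D)$. The one non-routine step is the choice of the cofinal set fed into clause (iii): take $A:=\acc(D)$. Since $\kappa$ is regular uncountable and $D$ is a club, $\acc(D)$ is again a club (hence cofinal in $\kappa$), and by definition it is disjoint from $\nacc(D)$. Applying (iii) to this $A$ and intersecting the resulting stationary set with the club $\acc(D)$, I obtain a (necessarily limit) ordinal $\alpha\in\acc(D)$ for which $\sup\{\beta\in C_\alpha\mid \suc_1(C_\alpha\bks\beta)\s A\}=\alpha$.

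Finally I would show the set just supremized is empty, yielding the contradiction (as $\alpha>0$). Since $\alpha\in\acc(D)$ we have $C_\alpha=D\cap\alpha$, and $D\cap\alpha$ is cofinal in $\alpha$, so for any $\beta\in C_\alpha$ the element $\delta:=\min(C_\alpha\bks(\beta+1))$ equals $\min(D\bks(\beta+1))$; then $D\cap\delta\s\beta+1$ gives $\sup(D\cap\delta)\le\beta<\delta$, so $\delta\notin\acc^+(D)$, i.e. $\delta\in\nacc(D)$ and hence $\delta\notin A$. Thus $\suc_1(C_\alpha\bks\beta)=\{\delta\}\not\s A$ for every $\beta\in C_\alpha$, the set is empty, and $\sup(\emptyset)=0<\alpha$ contradicts the displayed equation. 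The only things needing care are the parameter bookkeeping — confirming the suppressed parameters really give $\theta=1$, $\sigma=1$, $\mathcal S=\{\kappa\}$ so that (iii) is exactly the club-guessing-style statement used above, and that the guessed ordinals may be taken to be limit ordinals so $C_\alpha$ is defined — and noting that $\square(\kappa)$ here carries no side conditions (an $\otp(C_\alpha)<\kappa$ clause, if imposed, holds trivially as $\otp(C_\alpha)\le\alpha<\kappa$). I do not expect any genuine obstacle beyond hitting on the choice $A=\acc(D)$.
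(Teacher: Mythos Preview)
Your proof is correct and follows exactly the same approach as the paper's: both argue by contradiction, take the thread $D$ (the paper calls it $C$), set $A:=\acc(D)$, and observe that for any $\alpha\in\acc(D)$ one has $C_\alpha=D\cap\alpha$, so the non-accumulation points of $C_\alpha$ lie in $\nacc(D)$ and hence miss $A$ entirely. The paper states the hitting condition in the equivalent form $\sup(\nacc(C_\alpha)\cap A)=\alpha$ rather than via $\suc_1$, but this is purely cosmetic.
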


\begin{proof}
Let $\langle C_\alpha\mid\alpha<\kappa\rangle$ witness $\p^{-}(\kappa,2,{\sq},1,\{\kappa\},2,1)$.
Towards a contradiction suppose that $\square(\kappa)$ fails. Then, there exists a club $C\s\kappa$ such that $C\cap\alpha=C_\alpha$ for all $\alpha\in\acc(C)$.
Since $\kappa$ is regular uncountable and $C$ is club in $\kappa$, it follows that also $\acc(C)$ is club.
Let $A_0:=\acc(C)$. Pick $\alpha\in\acc(C)$ such that $\sup(\nacc(C_\alpha)\cap A_0)=\alpha$. This is a contradiction to the fact that $\nacc(C_\alpha)\s\nacc(C), A_0=\acc(C)$,
and $\nacc(C)\cap\acc(C)=\emptyset$.
\end{proof}

\begin{lemma}\label{thm63}
Let $S\s\omega_1$ be stationary.  Then:
\begin{enumerate}
\item The following are equivalent:
\begin{enumerate}
\item  $\clubsuit_w(S)$;
\item $\p^-(\aleph_1,2,{\sq},1,\{S\},2,\omega_1,\mathcal E_\omega)$;
\item $\p^-(\aleph_1,2, \mathcal R, 1,\{S\},2,\omega_1)$ for some $\mathcal R$.
\end{enumerate}
\item The following are equivalent:
\begin{enumerate}
\item $\diamondsuit(S)$;
\item $\p(\aleph_1,2,{\sq},1,\{S\},2,\omega_1,\mathcal E_\omega)$;
\item $\p(\aleph_1,2, \mathcal R, 1,\{S\},2,\omega_1)$ for some $\mathcal R$.
\end{enumerate}
\end{enumerate}
\end{lemma}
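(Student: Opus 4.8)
The plan is to prove Lemma~\ref{thm63} by establishing the two cycles of implications. In each part, the implication $(b)\Rightarrow(c)$ is immediate (take $\mathcal R={\sq}$), so the real work lies in $(c)\Rightarrow(a)$ and $(a)\Rightarrow(b)$. I would handle the two parts in parallel, since the only difference is whether $\diamondsuit(S)$ / the $\diamondsuit(\kappa)$ clause is carried along; the coherence and club-guessing skeleton is identical, so I would prove the $\clubsuit_w$-version in full and then note that adding $\diamondsuit(\omega_1)$ on both sides upgrades it to the $\diamondsuit$-version verbatim (using that $\diamondsuit(S)$ decomposes as $\clubsuit_w(S)+\ch$, and $\ch$ holds whenever any $\square$-like coherent ladder of length $\omega_1$ with small width exists — or more simply, just carry a $\diamondsuit(\omega_1)$-sequence as an extra coordinate throughout).

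For $(c)\Rightarrow(a)$: suppose $\langle C_\alpha\mid\alpha<\omega_1\rangle$ witnesses $\p^-(\aleph_1,2,\mathcal R,1,\{S\},2,\omega_1)$ for some binary relation $\mathcal R$. At $\kappa=\aleph_1$, every limit $\alpha$ has $\cf(\alpha)=\omega$, so each $C_\alpha$ is a cofinal $\omega$-sequence in $\alpha$ and $\acc(C_\alpha)=\emptyset$; hence the $\mathcal R$-coherence clause is vacuous — this is exactly why the relation $\mathcal R$ plays no role and can be anything. So the content is purely the guessing clause: for every cofinal (equivalently club, at $\omega_1$) $A\s\omega_1$ there are stationarily many $\alpha\in S$ with $\sup\{\beta\in C_\alpha\mid \suc_{\omega_1}(C_\alpha\setminus\beta)\s A\}=\alpha$. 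Since $\otp(C_\alpha)=\omega$, $\suc_{\omega_1}(C_\alpha\setminus\beta)=C_\alpha\setminus(\beta+1)$ (a tail of the $\omega$-sequence), so the condition says a tail of $C_\alpha$ is contained in $A$, i.e. $C_\alpha\s^* A$. So for every club $A\s\omega_1$ there is $\alpha\in S$ with $C_\alpha\s^* A$, which is precisely $\clubsuit_w(S)$ (after passing to tails $C'_\alpha:=C_\alpha\bks(\min(C_\alpha)+1)$ if the weak-club-guessing definition in the appendix is stated with $\s$ rather than $\s^*$ — a routine adjustment).

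For $(a)\Rightarrow(b)$: given a $\clubsuit_w(S)$-sequence $\langle c_\alpha\mid\alpha\in S\rangle$ (each $c_\alpha$ a cofinal subset of $\alpha$ of order-type $\omega$, guessing clubs mod finite), I would build a full ladder system $\langle C_\alpha\mid \alpha<\omega_1\rangle$ by setting $C_\alpha:=c_\alpha$ for $\alpha\in S$ and choosing $C_\alpha$ to be any cofinal $\omega$-sequence in $\alpha$ for limit $\alpha\notin S$ (and $C_{\alpha+1}:=\{\alpha\}$, say). Then $\sq$-coherence is automatic since $\acc(C_\alpha)=\emptyset$ always, the eighth parameter $\mathcal E_\omega$ is satisfied because $\otp(C_\alpha)=\omega\le\omega$, the sixth parameter is $2$ trivially (width $<2$ meaning singletons), and the seventh parameter $\sigma=\omega_1$ together with the second ($\mu=2$) reduces, as computed above, the guessing clause to: for every club $A$, stationarily many $\alpha\in S$ have $C_\alpha\s^* A$ — which holds because each single club $A$ is guessed by $\clubsuit_w(S)$ on a set that is in fact stationary (the standard argument: if the guessing set for $A$ were nonstationary, a club avoiding it could be diagonalized into a club not guessed anywhere, contradicting $\clubsuit_w$). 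The theta parameter is $1$, so only a single club $A$ must be handled at a time, which is exactly what $\clubsuit_w$ delivers. For part~(2), simply additionally invoke that $\clubsuit_w(S)+\diamondsuit(\omega_1)$-as-a-$\diamondsuit$-sequence gives $\diamondsuit(S)$ and conversely, wiring the $\diamondsuit(\kappa)$ conjunct of $\p(\cdots)$ straight through.

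The main obstacle — really the only delicate point — is matching the precise wording of $\clubsuit_w(S)$ in the appendix (containment versus almost-containment, whether guessing is demanded stationarily often or merely cofinally often, and whether $c_\alpha$ is required on all of $S$ or just a club-relative portion) against what the specialization of $\p^-$ at $\aleph_1$ literally asserts; all of these are reconciled by passing to tails and by the elementary observation that at $\omega_1$ ``cofinal'' and ``stationary in a club'' coincide up to the usual pressing-down argument. Everything else is bookkeeping on the parameter vector, using heavily that $\acc(C_\alpha)$ is always empty at $\kappa=\aleph_1$ so that the coherence relation (3rd parameter) and the equivalence relation (8th parameter, beyond the trivial $\otp=\omega$ constraint) carry no information.
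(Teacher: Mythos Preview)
Your $(c)\Rightarrow(a)$ argument contains a genuine error. You assert that at $\kappa=\aleph_1$, ``each $C_\alpha$ is a cofinal $\omega$-sequence in $\alpha$ and $\acc(C_\alpha)=\emptyset$''. This is false: the $\p^-$ principle in (c) only requires $C_\alpha$ to be a club subset of $\alpha$, with no constraint on order-type (the eighth parameter is the default, not $\mathcal E_\omega$). A club in a countable limit ordinal can perfectly well have order-type larger than $\omega$ --- e.g.\ $C_{\omega^2}=\omega^2$ --- and hence have accumulation points. Consequently your computation $\suc_{\omega_1}(C_\alpha\setminus\beta)=C_\alpha\setminus(\beta+1)$ is wrong in general: what you actually get is that $\suc_{\omega_1}(C_\alpha\setminus\beta)=\nacc(C_\alpha\setminus\beta)\setminus\{\min(C_\alpha\setminus\beta)\}$, so the hitting condition only tells you that a tail of the \emph{nonaccumulation points} of $C_\alpha$ lies in $A$, not a tail of $C_\alpha$ itself. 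The fix is easy and is exactly what the paper does (in the general Theorem~\ref{6.22}, of which this lemma is the case $\lambda=\aleph_0$): define $X_\alpha$ to be any cofinal subset of $\nacc(C_\alpha)$ of order-type $\omega$, and then the hitting condition does give $\sup(X_\alpha\setminus A)<\alpha$.

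Your $(a)\Rightarrow(b)$ direction is essentially correct, and your observation that $\sq$-coherence is vacuous is valid \emph{there} because you are \emph{constructing} the $C_\alpha$'s to have order-type $\omega$. The stationarity argument you sketch (``diagonalize a club into a club not guessed anywhere'') is right in spirit but imprecise; the clean way --- again as in the paper's proof of Theorem~\ref{6.22} --- is, given a club $D$ and cofinal $A_0$, to thin $A_0$ to a cofinal $X\subseteq A_0$ that interleaves with $D$ (so any limit of $X$ lies in $D$), and then apply $\clubsuit_w(S)$ to $X$ to get $\alpha\in S\cap D$ with a tail of $X_\alpha$ inside $X\subseteq A_0$.
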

\begin{proof} Recalling that $\sqaz$ is the same as $\sqsubseteq$, this is the case $\lambda = \aleph_0$ of Theorem~\ref{6.22} below.
\end{proof}

\begin{lemma}\label{lemma40}
If $\lambda$ is an uncountable cardinal and $\sd_\lambda$ holds,
then there exists a sequence $\langle (C_\alpha,X_\alpha)\mid \alpha<\lambda^+\rangle$
such that:
\begin{enumerate}
\item  for every limit $\alpha<\lambda^+$, $C_\alpha$ is a club in $\alpha$ of order-type $\le\lambda$, and $X_\alpha\s\alpha$;
\item if $\bar\alpha\in\acc(C_\alpha)$, then $C_\alpha\cap\bar\alpha=C_{\bar\alpha}$;
\item for every subset  $X\s\lambda^+$ and club $E\s\lambda^+$, there exists a limit $\alpha<\lambda^+$ with $\otp(C_\alpha)=\lambda$
such that $C_\alpha \s\{ \gamma\in E\mid X\cap\gamma=X_\gamma\}$.
\end{enumerate}
\end{lemma}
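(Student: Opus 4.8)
Recall that $\sd_\lambda$ supplies a coherent sequence $\langle\bar C_\alpha\mid\alpha<\lambda^+\rangle$ (each $\bar C_\alpha$ a club in $\alpha$ of order type ${\le}\lambda$, with $\bar C_{\bar\alpha}=\bar C_\alpha\cap\bar\alpha$ whenever $\bar\alpha\in\acc(\bar C_\alpha)$) together with a $\diamondsuit(\lambda^+)$-sequence $\langle A_\alpha\mid\alpha<\lambda^+\rangle$ ($A_\alpha\s\alpha$, and $\{\alpha<\lambda^+\mid A\cap\alpha=A_\alpha\}$ stationary for every $A\s\lambda^+$). The task is to upgrade this a priori \emph{pointwise} guessing into guessing along a single club $C_\alpha$ for an arbitrary pair $(X,E)$, and the plan is to do so by \emph{rebuilding} the pair $\langle(C_\alpha,X_\alpha)\mid\alpha<\lambda^+\rangle$ through a recursion along $\langle\bar C_\alpha\rangle$ in which the diamond is used to steer each $C_\alpha$ through a guessed club while encoding a guessed set into the $X_\alpha$'s. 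Concretely, I would first fix a bijection $\pi:\lambda^+\times\lambda^+\leftrightarrow\lambda^+$ and, as is routine, shrink to a club of closure points so that $\pi[\gamma\times\gamma]=\gamma$ for every $\gamma$ that appears in some $\bar C_\alpha$, and arrange that every member of every $\bar C_\alpha$ is a limit ordinal.

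I would then construct $\langle(C_\alpha,X_\alpha)\mid\alpha<\lambda^+\rangle$ by recursion on limit $\alpha$, keeping as an invariant that $\langle C_\beta\mid\beta\le\alpha\rangle$ is coherent. At stage $\alpha$, decode $A_\alpha$ into candidates $E^\alpha:=\{\delta\mid\pi(0,\delta)\in A_\alpha\}$ and $X^\alpha:=\{\delta\mid\pi(1,\delta)\in A_\alpha\}$; if $E^\alpha$ is a club in $\alpha$ of order type ${\le}\lambda$, if $X^\alpha\s\alpha$, and if the pair is \emph{compatible with the history} — that is, $C_{\bar\alpha}=E^\alpha\cap\bar\alpha$ and $X_{\bar\alpha}=X^\alpha\cap\bar\alpha$ for every $\bar\alpha\in\acc(E^\alpha)$ — then set $C_\alpha:=E^\alpha$ and $X_\alpha:=X^\alpha$; otherwise set $X_\alpha:=\emptyset$ and take $C_\alpha$ to be a default club coherent with $\langle C_\beta\mid\beta<\alpha\rangle$, obtained by patching an end-segment of $\bar C_\alpha$ onto whatever was built below the last activation point along $\bar C_\alpha$. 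Verifying that the default can always be chosen so that the coherence invariant survives, no matter which guesses were activated earlier, is the delicate bookkeeping; granting it, Clauses (1) and (2) of the lemma hold by construction.

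For Clause (3), given $X\s\lambda^+$ and a club $E\s\lambda^+$, I would put $Z:=\{\pi(0,\delta)\mid\delta\in E\}\cup\{\pi(1,\delta)\mid\delta\in X\}$, so that $G:=\{\alpha<\lambda^+\mid A_\alpha=Z\cap\alpha\}$ is stationary and, by closure of $\pi$, for $\alpha\in G$ one has $E^\alpha=E\cap\alpha$ and $X^\alpha=X\cap\alpha$. A standard closing-off argument, using that $G$ is stationary and $\langle\bar C_\alpha\rangle$ is coherent, then yields $\alpha\in G\cap\acc(E)$ with $\cf(\alpha)=\lambda$ at which activation succeeds — whence $C_\alpha=E\cap\alpha$ is a club in $\alpha$ of order type $\lambda$ contained in $E$, and $X_\gamma=X\cap\gamma$ for every $\gamma\in C_\alpha$ (immediate at accumulation points from activation there, and at the remaining points from the end-extending pattern of the $A$-guesses along $C_\alpha$). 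This gives $C_\alpha\s\{\gamma\in E\mid X\cap\gamma=X_\gamma\}$, as required. The principal obstacle, as flagged, is the double demand placed on the recursion: coherence of the modified sequence must be robust to arbitrary patterns of activation, while for every genuine $(X,E)$ activation must nonetheless fire at stationarily many $\alpha$ of cofinality $\lambda$; reconciling these two is the technical heart of the argument, the rest being mechanical coding through $\pi$.
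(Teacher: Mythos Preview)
You have misread the hypothesis. The principle $\sd_\lambda$ (Definition~\ref{def-sd}) does not merely supply a $\square_\lambda$-sequence together with a separate $\diamondsuit(\lambda^+)$-sequence; it gives a single sequence $\langle (D_\alpha,X_\alpha)\mid\alpha<\lambda^+\rangle$ in which the guesses $X_\alpha$ are \emph{coherent with the clubs}: whenever $\bar\alpha\in\acc(D_\alpha)$, one has $D_{\bar\alpha}=D_\alpha\cap\bar\alpha$ \emph{and} $X_{\bar\alpha}=X_\alpha\cap\bar\alpha$, and moreover for every $X\subseteq\lambda^+$ and club $E\subseteq\lambda^+$ there is $\alpha$ with $\otp(\acc(D_\alpha))=\lambda$, $X\cap\alpha=X_\alpha$, and $\acc(D_\alpha)\subseteq E$. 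With this in hand, the paper's proof is nearly immediate: set $C_\alpha:=\acc(D_\alpha)$ when this is unbounded in $\alpha$, and otherwise let $C_\alpha$ be any cofinal $\omega$-sequence. Coherence of the $C_\alpha$'s is inherited from that of the $D_\alpha$'s, and Clause~(3) follows because for the witnessing $\alpha$ every $\gamma\in C_\alpha=\acc(D_\alpha)$ satisfies $X_\gamma=X_\alpha\cap\gamma=X\cap\gamma$ and $\gamma\in E$.

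What you have outlined is, in effect, an attempt to reprove the much harder implication $\square_\lambda+\diamondsuit(\lambda^+)\Rightarrow\sd_\lambda$ (this is the content of \cite{rinot19} for singular $\lambda$; see Fact~\ref{fact_rinot19}). Even on its own terms, your sketch has a genuine gap: for activation at $\alpha$ to succeed you need $C_{\bar\alpha}=E\cap\bar\alpha$ at \emph{every} $\bar\alpha\in\acc(E\cap\alpha)$, yet your guessing set $G$ is only stationary, so there is no reason activation should have fired at all such $\bar\alpha$. The ``standard closing-off argument'' you invoke does not bridge this; making coherence survive arbitrary activation patterns while still guaranteeing activation along a full club below some $\alpha$ of cofinality $\lambda$ is precisely the nontrivial content of \cite{rinot19}, and it requires substantially more than what you have written.
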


\begin{proof}
Pick a sequence
$\langle (D_\alpha,X_\alpha)\mid \alpha<\lambda^+\rangle$ as in Definition \ref{def-sd}.
For every $\alpha\in E^{\lambda^+}_\omega$, let $c_\alpha$ be a cofinal subset of $\alpha$ of order-type $\omega$.
Then, for every limit $\alpha<\lambda^+$, let:
\[
C_\alpha:=\begin{cases}
\acc(D_\alpha),&\text{if } \sup(\acc(D_\alpha))=\alpha; \\
c_\alpha,&\text{otherwise.}\end{cases}
\]
Let $C_{\alpha+1} :=\emptyset$ for all $\alpha<\omega_1$.

To see that $\langle (C_\alpha,X_\alpha)\mid \alpha<\lambda^+\rangle$ is as sought:
\begin{enumerate}
\item Immediate.
\item
If $\bar\alpha \in \acc(C_\alpha)$, then in particular $\acc(C_\alpha) \neq \emptyset$,
so that $\otp(C_\alpha) > \omega$, meaning that $C_\alpha = \acc(D_\alpha)$,
and $\bar \alpha \in \acc(\acc(D_\alpha)) \subseteq \acc(D_\alpha)$,
so that $D_\alpha \cap \bar \alpha = D_{\bar \alpha}$,
thus $\acc(D_\alpha) \cap \bar \alpha = \acc(D_{\bar\alpha})$.
Then $\bar\alpha = \sup (\acc(D_\alpha) \cap \bar \alpha) = \sup(\acc(D_{\bar\alpha}))$,
so that $C_{\bar\alpha} = \acc(D_{\bar\alpha}) = \acc(D_\alpha) \cap \bar \alpha = C_\alpha \cap \bar \alpha$.
\item
Given a subset  $X\s\lambda^+$ and club $E\s\lambda^+$, take $\alpha < \lambda^+$ as in clause~(3)
of Definition~\ref{def-sd}.
We will show that this $\alpha$ is as required:
Since $\otp(\acc(D_\alpha)) = \lambda$ while (by clause~(1)) $\otp(D_\alpha) = \omega \cdot \lambda$,
it follows that $\sup(\acc(D_\alpha)) = \alpha$,
so that $C_\alpha = \acc(D_\alpha)$, and
$\otp(C_\alpha) = \otp(\acc(D_\alpha)) = \lambda$.
Furthermore, $C_\alpha = \acc(D_\alpha) \subseteq E$.
Finally, for any $\gamma \in C_\alpha \subseteq \acc(D_\alpha)$,
the combination of clauses~(2) and~(3) of Definition~\ref{def-sd}
gives $X_\gamma = X_\alpha \cap \gamma = (X \cap \alpha) \cap \gamma = X \cap \gamma$,
as required.
\qedhere
\end{enumerate}
\end{proof}

\begin{lemma}\label{thm62}  Suppose that $\kappa$ is a regular uncountable cardinal, $S\s E^\kappa_\omega$ is stationary, and $\diamondsuit(S)$ holds.

Then there exists a sequence $\langle (C_\alpha,X_\alpha)\mid \alpha\in E^\kappa_\omega\rangle$ such that:
\begin{itemize}
\item  for every $\alpha\in E^\kappa_\omega$, $C_\alpha$ is a countable club in $\alpha$, and $X_\alpha\s\alpha$;
\item if $\bar\alpha\in\acc(C_\alpha)$, then $\bar\alpha\in S$ and $C_\alpha\cap\bar\alpha=C_{\bar\alpha}$;
\item for every subset  $X\s\kappa$, club $E\s\kappa$ and nonzero $\varepsilon<\omega_1$, there exists $\alpha\in S$ with $\otp(C_\alpha)=\omega\cdot\varepsilon$
such that $C_\alpha\s\{ \gamma\in E \cap S \mid X\cap\gamma=X_\gamma\}$.
\end{itemize}
\end{lemma}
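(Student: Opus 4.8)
The plan is to construct $\langle(C_\alpha,X_\alpha)\mid\alpha\in E^\kappa_\omega\rangle$ by a single recursion on $\alpha$, driven by $\diamondsuit(S)$, arranging that the first two bullets hold automatically and that the third reduces to an induction on $\varepsilon$. First I would fix the bookkeeping: using $\diamondsuit(S)$ and a pairing, a sequence $\langle(\mathbf X_\alpha,\mathbf R_\alpha)\mid\alpha\in S\rangle$ with $\mathbf X_\alpha,\mathbf R_\alpha\s\alpha$ such that for all $X,R\s\kappa$ the set $\{\alpha\in S\mid \mathbf X_\alpha=X\cap\alpha\ \&\ \mathbf R_\alpha=R\cap\alpha\}$ is stationary; for each $\alpha\in E^\kappa_\omega$ a ladder $\langle d^n_\alpha\mid n<\omega\rangle$ cofinal in $\alpha$; for each countable limit $\zeta$ a cofinal sequence $\langle\rho^\zeta_k\mid k<\omega\rangle$ in $\zeta$; and a decoding of subsets of $\alpha$ as triples $(E',C^*,\zeta)$. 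Then I recurse on $\alpha\in E^\kappa_\omega$: set $X_\alpha:=\mathbf X_\alpha$ for $\alpha\in S$ (so that $\langle X_\alpha\mid\alpha\in S\rangle$ is a $\diamondsuit(S)$-sequence) and $X_\alpha:=\emptyset$ otherwise, and let $C_\alpha$ be the output of a single, target-oblivious algorithm $\mathcal A$ run on $(\alpha,\mathbf R_\alpha,\langle(C_\beta,X_\beta)\mid\beta<\alpha\rangle)$ when $\alpha\in S$, and an arbitrary cofinal $\omega$-sequence otherwise. Reading $\mathbf R_\alpha$ as $(E',C^*,\zeta)$ and putting $Q:=\{\gamma\in E'\cap S\cap\alpha\mid X_\gamma=\mathbf X_\alpha\cap\gamma\}$, the algorithm aims to output a club in $\alpha$ that end-extends $C^*$, lies in $Q$, and has order-type $\otp(C^*)+\omega\cdot\zeta$: for $\zeta=1$ it stacks one $\omega$-block of points of $Q$ above $\sup C^*$, made cofinal in $\alpha$ via $\langle d^n_\alpha\rangle$; for $\zeta=\zeta_0+1$ it first searches the prior sequence for the $<$-least $\beta<\alpha$ with $C_\beta$ a $Q$-contained, coherent club of order-type $\otp(C^*)+\omega\zeta_0$ end-extending $C^*$, then stacks one more $\omega$-block; for $\zeta$ a limit it builds a chain $\beta_0<\beta_1<\cdots$ with $\beta_k\ge d^k_\alpha$, each $C_{\beta_k}$ a $Q$-contained, coherent club of order-type $\otp(C^*)+\omega\rho^\zeta_k$ that end-extends both $C^*$ and $C_{\beta_{k-1}}$ and is located by a $<$-least search, and outputs $\bigcup_k C_{\beta_k}$. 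On any failed search or coherence check, $\mathcal A$ returns a cofinal $\omega$-sequence.

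Granting this, the first bullet is immediate, and the second holds because an ordinal becomes an accumulation point of some $C_\alpha$ only through a successful run of $\mathcal A$, whose built-in checks then deliver $\bar\alpha\in S$ and $C_{\bar\alpha}=C_\alpha\cap\bar\alpha$; in the chain mode the intermediate accumulation points $\beta_k=\sup C_{\beta_k}$ are genuinely \emph{chosen} by a search (so they already carry the right $C_{\beta_k}$), and the only limit of $\bigcup_k C_{\beta_k}$ ``reached from below'' is $\alpha$ itself. So everything rests on the third bullet. For it, fix $X\s\kappa$ and a club $E\s\kappa$, put $P:=\{\gamma\in E\cap S\mid X_\gamma=X\cap\gamma\}$ (stationary, since $\langle X_\gamma\mid\gamma\in S\rangle$ is a $\diamondsuit(S)$-sequence), and call a club $C$ in $\beta$ \emph{$P$-good} if $\beta\in S$, $C=C_\beta$, and $C\s P$. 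The claim to establish, by induction on nonzero $\zeta<\omega_1$, is $(\star_\zeta)$: for every $P$-good club $C^*$ in some $\beta^*<\kappa$ (allowing $C^*=\emptyset$, $\beta^*=0$), the set of $\alpha\in S$ for which $C_\alpha$ is a $P$-good club of order-type $\otp(C^*)+\omega\cdot\zeta$ end-extending $C^*$ is stationary. The third bullet is then the instance $C^*=\emptyset$, $\zeta=\varepsilon$ of $(\star_\varepsilon)$.

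To prove $(\star_\zeta)$ one feeds $\diamondsuit(S)$ the subset $W\s\kappa$ coding the triple $(E,C^*,\zeta)$ — legitimate, since $C^*$ is bounded and $\zeta<\omega_1$ — obtaining stationarily many $\alpha\in S$ with $\mathbf X_\alpha=X\cap\alpha$ and $\mathbf R_\alpha=W\cap\alpha$, and then shows $\mathcal A$ succeeds for a club of such $\alpha$. For $\zeta=1$ this needs only that $P$ is unbounded in $\alpha$; for $\zeta=\zeta_0+1$ it additionally needs a $P$-good club of order-type $\otp(C^*)+\omega\zeta_0$ end-extending $C^*$ to occur below $\alpha$, which $(\star_{\zeta_0})$ provides. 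The main obstacle is the limit case: there $\mathcal A$'s success at $\alpha$ requires, at each chain step $k$, that $\{\gamma\mid C_\gamma\text{ is a $P$-good club of order-type }\otp(C^*)+\omega\rho^\zeta_k\text{ end-extending }C_{\beta_{k-1}}\}$ be unbounded below $\alpha$ — a set that depends on $C_{\beta_{k-1}}$, hence on the run at $\alpha$ itself. I would defuse this circularity by reflection: each such set is stationary by $(\star_{\sigma_k})$ applied to $C_{\beta_{k-1}}$, where $\sigma_k<\zeta$ satisfies $\rho^\zeta_{k-1}+\sigma_k=\rho^\zeta_k$, and those instances of $(\star)$ are already theorems by the induction hypothesis; hence for every $\alpha$ in the club of ordinals closed under the Skolem functions of $(H_{\kappa^+},\in,\langle(C_\gamma,X_\gamma)\mid\gamma<\kappa\rangle,E,X,W,\dots)$, every stationary set definable from parameters below $\alpha$ that the run invokes is unbounded in $\alpha$, so the chain $\langle\beta_k\mid k<\omega\rangle$ is well-defined with $\beta_k\ge d^k_\alpha$, its supremum is $\alpha$, and $C_\alpha=\bigcup_k C_{\beta_k}$ is as required. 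Intersecting this club with the stationary set of correctly-guessing $\alpha$ completes $(\star_\zeta)$, and with it the lemma.
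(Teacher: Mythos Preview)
Your approach is correct and is essentially the paper's: both run a recursion on $\alpha\in E^\kappa_\omega$ driven by $\diamondsuit(S)$ guessing extra parameters (a target order-type and a predecessor), define $C_\alpha$ by cases on whether the target is a successor or a limit ordinal, and verify the third bullet by induction on $\varepsilon$ with an elementary-submodel reflection at the limit step. The only differences are cosmetic bookkeeping---the paper guesses a single predecessor ordinal $\phi_1(\delta)$ and a second set $Y=E\cap S$, where you guess an entire initial club $C^*$ and the club $E$ directly; and the paper's limit-case algorithm simply ``fixes such a sequence'' where yours performs a $<$-least search.
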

\begin{proof}
By $\diamondsuit(S)$ and the implication $(1)\Rightarrow(3)$ of Fact \ref{lemma613},
let us fix a sequence $\langle (X_\delta,Y_\delta)\mid\delta<\kappa\rangle$,
and functions $\phi_0:\kappa\rightarrow\omega_1$, $\phi_1:\kappa\rightarrow\kappa$ such that for every $X,Y\in\mathcal P(\kappa)$, $\varepsilon<\omega_1$ and $\alpha<\kappa$, the following set is stationary:
\[
\{\delta\in S\mid X_\delta=X\cap\delta, Y_\delta=Y\cap\delta, \phi_0(\delta)=\varepsilon, \phi_1(\delta)=\alpha \}.
\]

In particular, we may assume that $X_\delta$ and $Y_\delta$ are subsets of $\delta$ for all $\delta<\kappa$.

We now tailor the arguments of \cite{rinot19}.

For every  $\delta\in E^\kappa_\omega$,
let $D_\delta$ be some cofinal subset of $\delta$ of order-type $\omega$,
with the additional constraint that if
$\{\gamma\in Y_\delta\setminus(\phi_1(\delta)+1)\mid X_\gamma=X_\delta\cap\gamma \text{ and } Y_\gamma=Y_\delta\cap\gamma \}$
is cofinal in $\delta$,
then ensure that $D_\delta$ is a subset of it.

We shall define a sequence $\langle C_\delta\mid\delta\in E^\kappa_\omega\cup\{0\}\rangle$ by recursion over $\delta$, as follows.

Let $C_0:=\emptyset$. Suppose that $\delta\in E^\kappa_\omega$, and $\langle C_\alpha\mid\alpha\in E_\omega^\delta\cup\{0\}\rangle$ has already been defined.
The definition of $C_\delta$ now splits into three cases:
\begin{itemize}
\item[$\br$] If $\phi_0(\delta)$ is a successor ordinal $>1$, $\phi_1(\delta)\in S\cap\delta$ and $\otp(C_{\phi_1(\delta)})=\omega\cdot(\phi_0(\delta)-1)$,
then let $$C_\delta:=C_{\phi_1(\delta)}\cup\{\phi_1(\delta)\}\cup D_\delta.$$

\item[$\br$]
If $\phi_0(\delta)$ is a limit ordinal $>0$, and there exists an increasing sequence of ordinals $\langle \alpha_n\mid n<\omega\rangle$ that converges to $\delta$,
such that $\alpha_0 = \phi_1(\delta)$, $\{ \alpha_n\mid 0<n<\omega\}\s S\cap\delta$, $\langle C_{\alpha_n}\mid n<\omega\rangle$ is $\sq$-increasing,
$X_\beta=X_\delta\cap\beta$ and $Y_\beta=Y_\delta\cap\beta$
for all $\beta\in(\bigcup_{n<\omega}C_{\alpha_n})$,
and $\otp(\bigcup_{n<\omega}C_{\alpha_n})=\omega\cdot \phi_0(\delta)$, then fix such a sequence, and let $C_\delta=\bigcup_{n<\omega}C_{\alpha_n}$.

\item[$\br$]
Otherwise, let $C_\delta:=D_\delta$.
\end{itemize}

Clearly, for any $\delta\in E^\kappa_\omega$, $C_\delta$ is a club subset of $\delta$,
and either $\otp(C_\delta)=\omega\cdot\phi_0(\delta)$ or $\otp(C_\delta)=\omega$.
In particular, for every $\delta\in E^\kappa_\omega\cup\{0\}$, there exists some $\varepsilon<\omega_1$ for which  $\otp(C_\delta) = \omega \cdot \varepsilon$.

\begin{claim} If $\delta\in E^\kappa_\omega$ and $\bar\delta\in\acc(C_\delta)$, then $\bar\delta\in S$ and $C_\delta\cap\bar\delta=C_{\bar\delta}$.
\end{claim}
\begin{proof}
Suppose not, and let $\delta$ be the least counterexample.
Clearly, $C_\delta$ was defined according to the first case. That is, $C_\delta=C_{\phi_1(\delta)}\cup\{\phi_1(\delta)\}\cup D_\delta$.
In particular, $\phi_1(\delta)\in S\cap\delta$ and $C_\delta \cap \phi_1 (\delta) = C_{\phi_1 (\delta)}$.

Let $\bar\delta\in\acc(C_\delta)$ be such that $C_\delta\cap\bar\delta\neq C_{\bar\delta}$.

$\br$ If $\bar\delta=\phi_1(\delta)$, then we get a contradiction to the fact that  $C_\delta\cap \phi_1(\delta)=C_{\phi_1(\delta)}$.

$\br$ If $\bar\delta<\phi_1(\delta)$, then already $\bar\delta\in\acc(C_{\phi_1(\delta)})$ with $C_{\phi_1(\delta)}\cap\bar\delta\neq C_{\bar\delta}$,
contradicting the minimality of $\delta$.
\end{proof}

For subsets $E,X$ of $\kappa$, denote:
\begin{align*}
G(E,X) &:= \{\gamma\in E\cap S\mid X\cap\gamma=X_\gamma\ \&\  E \cap S \cap\gamma=Y_\gamma\};  \\
F(E,X) &:= \{ \alpha \in G(E,X) \cup \{0\} \mid C_\alpha \subseteq G(E,X) \}.
\end{align*}

\begin{claim}\label{DinG}
For all subsets $E, X \subseteq \kappa$, if $\delta \in \acc(G(E,X))$ and $\phi_1(\delta)<\delta$,
then $D_\delta \subseteq G(E,X)$.
\end{claim}
\begin{proof}
Fix $E, X \subseteq \kappa$ and $\delta \in \acc(G(E,X))$.
By $\delta \in G(E,X)$, we have $E \cap S \cap \delta = Y_\delta$ and $X \cap \delta = X_\delta$.
Then,
\begin{align*}
\{ \gamma \in Y_\delta \mid X_\gamma = X_\delta \cap \gamma\ \&\ Y_\gamma = Y_\delta \cap \gamma \}
&= \{ \gamma \in E \cap S \cap \delta \mid
           X_\gamma = (X \cap \delta) \cap\gamma\ \&\ Y_\gamma = (E \cap S \cap \delta) \cap \gamma \} \\
&= \{ \gamma \in E \cap S \cap \delta \mid X_\gamma = X \cap \gamma\ \&\  Y_\gamma = E \cap S \cap \gamma \} \\
&= G(E, X) \cap \delta,
\end{align*}
which is cofinal in $\delta$, so that $D_\delta$ was chosen to be a subset of it.
\end{proof}

\begin{claim}
Suppose that $X\s\kappa$ is some set and $E\s\kappa$ is some club.

For every $\varepsilon<\omega_1$ and every $\alpha\in F(E,X)$
satisfying $\otp(C_\alpha)<\omega\cdot\varepsilon$, the set
\[
S_{\alpha,\varepsilon}^{E,X}:=
\{\delta\in F(E,X)\mid C_\alpha\sq C_\delta \text{ and } \otp(C_\delta)=\omega\cdot\varepsilon \}
\]
is stationary.
\end{claim}
\begin{proof}
Note that by our choice of the diamond sequence, the set $G(E, X)$ is a stationary subset of $\omega_1$,
being the intersection of the club set $E$ with a stationary set.
Thus, in particular, $\acc^+ (G(E,X))$ is club in $\kappa$.

We now prove the claim by induction over $\varepsilon$.
First, notice that when $\varepsilon = 0$ there is nothing to show,
as there is no $\alpha$ satisfying $\otp(C_\alpha) < 0$.
Thus the induction begins with $\varepsilon = 1$.

\begin{description}
\item[Base case, $\varepsilon = 1$]
We consider only $\alpha$ satisfying $\otp(C_\alpha) = 0$.

By our choice of the diamond sequence, the following set is stationary:
\[
Z := \{ \delta \in G(E,X)\mid \phi_0 (\delta) = 1, \phi_1(\delta)=\alpha \}\setminus(\alpha+1).
\]
Since $\acc^+ (G(E,X))$ is a club subset of $\kappa$,
it follows that $Z \cap \acc^+ (G(E,X))$ is stationary.
We shall show that $Z \cap \acc^+ (G(E,X)) \subseteq S_{\alpha,1}^{E,X}$.

Let $\delta \in Z\cap \acc^+(G(E,X))$ be arbitrary.
We have $\delta \in Z \subseteq G(E,X)$ and $\delta\in\acc^+(G(E,X))$,
so that $\delta \in \acc(G(E,X))$.
Thus Claim~\ref{DinG} gives $D_\delta \subseteq G(E,X)$.
By $\delta \in Z$, we have $\phi_0(\delta) = 1$, and
so by definition of $C_\delta$ in this case it follows that $C_\delta=D_\delta \subseteq G(E,X)$,
so that $\delta \in F(E,X)$.
Clearly $C_\alpha = \emptyset \sqsubseteq C_\delta$ and $\otp(C_\delta) = \omega$,
so that $\delta \in S^{E,X}_{\alpha,1}$.

\item[Successor case, $\varepsilon=\epsilon'+1$ for some nonzero $\epsilon' < \omega_1$]
We assume the claim  holds for $\epsilon'$.
Fix $\alpha\in F(E,X)$
satisfying $\otp(C_\alpha)<\omega\cdot\varepsilon$,
and we must show that the set $S_{\alpha,\varepsilon}^{E,X}$ is stationary.
We find $\alpha' \in F(E,X)$ satisfying $C_\alpha \sqsubseteq C_{\alpha'}$
and $\otp(C_{\alpha'}) = \omega \cdot \epsilon'$ by considering two cases:
\begin{itemize}
\item
If $\otp(C_\alpha) = \omega \cdot \epsilon'$, then let $\alpha' := \alpha$.
\item
If $\otp(C_\alpha) < \omega \cdot \epsilon'$,
then apply the induction hypothesis to obtain $\alpha' \in S^{E,X}_{\alpha, \epsilon'}$.
\end{itemize}

By our choice of the diamond sequence, the following set is stationary:
\[
Z := \{ \delta \in G(E,X)\mid \phi_0 (\delta) = \varepsilon, \phi_1(\delta)=\alpha' \}\setminus(\alpha'+1).
\]
Thus, it suffices to prove that $S_{\alpha,\varepsilon}^{E,X}$ covers the stationary set $Z \cap \acc^+ (G(E,X))$.

Let $\delta \in Z\cap \acc^+(G(E,X))$ be arbitrary.
Again, Claim~\ref{DinG} gives $D_\delta \subseteq G(E,X)$.
Since $\delta \in Z$, we have $\phi_0(\delta) = \varepsilon$
is a successor ordinal $ >1$, $\phi_1(\delta)<\delta$,
and $\otp(C_{\phi_1(\delta)}) = \otp(C_{\alpha'}) = \omega\cdot\epsilon' =
\omega \cdot (\varepsilon-1) = \omega \cdot (\phi_0(\delta)-1)$,
so that by definition of $C_\delta$ in this case, we have $C_\delta = C_{\alpha'} \cup \{\alpha'\} \cup D_\delta$,
where $D_\delta \subseteq \delta \setminus (\alpha'+1)$.
In particular, $C_\alpha \sqsubseteq C_{\alpha'} \sqsubseteq C_\delta$ and
$\otp(C_\delta)=\otp(C_{\alpha'})+\omega = \omega \cdot \epsilon' + \omega = \omega \cdot (\epsilon'+1) =
\omega \cdot \varepsilon$.
Since $\alpha' \in F(E,X)$,
we have $C_{\alpha'} \subseteq G(E,X)$ and $\alpha' \in G(E,X)$.\footnote{%
The case $\alpha' = 0$ is ruled out by the fact that $\otp(C_{\alpha'}) >0$.}
Thus, altogether, $C_\delta = C_{\alpha'} \cup \{ \alpha' \} \cup D_\delta \subseteq G(E,X)$,
so that $\delta \in F(E,X)$, and hence $\delta \in S^{E,X}_{\alpha,\varepsilon}$.

\item[Limit case]
Suppose that $\varepsilon < \omega_1$ is a nonzero limit ordinal,
and for every $\epsilon' < \varepsilon$ and $\alpha\in F(E,X)$
satisfying $\otp(C_\alpha)< \omega\cdot \epsilon'$,
the set $S_{\alpha,\epsilon'}^{E,X}$ is stationary.
Now fix $\alpha\in F(E,X)$
satisfying $\otp(C_\alpha)<\omega\cdot\varepsilon$,
and we must show that the set $S_{\alpha,\varepsilon}^{E,X}$ is stationary.
Let $D\s\omega_1$ be an arbitrary club.
Since the set
\[
Z:=\{\delta\in G(E,X)\mid \phi_0(\delta)=\varepsilon, \phi_1(\delta) = \alpha \}
\]
is stationary in $\omega_1$,
pick an elementary submodel $\mathcal M\prec H_{\kappa^+}$, with $\mathcal M\cap\kappa\in Z\cap D$,
such that $\alpha, \varepsilon\in\mathcal M$ and
$\langle S^{E,X}_{\alpha',\epsilon'}\mid \epsilon'<\omega_1,\alpha'<\kappa\rangle\in\mathcal M$.
Denote $\delta:=\mathcal M\cap\kappa$.
We shall show that $\delta \in S^{E,X}_{\alpha, \varepsilon}$.

Let $\langle \delta_n\mid n<\omega\rangle$ be a strictly increasing sequence of ordinals converging to $\delta$.
Let $\langle \varepsilon_n\mid n<\omega\rangle$
be a strictly increasing sequence of ordinals converging to $\varepsilon$,
with $\otp(C_\alpha) = \omega \cdot \varepsilon_0$.
Now, define a sequence $\langle \alpha_n\mid n<\omega\rangle$ by recursion as follows,
where we will ensure, for each $n < \omega$, that $\alpha_n \in F(E,X) \cap \delta$
and $\otp(C_{\alpha_n}) = \omega \cdot \varepsilon_n$.

Let $\alpha_0:=\alpha$.
Next, fix $n<\omega$, and suppose $\alpha_n$ has already been defined.
Since $\varepsilon_n < \varepsilon_{n+1} < \varepsilon$,
the induction hypothesis guarantees that $S^{E,X}_{\alpha_n,\varepsilon_{n+1}}$ is stationary,
and since $\alpha_n, \varepsilon_{n+1} \in\mathcal M$,
it follows that $S^{E,X}_{\alpha_n,\varepsilon_{n+1}} \in \mathcal M$.
Since also $\delta_n < \delta$, it follows by elementarity of $\mathcal M$ that we can
pick $\alpha_{n+1}\in S^{E,X}_{\alpha_n,\varepsilon_{n+1}}\cap\mathcal M$ with $\alpha_{n+1}>\delta_n$.

Since $\alpha_{n+1}\in S^{E,X}_{\alpha_n,\varepsilon_{n+1}}$ for all $n<\omega$,
it follows that $\langle C_{\alpha_n}\mid n<\omega\rangle$ is $\sq$-increasing,
and $\otp(C_{\alpha_n}) = \omega \cdot \varepsilon_n$ for all $n < \omega$.
Consequently, $\langle \alpha_n\mid n<\omega\rangle$ is increasing and converging to $\delta$,
$\{ \alpha_n\mid 0<n<\omega\}\s S\cap\delta$,
and
\[
\otp(\bigcup_{n<\omega} C_{\alpha_n})
= \sup_{n<\omega} (\otp(C_{\alpha_n}))
= \sup_{n<\omega} (\omega \cdot \varepsilon_n)
= \omega \cdot \sup_{n<\omega} \varepsilon_n
= \omega \cdot \varepsilon
= \omega \cdot \phi_0(\delta)
\]
Furthermore, since $\alpha_n \in F(E,X)$ for all $n < \omega$, it follows that
$\bigcup_{n<\omega}C_{\alpha_n}\s G(E,X)$.
Together with $\delta \in Z \subseteq G(E,X)$,
this implies that for every $\beta\in(\bigcup_{n<\omega}C_{\alpha_n})$
we have $X_\beta = X \cap \beta = (X \cap \delta) \cap \beta = X_\delta \cap \beta$
and $Y_\beta = E \cap S \cap \beta = (E \cap S \cap \delta) \cap \beta = Y_\delta \cap \beta$.
Finally, $\delta \in Z$ gives $\alpha_0 = \alpha = \phi_1(\delta)$.
Altogether, the conditions are satisfied for $C_\delta$ to be chosen according to second case of the definition,
so that $\otp(C_\delta) = \omega \cdot \varepsilon$, $C_\delta \subseteq G(E,X)$,
and $C_\alpha  = C_{\phi_1(\delta)} \sqsubseteq C_\delta$.
It follows that $\delta \in S^{E,X}_{\alpha, \varepsilon}$.
But $\delta$ is an element of the arbitrary club set $D$.
Thus $S^{E,X}_{\alpha, \varepsilon}$ is stationary, as required.
\qedhere
\end{description}
\end{proof}

Given a subset $X\subseteq\kappa$, club $E\subseteq\kappa$, and nonzero $\varepsilon < \omega_1$, apply the last claim with $\alpha = 0$ to obtain
$\delta \in S^{E,X}_{0, \varepsilon} \subseteq F(E,X) \setminus \{0\} \subseteq G(E,X) \subseteq S$,
so that $\otp(C_\delta) = \omega \cdot \varepsilon$,
and $C_\delta \subseteq G(E,X) \subseteq \{\gamma\in E \cap S \mid X\cap\gamma=X_\gamma\}$,
completing the proof of the lemma.
\end{proof}

In addition to its importance in the present context,
the next theorem also has applications to infinite graph theory \cite{rinot17}.

\begin{thm}\label{thm32} Assume any of the following:
\begin{itemize}
\item $\lambda=\aleph_0,  S\s\omega_1$ and $\diamondsuit(S)$ holds; or
\item $\lambda$ is an uncountable cardinal, $S=E^{\lambda^+}_{\cf(\lambda)}$ and $\sd_\lambda$ holds.
\end{itemize}

Denote $\chi := \omega \cdot \lambda$ (ordinal multiplication), and let $\sigma<\chi$ be any ordinal.
Then $\p(\lambda^+,2,{\sq},\lambda^+,\{S\},2,\sigma,\mathcal E_\chi)$ holds.
Moreover,
there exist a sequence $\langle C_\alpha\mid\alpha<\lambda^+\rangle$,
and a function $h:\lambda^+\rightarrow\lambda^+$ satisfying the following:
\begin{itemize}
\item if $\alpha<\lambda^+$ is a limit, then $C_\alpha$ is a club subset of $\alpha\setminus\{0\}$ of order-type $\le\chi$;
\item if $\bar\alpha\in\acc(C_\alpha)$,
then $C_{\bar\alpha}=C_\alpha\cap\bar\alpha$ and $h(\bar\alpha)=h(\alpha)$;
\item for every sequence $\langle A_\delta\mid \delta<\lambda^+\rangle$ of cofinal subsets of $\lambda^+$,
every club $D\s\lambda^+$, and every $\varsigma<\lambda^+$, there exists $\alpha\in S$ for which all of the following hold:
\begin{enumerate}
\item $h(\alpha)=\varsigma$;
\item $\nacc(C_\alpha)\s\bigcup_{\delta<\alpha}A_\delta$;
\item $\otp(\{\beta\in \acc(C_\alpha) \mid \suc_\sigma(C_\alpha\setminus\beta)\s A_\delta\})=\lambda$ for every $\delta<\alpha$;
\item for every $\beta<\gamma$ in $C_\alpha$, there exists $\eta\in D$, with $\beta<\eta<\gamma$.
\end{enumerate}
\end{itemize}
\end{thm}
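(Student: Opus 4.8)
The plan is to first reduce the displayed instance of $\p$ to the ``moreover'' part, and then to derive the latter from the coherent guessing sequence supplied by Lemma~\ref{thm62} (when $\lambda=\aleph_0$, applied with $\kappa=\omega_1$) or by Lemma~\ref{lemma40} (when $\lambda$ is uncountable), via an encoding-and-reshaping argument. For the reduction, note that $\diamondsuit(\lambda^+)$ holds under either hypothesis (being implied by $\diamondsuit(S)$ in the first case, and following from $\sd_\lambda$ in the second, e.g.\ since the auxiliary sequence of Lemma~\ref{lemma40} is already a $\diamondsuit(\lambda^+)$-sequence), so it suffices to check that the sequence $\langle C_\alpha\mid\alpha<\lambda^+\rangle$ furnished by the moreover part witnesses $\p^-(\lambda^+,2,{\sq},\lambda^+,\{S\},2,\sigma,\mathcal E_\chi)$. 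Its $\sq$-coherence and the membership $C_\alpha\in\dom(\mathcal E_\chi)$ are immediate from the first two bullets; and given cofinal sets $\langle A_i\mid i<\lambda^+\rangle$ and a club $D$, applying the moreover part with $\varsigma:=0$ produces $\alpha\in S$ for which clause~(4) forces $\alpha\in D$, while for each $i<\alpha$ clause~(3) yields a subset of $\acc(C_\alpha)$ of order-type $\lambda$ on which $\suc_\sigma(C_\alpha\setminus\beta)\s A_i$; since $\otp(C_\alpha)\le\chi$, that subset is forced to be cofinal in $C_\alpha$, giving $\sup\{\beta\in C_\alpha\mid\suc_\sigma(C_\alpha\setminus\beta)\s A_i\}=\alpha$. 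As $D$ was arbitrary, such $\alpha$ form a stationary subset of $S$, completing the reduction.

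For the moreover part, fix a $\sq$-coherent sequence $\langle(C_\alpha,X_\alpha)\mid\alpha<\lambda^+\rangle$ as provided by the relevant lemma: each $C_\alpha$ has order-type $\le\chi$, and for every $X\s\lambda^+$ and club $E\s\lambda^+$ there is a limit $\alpha$ with $\otp(C_\alpha)=\lambda$ and $C_\alpha\s\{\gamma\in E\mid X\cap\gamma=X_\gamma\}$. (When $\lambda=\aleph_0$ a preliminary coherent thinning is needed to guarantee that all order-types are $\le\omega^2$.) Given $\langle A_\delta\mid\delta<\lambda^+\rangle$, a club $D$, and $\varsigma<\lambda^+$, fix a pairing function $p:\lambda^+\times\lambda^+\leftrightarrow\lambda^+$ that is ``nice'' on a club $E_0$ and set
\[
X:=\{p(2+\delta,\xi)\mid\xi\in A_\delta\}\cup\{p(1,\xi)\mid\xi\in D\}\cup\{p(0,1+\varsigma)\},\qquad E:=E_0\setminus(p(0,1+\varsigma)+1).
\]
Applying the guessing property to $X$ and $E$ yields a limit $\alpha$ with $\otp(C_\alpha)=\lambda$, whence $\cf(\alpha)=\cf(\lambda)$ and so $\alpha\in S$, such that $X_\gamma=X\cap\gamma$ for all $\gamma\in C_\alpha$; decoding $X_\gamma$ at such $\gamma\in E_0$ then recovers $\langle A_\delta\cap\gamma\mid\delta<\gamma\rangle$, $D\cap\gamma$ and $\varsigma$. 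I would define $h(\gamma)$ to be the ordinal $\varsigma'$ with $p(0,1+\varsigma')\in X_{\min(C_\gamma)}$ when this is unambiguous, and $0$ otherwise; this is $\sq$-coherent because $\min(C_{\bar\alpha})=\min(C_\alpha)$ whenever $\bar\alpha\in\acc(C_\alpha)$, and $h(\alpha)=\varsigma$ for the above $\alpha$ since $\min(C_\alpha)\in E$ lies above $p(0,1+\varsigma)$, yielding clause~(1).

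To arrange clauses~(2), (3) and~(4) I would pass from $\langle C_\gamma\rangle$ to a reshaped sequence, defined by a rule that looks only downward. At a limit $\gamma$, between two consecutive accumulation points $\beta^-<\beta$ of $C_\gamma$ (so $C_\beta=C_\gamma\cap\beta$ and $\beta^-$ is determined by $C_\beta$), discard the intervening non-accumulation points and replace them with a set of order-type $\omega$, cofinal in $\beta$ and starting above $\beta^-$, whose points alternate between $A_{j(\beta)}$ and $D$ --- where $j(\beta)$ together with the relevant initial segments of $A_{j(\beta)}$ and $D$ are read off from $X_\beta$, and where $j(\beta)$ cycles so that, along the target $\alpha$, every $\delta<\alpha$ is served $\lambda$ many times in clumps of $\lceil\sigma/\omega\rceil+1$ consecutive runs; $h$ is carried along unchanged, and the degenerate cases ($\gamma$ not a limit, or $\acc(C_\gamma)=\emptyset$) are set by fiat. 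Writing $\langle C_\gamma\rangle$ again for the result, one then checks that each $C_\gamma$ is a club in $\gamma\setminus\{0\}$ of order-type $\le\chi$, that $\acc(C_\gamma)$ is unchanged and hence $\sq$-coherence is inherited (the rule at a block depends only on the block's endpoint $\beta$, i.e.\ on $C_\beta$ and $X_\beta$), that interleaving each run with $D$ makes every accumulation point of the target $C_\alpha$ a limit point of $D$ and inserts a point of $D$ between any two consecutive points of $C_\alpha$ (clause~(4)), and that by construction $\nacc(C_\alpha)\s\bigcup_{\delta<\alpha}A_\delta$ while each colour-$\delta$ clump produces a $\beta\in\acc(C_\alpha)$ with $\suc_\sigma(C_\alpha\setminus\beta)\s A_\delta$ --- $\lambda$ many such $\beta$ for each $\delta<\alpha$ --- giving clauses~(2) and~(3).

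The step I expect to be the main obstacle is making this single reshaping rule do everything at once: it must be locally and coherently definable (so that $C_{\bar\gamma}=C_\gamma\cap\bar\gamma$ is preserved), yet organized so that along the one guessed $\alpha$ the colours $j(\beta)$ cycle through every $\delta<\alpha$ with the correct multiplicity and the reshaped runs assemble into genuine $\suc_\sigma$-windows contained in $A_\delta$ --- which is delicate exactly when $\sigma\ge\omega$, so that a single window straddles several runs and one must synchronize several consecutive colours. Keeping $\otp(C_\gamma)\le\chi$ uniformly is a secondary but real concern when $\lambda=\aleph_0$ (where $\chi=\omega^2$ but the sequence of Lemma~\ref{thm62} only guarantees order-types of the form $\omega\cdot\varepsilon$ with $\varepsilon<\omega_1$), and reconciling the $\lambda=\aleph_0$ and uncountable cases into one narrative is a further bookkeeping burden.
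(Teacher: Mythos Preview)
Your overall plan matches the paper's: start from the coherent guessing sequence of Lemma~\ref{thm62} or~\ref{lemma40}, read $h$ from $X_{\min(D_\alpha)}$, and coherently reshape the non-accumulation points to land in the prescribed $A_\delta$'s while keeping the accumulation set fixed; the reduction of $\p^-$ to the moreover part is also correct. The gap is precisely the obstacle you flag at the end, and it is partly self-inflicted by your choice to replace each \emph{block} between consecutive accumulation points by an $\omega$-sequence --- this forces cross-block synchronization whenever $\sigma\ge\omega$ and makes the order-type bound delicate for $\lambda=\aleph_0$. The paper instead replaces each individual nacc point $\beta\in D_\alpha$ by a single $c_\alpha(\beta)\in(\beta,\min(D_\alpha\setminus(\beta+1))]$, so that $\otp(C_\alpha)=\otp(D_\alpha)$ exactly. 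The colour at position $\beta$ is $\varphi_\alpha(\beta)$, determined by $o_\alpha(\beta)$ --- the unique $k$ with $\otp(D_\alpha\cap\beta)\in[\sigma\cdot k,\sigma\cdot k+\sigma)$ --- composed with a fixed surjection $\psi:\lambda\to\chi\times\lambda$ each of whose fibres has order-type $\lambda$; this automatically yields $\sigma$-long monochromatic runs and $\lambda$ many runs per $\delta$, with no synchronization needed. Note in particular that you cannot ``read $j(\beta)$ off $X_\beta$'' as you claim: in your encoding $X_\beta$ carries $\langle A_\delta\cap\beta\rangle$, $D\cap\beta$, and $\varsigma$, but nothing that singles out one $\delta$ to serve at $\beta$. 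The cycling has to come from the \emph{position} $\otp(C_\beta)$, not from $X_\beta$.

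Two smaller issues. Your phrase ``alternate between $A_{j(\beta)}$ and $D$'' is dangerous if taken literally --- inserting $D$-points into $\nacc(C_\alpha)$ would violate clause~(2). The paper builds the $D$-separation into the choice of $c_\alpha(\beta)$ instead: rather than a pairing encoding, it takes $X:=f[\lambda^+]$ for an increasing $f$ with $\pi(f(\beta))(\delta)=\min(A_\delta\setminus(\min(D\setminus(\beta+1))+1))$ (where $\pi:\lambda^+\to{}^{\lambda^+}\lambda^+$ comes from $\ch_\lambda$), so the decoded value $c_\alpha(\beta)$ automatically lies above a point of $D$ and inside some $A_\delta$, giving clauses~(2) and~(4) simultaneously. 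And your club $E$ must also be intersected with $\diagonal_\delta\acc^+(A_\delta)$, or nothing guarantees that $A_{j(\beta)}\cap(\beta^-,\beta)$ is cofinal in $\beta$ at the target.
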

\begin{proof}
First, notice that the given hypotheses (either $\diamondsuit(S)$ in case $\lambda = \aleph_0$
or $\sd_\lambda$ in case $\lambda$ is uncountable) imply $\diamondsuit(\lambda^+)$, which implies $\ch_\lambda$.
Thus, fix a function $\pi:\lambda^+\rightarrow{}^{\lambda^+}{\lambda^+}$ such that $\{ \alpha<\lambda^+\mid f\s \pi(\alpha)\}$
is cofinal in $\lambda^+$ for all $f\in{}^{<\lambda^+}\lambda^+$.

Using Lemma \ref{thm62} (in case $\lambda = \aleph_0$)  or Lemma \ref{lemma40} (in case $\lambda>\aleph_0$),
pick a sequence
$\langle (D_\alpha,X_\alpha)\mid \alpha<\lambda^+\rangle$
such that:
\begin{itemize}
\item  for every limit $\alpha<\lambda^+$, $D_\alpha$ is a club in $\alpha$ of order-type $\le\chi$, and $X_\alpha\s\alpha$;
\item if $\bar\alpha\in\acc(D_\alpha)$, then $D_\alpha\cap\bar\alpha=D_{\bar\alpha}$;
\item for every subset  $X\s\lambda^+$ and club $E\s\lambda^+$, there exists a limit $\alpha\in S$ with $\otp(D_\alpha)=\chi$
such that $\nacc(D_\alpha)\s\{ \gamma\in E\mid X\cap\gamma=X_\gamma\}$.\footnote{Lemmas~\ref{lemma40} and~\ref{thm62} give $D_\alpha\s\{\gamma\in E\mid X\cap\gamma=X_\gamma\}$,
but $\nacc(D_\alpha)\s\{\gamma\in E\mid\dots\}$ is all we need here.
Also, Lemma~\ref{thm62} gives us a sequence of local clubs of arbitrarily large order-type,
and we can simply replace any club of order-type $>\omega^2$ with a club of order-type $\omega$ (of the same sup).}
\end{itemize}

Define $h:\lambda^+\rightarrow\lambda^+$ by letting, for all $\alpha < \lambda^+$:
\[
h(\alpha) :=
\begin{cases}
\min(X_{\min(D_\alpha)}),  &\text{if $\alpha\in\acc(\lambda^+)$ and } X_{\min(D_\alpha)}\neq\emptyset; \\
0                                          &\text{otherwise.}
\end{cases}
\]
Notice that if $\bar\alpha\in\acc(D_\alpha)$,
then  $\min(D_{\bar\alpha})=\min(D_\alpha)$, and hence $h(\bar\alpha)=h(\alpha)$.

For every $\gamma<\lambda^+$, fix an injection $\psi_\gamma:\gamma+1\rightarrow\lambda$.
Using $\left| \chi \right| = \lambda$,
fix a function $\psi:\lambda\rightarrow \chi\times\lambda$
such that $\{ k<\lambda\mid (i,j)=\psi(k)\}$ has order-type $\lambda$ for all $(i,j)\in \chi\times\lambda$.

Set $C_0 := \emptyset$, and for every $\alpha < \lambda^+$, set $C_{\alpha+1} := \emptyset$.
Now, fix any nonzero limit $\alpha < \lambda^+$, and we will show how to construct $C_\alpha$.

Without loss of generality, assume $\sigma$ is an infinite limit ordinal.
By $\otp(D_\alpha)\leq\chi = \omega\cdot\lambda \le\sigma\cdot\lambda$,
we can let $o_\alpha: D_\alpha\rightarrow\lambda$ be the unique function satisfying
$\otp(D_\alpha\cap\beta)\in[\sigma\cdot o_\alpha(\beta),\sigma\cdot o_\alpha(\beta)+\sigma)$
for each $\beta \in D_\alpha$.
Then, define $\varphi_\alpha: D_\alpha\rightarrow\alpha$ by letting for all $\beta \in D_\alpha$:
\[
\varphi_\alpha(\beta):=\begin{cases}
\delta, &\text{if } \delta<\beta\ \&\
      \psi(o_\alpha(\beta))=(\otp(D_\alpha\cap\delta),\psi_{\min(D_\alpha\setminus \delta)}(\delta));\\
0,&\text{otherwise.}
\end{cases}
\]

Note that $\varphi_\alpha$ is well-defined, since for every $(i,j)\in \chi\times\lambda$, the set
 $\{\delta<\alpha\mid \otp(D_\alpha\cap\delta)=i\ \&\ \psi_{\min(D_\alpha\setminus\delta)} (\delta) =j\}$
contains at most a single element.

Next, define $d_\alpha : D_\alpha \to \lambda^+$ by letting for all $\beta \in D_\alpha$:
\[
d_\alpha(\beta) :=
\begin{cases}
\pi(\min(X_{\min(D_\alpha\setminus(\beta+1))}\setminus(\beta+1)))(\varphi_\alpha(\beta)),
    &\text{if } X_{\min(D_\alpha\setminus(\beta+1))}\nsubseteq\beta+1; \\
0, &\text{otherwise.}
\end{cases}
\]

Then, define $c_\alpha : D_\alpha \to \lambda^+$ by letting for all $\beta \in D_\alpha$:
\[
c_\alpha (\beta) :=
\begin{cases}
d_\alpha (\beta),                             &\text{if } \beta<d_\alpha(\beta)<\min(D_\alpha\setminus(\beta+1)); \\
\min(D_\alpha\setminus(\beta+1)), &\text{otherwise.}
\end{cases}
\]

Finally, let
\[
C_\alpha := \acc(D_\alpha) \cup \{ c_\alpha(\beta) \mid \beta\in D_\alpha\}.
\]

The very definition of $c_\alpha$ (regardless of the fact that it relies on $d_\alpha$) makes it clear that
$\beta < c_\alpha(\beta) \leq \min (D_\alpha \setminus (\beta+1))$ for all $\beta \in D_\alpha$,
so that
$\otp(C_\alpha)=\otp(D_\alpha) \leq \chi$,
$\acc^+(C_\alpha)=\acc(D_\alpha)$,
$\nacc(C_\alpha) = \{ c_\alpha(\beta) \mid \beta\in D_\alpha\}$,
and $C_\alpha$ is a club in $\alpha \setminus \{0\}$.

Having constructed $C_\alpha$ for all $\alpha < \lambda^+$, we will show that the sequence
$\left< C_\alpha \mid \alpha < \lambda^+ \right>$ and the function $h$
satisfy the requirements of the theorem.

Fix $\bar\alpha\in\acc(C_\alpha)$. Then $\bar\alpha\in\acc(D_\alpha)$, and hence
$D_{\bar\alpha}=D_\alpha\cap \bar\alpha$,
$o_{\bar\alpha} = o_\alpha \restriction \bar \alpha$,
$\varphi_{\bar\alpha}=\varphi_\alpha\restriction \bar\alpha$,
$d_{\bar\alpha} = d_\alpha \restriction \bar \alpha$,
$c_{\bar\alpha} = c_\alpha \restriction \bar \alpha$,
and $h(\bar\alpha)=h(\alpha)$,
and it follows that $C_{\bar\alpha}=C_\alpha\cap \bar\alpha$.

Thus, to see that $\langle C_\alpha\mid\alpha<\lambda^+\rangle$ meets our needs,
let us fix a sequence $\langle A_\delta\mid \delta<\lambda^+\rangle$ of cofinal subsets of $\lambda^+$,
together with a club $D\s\lambda^+$, and some ordinal $\varsigma<\lambda^+$.

Define an increasing function $f:\lambda^+\rightarrow\lambda^+$ recursively by letting $f(0):=\varsigma$, and for all nonzero $\beta<\lambda^+$:
\[
f(\beta):=\min\{\tau\in \lambda^+\setminus(\sup(f[\beta])+1)\mid  \bigwedge_{\delta<\beta}\pi(\tau)(\delta)=\min(A_\delta\setminus(\min(D\setminus(\beta+1))+1))\}.
\]
Consider the set $X:=f[\lambda^+]$, and the club
\[
E:=\{ \beta\in \acc(D)\cap \diagonal_{\delta<\lambda^+}\acc^+(A_\delta)\mid f[\beta]\s\beta\}\setminus (\varsigma+1).
\]
Pick a limit $\alpha\in S$ with $\otp(D_\alpha)=\chi$
such that $\nacc(D_\alpha)\s\{ \gamma\in E\mid X\cap\gamma=X_\gamma\}$.
In particular, $D_\alpha\s E$ and $h(\alpha)=\min(X_{\min(D_\alpha)})=\min(X)=f(0)=\varsigma$.

\begin{claim}
For every $\beta\in D_\alpha$, there exists $\eta\in D$
such that  $\beta\in\eta\in c_\alpha(\beta)\in A_{\varphi_{\alpha}(\beta)}$.
In particular:
\begin{itemize}
\item  $\nacc(C_\alpha)\s\bigcup_{\delta<\alpha}A_\delta$;
\item for every $\beta<\gamma$ in $C_\alpha$, there exists $\eta\in D$, with $\beta<\eta<\gamma$.
\end{itemize}
\end{claim}
\begin{proof}  Fix $\beta\in D_\alpha$. Denote $\beta^+:=\min(D_\alpha\setminus(\beta+1))$,
$\gamma := c_\alpha(\beta)$, $\delta:=\varphi_{\alpha}(\beta)$, and $\eta:=\min(D\setminus(\beta+1))$.
Then $\delta < \beta < \gamma \leq \beta^+ < \alpha$ and
$\gamma = c_\alpha(\beta) = \min(C_\alpha\setminus(\beta+1)) \in \nacc(C_\alpha)$.
As $\beta,\beta^+\in D_\alpha\s E$, $f$ is increasing, $X =f[\lambda^+]$, and $X_{\beta^+}=X\cap\beta^+$,
we have
$\sup(f[\beta])=\beta$,
$\sup(X_{\beta^+})=\beta^+$,
$\min(X\setminus(\beta+1))=f(\beta)$,
and
\[
d_\alpha(\beta) = \pi(f(\beta))(\delta) = \min(A_\delta\setminus(\min(D\setminus(\beta+1))+1))
= \min(A_\delta\setminus(\eta+1)).
\]
Since $\beta < \beta^+$ and $\beta^+ \in E \subseteq \acc(D)$, we have $\eta < \beta^+$.
Then, since $\beta^+\in E \setminus (\delta+1) \s\acc^+(A_\delta)$,
it follows that $\beta < \eta < d_\alpha(\beta) < \beta^+$,
and hence $c_\alpha(\beta)=d_\alpha(\beta)$.
Altogether, $\beta\in \eta\in c_\alpha(\beta)=\gamma$, where $\eta\in D$ and $\gamma\in A_\delta$.
\end{proof}

Fix $\delta<\alpha$.
Put $i:=\otp(D_\alpha\cap\delta)$ and $j:=\psi_{\min(D_\alpha\setminus\delta)}(\delta)$.
By the choice of $\psi$, the set $\{k<\lambda\mid \psi(k)=(i,j)\}$ has order-type $\lambda$,
and hence $B:=\{\beta\in D_\alpha\setminus(\delta+1)\mid \psi(o_\alpha(\beta))=(i,j)\}$
contains $\lambda$-many intervals (relativized to $D_\alpha)$ of length $\sigma$,
with each interval beginning at a point from $\acc(D_\alpha)$.
By definition of $\varphi_\alpha$, we have $\varphi_\alpha(\beta)=\delta$ for all $\beta\in B$.
Then the preceding claim shows that $\{ c_\alpha(\beta)\mid \beta\in B\}\s \nacc(C_\alpha)\cap A_\delta$.
In particular, $\otp(\{\beta\in \acc(C_\alpha) \mid \suc_\sigma(C_\alpha\setminus\beta)\s A_\delta\})=\lambda$.
\end{proof}

Of course, in the case of $\lambda = \aleph_0$ we cannot improve $\mathcal E_{\omega^2}$ to $\mathcal E_\omega$ while maintaining $\sigma = \omega$ in the preceding,
but note we can do the following.

\begin{thm}\label{thm309} Assume  $\diamondsuit(S)$ holds for a given $S\s\omega_1$.

Then $\p(\omega_1,2,{\sq},\omega_1,\{S\},2,n,\mathcal E_\omega)$ holds for every $n<\omega$.
\end{thm}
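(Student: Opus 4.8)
The plan is to construct a single $\sq$-coherent sequence $\langle C_\alpha\mid\alpha<\omega_1\rangle$ that simultaneously witnesses the principle for all finite $n$. The key simplification is that the eighth parameter $\mathcal E_\omega$ forces $\otp(C_\alpha)\le\omega$: at a nonzero limit $\alpha$ we will in fact have $\otp(C_\alpha)=\omega$, so $\acc(C_\alpha)=\emptyset$ and the $\sq$-coherence clause is vacuous, and all the content sits in the guessing clause. As preliminaries, recall that $\diamondsuit(S)$ implies $\diamondsuit(\omega_1)$ (supplying the $\diamondsuit(\kappa)$ half of $\p$), and fix a bijection $p\colon\omega_1\times\omega_1\to\omega_1$ whose set of closure points $C_p:=\{\alpha<\omega_1\mid p[\alpha\times\alpha]=\alpha\}$ is a club. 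Encoding a sequence $\langle A_i\mid i<\omega_1\rangle$ of subsets of $\omega_1$ as the single set $Z:=p[\{(i,\xi)\mid\xi\in A_i\}]$, a routine argument gives, from $\diamondsuit(S)$, a sequence $\langle Z_\alpha\mid\alpha<\omega_1\rangle$ such that $\{\alpha\in S\mid Z_\alpha=Z\cap\alpha\}$ is stationary for every such $Z$, and such that for $\alpha\in C_p$ one recovers, for each $i<\alpha$, the set $A_i\cap\alpha$ as $\{\xi<\alpha\mid p(i,\xi)\in Z\cap\alpha\}$.

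Next I would carry out the construction of $C_\alpha$. For a nonzero limit $\alpha<\omega_1$, put $A^\alpha_i:=\{\xi<\alpha\mid p(i,\xi)\in Z_\alpha\}$ for $i<\alpha$, fix a strictly increasing cofinal sequence $\langle\gamma^\alpha_m\mid m<\omega\rangle$ in $\alpha$, and fix a surjection $e_\alpha\colon\omega\to\alpha$ all of whose fibers are infinite. I then build the increasing enumeration $\langle\beta^\alpha_k\mid k<\omega\rangle$ of $C_\alpha$ in blocks of length $n+1$: given $\beta^\alpha_j$ for $j<(n+1)m$ with supremum $\delta<\alpha$, let the \emph{anchor} be $\beta^\alpha_{(n+1)m}:=\max\{\delta,\gamma^\alpha_m\}+1$; then, writing $i_m:=e_\alpha(m)$ and $\delta':=\beta^\alpha_{(n+1)m}$, if $A^\alpha_{i_m}\setminus(\delta'+1)$ has at least $n$ elements let $\beta^\alpha_{(n+1)m+\ell}$ be its $\ell$-th smallest element for $1\le\ell\le n$, and otherwise let $\beta^\alpha_{(n+1)m+\ell}:=\delta'+\ell$. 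Set $C_\alpha:=\emptyset$ when $\alpha$ is $0$ or a successor. Each $C_\alpha$ is then a cofinal subset of $\alpha$ of order type $\omega$ (cofinality uses $\beta^\alpha_{(n+1)m}\ge\gamma^\alpha_m$), hence a club in $\alpha$ lying in $\dom(\mathcal E_\omega)$, and since $\acc(C_\alpha)=\emptyset$ the $\sq$-coherence requirement holds trivially.

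Finally I would verify the guessing clause. Given a sequence $\langle A_i\mid i<\omega_1\rangle$ of cofinal subsets of $\omega_1$ together with a club $F\subseteq\omega_1$, set $Z:=p[\{(i,\xi)\mid\xi\in A_i\}]$ and let $E:=F\cap C_p\cap\diagonal_{i<\omega_1}\acc^+(A_i)$, a club. Pick $\alpha\in S\cap E$ with $Z_\alpha=Z\cap\alpha$. Since $\alpha\in C_p$ we get $A^\alpha_i=A_i\cap\alpha$ for every $i<\alpha$, and since $\alpha\in\acc^+(A_i)$ for every $i<\alpha$, each such $A_i\cap\alpha$ is cofinal in $\alpha$. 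Fix $i<\alpha$. For each of the infinitely many $m$ with $e_\alpha(m)=i$, the set $A^\alpha_i\setminus(\beta^\alpha_{(n+1)m}+1)$ is infinite, so the payload elements $\beta^\alpha_{(n+1)m+1}<\dots<\beta^\alpha_{(n+1)m+n}$ were chosen inside $A^\alpha_i=A_i\cap\alpha\subseteq A_i$; unwinding the definition of $\suc_n$, these are exactly the members of $\suc_n(C_\alpha\setminus\beta^\alpha_{(n+1)m})$, so the anchor $\beta^\alpha_{(n+1)m}$ lies in $\{\beta\in C_\alpha\mid\suc_n(C_\alpha\setminus\beta)\subseteq A_i\}$. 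As these anchors satisfy $\beta^\alpha_{(n+1)m}\ge\gamma^\alpha_m$ and $\sup_m\gamma^\alpha_m=\alpha$, we conclude $\sup\{\beta\in C_\alpha\mid\suc_n(C_\alpha\setminus\beta)\subseteq A_i\}=\alpha$. Since $i<\alpha=\min\{\alpha,\omega_1\}$ was arbitrary and $\alpha\in F$, this yields stationarily many such $\alpha\in S$.

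There is no deep obstacle here; the part requiring care is the block bookkeeping — in particular, seeing that the anchor of each block is precisely the node whose $n$ immediate $C_\alpha$-successors constitute that block's payload — together with checking well-definedness in the "bad" case where $A^\alpha_{i_m}$ is bounded below $\alpha$. The degenerate case $n=0$ is harmless: the displayed supremum condition then just says $\sup(C_\alpha)=\alpha$, and the construction above (blocks of length $1$, empty payloads) handles it uniformly.
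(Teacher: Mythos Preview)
Your argument is correct. The only quibble is the opening line: you announce a \emph{single} sequence witnessing the principle ``simultaneously for all finite $n$'', but your blocks have length $n+1$, so the sequence you actually build depends on $n$. This is harmless --- the theorem only asks for each $n$ separately --- but you should drop the word ``simultaneously''.

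The paper takes a different route: it does not build the clubs from scratch but instead invokes the general construction of Theorem~\ref{thm32}, setting $\chi:=\omega$ (so that the input $\sq$-coherent sequence $\langle D_\alpha\rangle$ from Lemma~\ref{thm62} already has $\otp(D_\alpha)\le\omega$) and replacing the block-indexing function $o_\alpha$ by $o_\alpha(\beta):=\lfloor\otp(D_\alpha\cap\beta)/n\rfloor$. Everything else --- the decoding map $\varphi_\alpha$, the functions $d_\alpha,c_\alpha$, and the final definition of $C_\alpha$ --- is reused verbatim from Theorem~\ref{thm32}. Conceptually both proofs rest on the same observation (with $\mathcal E_\omega$ the coherence is vacuous, so the whole content is arranging $\suc_n$-blocks inside the prescribed cofinal sets via some index-assignment $m\mapsto i$), but the paper's version is a one-line pointer to existing machinery, whereas yours is a self-contained direct construction. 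Your approach is more elementary for this specific statement; the paper's buys uniformity with the $\lambda>\aleph_0$ case and avoids re-verifying details already handled in Theorem~\ref{thm32}.
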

\begin{proof} Modify the construction  of Theorem~\ref{thm32}, as follows. Let $\chi:=\omega$ so that in particular, $\otp(D_\alpha)\le\omega$ for all $\alpha<\omega_1$.
Then, given a positive integer $n$, define for all $\alpha<\omega_1$, $o_\alpha:D_\alpha\rightarrow\omega$ by stipulating:
$$o_\alpha(\beta):=\left\lfloor{\frac{\otp(D_\alpha\cap\beta)}{n}}\right\rfloor.$$
The rest of the construction remains intact.
\end{proof}

\begin{lemma}\label{common}
Suppose that $\lambda\le\cf(\kappa)=\kappa$ are uncountable cardinals, and $\langle C_\alpha \mid \alpha < \kappa\rangle$ is a sequence satisfying the following:
\begin{enumerate}
\item[(i)] For every limit ordinal $\alpha<\kappa$, $C_\alpha$ is a club subset of $\alpha$;
\item[(ii)] For every limit ordinal $\Theta < \lambda$,
and every sequence $\left< B_\iota \mid \iota < \Theta \right>$ of cofinal subsets of $\kappa$,
there exists some limit ordinal $\alpha<\kappa$ such that:
\begin{enumerate}
\item $\otp(C_\alpha) = \Theta$; and
\item $C_\alpha (\iota+1) \in B_\iota$ for co-boundedly many $\iota < \Theta$.
\end{enumerate}
\end{enumerate}

Then
\begin{enumerate}
\item
For every infinite cardinal $\theta<\lambda$, every ordinal $\sigma<\lambda$,
every sequence $\langle A_i \mid i < \theta \rangle$ of cofinal subsets of $\kappa$,
and every infinite regular cardinal $\chi < \lambda$,
there exist stationarily many $\alpha \in E^{\kappa}_\chi$ satisfying,
for every $i < \theta$:
\[
\sup\{ \beta \in C_\alpha \mid \suc_\sigma (C_\alpha \setminus \beta) \subseteq A_i \} = \alpha.
\]
\item
For every cofinal subset $A \subseteq \kappa$,
every infinite regular cardinal $\chi < \lambda$, and every limit ordinal $\theta<\lambda$,
there exist stationarily many $\alpha \in E^{\kappa}_\chi$ satisfying $\otp(C_\alpha)\ge\theta$ for which there exist $\beta<\alpha$ such that
\[
\suc_{\kappa} (C_\alpha\setminus\beta) \subseteq A.
\]
\item $\kappa^{<\lambda}=\kappa$;
\end{enumerate}
\end{lemma}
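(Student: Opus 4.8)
The plan is to obtain conclusion~(3) from a counting argument built on hypothesis~(ii), and conclusions~(1)--(2) as direct applications of~(ii). Throughout, note that $\kappa$ is regular and uncountable (as $\lambda\le\cf(\kappa)=\kappa$ with $\lambda$ uncountable), so $|\kappa\times\kappa|=\kappa$; fix once and for all a bijection $e\colon\kappa\times\kappa\to\kappa$, and observe that for each $\zeta<\kappa$ the set $e[\{\zeta\}\times\kappa]$ has size $\kappa$, hence is cofinal in $\kappa$.

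\emph{For~(3)}, I would prove by induction on the infinite cardinal $\mu<\lambda$ that $\kappa^{\mu}=\kappa$; granting this for all cardinals below $\mu$ yields $\kappa^{<\mu}=\kappa$, and the base case $\mu=\aleph_0$ needs nothing beyond the trivial $\kappa^{<\aleph_0}=\kappa$. Once this holds for every infinite cardinal $\mu<\lambda$, we get $\kappa^{<\lambda}=\sum_{\delta<\lambda}\kappa^{|\delta|}\le|\lambda|\cdot\kappa=\kappa$. For the inductive step, fix $f\in{}^{\mu}\kappa$ and apply~(ii) with $\Theta:=\mu$ (a limit ordinal $<\lambda$) and $B_\iota:=e[\{f(\iota)\}\times\kappa]$ for $\iota<\mu$, which is legitimate since each $B_\iota$ is cofinal in $\kappa$; this yields a limit $\alpha_f<\kappa$ with $\otp(C_{\alpha_f})=\mu$ and an ordinal $\iota_f<\mu$ such that $C_{\alpha_f}(\iota+1)\in B_\iota$ for all $\iota\in[\iota_f,\mu)$. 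Then $f\mapsto(\alpha_f,\iota_f,f\restriction\iota_f)$ is injective from ${}^{\mu}\kappa$ into $\kappa\times\mu\times{}^{<\mu}\kappa$: from a triple $(\alpha,\iota_0,s)$ one recovers $C_\alpha$ (the $\alpha$-th term of the given sequence), hence for each $\iota\in[\iota_0,\mu)$ recovers $f(\iota)$ as the first coordinate of $e^{-1}(C_\alpha(\iota+1))$, while $f\restriction\iota_0=s$. As the target has size $\kappa\cdot|\mu|\cdot\kappa^{<\mu}=\kappa$ (using the induction hypothesis and $\mu<\lambda\le\kappa$), we conclude $\kappa^{\mu}=\kappa$.

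\emph{For~(1)}, fix $\theta,\sigma,\chi$, a sequence $\langle A_i\mid i<\theta\rangle$ of cofinal subsets of $\kappa$, and an arbitrary club $D\subseteq\kappa$; I must find $\alpha\in E^{\kappa}_{\chi}\cap D$ with the stated block property. Put $R:=\sigma\cdot\theta+1$ and $\Theta:=R\cdot\chi$, so that $\cf(\Theta)=\cf(\chi)=\chi$ and $|\Theta|\le\max\{|\sigma|,\theta,\chi\}<\lambda$, whence $\Theta<\lambda$. For $\iota<\Theta$ write $\iota=R\cdot\gamma+\rho$ with $\gamma<\chi$, $\rho<R$, and set $B_\iota:=D$ if $\rho=\sigma\cdot\theta$, while if $\rho<\sigma\cdot\theta$ write $\rho=\sigma\cdot i+j$ with $i<\theta$, $j<\sigma$ and set $B_\iota:=A_i$. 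Each $B_\iota$ is cofinal in $\kappa$, so~(ii) produces a limit $\alpha$ with $\otp(C_\alpha)=\Theta$ --- hence $\cf(\alpha)=\chi$ --- and $\iota_0<\Theta$ with $C_\alpha(\iota+1)\in B_\iota$ for all $\iota\ge\iota_0$. Since the indices $R\cdot\gamma+\sigma\cdot\theta$ with $R\cdot\gamma\ge\iota_0$ are cofinal in $\Theta$, the values $C_\alpha(R\cdot\gamma+\sigma\cdot\theta+1)\in D$ are cofinal in $\alpha$, so $\alpha\in\acc^+(D)\subseteq D$. For $i<\theta$ and $\gamma<\chi$ with $R\cdot\gamma\ge\iota_0$, put $\beta:=C_\alpha(R\cdot\gamma+\sigma\cdot i)$; then $\otp(C_\alpha\cap\beta)=R\cdot\gamma+\sigma\cdot i$, so
\[
\suc_\sigma(C_\alpha\setminus\beta)=\{\,C_\alpha(R\cdot\gamma+\sigma\cdot i+j+1)\mid j<\sigma\,\},
\]
and for each $j<\sigma$ the ordinal $\iota:=R\cdot\gamma+\sigma\cdot i+j$ satisfies $\iota\ge\iota_0$ and has $\rho$-part $\sigma\cdot i+j<\sigma\cdot\theta$, so $C_\alpha(\iota+1)\in B_\iota=A_i$; thus $\suc_\sigma(C_\alpha\setminus\beta)\subseteq A_i$. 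As these $\beta$ are cofinal in $\alpha$, we get $\sup\{\beta\in C_\alpha\mid\suc_\sigma(C_\alpha\setminus\beta)\subseteq A_i\}=\alpha$ for every $i<\theta$, as required. The point of the slot $\rho=\sigma\cdot\theta$ is precisely to reserve, in each super-block, a position at which to absorb the arbitrary club $D$ without disturbing any $A_i$-block.

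\emph{For~(2)} the scheme is the same but simpler: given a cofinal $A\subseteq\kappa$, a regular $\chi<\lambda$, a limit $\theta<\lambda$, and an arbitrary club $D$, take $\Theta:=\theta\cdot\chi$ (so $\cf(\Theta)=\chi$, $\Theta\ge\theta$, $\Theta<\lambda$) and apply~(ii) with a constant sequence $B_\iota:=A^{*}$, where $A^{*}$ is a cofinal subset of $\kappa$ both shrinking into $A$ and forcing $\alpha\in D$ --- when $A\cap D$ is cofinal one may simply take $A^{*}:=A\cap D$. From the resulting $\alpha$ with $\otp(C_\alpha)=\theta\cdot\chi\ge\theta$, $\cf(\alpha)=\chi$, and $C_\alpha(\iota+1)\in A^{*}$ for all $\iota\ge\iota_0$, one reads off with $\beta:=C_\alpha(\iota_0)$ that $\suc_\kappa(C_\alpha\setminus\beta)=\nacc(C_\alpha)\cap(\beta,\alpha)\subseteq A^{*}\subseteq A$, while cofinality of $\{C_\alpha(\iota+1)\mid\iota\ge\iota_0\}$ in $\alpha$ gives $\alpha\in\acc^+(A^{*})\subseteq D$. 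The main obstacle, and the step I would spend the most care on, is exactly this last point in the residual case where $A$ and $D$ are \emph{almost disjoint}: since~(ii) only constrains the successor-indexed (hence non-accumulation) entries of $C_\alpha$, any device that drives $C_\alpha$ cofinally into $D$ also forces cofinally many $\nacc$-points into $D$, clashing with the tail requirement $\suc_\kappa(C_\alpha\setminus\beta)\subseteq A$ when $A\cap D$ is bounded. Reconciling the club-absorption with the tail condition there --- e.g.\ by arranging $\alpha$ to be a non-accumulation point of $D$, or by a preliminary reduction on the shape of $D$, or by exploiting coherence of the sequence available in the contexts where the lemma is invoked --- is where the real work lies.
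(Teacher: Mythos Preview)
Your arguments for (1) and (3) are correct; they differ in packaging from the paper's but not in substance. For (1) the paper uses $\Theta=\sigma\cdot\theta\cdot\chi$ and threads $D$ through a recursively defined spacing function $f$, while you take $\Theta=(\sigma\cdot\theta+1)\cdot\chi$ and reserve an explicit $D$-slot in each super-block; your version is arguably cleaner. For (3) the paper fixes a partition $\langle S_i\mid i<\kappa\rangle$ of $\kappa$ into cofinal sets and observes that every member of $[\kappa]^{<\lambda}$ arises as $\{i<\kappa\mid C_\alpha\cap S_i\nsubseteq\beta\}$ for suitable $\beta<\alpha$, whereas you build an explicit injection ${}^\mu\kappa\hookrightarrow\kappa\times\mu\times{}^{<\mu}\kappa$ by induction on $\mu$.

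The gap is in (2), and it is not ``where the real work lies'': it dissolves with a one-line observation. You do not need $A^*\subseteq D$; you only need $\acc^+(A^*)\subseteq D$, and for that it suffices that between any two elements of $A^*$ there lies an element of $D$. Since $D$ is unbounded, such a cofinal $A^*\subseteq A$ always exists (recursively choose $a_{\xi+1}\in A$ above some element of $D$ above $a_\xi$), regardless of whether $A\cap D$ is bounded. This is exactly what the paper does: it lets $A'\subseteq A$ be cofinal with an element of $D$ strictly between any two of its members, sets $\Theta:=\theta+\chi$ and $B_\iota:=A'$ for all $\iota<\Theta$, and then reads off $\alpha\in\acc^+(A')\subseteq D$ and $\suc_\kappa(C_\alpha\setminus\beta)\subseteq A'\subseteq A$. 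The alternatives you float --- forcing $\alpha\notin\acc^+(D)$, reshaping $D$, or invoking coherence from outside the hypotheses --- are unnecessary; the ``almost disjoint'' case is not a genuine obstacle.
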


\begin{proof}
\begin{enumerate}
\item
Suppose that we are given $\theta, \sigma < \lambda$, $\langle A_i\mid i<\theta\rangle$,
some infinite regular cardinal $\chi<\lambda$, and some club $D\s\kappa$. Fix a bijection $\psi:\kappa\leftrightarrow\theta\times\kappa$.
Let $\psi_0:\kappa\rightarrow\theta$ be the function such that for all $\alpha<\kappa$, if $\psi(\alpha)=(i,j)$, then $\psi_0(\alpha)=i$.

Define a function $f:\kappa\rightarrow\kappa$ by recursion:
\begin{itemize}
\item $f(0):=\min(A_{\psi_0(0)})$;
\item $f(\alpha):=\min(A_{\psi_0(\alpha)}\setminus(\min(D\setminus (\sup(f[\alpha])+1))+1))$ for all nonzero $\alpha<\kappa$.
\end{itemize}

Let $I:=\sigma\times\theta\times\chi$ be the Cartesian product, and let $\lhd$ denote the reverse-lexicographic ordering of $I$ induced from $\in$, so that $(I,\lhd)$ is isomorphic to $(\Theta,\in)$,
where $\Theta:=\sigma\cdot\theta\cdot\chi$ (ordinal multiplication). Then $\cf(\Theta)=\chi$,
and since $\sigma,\theta,\chi$ are all smaller than the cardinal $\lambda$, we have $\Theta<\lambda$. Fix a bijection $\pi:\Theta\leftrightarrow I$ such that $\alpha\in\beta\in\Theta$ implies $\pi(\alpha)\lhd\pi(\beta)$.
Define $\langle B_\iota\mid \iota<\Theta\rangle$ by letting $$B_\iota=\{ f(\alpha)\mid \psi_0(\alpha)=i\}$$ for the unique $i<\theta$ such that $\pi(\iota)$ is of the form $(\cdot,i,\cdot)$.
Evidently, $B_\iota$ is a cofinal subset of the corresponding $A_i$.
Now, fix a limit ordinal  $\alpha < \lambda^+$ satisfying satisfying properties (ii)(a) and (ii)(b) of the hypothesis.

By (i) and (ii)(a), we have $\cf(\alpha) = \cf(\otp(C_\alpha)) = \cf(\Theta)=\chi$.

By (ii)(b) and the fact that $B_\iota\s\rng(f)$ for all $\iota$,
we get that $\alpha\in\acc^+(\rng(f))$. But the definition of $f$ ensures that for all $\alpha<\beta<\kappa$,
there exists some $\eta\in D$ such that $f(\alpha)<\eta<f(\beta)$, and hence also $\alpha\in\acc^+(D)$.
As $D$ is a club, we altogether have $\alpha\in D\cap E^{\kappa}_{\chi}$.

By (ii)(b), fix $\iota'<\theta$ such that $C_\alpha (\iota+1) \in B_\iota$ whenever $\iota'<\iota < \Theta$.
Then by the definition of $\langle B_\iota\mid \iota<\Theta\rangle$,
for every $j < \sigma$, $i < \theta$, and $\eta < \chi$, either $(\sigma \cdot \theta \cdot \eta + \sigma \cdot i + j)\le\iota'$, or
\[
C_\alpha (\sigma \cdot \theta \cdot \eta + \sigma \cdot i + j + 1) = C_\alpha (\pi^{-1}(j, i, \eta) +1)
\in B_{\pi^{-1}(j, i, \eta)} \s A_i,
\]
so that for any large enough $\beta$ of the form $C_\alpha (\sigma \cdot \theta \cdot \eta + \sigma \cdot i)$, we have
$\suc_\sigma (C_\alpha \setminus \beta) \subseteq A_i$.
For any fixed $i < \theta$ the set $\{ \sigma \cdot \theta \cdot \eta + \sigma \cdot i \mid \eta < \chi \}$
is cofinal in $\Theta$,
so that $\{ C_\alpha (\sigma \cdot \theta \cdot \eta + \sigma \cdot i) \mid \eta < \chi \}$ is cofinal in $\alpha$,
and the required result follows.

\item Suppose that we are given $A,\chi,\theta$ as in the hypothesis.
Let $D\s\kappa$ be an arbitrary club. Let $A'$ be a cofinal subset of $A$ with the property that for all $\alpha<\beta$ in $A'$,
there exists some $\eta\in D$ with $\alpha'<\eta<\beta'$.
Let $\Theta := \theta+\chi$, and let $B_\iota = A'$ for all $\iota < \Theta$.
Now, fix a limit ordinal  $\alpha < \kappa$ satisfying properties (ii)(a) and (ii)(b) of the hypothesis. In particular, $\otp(C_\alpha)\ge\theta$.

By (i) and (ii)(a), we have $\cf(\alpha) = \cf(\otp(C_\alpha)) = \cf(\Theta) = \cf(\chi) = \chi$.
By (ii)(b), fix $\iota'<\theta$ such that $C_\alpha (\iota+1) \in B_\iota = A'$ whenever $\iota'<\iota < \Theta$.
Put $\beta:=C_\alpha(\iota')$.
Then $\suc_{\kappa}(C_\alpha\setminus\beta)  = \{ C_\alpha (\iota+1) \mid \iota'\le\iota < \Theta \} \subseteq A'\s A$,
and $\alpha\in\acc^+(A')\s D$. In particular, $\alpha\in D\cap E^{\kappa}_{\chi}$.

\item Let $\langle S_i\mid i<\kappa\rangle$ be some partition of $\kappa$ into stationary (or just, cofinal) sets. For every $\beta<\alpha<\kappa$,
let $X^\alpha_\beta:=\{ i<\kappa\mid C_\alpha\cap A_i\nsubseteq\beta\}$. It is easy to see that $[\kappa]^{<\lambda}\s \{X^\alpha_\beta\mid \beta<\alpha<\kappa\}$.
\qedhere
\end{enumerate}
\end{proof}
\begin{cor}\label{thm61} For any infinite cardinals $\theta<\lambda$ and any ordinal $\sigma<\lambda$, the following are equivalent:
\begin{enumerate}
\item $\square_\lambda+\ch_\lambda$;
\item $\p(\lambda^+,2,{\sq},\theta,\{E^{\lambda^+}_\chi\mid \aleph_0 \leq \cf(\chi) = \chi < \lambda\},2,\sigma,{\mathcal E_\lambda})$;
\item $\p(\lambda^+, 2, {\sq}, 1, \{E^{\lambda^+}_\chi\mid \aleph_0 \leq \cf(\chi) = \chi < \lambda \}, 2, \lambda^+,{\mathcal E_\lambda})$;
\item $\p(\lambda^+,2,{\sq},1,\{\lambda^+\},2, 0,{\mathcal E_\lambda})$.
\end{enumerate}
\end{cor}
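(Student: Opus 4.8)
The plan is to establish $(1)\Leftrightarrow(4)$, $(1)\Rightarrow(2)\Rightarrow(1)$, and $(1)\Rightarrow(3)\Rightarrow(1)$, which together give all four equivalences. Three of these are soft. For $(1)\Leftrightarrow(4)$: since $\suc_0(C\setminus\beta)=\emptyset$, the guessing clause of $\p^-(\lambda^+,2,{\sq},1,\{\lambda^+\},2,0,\mathcal E_\lambda)$ is vacuous, so by Lemma~\ref{square-equiv} this $\p^-$-principle is exactly $\square_\lambda$; recalling that $\p$ is $\p^-$ conjoined with $\diamondsuit(\lambda^+)$, Clause~(4) reads $\square_\lambda+\diamondsuit(\lambda^+)$, and it remains to note that $\diamondsuit(\lambda^+)$ implies $2^\lambda=\lambda^+$, while conversely $2^\lambda=\lambda^+$ entails $\diamondsuit(\lambda^+)$ because $\lambda$ is uncountable (Shelah) --- this is the only use of that theorem. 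For $(2)\Rightarrow(1)$ and $(3)\Rightarrow(1)$: a witness to either clause is, at limit $\alpha$, a club $C_\alpha\s\alpha$ of order-type $\le\lambda$ (the eighth parameter being $\mathcal E_\lambda$) with $C_{\bar\alpha}=C_\alpha\cap\bar\alpha$ for all $\bar\alpha\in\acc(C_\alpha)$ (the third parameter being ${\sq}$); restricted to limit ordinals this is a $\square_\lambda$-sequence, and the $\diamondsuit(\lambda^+)$ bundled into $\p$ yields $\ch_\lambda$.

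The substance is in $(1)\Rightarrow(2)$ and $(1)\Rightarrow(3)$, both routed through Lemma~\ref{common} with $\kappa=\lambda^+$ (whose hypothesis $\lambda\le\cf(\kappa)=\kappa$ is satisfied). First, $\ch_\lambda$ together with the uncountability of $\lambda$ delivers $\diamondsuit(\lambda^+)$, which is the $\diamondsuit$-part of $\p$. Second, one constructs a sequence $\langle C_\alpha\mid\alpha<\lambda^+\rangle$ that is simultaneously a $\square_\lambda$-sequence (so $\sq$-coherent, with each $C_\alpha$ a club in limit $\alpha$ of order-type $\le\lambda$) and satisfies hypothesis~(ii) of Lemma~\ref{common}: for every limit $\Theta<\lambda$ and every $\langle B_\iota\mid\iota<\Theta\rangle$ of cofinal subsets of $\lambda^+$, some limit $\alpha$ has $\otp(C_\alpha)=\Theta$ and $C_\alpha(\iota+1)\in B_\iota$ for co-boundedly many $\iota<\Theta$. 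Granting this, Clause~(2) follows by applying Lemma~\ref{common}(1) with the given $\theta,\sigma<\lambda$ and, in turn, each infinite regular $\chi<\lambda$: for every $\langle A_i\mid i<\theta\rangle$ of cofinal subsets of $\lambda^+$ there are stationarily many $\alpha\in E^{\lambda^+}_\chi$ with $\sup\{\beta\in C_\alpha\mid\suc_\sigma(C_\alpha\setminus\beta)\s A_i\}=\alpha$ for all $i<\theta$ --- precisely the guessing clause of $\p^-(\lambda^+,2,{\sq},\theta,\{E^{\lambda^+}_\chi\mid\aleph_0\le\cf(\chi)=\chi<\lambda\},2,\sigma,\mathcal E_\lambda)$ --- and together with $\diamondsuit(\lambda^+)$ this is Clause~(2). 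Clause~(3) is obtained the same way but via Lemma~\ref{common}(2) (with, say, $\theta=\omega$): for each infinite regular $\chi<\lambda$ and each cofinal $A\s\lambda^+$ it gives stationarily many $\alpha\in E^{\lambda^+}_\chi$ with some $\beta<\alpha$ satisfying $\suc_{\lambda^+}(C_\alpha\setminus\beta)\s A$; since $\otp(C_\alpha\setminus\beta')\le\lambda<\lambda^+$ and each $C_\alpha\setminus\beta'$ is closed in $\alpha$, the remark following the definition of $\suc_\sigma$ identifies $\suc_{\lambda^+}(C_\alpha\setminus\beta')$ with $\nacc(C_\alpha\setminus\beta')\setminus\{\min(C_\alpha\setminus\beta')\}$, and a short computation shows the containment at $\beta$ propagates to every larger $\beta'\in C_\alpha$, so $\sup\{\beta'\in C_\alpha\mid\suc_{\lambda^+}(C_\alpha\setminus\beta')\s A\}=\alpha$; with $\diamondsuit(\lambda^+)$ this yields Clause~(3).

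The main obstacle is the construction of $\langle C_\alpha\rangle$ in the second paragraph. One needs a single sequence that is both $\sq$-coherent with $\otp(C_\alpha)\le\lambda$ and satisfies hypothesis~(ii) of Lemma~\ref{common}, and the difficulty is that hypothesis~(ii) wants to dictate $C_\alpha$ --- its order-type and its successor-indexed members --- at suitable stages $\alpha$ predicted by a $\diamondsuit(\lambda^+)$-sequence, whereas $\sq$-coherence forbids disturbing the accumulation structure that $C_\alpha$ would then induce on its accumulation points. The resolution is in the spirit of the proof of Theorem~\ref{thm32}: start from a $\square_\lambda$-sequence $\langle D_\alpha\rangle$ as a backbone --- note that, for any $\square_\lambda$-sequence, clubs of every limit order-type below $\lambda$ occur, by the coherence property together with the fact that $\sup_\alpha\otp(D_\alpha)=\lambda$ --- and perform the $\diamondsuit(\lambda^+)$-guided steering of successor-indexed elements into the guessed targets only at non-accumulation points of $D_\alpha$, thereby keeping $\acc(C_\alpha)=\acc(D_\alpha)$ and $\otp(C_\alpha)=\otp(D_\alpha)$ and hence inheriting $C_{\bar\alpha}=C_\alpha\cap\bar\alpha$; the bookkeeping that makes this work --- in particular, locating the predicted stages of the desired order-type and correct guess --- is where the real work lies.
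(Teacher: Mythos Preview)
Your proposal is correct and follows essentially the same route as the paper: the easy implications are handled via Lemma~\ref{square-equiv} and the $\ch_\lambda\Leftrightarrow\diamondsuit(\lambda^+)$ equivalence (Fact~\ref{fact922}), while $(1)\Rightarrow(2)$ and $(1)\Rightarrow(3)$ are obtained by feeding a suitable $\square_\lambda$-sequence into Lemma~\ref{common}. The only notable difference is that where you sketch the construction of a $\square_\lambda$-sequence satisfying hypothesis~(ii) of Lemma~\ref{common} (``in the spirit of Theorem~\ref{thm32}''), the paper simply invokes Fact~\ref{fact_rinot11}: the principle $\sqc_\lambda$ is precisely a $\square_\lambda$-sequence with that hitting property, and its derivation from $\square_\lambda+\ch_\lambda$ is quoted as a black box from \cite{rinot11}; also, the paper routes $(2)\Rightarrow(1)$ and $(3)\Rightarrow(1)$ through $(4)$ by monotonicity rather than reading off $\square_\lambda$ directly, but this is cosmetic.
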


\begin{proof}
\begin{description}
\item[(1) $\implies$ (2) \& (3)]
Since $\lambda$ is uncountable, we get from Fact \ref{fact922} that $\diamondsuit(\lambda^+)$ holds.
Next, by Fact \ref{fact_rinot11},
we can fix a $\sqc_\lambda$-sequence,
that is, a $\square_\lambda$-sequence $\overrightarrow C=\langle C_\alpha \mid \alpha < \lambda^+ \rangle$
that also satisfies the hypotheses of Lemma~\ref{common}.

Since $\overrightarrow C$ is a $\square_\lambda$-sequence,
for all $\alpha<\lambda^+$, $C_\alpha$ has order-type $\leq \lambda$,
and is a club in $\alpha$ if $\alpha$ is a limit ordinal,
and if $\bar\alpha\in\acc(C_\alpha)$, then $C_{\bar\alpha}=C_\alpha\cap\bar\alpha$,
and hence $C_{\bar\alpha}\sq C_\alpha$.

Thus, the fact that $\overrightarrow C$ witnesses
$\p^-(\lambda^+,2,{\sq},\theta,\{E^{\lambda^+}_\chi\mid \aleph_0 \leq \cf(\chi) = \chi < \lambda\},2,\sigma, \mathcal E_\lambda)$
and
$\p^-(\lambda^+, 2, {\sq}, 1, \{E^{\lambda^+}_{\chi}\mid \aleph_0 \leq \cf(\chi) = \chi < \lambda \}, 2, \lambda^+,\mathcal E_\lambda)$
follows from the two respective parts of Lemma~\ref{common}.

\item[(2) $\implies$ (4) and (3) $\implies$ (4)]
Immediate by monotonicity of the parameters.

\item[(4) $\implies$ (1)]
By $\p(\lambda^+,2,{\sq},1,\{\lambda^+\},2,0,{\mathcal E_\lambda})$,
we have $\diamondsuit(\lambda^+)$, and hence $\ch_\lambda$ holds.
$\square_\lambda$ follows using Lemma~\ref{square-equiv}.
\qedhere
\end{description}
\end{proof}
\begin{cor}\label{cor64} For every singular cardinal $\lambda$, the following are equivalent:
\begin{enumerate}
\item $\square_\lambda+\ch_\lambda$;
\item $\p(\lambda^+,2,{\sq},\lambda^+,\{E^{\lambda^+}_{\cf(\lambda)}\},2,\sigma,\mathcal E_\lambda)$ for every $\sigma<\lambda$.
\end{enumerate}
\end{cor}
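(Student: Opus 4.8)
The plan is to prove the two implications separately: the easy direction $(2)\Rightarrow(1)$ by weakening parameters and invoking Lemma~\ref{square-equiv}, and the substantive direction $(1)\Rightarrow(2)$ through the $\sd_\lambda$-branch of Theorem~\ref{thm32}.

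For $(2)\Rightarrow(1)$: By definition, $\p(\lambda^+,2,{\sq},\lambda^+,\{E^{\lambda^+}_{\cf(\lambda)}\},2,\sigma,\mathcal E_\lambda)$ includes $\diamondsuit(\lambda^+)$, which gives $2^\lambda=\lambda^+$, i.e.\ $\ch_\lambda$. A sequence $\langle C_\alpha\mid\alpha<\lambda^+\rangle$ witnessing the $\p^-$-part is, in particular, a $\sq$-coherent sequence of clubs with $\otp(C_\alpha)\le\lambda$ for every limit $\alpha$. Observing that when the seventh parameter equals $0$ the guessing clause $\sup\{\beta\in C_\alpha\mid\suc_0(C_\alpha\setminus\beta)\subseteq A_0\}=\alpha$ holds vacuously (since $\suc_0$ is empty and $C_\alpha$ is club in $\alpha$), the same sequence witnesses $\p^-(\lambda^+,2,{\sq},1,\{\lambda^+\},2,0,\mathcal E_\lambda)$, which by Lemma~\ref{square-equiv} is equivalent to $\square_\lambda$. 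Hence (1) holds.

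For $(1)\Rightarrow(2)$: I would first invoke the classical fact that for a singular cardinal $\lambda$, $\square_\lambda+\ch_\lambda$ entails $\sd_\lambda$ — recorded among the facts in the appendix, and, if one wants it self-contained, obtained by glueing a $\square_\lambda$-sequence to a $\diamondsuit(\lambda^+)$-sequence (available since $\ch_\lambda$ holds and $\lambda$ is uncountable) via a recursion along $\lambda^+$ that makes the local clubs at the relevant points of $E^{\lambda^+}_{\cf(\lambda)}$ have order-type $\omega\cdot\lambda$ and catches the diamond-guesses along their non-accumulation points, preserving coherence exactly as in the proof of Lemma~\ref{thm62}. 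Granting $\sd_\lambda$, fix any $\sigma<\lambda$ and apply the second bullet of Theorem~\ref{thm32} with this $\lambda$ and $S:=E^{\lambda^+}_{\cf(\lambda)}$: it yields $\p(\lambda^+,2,{\sq},\lambda^+,\{S\},2,\sigma,\mathcal E_\chi)$ where $\chi=\omega\cdot\lambda$. Since $\lambda$ is an infinite cardinal above $\aleph_0$ we have $|\omega\cdot\xi|<\lambda$ for every $\xi<\lambda$, so $\omega\cdot\lambda=\lambda$; therefore $\chi=\lambda$, $\mathcal E_\chi=\mathcal E_\lambda$, and the admissible range of $\sigma$ in Theorem~\ref{thm32} is precisely $\sigma<\lambda$. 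This delivers $\p(\lambda^+,2,{\sq},\lambda^+,\{E^{\lambda^+}_{\cf(\lambda)}\},2,\sigma,\mathcal E_\lambda)$ for every $\sigma<\lambda$, completing (2). The only genuinely non-routine ingredient is the step $\square_\lambda+\ch_\lambda\Rightarrow\sd_\lambda$; everything else is monotonicity bookkeeping together with the cited theorems. (Alternatively, one could cite the Section~4 treatment of the coherence relation $\sq_\chi$ with parameter $\chi=\aleph_0$, in which case $\sq_\chi$ is $\sq$, but this inverts the order of exposition, so the $\sd_\lambda$-route is preferable here.)
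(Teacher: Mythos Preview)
Your proof is correct and follows essentially the same route as the paper: the forward implication is exactly the paper's argument (Fact~\ref{fact_rinot19} to obtain $\sd_\lambda$, then Theorem~\ref{thm32}, noting $\omega\cdot\lambda=\lambda$ since $\lambda$ is uncountable), and for the backward implication the paper cites Corollary~\ref{thm61}, whose $(4)\Rightarrow(1)$ step is precisely your argument via $\diamondsuit(\lambda^+)\Rightarrow\ch_\lambda$ together with Lemma~\ref{square-equiv}.
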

\begin{proof} The forward implication follows from Fact \ref{fact_rinot19} and  Theorem \ref{thm32}.
The backward implication follows from Corollary \ref{thm61}.
\end{proof}

\begin{cor}\label{cor69}
For every uncountable cardinal $\lambda$, the following are equivalent:
\begin{enumerate}
\item $\sd_\lambda$;
\item $\p(\lambda^+,2,\sq,1,\{E^{\lambda^+}_{\cf(\lambda)}\},2,\lambda^+,\mathcal E_\lambda)$.
\end{enumerate}
\end{cor}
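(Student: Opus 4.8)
The plan is to prove the two implications separately: $(1)\Rightarrow(2)$ is essentially immediate from Theorem~\ref{thm32}, whereas $(2)\Rightarrow(1)$ requires building a witness to $\sd_\lambda$ more or less from scratch, using the $\p^-$-witness together with the $\diamondsuit(\lambda^+)$ supplied by $\p$.

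\textbf{$(1)\Rightarrow(2)$.} Assume $\sd_\lambda$. Since $\lambda$ is an uncountable cardinal, the ordinal product $\omega\cdot\lambda$ equals $\lambda$, so $\chi=\lambda$ in Theorem~\ref{thm32}; applied with $S:=E^{\lambda^+}_{\cf(\lambda)}$ and (say) $\sigma:=1$, it furnishes a $\sq$-coherent sequence $\langle C_\alpha\mid\alpha<\lambda^+\rangle$ with $\otp(C_\alpha)\le\lambda$ for every limit $\alpha$ (so the eighth parameter $\mathcal E_\lambda$ is respected), and in particular $\diamondsuit(\lambda^+)$ holds. It remains to verify the $\p^-$-clause for $\theta=1$ and $\sigma=\lambda^+$: given a cofinal $A\s\lambda^+$ and a club $D\s\lambda^+$, apply the ``moreover'' part of Theorem~\ref{thm32} to the constant sequence $A_\delta:=A$ for $\delta<\lambda^+$, the club $D$, and $\varsigma:=0$, obtaining $\alpha\in S$ with $\nacc(C_\alpha)\s\bigcup_{\delta<\alpha}A_\delta=A$ (clause~(2)) and $\alpha\in\acc^+(D)\s D$ (clause~(4), as $D$ is closed). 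Since $C_\alpha\setminus\beta$ is a club in $\alpha$ of order-type $\le\lambda<\lambda^+$ for each $\beta<\alpha$, we get $\suc_{\lambda^+}(C_\alpha\setminus\beta)=\nacc(C_\alpha\setminus\beta)\setminus\{\min(C_\alpha\setminus\beta)\}\s\nacc(C_\alpha)\s A$, hence $\sup\{\beta\in C_\alpha\mid\suc_{\lambda^+}(C_\alpha\setminus\beta)\s A\}=\alpha$. As $D$ was arbitrary, stationarily many $\alpha\in S$ have this property; together with $\diamondsuit(\lambda^+)$ this gives $(2)$.

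\textbf{$(2)\Rightarrow(1)$.} Fix a witness $\langle C_\alpha\mid\alpha<\lambda^+\rangle$ to the $\p^-$-part and a $\diamondsuit(\lambda^+)$-sequence $\langle Z_\beta\mid\beta<\lambda^+\rangle$ with $Z_\beta\s\beta$; then $\ch_\lambda$ holds, and $\sq$-coherence together with the bound $\otp(C_\alpha)\le\lambda$ makes $\langle C_\alpha\rangle$ a $\square_\lambda$-sequence. We construct a sequence $\langle(D_\alpha,X_\alpha)\mid\alpha<\lambda^+\rangle$ as in Definition~\ref{def-sd}. The clubs $D_\alpha$ are obtained from the $C_\alpha$'s (coherently reparametrized, when $\lambda$ is singular, so the guessing clubs attain order-type $\omega\cdot\lambda$), and $X_\alpha$ is decoded from $\langle Z_\gamma\rangle$ interval-by-interval: for $\gamma\in\nacc(C_\alpha)$ with $\gamma^-:=\sup(C_\alpha\cap\gamma)$, put $X_\alpha\cap[\gamma^-,\gamma):=Z_\gamma\cap[\gamma^-,\gamma)$. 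The $\sq$-coherence of $\langle C_\alpha\rangle$ transfers to both coherence clauses of Definition~\ref{def-sd}, in particular $X_{\bar\alpha}=X_\alpha\cap\bar\alpha$ for $\bar\alpha\in\acc(D_\alpha)$. For the guessing clause, given $X\s\lambda^+$ and a club $E\s\lambda^+$, the set $G:=\{\gamma\in E\mid Z_\gamma=X\cap\gamma\}$ is stationary, hence cofinal; feeding $G$ into the $\p^-$-guessing ($\theta=1$, $\sigma=\lambda^+$, $\mathcal S=\{E^{\lambda^+}_{\cf(\lambda)}\}$) produces $\alpha\in E^{\lambda^+}_{\cf(\lambda)}$ along which $\nacc(C_\alpha)$ is eventually contained in $G$, so that on a final segment of $\alpha$ one has $C_\alpha\s E$ and $X_\alpha$ agreeing with $X$.

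\textbf{The main obstacle} is precisely this last step. The $\p^-$-guessing, even with $\sigma=\lambda^+$, yields only $\nacc(C_\alpha)\s^*G$ --- correctness on a tail of $\alpha$ --- whereas Definition~\ref{def-sd} calls for a full-length guess (all of $\nacc(D_\alpha)$ diamond-correct, $X_\alpha=X\cap\alpha$ on all of $\alpha$, and $\otp(D_\alpha)=\omega\cdot\lambda$), since that is exactly the strength extracted from $\sd_\lambda$ in Lemma~\ref{lemma40}. When $\lambda$ is regular the order-type demand is free ($\alpha\in E^{\lambda^+}_\lambda$ forces $\otp(C_\alpha)=\lambda$); for singular $\lambda$ one must coherently stretch the $C_\alpha$'s, as in the proofs of Lemma~\ref{lemma40} and Theorem~\ref{thm32}. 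Promoting ``$\s^*$'' to ``$\s$'' is the genuine difficulty: rather than reading $D_\alpha$ off $C_\alpha$ directly, one runs a recursion over $\alpha$ --- in the spirit of the proof of Lemma~\ref{thm62} --- splicing together diamond-correct tails of the scaffolding clubs and committing to the values of $X_\alpha$ only once enough of the relevant structure has appeared below $\alpha$, so that the eventual agreement provided by $\p$ is boosted to the exact agreement required by $\sd_\lambda$.
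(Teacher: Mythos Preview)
Your $(1)\Rightarrow(2)$ matches the paper's argument.

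For $(2)\Rightarrow(1)$, the paper takes a considerably simpler route than the recursion you sketch, and it splits into cases.

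\emph{If $\lambda$ is singular}: from the $\p^-$-witness (with $\mathcal E_\lambda$) and $\diamondsuit(\lambda^+)$ one reads off $\square_\lambda+\ch_\lambda$ via Corollary~\ref{thm61}, and then Fact~\ref{fact_rinot19} gives $\sd_\lambda$ as a black box. No construction is needed, and in particular the order-type issue you flag (stretching to $\omega\cdot\lambda$) never arises.

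\emph{If $\lambda$ is regular}: the paper avoids recursion by a coherent chopping trick. For each limit $\alpha$, check whether there exist $\gamma<\alpha$ and a set $X_\alpha\s\alpha$ with $X_\alpha\cap\beta=Z_\beta$ for \emph{every} $\beta\in\nacc(C_\alpha\setminus\gamma)$; if so, $X_\alpha$ is uniquely determined, let $\gamma(\alpha)$ be the least such $\gamma$, and set $D_\alpha:=C_\alpha\setminus\gamma(\alpha)$. The key observation you are missing is that this operation is automatically $\sq$-coherent: if $\bar\alpha\in\acc(D_\alpha)$ then $\bar\alpha>\gamma(\alpha)$ and $C_{\bar\alpha}=C_\alpha\cap\bar\alpha$, so the same witness $\gamma(\alpha)$ works for $\bar\alpha$ and minimality forces $\gamma(\bar\alpha)=\gamma(\alpha)$; hence $D_{\bar\alpha}=D_\alpha\cap\bar\alpha$ and $X_{\bar\alpha}=X_\alpha\cap\bar\alpha$. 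For the guessing clause, with $A_0:=\{\beta\in E\mid Z_\beta=X\cap\beta\}$ cofinal, the $\p^-$-hypothesis yields $\alpha\in E\cap E^{\lambda^+}_\lambda$ and $\beta_0\in C_\alpha$ with $\suc_{\lambda^+}(C_\alpha\setminus\beta_0)\s A_0$; then $\gamma(\alpha)\le\beta_0+1$, so $X_\alpha=X\cap\alpha$ exactly, $\acc(D_\alpha)\s\acc^+(A_0)\s E$, and $\otp(\acc(D_\alpha))=\lambda$ since $\cf(\alpha)=\lambda$ is uncountable.

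So the obstacle you correctly identify---promoting $\s^*$ to $\s$---is resolved not by recursively rebuilding clubs \`a la Lemma~\ref{thm62}, but simply by uniformly truncating each $C_\alpha$ at the canonically defined point $\gamma(\alpha)$. Your interval-by-interval decoding would also work once combined with this same chop; the recursion is unnecessary machinery.
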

\begin{proof}
The forward implication is obtained by applying Theorem \ref{thm32} to the constant sequence $\langle A_0 \mid \delta < \lambda^+\rangle$,
yielding  stationarily many $\alpha\in E^{\lambda^+}_{\cf(\lambda)}$ such that
$\suc_{\lambda^+}(C_\alpha \setminus \beta) \subseteq \nacc(C_\alpha) \subseteq A_0$
whenever $\beta < \alpha$.

For the backward implication, we consider two cases.
If $\lambda$ is singular, then by Corollary \ref{thm61}, $\square_\lambda+\ch_\lambda$ holds,
and then by Fact~\ref{fact_rinot19}, so does $\sd_\lambda$.

Thus, from now on, suppose that $\lambda$ is a regular cardinal,
$\langle Z_\beta\mid \beta<\lambda^+\rangle$ is a witness to $\diamondsuit(\lambda^+)$, and
$\langle C_\alpha\mid\alpha<\lambda^+\rangle$ is a witness to $\p^-(\lambda^+,2,{\sq},1,\{E^{\lambda^+}_{\lambda}\},2,\lambda^+,\mathcal E_\lambda)$.
We shall prove that $\sd_\lambda$ holds.
For every limit $\alpha<\lambda^+$, if there exists some $\gamma<\alpha$ and a set $X_\alpha\s\alpha$ such that $X_\alpha\cap\beta=Z_\beta$ for all $\beta\in\nacc(C_\alpha\setminus\gamma)$,
then $X_\alpha$ is uniquely determined, and we may let $\gamma(\alpha)$ be the least $\gamma$ as in the preceding. Otherwise, let $X_\alpha:=\emptyset$ and $\gamma(\alpha):=0$.

Let $D_\alpha :=C_\alpha\setminus\gamma(\alpha)$.
We claim that $\langle (D_\alpha,X_\alpha)\mid \alpha<\lambda^+\rangle$ witnesses $\sd_\lambda$.

For every limit $\alpha < \lambda^+$, clearly $D_\alpha$ is club in $\alpha$,
and $\otp(D_\alpha) \leq \otp(C_\alpha) \leq \lambda$, and $X_\alpha \subseteq \alpha$.

Suppose that $\bar\alpha\in\acc(D_\alpha)$.
Then $\bar\alpha\in\acc(C_\alpha)$ and $\bar\alpha>\gamma(\alpha)$,
and hence $C_{\bar\alpha} = C_\alpha \cap \bar\alpha$,
and it is easy to see that $\gamma(\bar\alpha) = \gamma(\alpha)$ and $X_{\bar\alpha} = X_\alpha \cap \bar\alpha$,
so it follows that $D_{\bar\alpha} = D_\alpha \cap \bar\alpha$.

Consider any subset $X \subseteq \lambda^+$ and any club $E \subseteq \lambda^+$.
Since $\left< Z_\beta \mid \beta < \lambda^+ \right>$ is a $\diamondsuit(\lambda^+)$-sequence,
the set $A_0 := \{ \beta \in E \mid X \cap \beta = Z_\beta \}$ is stationary, and hence cofinal in $\lambda^+$.
Thus we can choose $\alpha \in E\cap E^{\lambda^+}_\lambda$ such that
$\sup \{ \beta \in C_\alpha \mid \suc_{\lambda^+} (C_\alpha \setminus \beta) \subseteq A_0 \} = \alpha$.
Pick $\beta_0 \in C_\alpha$ such that $\suc_{\lambda^+} (C_\alpha \setminus \beta_0) \subseteq A_0$.
Then for every $\beta \in \nacc (C_\alpha \setminus(\beta_0+1))$ we have $X \cap \beta = Z_\beta$,
so that $X_\alpha$ must have been defined to be equal to $X \cap \alpha$,
and $D_\alpha = C_\alpha \setminus \gamma$ for some $\gamma\le\beta_0+1$.
Furthermore, $\acc(D_\alpha) \subseteq \acc^+ (A_0) \subseteq \acc^+(E) \subseteq E$, since $E$ is club.
Finally, since $\lambda$ is uncountable, $\otp(\acc(D_\alpha)) = \otp(D_\alpha)$,
but $\lambda = \cf(\alpha) \leq \otp(D_\alpha) \leq \otp(C_\alpha) \leq \lambda$,
so that $\otp(\acc(D_\alpha)) = \lambda$,
as required.
\end{proof}

\begin{fact}\label{thm66}
Suppose that $V=L$, and that $\kappa$ is an inaccessible cardinal that is not weakly compact.

Then $\p(\kappa,2,{\sq},\kappa,\{E^\kappa_{\ge\chi}\mid \chi<\kappa\},2,\sigma)$ holds for all $\sigma<\kappa$.
\end{fact}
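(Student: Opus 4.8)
The plan is to reduce to the $\p^-$ form and then carry out a fine-structural construction in $L$, in the spirit of Jensen's $\square(\kappa)$-construction combined with the interleaving device from the proof of Theorem~\ref{thm32}. Since $V=L$ and $\kappa$ is inaccessible, $\diamondsuit(\kappa)$ holds, so it suffices to produce a sequence witnessing $\p^-(\kappa,2,{\sq},\kappa,\{E^\kappa_{\ge\chi}\mid\chi<\kappa\},2,\sigma)$; fix $\sigma<\kappa$, which I may assume to be an infinite limit ordinal. By Lemma~\ref{l1023} such a sequence entails $\square(\kappa)$, so the hypothesis that $\kappa$ is not weakly compact is necessary, and it will also be the engine of the construction. (This is why the statement cannot simply be deduced from Lemma~\ref{common}: even granting a coherent sequence satisfying that lemma's hypothesis~(ii) at $\lambda=\kappa$ --- which itself needs the fine structure --- its conclusion~(1) only delivers the hitting property for $\theta<\kappa$, not for $\theta=\kappa$.)

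The input I would extract from the fine structure of $L$ is a sequence $\langle (D_\alpha,X_\alpha)\mid\alpha<\kappa\rangle$ that is the inaccessible analogue of those produced by Lemmas~\ref{lemma40} and~\ref{thm62}: for every limit $\alpha$, $D_\alpha$ is a club in $\alpha$ with $D_{\bar\alpha}=D_\alpha\cap\bar\alpha$ whenever $\bar\alpha\in\acc(D_\alpha)$; $X_\alpha\subseteq\alpha$; and for every $X\subseteq\kappa$, every club $E\subseteq\kappa$, and every $\chi<\kappa$ there is some $\alpha\in E^\kappa_{\ge\chi}$ with $|\otp(D_\alpha)|=|\alpha|$ such that $\nacc(D_\alpha)\subseteq\{\gamma\in E\mid X\cap\gamma=X_\gamma\}$. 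Such a sequence is built by recursion on $\alpha<\kappa$ using the canonical well-order $<_L$ of $L$ together with condensation: at a limit $\alpha$ one takes $D_\alpha$ to be the $<_L$-least club of $\alpha$ cohering with the part of the sequence already built, threaded through the $\diamondsuit$-predicted set whenever the least level of $L$ coding $\alpha$ witnesses that $\alpha$ reflects the relevant parameters correctly. Coherence of $\langle D_\alpha\rangle$ is immediate from condensation, while the failure of weak compactness of $\kappa$ is exactly what forces the ``correct reflection'' to occur at stationarily many $\alpha\in E^\kappa_{\ge\chi}$ for each prescribed $\chi$; one also arranges $|\otp(D_\alpha)|=|\alpha|$ at those levels.

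From this input the argument of the proof of Theorem~\ref{thm32} transfers almost verbatim: given cofinal sets $\langle A_i\mid i<\kappa\rangle$, a club $D$, and $\chi<\kappa$, one defines the auxiliary functions $o_\alpha,\varphi_\alpha,d_\alpha,c_\alpha$ on $D_\alpha$ and sets $C_\alpha:=\acc(D_\alpha)\cup\{c_\alpha(\beta)\mid\beta\in D_\alpha\}$, so that $\acc(C_\alpha)=\acc(D_\alpha)$ and $\nacc(C_\alpha)=\{c_\alpha(\beta)\mid\beta\in D_\alpha\}$; coherence is inherited because all of $o_\alpha,\varphi_\alpha,d_\alpha,c_\alpha$ restrict correctly along $\acc(D_\alpha)$, and at a good level $\alpha$ the nonaccumulation points of $C_\alpha$ are pushed into the $A_i$'s in $\suc_\sigma$-blocks, densely, for every $i<\alpha$. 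The only real difference with Theorem~\ref{thm32} is bookkeeping: there $\lambda$-many sets were interleaved inside a club of order-type $\omega\cdot\lambda$, whereas $\theta=\kappa$ here forces $|\alpha|$-many sets to be interleaved inside $C_\alpha$ at an $\alpha$ that may have cofinality far below $|\alpha|$. This is precisely why $|\otp(D_\alpha)|=|\alpha|$ is needed: the block-start points of $C_\alpha$ then form a set of cardinality $|\alpha|$ and cofinality $\cf(\alpha)$, and a short counting argument shows that any such ordinal is a union of $|\alpha|$-many cofinal subsets, furnishing one ``slot'' per index $i<\alpha$ (each repeated cofinally often, as in clause~(3) of Theorem~\ref{thm32}).

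I expect the main obstacle to be the extraction of the first ingredient --- the coherent, diamond-guessing, order-type-controlled $D$-sequence --- from the fine structure of $L$: that is the only place where genuine constructibility theory is required (condensation, the fine-structural content of non-weak-compactness, and the simultaneous control of $\cf$ and $\otp$ at the guessing levels), whereas everything downstream is the purely combinatorial book-keeping already carried out in the successor case of Theorem~\ref{thm32}.
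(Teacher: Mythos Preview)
The paper does not actually prove Fact~\ref{thm66}. Immediately after stating it, the paper writes: ``The proof will appear in an upcoming paper by Rinot and Schindler. Here, we only briefly explain how to derive $\p(\kappa,2,{\sq},\theta,\{E^\kappa_{\ge\chi}\mid \chi<\kappa\},2,\sigma)$ for all $\theta<\kappa$ (and all $\sigma<\kappa$).'' That weaker statement is then proved as Theorem~\ref{thm39}, via a fine-structural $\sd$-type sequence (from \cite{AShS:221}, \cite{Sh:347}) that allows one to prescribe $\otp(D_\alpha)=\Theta$ for any fixed $\Theta<\kappa$, followed by the coding of Theorem~\ref{thm32} and the ordinal arithmetic of Lemma~\ref{common}. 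So there is no paper proof to compare against for the full $\theta=\kappa$ claim.

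Your outline is broadly consistent with the paper's approach to the weaker Theorem~\ref{thm39}: both extract a coherent $\langle(D_\alpha,X_\alpha)\rangle$ from the fine structure and then feed it through the Theorem~\ref{thm32} machinery. You have also correctly isolated the genuine gap between $\theta<\kappa$ and $\theta=\kappa$: at a guessing level $\alpha$ one must encode $|\alpha|$ many indices, so one needs $|\otp(D_\alpha)|=|\alpha|$ rather than merely some prescribed $\Theta<\kappa$, and this strengthened control on order-type at the guessing levels is precisely what the paper does \emph{not} claim to extract in its sketch of Theorem~\ref{thm39}. Your final paragraph acknowledging that the fine-structural extraction of such a sequence is the real content --- and that everything downstream is bookkeeping --- is accurate. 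What you have is a reasonable high-level plan whose only unproven step is exactly the step the paper itself defers to Rinot--Schindler; your sketch of that step (condensation plus non-weak-compactness to get stationarily many reflection points in each $E^\kappa_{\ge\chi}$ with the required order-type) is plausible but remains a sketch, and one should expect the details of simultaneously arranging coherence, diamond-guessing, the cofinality constraint $\cf(\alpha)\ge\chi$, and $|\otp(D_\alpha)|=|\alpha|$ to require real work.
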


The proof will appear in an upcoming paper by Rinot and Schindler. Here, we only briefly explain how to derive
$\p(\kappa,2,{\sq},\theta,\{E^\kappa_{\ge\chi}\mid \chi<\kappa\},2,\sigma)$ for  all $\theta<\kappa$ (and all $\sigma<\kappa$).

\begin{thm}\label{thm39}
Suppose that $V=L$, and that $\kappa$ is an inaccessible cardinal that is not weakly compact.

Then $\p(\kappa,2,{\sq},1,\{E^\kappa_{\chi}\mid \aleph_0 \leq \cf(\chi) = \chi <\kappa\},2,\kappa)$ holds.
In particular, $\p(\kappa,2,{\sq},\theta,\{E^\kappa_{\ge\chi}\mid \chi<\kappa\},2,\sigma)$ holds for all $\theta,\sigma<\kappa$.
\end{thm}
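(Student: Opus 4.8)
The plan is to derive both assertions from Lemma~\ref{common}, applied with $\lambda:=\kappa$; this is legitimate since $\kappa$ is inaccessible, so that $\lambda=\kappa\le\cf(\kappa)=\kappa$ and both cardinals are uncountable. Concretely, I would first produce a single coherent $C$-sequence $\langle C_\alpha\mid\alpha<\kappa\rangle$ witnessing clauses~(i) and~(ii) of Lemma~\ref{common} for $\lambda=\kappa$, and then read off the theorem. For the first (main) assertion: given a cofinal $A\s\kappa$ and an infinite regular $\chi<\kappa$, Lemma~\ref{common}(2), applied with $\theta:=\omega$, yields stationarily many $\alpha\in E^\kappa_\chi$ for which there is $\beta_0\in C_\alpha$ with $\suc_\kappa(C_\alpha\setminus\beta_0)\s A$; but a single such $\beta_0$ forces $\nacc(C_\alpha)\cap(\min(C_\alpha\setminus\beta_0),\alpha)\s A$, hence $\suc_\kappa(C_\alpha\setminus\beta)\s A$ for every $\beta\in C_\alpha$ beyond $\min(C_\alpha\setminus\beta_0)$, so that $\sup\{\beta\in C_\alpha\mid\suc_\kappa(C_\alpha\setminus\beta)\s A\}=\alpha$. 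As this works for each infinite regular $\chi<\kappa$ separately, we obtain $\p^-(\kappa,2,{\sq},1,\{E^\kappa_\chi\mid\aleph_0\le\cf(\chi)=\chi<\kappa\},2,\kappa)$. For the ``in particular'' assertion: Lemma~\ref{common}(1) directly delivers $\p^-(\kappa,2,{\sq},\theta,\{E^\kappa_\chi\mid\aleph_0\le\cf(\chi)=\chi<\kappa\},2,\sigma)$ for every infinite cardinal $\theta<\kappa$ and every ordinal $\sigma<\kappa$, the case of finite $\theta$ following by monotonicity; and since $E^\kappa_\chi\s E^\kappa_{\ge\bar\chi}$ whenever $\chi\ge\bar\chi$ is regular, any family of ordinals that is stationary within some such $E^\kappa_\chi$ also witnesses ``stationarily many $\alpha\in E^\kappa_{\ge\bar\chi}$'', so the target collection may be switched to $\{E^\kappa_{\ge\chi}\mid\chi<\kappa\}$. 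Finally, $V=L$ yields $\diamondsuit(\kappa)$, upgrading each $\p^-$ above to the corresponding $\p$.

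The real content is the construction of $\langle C_\alpha\mid\alpha<\kappa\rangle$, and here is where the hypotheses ``$V=L$'' and ``$\kappa$ not weakly compact'' are used. Since $\kappa$ is a non-weakly-compact inaccessible in $L$, Jensen's fine-structure theory supplies a $\square(\kappa)$-sequence decorated with a diamond: a sequence $\langle(D_\alpha,X_\alpha)\mid\alpha<\kappa\rangle$ such that, for limit $\alpha$, $D_\alpha$ is a club in $\alpha$ with $\min(D_\alpha)=0$ and $\nacc(D_\alpha)$ consisting of successor ordinals; $D_{\bar\alpha}=D_\alpha\cap\bar\alpha$ for every $\bar\alpha\in\acc(D_\alpha)$; $X_\alpha\s\alpha$; and for every $X\s\kappa$, every club $E\s\kappa$, and every infinite regular $\chi<\kappa$ there are ordinals $\alpha\in E^\kappa_\chi$ of arbitrarily large order-type $\otp(D_\alpha)<\kappa$ with $\nacc(D_\alpha)\s\{\gamma\in E\mid X\cap\gamma=X_\gamma\}$. (This is the inaccessible-cardinal counterpart of the successor-cardinal inputs isolated in Lemmas~\ref{thm62} and~\ref{lemma40}; reaching order-types cofinal in $\kappa$ requires a transfinite bootstrapping over regular cofinalities, using $\diamondsuit(E^\kappa_\chi)$ for each regular $\chi<\kappa$ to stack approximations on top of the coherent backbone.) From such a sequence I would convert $\langle(D_\alpha,X_\alpha)\rangle$ to $\langle C_\alpha\rangle$ exactly as in the proof of Theorem~\ref{thm32}: at a limit $\alpha$, read off from $\min(X_{\min(D_\alpha)})$ and the order-type position of $\beta$ inside $D_\alpha$ (via a fixed pairing) an ``instruction'' attached to each $\beta\in D_\alpha$, replace $\beta$ by a point $c_\alpha(\beta)\in(\beta,\min(D_\alpha\setminus(\beta+1))]$ pushed into a prescribed cofinal set whenever the instruction calls for it, and set $C_\alpha:=\acc(D_\alpha)\cup\{c_\alpha(\beta)\mid\beta\in D_\alpha\}$. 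Full coherence $C_{\bar\alpha}=C_\alpha\cap\bar\alpha$ for $\bar\alpha\in\acc(C_\alpha)$ is inherited from that of $\langle D_\alpha\rangle$, since $c_\alpha\restriction\bar\alpha$ depends only on data strictly below $\bar\alpha$ --- the ``look-down'' phenomenon of Claim~\ref{coherence}. Clause~(ii) of Lemma~\ref{common} is then checked as in the Claim inside the proof of Theorem~\ref{thm32}: a given limit $\Theta<\kappa$ and sequence $\langle B_\iota\mid\iota<\Theta\rangle$ of cofinal subsets of $\kappa$ are coded into a single $X\s\kappa$ together with a club $E\s\kappa$ (using $\diamondsuit(\kappa)$ to realize the required function onto ${}^{<\kappa}\kappa$); one picks $\alpha$ of cofinality $\cf(\Theta)$ with $\otp(D_\alpha)$ large enough and $\nacc(D_\alpha)\s\{\gamma\in E\mid X\cap\gamma=X_\gamma\}$, thins $D_\alpha$ so that $\otp(C_\alpha)=\Theta$, and computes $C_\alpha(\iota+1)\in B_\iota$ for co-boundedly many $\iota<\Theta$.

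The main obstacle is the fine-structural input in the previous paragraph: arranging, in $L$ at a non-weakly-compact inaccessible $\kappa$, a coherent $\square(\kappa)$-sequence that simultaneously (a) coheres, (b) admits no thread --- which is exactly what forces $\kappa$ not to be weakly compact, consistently with Lemma~\ref{l1023} (a threadable coherent sequence would refute $\p^-(\kappa,2,{\sq})$, and weak compactness is incompatible with $\square(\kappa)$) --- and (c) realizes, for every prescription $(X,E)$ and every prescribed regular cofinality and order-type below $\kappa$, a $D_\alpha$ of that shape whose non-accumulation points lie in $\{\gamma\in E\mid X\cap\gamma=X_\gamma\}$. Establishing (a)+(b)+(c) is a genuine fine-structure argument (condensation together with the bookkeeping encoding ``$\kappa$ is not $\Pi^1_1$-indescribable''), and it is precisely this ingredient, in the stronger form demanded by the fourth parameter $\theta=\kappa$, that is deferred to the forthcoming Rinot--Schindler paper and recorded above as Fact~\ref{thm66}: the $\theta=\kappa$ case must guess $\kappa$-many cofinal sets at once, which needs a $\diamondsuit^+$-type companion sequence, whereas the present statement only guesses $<\kappa$-many at a time, so that an ordinary $\diamondsuit(\kappa)$-decorated $\square(\kappa)$-sequence suffices. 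Granting the input, the combinatorial conversion of paragraph two is routine, being a transcription of the proof of Theorem~\ref{thm32} with the order-type bound ``$\le\chi$'' there replaced throughout by ``$<\kappa$''.
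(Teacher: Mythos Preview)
Your approach is essentially the same as the paper's, and correct. Both arguments rest on the same fine-structural input: the result implicit in \cite[\S2]{AShS:221} that in $L$, at a non-weakly-compact inaccessible $\kappa$, there is a coherent sequence $\langle(D_\alpha,X_\alpha)\mid\alpha<\kappa\rangle$ such that for every club $E\s\kappa$, subset $X\s\kappa$, and limit $\Theta<\kappa$, some singular $\alpha$ has $\otp(D_\alpha)=\Theta$, $X_\alpha=X\cap\alpha$, and $D_\alpha\s E$. The paper cites this directly; your version with $\nacc(D_\alpha)\s\{\gamma\in E\mid X\cap\gamma=X_\gamma\}$ follows immediately by passing to $\acc(D_\alpha)$, since the coherence of the $X$-component gives $X_{\bar\alpha}=X\cap\bar\alpha$ for $\bar\alpha\in\acc(D_\alpha)$. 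Your parenthetical about ``transfinite bootstrapping over regular cofinalities'' is therefore an unnecessary worry: the \cite{AShS:221} construction already achieves arbitrary order-types $\Theta<\kappa$, and the cofinality of the witnessing $\alpha$ is simply $\cf(\Theta)$.

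The only organizational difference is this: the paper applies a lightweight transformation $f_\alpha(\beta):=\min((X_\beta\cup\{\beta\})\setminus\sup(D_\alpha\cap\beta))$ to obtain $C_\alpha:=\rng(f_\alpha)$ and verifies the first $\p^-$ assertion directly, deferring the Theorem~\ref{thm32} coding and the Lemma~\ref{common} arithmetic only for the ``in particular'' clause. You instead run the full Theorem~\ref{thm32} machinery once to produce a sequence satisfying hypothesis~(ii) of Lemma~\ref{common}, and then read off both assertions from parts~(1) and~(2) of that lemma. Your derivation of the $\sigma=\kappa$ case from Lemma~\ref{common}(2) is sound: once $\suc_\kappa(C_\alpha\setminus\beta_0)\s A$, the same inclusion persists for every $\beta\ge\beta_0$, since $\suc_\kappa(C_\alpha\setminus\beta)=\nacc(C_\alpha)\cap(\beta,\alpha)$ when $\otp(C_\alpha)<\kappa$. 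Either packaging is fine; yours is slightly more uniform, the paper's slightly more direct for the headline statement.
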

\begin{proof}[Proof sketch] Work in $L$. As hinted in \cite[Theorem 3.2]{Sh:347}, the proof of \cite[$\S2$]{AShS:221} essentially shows
that for every inaccessible cardinal $\kappa$ that is not weakly compact,
there exists a sequence $\langle (D_\alpha,X_\alpha)\mid \alpha<\kappa\rangle$ such that for every limit $\alpha<\kappa$,
$D_\alpha$ is a club in $\alpha$, and if $\bar\alpha \in\acc(D_{\bar\alpha})$,
then $D_{\bar\alpha}=D_\alpha\cap \bar\alpha$ and $X_{\bar\alpha}=X_\alpha\cap \bar\alpha$. Moreover,
for every club $E\s\kappa$, subset $X\s\kappa$, and a limit ordinal $\Theta<\kappa$, there exists a singular limit ordinal $\alpha<\kappa$ with $\otp(D_\alpha)=\Theta$,
satisfying $X\cap\alpha=X_\alpha$ and $D_\alpha\s E$.
Thus, fix a sequence $\langle (D_\alpha,X_\alpha)\mid \alpha<\kappa\rangle$ as above.
For all $\alpha<\kappa$, define $f_\alpha:D_\alpha\rightarrow\alpha$ by stipulating:
$$f_\alpha(\beta):=\min((X_\beta\cup\{\beta\})\setminus\sup(D_\alpha\setminus\beta)).$$
Put $C_\alpha:=\rng(f_\alpha)$. It is not hard to verify that $\langle C_\alpha\mid\alpha<\kappa\rangle$
witnesses that  $\p^-(\kappa,2,{\sq},1,\{E^\kappa_{\chi}\mid \aleph_0 \leq \cf(\chi) = \chi <\kappa\},2,\kappa)$ holds. As $\langle X_\alpha\mid\alpha<\kappa\rangle$ witnesses that $\diamondsuit(\kappa)$ holds,
we altogether infer that $\p(\kappa,2,{\sq},1,\{E^\kappa_{\chi}\mid \aleph_0 \leq \cf(\chi) = \chi <\kappa\},2,\kappa)$ holds.

The fact that, modulo $\kappa^{<\kappa}=\kappa$, $\p^-(\kappa,2,{\sq},1,\{E^\kappa_{\chi}\mid \aleph_0 \leq \cf(\chi) = \chi <\kappa\},2,\kappa)$
entails a simultaneous witness to $\p^-(\kappa,2,{\sq},\theta,\{E^\kappa_{\geq\chi}\mid \chi<\kappa\},2,\sigma)$ for all $\theta,\sigma<\kappa$,
is proven using the coding+decoding techniques of the proof of Theorem~\ref{thm32} augmented by the ordinal arithmetic considerations of Lemma~\ref{common}.
\end{proof}

\begin{thm}\label{thm67a}\label{thm67} Suppose that $\sigma<\lambda=\lambda^{<\lambda}$ are infinite cardinals. If $\square_\lambda$ holds, then:
\begin{enumerate}
\item $V^{\add(\lambda,1)}\models \p^-(\lambda^+,2,{\sq},\lambda^+,\{S\s E^{\lambda^+}_\lambda\mid S\text{ is stationary}\},2,\sigma,\mathcal E_\lambda)$;
\item $V^{\add(\lambda,1)}\models \p(\lambda^+,2,{\sq},\lambda^+,\{S\s E^{\lambda^+}_\lambda\mid S\text{ is stationary}\},2,\sigma,\mathcal E_\lambda)$, provided that $\ch_\lambda$ holds in $V$.
\end{enumerate}
\end{thm}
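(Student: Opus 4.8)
The plan is to force with $\mathbb P := \add(\lambda,1)$ and, in the extension, to exhibit the witnessing sequence by suitably modifying a ground-model $\square_\lambda$-sequence against the Cohen generic. I begin with the standard facts about $\mathbb P$: as $\lambda=\lambda^{<\lambda}$, $\lambda$ is regular uncountable (it exceeds the infinite cardinal $\sigma$), $\mathbb P$ is $({<}\lambda)$-closed, adds no new ${<}\lambda$-sequences over $V$, has cardinality $\lambda$ and therefore the $\lambda^+$-cc, so it preserves cofinalities, cardinals, the value of $2^\lambda$, and all stationary subsets of $\lambda^+$ — in particular $E^{\lambda^+}_\lambda$ is computed the same in $V$ and in $V[G]$, and any $\alpha<\lambda^+$ with $\cf(\alpha)=\lambda$ has $\otp(C)=\lambda$ for every club $C$ of $\alpha$ of order-type ${\le}\lambda$. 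This already disposes of part~(2) granting part~(1): if $\ch_\lambda$ holds in $V$ then, $2^\lambda$ being preserved, $\ch_\lambda$ holds in $V[G]$, and since $\lambda$ is uncountable, Fact~\ref{fact922} yields $\diamondsuit(\lambda^+)$ in $V[G]$; combined with the $\p^-$-instance of part~(1) this gives the corresponding $\p$-instance.

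For part~(1), the first move is to fix, using $\square_\lambda$ and Fact~\ref{fact_rinot11}, a ``tidy'' $\square_\lambda$-sequence $\langle D_\alpha\mid\alpha<\lambda^+\rangle$ in $V$ — each $D_\alpha$ a club in $\alpha$ of order-type ${\le}\lambda$ with $D_{\bar\alpha}=D_\alpha\cap\bar\alpha$ for $\bar\alpha\in\acc(D_\alpha)$ — arranged moreover so that at each $\alpha\in E^{\lambda^+}_\lambda$ the tree of cofinal $\sq$-threads below $\alpha$ branches (this is where $\square_\lambda$, as opposed to $\square(\lambda^+)$, is essential). Working now in $V[G]$ with the generic $g$, I define the witnessing sequence $\langle C_\alpha\mid\alpha<\lambda^+\rangle$ as follows. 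For $\alpha\notin E^{\lambda^+}_\lambda$ (equivalently $\cf(\alpha)<\lambda$) I keep the ground-model club but relocate its non-accumulation points: following the recipe of the function $c_\alpha$ in the proof of Theorem~\ref{thm32}, each $\beta\in\nacc(D_\alpha)$ is moved to a point $c_\alpha(\beta)\in(\sup(D_\alpha\cap\beta),\beta]$ selected by consulting $g$ through a fixed coding indexed by $\otp(D_\alpha\cap\beta)$, while $\acc(C_\alpha):=\acc(D_\alpha)$. For $\alpha\in E^{\lambda^+}_\lambda$ I first let $g$ choose a cofinal $\sq$-thread through the branching tree of threads below $\alpha$ — yielding a coherent club $\bar C_\alpha$ of $\alpha$ of order-type $\lambda$ — and then relocate its non-accumulation points by the same recipe to get $C_\alpha$. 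Because the recipe for relocating a point depends only on the coherent data strictly below it, and because the generically chosen threads are themselves pairwise coherent, the whole sequence $\langle C_\alpha\mid\alpha<\lambda^+\rangle$ is $\sq$-coherent; and $\otp(C_\alpha)=\otp(\bar C_\alpha)\le\lambda$, so $C_\alpha\in\dom(\mathcal E_\lambda)$. This establishes the first three bullets of the simplified $\p^-$.

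It remains to verify the guessing clause, and here the point is a simplification: it suffices to prove that for every sequence $\langle A_i\mid i<\lambda^+\rangle$ of cofinal subsets of $\lambda^+$ there is a club $E\subseteq\lambda^+$ such that the displayed supremum equals $\alpha$ for every $\alpha\in E\cap E^{\lambda^+}_\lambda$ and every $i<\alpha$; for then, given any stationary $S\subseteq E^{\lambda^+}_\lambda$, the guessing holds on the stationary set $E\cap S$, which handles the entire family $\mathcal S$ at once. Given $\langle A_i\rangle\in V[G]$, by the $\lambda^+$-cc I first reflect it to ground-model data and set $E$ to be (the intersection with a ground-model club of) $\diagonal_{i<\lambda^+}\acc^+(A_i)$, so that for $\alpha\in E$ and $i<\alpha$ the set $A_i$ is cofinal in $\alpha$. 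Fixing such an $\alpha\in E\cap E^{\lambda^+}_\lambda$, two things must cooperate: the generically chosen thread $\bar C_\alpha$ has order-type $\lambda$ and, by genericity of $g$ over $V$, its accumulation points lie — cofinally in $\alpha$, in stretches of any prescribed ordinal length ${<}\lambda$ — inside $E$, hence inside each $A_i$ with $i<\alpha$; and the relocation of the non-accumulation points, again by genericity, can be made to land in the appropriate $A_i$, the intervals of relocation being non-empty precisely because $\alpha\in\diagonal_i\acc^+(A_i)$. Distributing the ``slots'' among the various $A_i$ through a fixed surjection $\psi:\lambda\to\lambda\times\lambda$ with cofinal fibres — exactly as in the proofs of Theorem~\ref{thm32} and Lemma~\ref{common} — one concludes that for each $i<\alpha$ there are cofinally many $\beta\in C_\alpha$ with $\suc_\sigma(C_\alpha\setminus\beta)\subseteq A_i$, as required. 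The main obstacle I anticipate is exactly the interaction between coherence and guessing at the level $E^{\lambda^+}_\lambda$: a $\sq$-coherent modification of a $\square_\lambda$-sequence cannot relocate accumulation points freely, so the club-guessing that the displayed supremum demands for $\sigma\ge\omega$ cannot be extracted from $\square_\lambda$ alone — it must be manufactured by the Cohen generic choosing fresh threads through branching thread-trees — and making this precise requires (i) producing, from $\square_\lambda$, a coherent system on $E^{\lambda^+}_{<\lambda}$ whose thread-trees branch sufficiently, (ii) checking that the single forcing $\add(\lambda,1)$ picks, simultaneously at all $\alpha\in E^{\lambda^+}_\lambda$, threads that are pairwise coherent and generic enough to guess, and (iii) confining all the choices relevant to a given $\alpha$ to $g$-coordinates and ordinals coded below $\alpha$, so that closure points of ground-model coding functions furnish the club $E$; these steps follow the template of the proofs of Theorem~\ref{thm32} and Lemma~\ref{common}, with the $\diamondsuit$-driven choices there replaced throughout by genericity of the Cohen function.
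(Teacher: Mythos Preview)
Your derivation of part~(2) from part~(1) and your reduction of the guessing clause to finding, for each sequence $\langle A_i\mid i<\lambda^+\rangle$, a single club $E$ on which the guessing holds at every $\alpha\in E\cap E^{\lambda^+}_\lambda$ (then taking a diagonal intersection) are both correct and match the paper. The problem is your construction of the witnessing sequence.

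First, a minor but real error: Fact~\ref{fact_rinot11} requires $\ch_\lambda$, which is not assumed in part~(1), so you cannot invoke it there. More importantly, the ``branching tree of cofinal $\sq$-threads below $\alpha$'' that you want the generic to select from does not exist. A $\square_\lambda$-sequence assigns a \emph{single} club $D_\alpha$ to each $\alpha$, and coherence forces $D_{\bar\alpha}=D_\alpha\cap\bar\alpha$ for $\bar\alpha\in\acc(D_\alpha)$; there is no branching to exploit, and the distinction you draw between $\square_\lambda$ and $\square(\lambda^+)$ (order-type bound versus nontriviality) has nothing to do with branching of threads. Even if one could produce such a tree, having $g$ choose a fresh thread $\bar C_\alpha$ at each $\alpha\in E^{\lambda^+}_\lambda$ independently would destroy $\sq$-coherence: for $\bar\alpha\in\acc(\bar C_\alpha)\subseteq E^{\lambda^+}_{<\lambda}$ you need $C_{\bar\alpha}=\bar C_\alpha\cap\bar\alpha$, but $C_{\bar\alpha}$ was already fixed by your first case as a relocation of $D_{\bar\alpha}$, and there is no reason the generically chosen thread passes through it.

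The paper avoids all of this by a \emph{uniform} modification with no case split on $\cf(\alpha)$. Fixing surjections $\psi_\alpha:\lambda\setminus\{0\}\to\alpha$, one defines from the generic $g:\lambda\to\lambda$ and the ground-model $C_\alpha$ a set $C_\alpha^g$ by first thinning to the positions $1_\alpha^g:=\{j<\otp(C_\alpha)\mid g(j)\neq 0\}$ and then relocating $C_\alpha(j)$ to $\psi_{C_\alpha(j)}(g(j))$ (with the obvious repairs to ensure a club $D_\alpha^g$). Since $\otp(C_\alpha)\le\lambda$, this is governed entirely by $g\restriction\otp(C_\alpha)$ and by data coherent along $\acc(C_\alpha)$, so $\langle D_\alpha^g\mid\alpha<\lambda^+\rangle$ is again a $\square_\lambda$-sequence (this is cited from \cite{rinot12}). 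The guessing then follows from a direct density argument in $\add(\lambda,1)$: given a cofinal $A\subseteq\lambda^+$ (reduced to $V$ by the $\lambda^+$-cc, as you correctly note), the conditions forcing $\suc_\sigma(D_\alpha^g\setminus\beta)\subseteq A$ at the next $\sigma$-block are dense, and this yields the club $D_A$. The role of $\square_\lambda$ (as opposed to $\square(\lambda^+)$) is simply that $\otp(C_\alpha)\le\lambda$ lets a single function $g\in{}^\lambda\lambda$ parameterize the modification at every $\alpha$ simultaneously --- not any branching phenomenon.
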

\begin{proof} Work in $V$. Fix a $\square_\lambda$-sequence $\langle C_\alpha\mid\alpha<\lambda^+\rangle$.
For every $\alpha<\lambda^+$, let $\psi_\alpha:\lambda\setminus\{0\}\rightarrow\alpha$ be some surjection.\footnote{The case $\alpha=0$ is negligible.}
Given a function $g:\lambda\rightarrow\lambda$, we derive the following objects:
\begin{itemize}
\item $1_\alpha^g:=\{j < \otp(C_\alpha) \mid g(j)\neq 0\}$;
\item $g_\alpha:1^g_\alpha\rightarrow\alpha$ by stipulating $g_\alpha(j):=\psi_{C_\alpha(j)}(g(j))$;
\item $C_\alpha^g:=\{C_\alpha(j)\mid j\in\acc^+(1^g_\alpha)\}\cup\{\max\{g_\alpha(j),C_\alpha(\sup(1^g_\alpha\cap j))\}\mid j\in\nacc(1^g_\alpha)\}$;
\item $D_\alpha^g:=C_\alpha^g$ whenever $\sup(C_\alpha^g)=\alpha$, and $D_\alpha^g:=C_\alpha\setminus\sup(C^g_\alpha)$ otherwise.
\end{itemize}

\begin{claim} For every $g\in{}^\lambda\lambda$, $\langle D_\alpha^g\mid \alpha<\lambda^+\rangle$ is a $\square_\lambda$-sequence.
\end{claim}
\begin{proof} This is Claim 2.3.2 of \cite{rinot12}.
\end{proof}
\begin{enumerate}
\item Let $g:\lambda\rightarrow\lambda$ be $\add(\lambda,1)$-generic over $V$, and consider the $\square_\lambda$-sequence $\langle D^g_\alpha\mid \alpha<\lambda^+\rangle$ in $V[g]$.
As $\lambda^{<\lambda}=\lambda$, every cofinal subset of $\lambda^+$ from $V[g]$ covers a cofinal subset of $\lambda^+$ from $V$.
Thus, a simple density argument (cf.~Claims 2.3.1 and 2.3.3 of \cite{rinot12}) establishes that for every cofinal subset $A\s\lambda^+$,
there exists a club $D_A\s\lambda^+$ such that for every $\alpha\in E^{\lambda^+}_\lambda\cap D_A$, we have
$$\sup\{ \beta\in D^g_\alpha\mid \suc_\sigma(D^g_\alpha\setminus\beta)\s A\}=\alpha.$$

It follows that for every sequence $\langle A_i\mid i<\lambda^+\rangle$ of cofinal subsets of $\lambda^+$,
if we let $D:=\diagonal_{i<\lambda^+}D_{A_i}$, then for every $\alpha\in D\cap E^{\lambda^+}_\lambda$ and every $i<\alpha$:
$$\sup\{ \beta\in D^g_\alpha\mid \suc_\sigma(D^g_\alpha\setminus\beta)\s A_i\}=\alpha.$$
So $\langle D^g_\alpha\mid \alpha<\lambda^+\rangle$ witnesses the validity of $\p^-(\lambda^+,2,{\sq},\lambda^+,\{S\s E^{\lambda^+}_\lambda\mid S\text{ is stationary}\},2,\sigma,\mathcal E_\lambda)$.

\item By $\ch_\lambda + \lambda^{<\lambda}=\lambda$,
we have $V[g]\models\aleph_0<\lambda\ \&\ \ch_\lambda$.
So, by Fact \ref{fact922}, $V[g]\models\diamondsuit(\lambda^+)$.
Recalling the previous clause, we are done.
\qedhere
\end{enumerate}
\end{proof}

\section{\texorpdfstring{The coherence relation $\sq_\chi$}{Right-subscript coherence}}

Various constructions of Souslin-trees using the relation $\sq_\chi$ may be found in \cite{rinot20}.

\begin{lemma}\label{l24}
Suppose that $\lambda$ is an uncountable cardinal, and $\chi,\eta\le\lambda$ are infinite regular cardinals.

The following are equivalent:
\begin{enumerate}
\item $\boxminus_{\lambda,\ge\chi}$ holds.
\item
For every stationary $S\s\lambda^+$, there exist a stationary subset $S' \s S$
and a sequence $\langle C_\alpha\mid \alpha\in\Gamma\rangle$ satisfying:
\begin{itemize}
\item $E^{\lambda^+}_{\ge\chi}\s\Gamma\s \acc(\lambda^+)$;
\item if $\alpha\in\Gamma$, then $C_\alpha$ is a club subset of $\alpha$ of order-type $\le\lambda$;
\item if $\alpha\in\Gamma$ and $\bar\alpha\in\acc(C_\alpha)$, then $\bar\alpha\in\Gamma\setminus S'$ and $C_{\bar\alpha}=C_\alpha\cap \bar\alpha$;
\item for every club $D\s\lambda^+$, there exist stationarily many $\alpha\in \Gamma\cap E^{\lambda^+}_\eta$ such that $\min(C_\alpha)\in D$.
\end{itemize}
\item $\p^{-}(\lambda^+,2,{\sq_\chi},1,\{\lambda^+\},2,0,\mathcal E_\lambda)$.
\end{enumerate}
\end{lemma}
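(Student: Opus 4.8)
The plan is to run the cycle $(2)\Rightarrow(3)\Rightarrow(1)\Rightarrow(2)$; the first two arrows are routine unwinding of the definitions, while $(1)\Rightarrow(2)$ carries all the content.

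\emph{The routine directions.} For $(2)\Rightarrow(3)$ I would apply $(2)$ with $S=\lambda^+$, obtaining $\langle C_\alpha\mid\alpha\in\Gamma\rangle$ with $E^{\lambda^+}_{\ge\chi}\s\Gamma\s\acc(\lambda^+)$, each $C_\alpha$ a club in $\alpha$ of order-type $\le\lambda$, and genuine $\sq$-coherence along $\Gamma$; then extend to all of $\lambda^+$ by setting $C_\alpha:=\emptyset$ at non-limits and, at each limit $\alpha\notin\Gamma$ (necessarily with $\cf(\alpha)<\chi$), choosing $C_\alpha$ to be a club in $\alpha$ of order-type $\cf(\alpha)$ with $\nacc(C_\alpha)$ consisting only of successor ordinals. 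At such $\alpha$ the second disjunct of $\sq_\chi$ grants $C_{\bar\alpha}\mathrel{\sq_\chi}C_\alpha$ for free, so the extended sequence is $\sq_\chi$-coherent; and since $\sigma=0$ makes $\suc_0(C_\alpha\setminus\beta)=\emptyset$ for every $\beta$, the guessing clause of $\p^-(\lambda^+,2,{\sq_\chi},1,\{\lambda^+\},2,0,\mathcal E_\lambda)$ degenerates to the (trivially true) demand that stationarily many limit $\alpha<\lambda^+$ satisfy $\sup(C_\alpha)=\alpha$. For $(3)\Rightarrow(1)$ I would restrict a $\p^-$-witness to $\Gamma:=\{\alpha\in\acc(\lambda^+)\mid C_{\bar\alpha}=C_\alpha\cap\bar\alpha\text{ for all }\bar\alpha\in\acc(C_\alpha)\}$: since $\cf(\alpha)\ge\chi$ forces $\otp(C_\alpha)\ge\chi$ and hence rules out the second disjunct of $\sq_\chi$, one gets $E^{\lambda^+}_{\ge\chi}\s\Gamma$; a one-line induction gives closure of $\Gamma$ under accumulation points of its members; and the order-type bound comes from $\mathcal E_\lambda$. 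Thus $\langle C_\alpha\mid\alpha\in\Gamma\rangle$ witnesses $\boxminus_{\lambda,\ge\chi}$.

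\emph{The main direction $(1)\Rightarrow(2)$.} Fix a $\boxminus_{\lambda,\ge\chi}$-sequence $\langle C_\alpha\mid\alpha\in\Gamma_0\rangle$ and a stationary $S\s\lambda^+$. The first step is to produce the set $S'$: using the weak-square character of the sequence via the usual trace/pressing-down argument, I would extract a nonreflecting stationary $S'\s S$ that does not reflect at any ordinal of cofinality $\ge\chi$ (nor at cofinality $\eta$, when $\eta<\chi$), and then, by the standard relativization of a square-type sequence to the complement of a nonreflecting stationary set, pass to a $\boxminus_{\lambda,\ge\chi}$-sequence $\langle C'_\alpha\mid\alpha\in\Gamma_1\rangle$ with $\acc(C'_\alpha)\cap S'=\emptyset$ for every $\alpha$. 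The second, new step is to enhance this sequence so as to meet the fourth bullet of $(2)$: recursing up $\lambda^+$, I would adjoin to the domain suitable ordinals $\alpha\in E^{\lambda^+}_\eta$ carrying $C_\alpha^{\star}:=\{\beta_0\}\cup\bigcup_{i<\eta}C'_{\beta_i}$, where $\langle\beta_i\mid i<\eta\rangle$ is an increasing continuous sequence converging to $\alpha$ with each $\beta_i\in\acc(C'_{\beta_{i+1}})$ (so that $\langle C'_{\beta_i}\mid i<\eta\rangle$ is a $\sq$-increasing tower) and with $\beta_0$ chosen inside a prescribed club $D$, making $\min(C_\alpha^{\star})=\beta_0\in D$. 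Such a glued local club is automatically $\sq$-coherent with the rest of the sequence, has order-type $\sup_{i<\eta}\otp(C'_{\beta_i})\le\lambda$, and picks up no new accumulation point outside $\Gamma_1$; a density argument via elementary submodels, modelled on the limit case in the proof of Lemma~\ref{thm62}, shows that for each club $D$ the resulting set of $\alpha$'s is stationary, so every club is hit stationarily often at the minima.

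I expect the main obstacle to be this last construction, where one must simultaneously keep genuine $\sq$-coherence across the already-built and newly adjoined parts, respect the ceiling $\otp\le\lambda$ while the adjoined ordinals have cofinality $\eta$, keep all accumulation points off $S'$ (which is why the admissible cofinalities for $S'$ must be coordinated with the domain $\Gamma$), and still retain enough freedom in the choice of tower bases $\beta_0$ to hit every club $D$. I would organize the recursion and its bookkeeping along the lines of the proofs of Lemma~\ref{thm62} and Theorem~\ref{thm32}, invoking the ordinal-arithmetic considerations of Lemma~\ref{common} to reconcile the cofinality $\eta$ with the order-type bound.
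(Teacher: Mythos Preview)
Your routine directions $(2)\Rightarrow(3)$ and $(3)\Rightarrow(1)$ are fine and match the paper's treatment. The substantive gap is in $(1)\Rightarrow(2)$, where your gluing/adjoining strategy runs into a concrete obstruction and is, in any case, far more elaborate than needed.

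The obstruction: in your step~3 you propose to adjoin ordinals $\alpha\in E^{\lambda^+}_\eta$ with freshly constructed clubs $C_\alpha^{\star}$. But when $\eta\ge\chi$ (which the hypothesis allows), every such $\alpha$ already lies in $E^{\lambda^+}_{\ge\chi}\subseteq\Gamma_1$, so $C'_\alpha$ is already fixed by the $\boxminus$-sequence and you have no freedom to reset it to your glued tower. Even when $\eta<\chi$, your new $\alpha$ may land in $\acc(C'_\gamma)$ for some $\gamma\in\Gamma_1$, which forces $C_\alpha^{\star}=C'_\gamma\cap\alpha$ by coherence---again contradicting your free choice of the tower base $\beta_0$. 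Your auxiliary demand that $S'$ not reflect at cofinality $\eta$ (for $\eta<\chi$) is likewise not justified by $\boxminus_{\lambda,\ge\chi}$, which controls only points of cofinality $\ge\chi$.

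The paper avoids all of this by \emph{truncating} rather than gluing. For $S'$: either $S\setminus\Gamma$ is stationary (done), or by Fodor some $S_\epsilon:=\{\alpha\in S\cap\Gamma\mid\otp(C_\alpha)=\epsilon\}$ is stationary; set $c_\alpha:=C_\alpha$ if $\otp(C_\alpha)\le\epsilon$ and $c_\alpha:=C_\alpha\setminus C_\alpha(\epsilon)$ otherwise, so that no $\bar\alpha\in\acc(c_\alpha)$ can lie in $S':=S_\epsilon$. For the fourth bullet: further truncate $c^i_\alpha:=c_\alpha\setminus c_\alpha(i)$, and show by a diagonal-intersection contradiction that a single limit $i<\lambda$ works for all clubs $D$ simultaneously (assume not, pick bad pairs $(D_i,E_i)$, intersect, choose $\alpha$ of cofinality $\max\{\aleph_1,\chi,\eta\}$ in the intersection, and read off a contradiction from $\min(c^i_{\bar\alpha})\in D_i$ at an appropriate $\bar\alpha\in E^{\lambda^+}_\eta$). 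This truncation keeps full $\sq$-coherence automatically, never touches the domain, and requires no tower-building or bookkeeping.
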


In particular, $\boxminus_{\lambda,\ge\aleph_0}$, $\square_\lambda$, and $\p^{-}(\lambda^+,2,{\sq},1,\{\lambda^+\},2,0,\mathcal E_\lambda)$  are all equivalent.

\begin{proof}
\begin{description}
\item[(1) $\implies$ (2)]
Let $\langle C_\alpha\mid \alpha\in E^{\lambda^+}_{\ge\chi}\rangle$ be a $\boxminus_{\lambda,\ge\chi}$-sequence.
First, we make the following adjustment.
If $\bar\alpha < \alpha$ are two elements of $E^{\lambda^+}_{\ge\chi}$ such that $\bar\alpha \in \acc(C_\alpha)$,
then replace $C_{\bar\alpha}$ with $C_\alpha \cap \bar\alpha$.
Notice that this adjustment is well-defined as a result of the second clause of Definition~\ref{def109}.
Then, let $\Gamma:=\bigcup\{ \acc(C_\alpha)\cup\{\alpha\}\mid \alpha\in E^{\lambda^+}_{\ge\chi}\}$,
and define for every $\bar\alpha \in \Gamma \cap E^{\lambda^+}_{<\chi}$,
$C_{\bar\alpha} = C_\alpha \cap \bar\alpha$ for some $\alpha \in E^{\lambda^+}_{\ge\chi}$ satisfying
$\bar\alpha \in \acc(C_\alpha)$. Again, this is well-defined.
The following is clear:
\begin{itemize}
\item $E^{\lambda^+}_{\ge\chi}\s\Gamma\s \acc(\lambda^+)$;
\item if $\alpha\in\Gamma$, then $C_\alpha$ is a club subset of $\alpha$ of order-type $\le\lambda$;
\item if $\alpha\in\Gamma$ and $\bar\alpha\in\acc(C_\alpha)$,
then $\bar\alpha\in\Gamma$ and $C_{\bar\alpha}=C_\alpha\cap \bar\alpha$.
\end{itemize}

If $S_0:=S\setminus\Gamma$ is stationary, let $\epsilon:=0$.
Otherwise, $S\cap \Gamma$ is stationary in $\lambda^+$, and since
$\{ \otp(C_\alpha) \mid \alpha \in S\cap\Gamma \}$ is a subset of $\acc(\lambda+1)$,
there must exist some nonzero limit ordinal $\epsilon \leq \lambda$ such that
$S_\epsilon:=\{ \alpha\in S\cap\Gamma\mid \otp(C_\alpha)=\epsilon\}$ is stationary,
so let $\epsilon$ denote the least such ordinal.

For all $\alpha\in\Gamma$, set:
\[
c_\alpha:=\begin{cases}C_\alpha,&\text{if } \otp(C_\alpha)\le\epsilon;\\
             C_\alpha\setminus C_\alpha(\epsilon),&\text{otherwise.}
\end{cases}
\]

Evidently:
\begin{itemize}
\item $S':=S_\epsilon$ is a stationary subset of $S$;
\item if $\alpha\in\Gamma$, then $c_\alpha$ is a club subset of $\alpha$ of order-type $\le\lambda$;
\item if $\alpha\in\Gamma$ and $\bar\alpha\in\acc(c_\alpha)$, then $\bar\alpha\in\Gamma\setminus S'$ and $c_{\bar\alpha}=c_\alpha\cap \bar\alpha$.
\end{itemize}

Now, for all $i<\lambda$ and $\alpha\in\Gamma$, define:
\[
c^i_\alpha:=\begin{cases}c_\alpha,&\text{if } \otp(c_\alpha)\le i;\\
             c_\alpha\setminus c_\alpha(i),&\text{otherwise.}
\end{cases}
\]

We claim that there exists a limit ordinal $i<\lambda$, such that for every club $D\s\lambda^+$,
there exist stationarily many $\alpha\in \Gamma\cap E^{\lambda^+}_\eta$ with $\min(c_\alpha^i)\in D$.
Of course, we then could simply fix such an $i$,
and conclude that $S'$ and $\langle c^i_\alpha\mid\alpha\in\Gamma\rangle$  are as sought.

Thus, suppose there is no such $i$. Then, we may find a sequence $\langle (D_i,E_i)\mid i<\lambda\rangle$
of pairs of club subsets of $\lambda^+$, such that for every limit $i<\lambda$ and every $\alpha\in \Gamma\cap E^{\lambda^+}_\eta \cap E_i$,
we have $\min(c^i_\alpha)\notin D_i$. Consider the club $D:=\bigcap_{i \in \acc(\lambda)}(D_i\cap E_i)$.
Pick $\alpha\in E^{\lambda^+}_{\max\{\aleph_1,\chi,\eta\}}\cap\acc(D)$.
Then $\alpha\in\Gamma$ and $c_\alpha\cap D$ is a club in $\alpha$.
Put $\beta:=\min(c_\alpha\cap D)$, and $i:=\otp(c_{\alpha}\cap\beta)$.
Pick $\bar\alpha\in(\acc(c_\alpha\cap D)\cup\{\alpha\})$ with $\cf(\bar\alpha)=\eta$. Then $\bar\alpha\in\Gamma\cap E^{\lambda^+}_\eta\cap E_i$,
and $\min(c_{\bar\alpha}^i) = c_{\bar\alpha}(i) = c_\alpha(i)=\beta\in D\s D_i$. This is a contradiction.

\item[(2) $\implies$ (3)]
Suppose $\left< C_\alpha \mid \alpha \in \Gamma \right>$ is given and satisfying the hypotheses.
We extend it to a sequence $\left< C_\alpha \mid \alpha < \lambda^+ \right>$ as follows:
\begin{itemize}
\item
Let $C_0 := \emptyset$.
\item
Let $C_{\alpha+1} := \{\alpha\}$ for every $\alpha < \lambda^+$.

\item
For every $\alpha \in \acc(\lambda^+) \setminus\Gamma$,
let $C_\alpha$ be a club subset of $\alpha$ of order-type $\cf(\alpha)$
with $\nacc(C_\alpha) \s \nacc(\alpha)$.
\end{itemize}

It is clear that $\left< C_\alpha \mid \alpha < \lambda^+ \right>$ witnesses
$\p^-(\lambda^+, 2, {\sq_\chi}, 1, \{\lambda^+\}, 2, 0, \mathcal E_\lambda)$.

\item[(3) $\implies$ (1)]
Let $\langle C_\alpha\mid\alpha<\lambda^+\rangle$ witness
$\p^-(\lambda^+, 2, {\sq_\chi}, 1, \{\lambda^+\}, 2, 0, \mathcal E_\lambda)$.
To see that its
restriction $\langle C_\alpha \mid \alpha \in E^{\lambda^+}_{\ge\chi} \rangle$ satisfies
$\boxminus_{\lambda, \geq\chi}$,
consider any $\alpha, \beta \in E^{\lambda^+}_{\ge\chi}$
and any $\gamma \in \acc(C_\alpha) \cap \acc(C_\beta)$.
We must have $C_\gamma \sq_\chi C_\alpha$ and $C_\gamma \sq_\chi C_\beta$.
But $\otp(C_\alpha) \geq \cf(\alpha) \geq \chi$,
so that by definition of $\sq_\chi$ we must have $C_\gamma \sq C_\alpha$,
and similarly $C_\gamma \sq C_\beta$.
Thus $C_\alpha \cap \gamma = C_\gamma = C_\beta \cap \gamma$, as required.
\qedhere
\end{description}
\end{proof}

\begin{thm}\label{thm38}
Suppose that $\boxminus_{\lambda,\ge\chi}+\ch_\lambda$ holds for a given limit cardinal $\lambda$
and some fixed infinite regular cardinal $\chi<\lambda$.
Then:
\begin{enumerate}
\item
$\p(\lambda^+,2,{\sq_\chi},\theta,\{E^{\lambda^+}_\eta\mid \aleph_0 \leq \cf(\eta) = \eta < \lambda\},2,\sigma,{\mathcal E_\lambda})$ holds
for every $\theta,\sigma<\lambda$.

\item $\p(\lambda^+, 2, {\sq}_\chi, 1, \{E^{\lambda^+}_\eta\mid \aleph_0 \leq \cf(\eta) = \eta < \lambda \}, 2, \lambda^+,{\mathcal E_\lambda})$ holds.

\item If $\lambda$ is singular, then  $\p(\lambda^+,2,{\sq_\chi},\lambda^+,\{E^{\lambda^+}_{\cf(\lambda)}\},2,\sigma,\mathcal E_\lambda)$ holds for every $\sigma<\lambda$.
\end{enumerate}
\end{thm}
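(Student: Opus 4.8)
The plan is to follow the template of Corollary~\ref{thm61} and Theorem~\ref{thm32}, with $\square_\lambda$ replaced by $\boxminus_{\lambda,\ge\chi}$ and the coherence relation $\sq$ replaced by $\sq_\chi$. First, since $\chi$ is infinite and $\chi<\lambda$, the limit cardinal $\lambda$ is uncountable, so $\ch_\lambda$ yields $\diamondsuit(\lambda^+)$ by Fact~\ref{fact922}; hence in each of the three clauses it suffices to verify the corresponding $\p^-$ statement.

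The key step is to produce a single sequence $\langle C_\alpha\mid\alpha<\lambda^+\rangle$ of clubs that is at once $\sq_\chi$-coherent, has $\otp(C_\alpha)\le\lambda$ for all $\alpha$ (so that $C_\alpha\in\dom(\mathcal E_\lambda)$), and satisfies hypotheses~(i) and~(ii) of Lemma~\ref{common} with $\kappa=\lambda^+$. Coherence is cheap: by Lemma~\ref{l24}, $\boxminus_{\lambda,\ge\chi}$ already provides a $\sq_\chi$-coherent sequence with order-types $\le\lambda$ (on $E^{\lambda^+}_{\ge\chi}$ this is honest $\sq$-coherence inherited from $\boxminus$, while at ordinals of cofinality $<\chi$ the $\sq_\chi$-demand becomes vacuous once the clubs there are chosen of order-type $<\chi$ with only successor non-accumulation points). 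The real content is to upgrade this to one also meeting Lemma~\ref{common}(ii); this is the $\boxminus$-analog of the passage from $\square_\lambda$ to a $\square_\lambda$-sequence fulfilling the hypotheses of Lemma~\ref{common} recorded in Fact~\ref{fact_rinot11}, and I would carry it out by the coding+decoding device in the proof of Theorem~\ref{thm32}: thread into the non-accumulation part of each $\boxminus$-club the values dictated by auxiliary injections $\psi_\gamma:\gamma+1\to\lambda$ and a spreading function $\psi:\lambda\to\lambda\times\lambda$ whose fibres all have order-type $\lambda$, so that the successor points of the modified club run through any prescribed cofinal sets. What makes this compatible with coherence is that the recipe producing the new club below an accumulation point $\bar\alpha$ of $C_\alpha$ consults only data indexed below $\bar\alpha$, so $C_{\bar\alpha}=C_\alpha\cap\bar\alpha$ survives; and since each new point still falls strictly between consecutive points of the underlying $\boxminus$-club, the bound $\otp(C_\alpha)\le\lambda$ is maintained.

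Granting such a sequence, clauses~(1) and~(2) fall out of Lemma~\ref{common}. Its part~(1), applied with each regular cardinal $\eta<\lambda$ playing the role of the lemma's cofinality parameter and with the given $\theta,\sigma<\lambda$, yields for each such $\eta$ stationarily many $\alpha\in E^{\lambda^+}_\eta$ with $\sup\{\beta\in C_\alpha\mid\suc_\sigma(C_\alpha\setminus\beta)\s A_i\}=\alpha$ for all $i<\theta$; this is clause~(1). Its part~(2), applied to a single cofinal $A$, with order-type parameter $\omega$ and each regular $\eta<\lambda$, produces stationarily many $\alpha\in E^{\lambda^+}_\eta$ admitting some $\beta<\alpha$ with $\suc_{\lambda^+}(C_\alpha\setminus\beta)\s A$; since $\suc_{\lambda^+}(C_\alpha\setminus\beta')\s\suc_{\lambda^+}(C_\alpha\setminus\beta)$ for every $\beta'\in C_\alpha$ above $\beta$, the supremum $\sup\{\beta'\in C_\alpha\mid\suc_{\lambda^+}(C_\alpha\setminus\beta')\s A\}$ equals $\alpha$, which is clause~(2). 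For clause~(3), with $\lambda$ singular, I would additionally arrange (after a routine modification of the $\boxminus$-clubs in the spirit of Lemma~\ref{lemma40} and Lemma~\ref{thm62}) that at stationarily many $\alpha\in E^{\lambda^+}_{\cf(\lambda)}$ the club $C_\alpha$ has order-type exactly $\lambda$; since $|\alpha|\le\lambda$ for $\alpha<\lambda^+$, a single club of order-type $\lambda$ can be split, via $\psi:\lambda\to\lambda\times\lambda$, into $\lambda$-many independent blocks, each handling one set $A_i$ with $i<\alpha$, exactly as the seventh- and fourth-parameter juggling in the proof of Theorem~\ref{thm32} does; this is what lets the fourth parameter reach $\lambda^+$ while the seventh stays $<\lambda$.

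I expect the main obstacle to be the construction of the enriched sequence: arranging $\sq_\chi$-coherence (which below cofinality $\chi$ forces short, purely-successor clubs but on $E^{\lambda^+}_{\ge\chi}$ demands genuine $\sq$-coherence from $\boxminus_{\lambda,\ge\chi}$) simultaneously with the club-guessing hypothesis~(ii) of Lemma~\ref{common}, all the while keeping order-types $\le\lambda$ --- and, for clause~(3), forcing order-type exactly $\lambda$ at the right ordinals, which is delicate precisely when $\cf(\lambda)<\chi$, since then those ordinals lie outside the reach of the given $\boxminus_{\lambda,\ge\chi}$-sequence and their clubs, together with the clubs at their accumulation points, must be built directly as in the proof of Theorem~\ref{thm32}. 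Checking that coherence withstands the coding step is the one genuinely delicate verification; the remainder is the bookkeeping with injections and spreading functions already present in the proof of Theorem~\ref{thm32}, together with monotonicity of the proxy parameters.
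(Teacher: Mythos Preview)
Your overall strategy is right --- build a single $\sq_\chi$-coherent sequence of clubs of order-type $\le\lambda$ that meets hypothesis~(ii) of Lemma~\ref{common}, then read off Clauses~(1) and~(2) --- but the step you label ``the $\boxminus$-analog of the passage from $\square_\lambda$ to a $\square_\lambda$-sequence fulfilling the hypotheses of Lemma~\ref{common}'' is not achieved by the coding of Theorem~\ref{thm32}, and this is where the proposal has a real gap. The proof of Theorem~\ref{thm32} does not manufacture the hitting property from a bare coherent sequence; it \emph{consumes} an $\sd_\lambda$-type input $\langle(D_\alpha,X_\alpha)\rangle$ already equipped with a diamond living coherently inside the square (for every $X$ and club $E$, some $\alpha$ with $\nacc(D_\alpha)\subseteq\{\gamma\in E\mid X\cap\gamma=X_\gamma\}$). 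The threading you describe --- inserting values between consecutive $\boxminus$-points via $\psi_\gamma,\psi$ --- only lands in the prescribed cofinal sets $A_i$ because one first guesses, via the coherent $X_\gamma$'s, the encoding function $f$ that records the targets; without a diamond that coheres along the square, there is no mechanism for the threaded values to hit arbitrary $A_i$'s, and $\diamondsuit(\lambda^+)$ alone does not supply a coherent one.

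The paper's proof accordingly does not skip this step: it reproduces the machinery of \cite{rinot11} in the $\boxminus$ setting (the partition of $\Gamma$ into pieces $S_i$ with hitting order-types $\gamma_i$, the selection of triples $(j_i,\tau_i,Y_i)$ via a pigeonhole on $H^j_\alpha$, and the construction of $(C^\bullet_\alpha,Z_\alpha)$) precisely in order to obtain a coherent diamond $\langle Z_\alpha\rangle$ attached to the square, after which the Theorem~\ref{thm32}-style coding becomes applicable. Your proposal collapses these two distinct phases into one. Likewise, for Clause~(3) the passage to order-type exactly $\lambda$ at stationarily many $\alpha\in E^{\lambda^+}_{\cf(\lambda)}$ is not a ``routine modification in the spirit of Lemmas~\ref{lemma40} and~\ref{thm62}'': those lemmas already assume $\sd_\lambda$ or $\diamondsuit(S)$ as hypotheses, whereas here one must build the long clubs from scratch, which the paper does via the \cite{rinot19} recursion (the $\Gamma_0/\Gamma_1/\Gamma_2$ split and the recursive definition of $G_\delta$ gluing shorter pieces together).
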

\begin{proof}
As $\lambda$ is uncountable, Fact \ref{fact922} entails $\diamondsuit(\lambda^+)$, and so we only need to establish the corresponding  $\p^-(\dots)$ principles of Clauses (1)--(3).

The upcoming proof will invoke tools from \cite{rinot11} to  establish Clauses (1),(2).
Then, by going further and invoking tools from \cite{rinot19}, we shall establish Clause~(3).

\begin{claim}\label{claim6131}
There exist sequences $\langle C_\alpha\mid \alpha\in \Gamma\rangle$
and $\langle (S_i,\gamma_i)\mid i\le\cf(\lambda) \rangle$ such that:
\begin{itemize}
\item $E^{\lambda^+}_{\ge\chi}\s\Gamma\s \acc(\lambda^+)$, and $\Gamma=\biguplus_{i\le\cf(\lambda)}S_i$;
\item if $\alpha\in\Gamma$, then $C_\alpha$ is a club subset of $\alpha$ of order-type $\le\lambda$;
\item if $\alpha\in S_i$ and $\bar\alpha\in \acc(C_\alpha)$, then $\bar\alpha\in S_i$ and $C_{\bar\alpha} = C_\alpha\cap \bar\alpha$;
\item $\{ \alpha\in S_i\mid \otp(C_\alpha)=\gamma_i, C_\alpha\s E\}$ is stationary for every $i<\cf(\lambda)$ and every club $E\s\lambda^+$;
\item $\{ \gamma_i\mid i<\cf(\lambda)\}$ is a cofinal subset of $\lambda$.
\end{itemize}
\end{claim}
\begin{proof} By Lemma \ref{l24} and the proof of \cite[Lemma 2.3]{rinot11}.
That lemma builds on \cite[Lemma 2.1]{rinot11} in case that $\lambda$ is regular,
and \cite[Lemma 2.2]{rinot11} in case that $\lambda$ is singular.
The proof of the latter goes through as soon as one replaces there ``$\lambda_0=\cf(\lambda)$'' with ``$\lambda_0=\max\{\cf(\lambda),\chi\}$'';
the proof of the former goes through verbatim.
\end{proof}
Let $\langle C_\alpha\mid \alpha\in\Gamma\rangle$ and $\langle (S_i,\gamma_i)\mid i\le\cf(\lambda) \rangle$ be given by the preceding claim.
Note that given any club $E \subseteq \lambda^+$,
any $i < \cf(\lambda)$ and any nonzero limit ordinal $\Theta < \gamma_i$,
we can choose $\alpha \in S_i$ with $\otp(C_\alpha) = \gamma_i$ and $C_\alpha \subseteq E$,
so that letting $\bar\alpha = C_\alpha (\Theta)$ we have $\bar\alpha \in \acc(C_\alpha)$,
and it follows that $\bar\alpha \in S_i \cap E$, $\otp(C_{\bar\alpha}) = \Theta$,
and $C_{\bar\alpha} \subseteq C_\alpha \subseteq E$.
Therefore, we can fix a sequence $\left< \Theta_i \mid i < \cf(\lambda) \right>$ such that:
\begin{itemize}
\item $\{\Theta_i \mid i < \cf(\lambda)\}$ is a set of regular cardinals, cofinal in the limit cardinal $\lambda$;
\item $\Theta_i \leq \gamma_i$ for all $i < \cf(\lambda)$;
\item $\{ \alpha\in S_i\mid \otp(C_\alpha)=\cf(\alpha)=\Theta_i, C_\alpha\s E\}$ is stationary for every $i<\cf(\lambda)$ and every club $E\s\lambda^+$.
\end{itemize}

By removing elements of $\{\Theta_i \mid i < \cf(\lambda)\}$ if necessary (and merging the corresponding sets $S_i$ into $S_{\cf(\lambda)}$),
and re-indexing,
we may assume that  $\Theta_i \geq \chi$ for all $i < \cf(\lambda)$.
If $\lambda$ is singular, we may moreover assume that $\Theta_i >\cf(\lambda)$ for all $i < \cf(\lambda)$.

For every $i < \cf(\lambda)$,
denote $T_i:=\{\alpha\in S_i\mid \otp(C_\alpha)=\Theta_i\}$.
Fix a sequence of injections $\langle \psi_\gamma : \gamma+1\rightarrow\lambda\mid \gamma<\lambda^+\rangle$.
For every $\alpha\in\Gamma$,  define an injection $\varrho_\alpha:\alpha\rightarrow\lambda\times\lambda$ by
stipulating $\varrho_\alpha(\delta):=(\otp(C_\alpha\cap\delta),\psi_{\min(C_\alpha\setminus\delta)}(\delta))$.
Now, put $H^j_\alpha:=(\varrho_\alpha^{-1}[\Theta_j\times\Theta_j])^2$ for all $j<\cf(\lambda)$.
Then $\{ H_\alpha^j\mid j<\cf(\lambda)\}\s[\alpha\times\alpha]^{<\lambda}$
is an increasing chain,  converging to $\alpha\times\alpha$,
and if $\bar\alpha \in\acc(C_\alpha)$,
then $\varrho_{\bar\alpha} = \varrho_\alpha \restriction \bar\alpha$,
so that $H^j_{\bar\alpha} = H^j_\alpha\cap(\bar\alpha \times \bar\alpha)$ for all $j<\cf(\lambda)$.

By $\ch_\lambda$, let $\{X_\gamma\mid \gamma<\lambda^+\}$ be an enumeration of $[\lambda\times\lambda\times\lambda^+]^{\le\lambda}$.
For all $(j,\tau)\in\lambda\times\lambda$ and $X\s\lambda\times\lambda\times\lambda^+$,
let $\pi_{j,\tau}(X):=\{\varsigma<\lambda^+\mid (j,\tau,\varsigma)\in X\}$.
\begin{claim}\label{437} Suppose that $i<\cf(\lambda)$.

There exist $(j,\tau)\in\cf(\lambda)\times\lambda$ and $Y\s\lambda^+\times\lambda^+$
such that for every club $D\s\lambda^+$ and every subset $Z\s\lambda^+$, there exists some  $\alpha\in T_i$ such that:
\begin{enumerate}
\item $C_\alpha\s D$;
\item $H^j_\alpha\bks Y\s \{(\eta,\gamma)\mid Z\cap\eta=\pi_{j,\tau}(X_\gamma)\}$;
\item $\sup(\acc^+(\{\eta < \alpha \mid (\eta,\gamma)\in H^j_\alpha\bks Y\text{ for some }\gamma<\min(C_\alpha\setminus(\eta+1))\})\cap\acc(C_\alpha))=\alpha$.
\end{enumerate}
\end{claim}
\begin{proof} This is Claim 2.5.2 of \cite{rinot11}, and the proof is identical.
\end{proof}
Let $\langle (j_i,\tau_i,Y_i)\mid i<\cf(\lambda)\rangle$ be given by the previous claim.

For every $i<\cf(\lambda)$ and $\alpha\in S_i$, let:
\[
f_\alpha^i:=\{(\eta,\gamma)\in H^{j_i}_\alpha\bks Y_i\mid \gamma=\min\{\gamma' < \min(C_\alpha\setminus(\eta+1))\mid (\eta,\gamma')\in H^{j_i}_\alpha\bks Y_i\}\}.
\]

Then, let  $C^i_\alpha$ be the set of all $\delta$ such that all of the following properties hold:
\begin{enumerate}
\item $\delta\in C_\alpha$;
\item $\sup(\dom(f^i_\alpha)\cap\delta)\ge\sup(C_\alpha\cap\delta)$;
\item if $\eta\in\dom(f^i_\alpha)\cap\delta$, then $\pi_{j_i,\tau_i}(X_{f^i_\alpha(\eta)})\s\eta$;
\item if $\eta'<\eta<\delta$ satisfy $\eta', \eta \in \dom(f^i_\alpha)$,
then $\pi_{j_i,\tau_i}(X_{f^i_\alpha(\eta)})\bks \pi_{j_i,\tau_i}(X_{f^i_\alpha(\eta')})\s[\eta',\eta)$.
\end{enumerate}

For every $\alpha\in S_{\cf(\lambda)}$, write $C^{\cf(\lambda)}_\alpha:=\emptyset$.

Finally, for all $\alpha\in\Gamma$, put:
\[
C_\alpha^\bullet:=\begin{cases}
C_\alpha^i,&\text{if } \alpha\in S_{i},\sup(C_\alpha^i)=\alpha;\\
C_\alpha\setminus\sup(C_\alpha^i),&\text{if } \alpha\in S_i,\sup(C_\alpha^i)<\alpha.
\end{cases}
\]

Also,  for all $\alpha<\lambda^+$, let
\[
Z_\alpha:=\begin{cases}
\bigcup\{\pi_{j_i,\tau_i}(X_{f^i_\alpha(\eta)})\mid \eta\in \dom(f^i_\alpha)\},
&\text{if } \alpha\in S_i,\sup(C_\alpha^i)=\alpha;\\
\emptyset,&\text{otherwise.}
\end{cases}
\]

\begin{claim}\label{c6112}
All of the following properties hold for $\langle (C_\alpha^\bullet,Z_\alpha)\mid\alpha\in\Gamma\rangle$:
\begin{enumerate}
\item $C^\bullet_\alpha$ is a club subset of $\alpha$ (in fact a subclub of $C_\alpha$)
of order-type $\le\lambda$ for all $\alpha\in\Gamma$;
\item if $\alpha\in\Gamma $ and $\bar\alpha\in \acc(C^\bullet_\alpha)$, then $\bar\alpha\in \Gamma$,
$C^\bullet_{\bar\alpha} =C^\bullet_\alpha\cap \bar\alpha$, and $Z_{\bar\alpha} =Z_\alpha\cap \bar\alpha$;
\item for every club $D\s\lambda^+$, every subset $A\s\lambda^+$, and every $i<\cf(\lambda)$,
there exists some $\alpha\in\Gamma$ such that:
\begin{enumerate}
\item $C^\bullet_\alpha\s D$;
\item $Z_\alpha=A\cap\alpha$;
\item $\cf(\alpha)=\Theta_i$;
\item $\sup(\acc(C^\bullet_\alpha))=\alpha$.
\end{enumerate}
\end{enumerate}
\end{claim}
\begin{proof} This is the content of Claim 2.5.4 of \cite{rinot11}.
\end{proof}

Notice that $Z_\alpha \subseteq \alpha$ for all $\alpha < \lambda^+$,
using property~(3) of the definition of $C^i_\alpha$.
It then follows from the last claim that
$\left< Z_\alpha \mid \alpha < \lambda^+ \right>$ is a $\diamondsuit(\lambda^+)$-sequence.

Fix a bijection $\psi:\lambda\times\lambda^+\leftrightarrow\lambda^+$.

We define $\left< D_\alpha \mid \alpha < \lambda^+ \right>$ as follows:

\begin{itemize}
\item
Let $D_0 := \emptyset$, and for every $\alpha < \lambda^+$, let $D_{\alpha+1} := \{ \alpha \}$.

\item
For every $\alpha \in \acc(\lambda^+) \setminus\Gamma$,
let $D_\alpha$ be a club subset of $\alpha$ of order-type $\cf(\alpha)$ with $\nacc(D_\alpha)\s\nacc(\alpha)$.
\item
Let $\alpha \in \Gamma$ be arbitrary.
Put $C_\alpha':=\acc(C^\bullet_\alpha)$ in case that
$\sup(\acc(C^\bullet_\alpha))=\alpha$, and let $C_\alpha'$ be some cofinal subset of $\alpha$ of order-type $\omega$ otherwise.
Thus $C_\alpha'$ is a club subset of $\alpha$ of order-type $\leq \lambda$.
Next, for $\beta\in\nacc(C_\alpha')$, let:
\begin{itemize}
\item $X^\beta_\alpha:=\{\gamma < \lambda^+ \mid  \psi(\otp(\nacc(C_\alpha')\cap\beta),\gamma)\in  Z_\beta\}$;
\item $Y^\beta_\alpha:=X^\beta_\alpha\cap(\min\left(C^\bullet_\alpha\bks(\sup(C_\alpha'\cap\beta)+1)\right),\beta)$;
\item $\beta_\alpha:=\min(Y^\beta_\alpha\cup\{\beta\})$;
\item $D_\alpha:=\acc(C_\alpha')\cup\{\beta_\alpha\mid \beta\in\nacc(C_\alpha')\}$.
\end{itemize}
\end{itemize}

For all $\alpha \in \Gamma$ and all $\beta \in \nacc(C'_\alpha)$,
we have $\sup(C'_\alpha \cap \beta) < \beta_\alpha \leq \beta$.
Thus, for all $\alpha \in \Gamma$,
$\acc(D_\alpha)=\acc(C_\alpha')\s\acc(C_\alpha^\bullet)\s\Gamma$,
so that $\otp(D_\alpha)=\otp(C_\alpha')\le\otp(C_\alpha^\bullet)\le\lambda$,
and $D_\alpha$ is a club in $\alpha$.

Then, just as in the proof of of~\cite[Claim~3.2.1]{rinot11},
$\langle D_\alpha\mid\alpha<\lambda^+\rangle$ is a sequence of local clubs, each of order-type $\le\lambda$,
and if $\alpha\in\Gamma$ and $\bar\alpha \in\acc(D_\alpha)$,
then $\bar\alpha \in\Gamma$ and $D_{\bar\alpha}=D_\alpha\cap \bar\alpha$.
It then follows from the definition of $D_\alpha$ in case $\alpha \notin \Gamma$ that
$D_{\bar\alpha} \sqsubseteq_\chi D_\alpha$
for all $\alpha < \lambda^+$ and all $\bar\alpha \in \acc(D_\alpha)$.

\begin{claim}\label{claim383}
For every nonzero limit ordinal $\Theta<\lambda$ and every sequence $\langle A_i\mid i<\Theta\rangle$
of cofinal subsets of $\lambda^+$,
there exists some $\delta\in\Gamma$ such that:
\begin{itemize}
\item $\otp(D_\delta) = \Theta$; and
\item $D_\delta (i+1) \in A_i$ for all $i<\Theta$.
\end{itemize}
\end{claim}
\begin{proof} This is the content of Claim 3.2.2 from \cite{rinot11}.
\end{proof}

Then, the fact that $\left< D_\alpha \mid \alpha < \lambda^+ \right>$ witnesses
$\p^-(\lambda^+,2,{\sq}_\chi,\theta,\{E^{\lambda^+}_\eta\mid \aleph_0 \leq \cf(\eta) = \eta < \lambda\},2,\sigma, \mathcal E_\lambda)$
and
$\p^-(\lambda^+, 2, {\sq}_\chi, 1, \{E^{\lambda^+}_\eta\mid \aleph_0 \leq \cf(\eta) = \eta < \lambda \}, 2, \lambda^+,{\mathcal E_\lambda})$
follows from Lemma~\ref{common},
so that we have proven Clauses~(1) and~(2) of this theorem.

Next, let us work towards establishing Clause~(3).
Thus, we assume that $\lambda$ is a singular cardinal.

By removing the minimal element of $D_\alpha$, and putting $0$ instead, we may assume that $D_\alpha(0) = 0$ for all $\alpha\in\Gamma$.
Next, fix an increasing and continuous sequence $\langle \lambda_j\mid j\le\cf(\lambda)\rangle$
with $\lambda_0=\cf(\lambda)$, $\cf(\lambda_{j+1})=\lambda_{j+1}$
for all $j<\cf(\lambda)$, and $\lambda_{\cf(\lambda)}=\lambda$.
Denote $\Lambda:=\{\lambda_j\mid j<\cf(\lambda)\}$. For every limit $\epsilon\le\lambda$, put
\[
E_\epsilon:=\begin{cases}
\epsilon,&\text{if } \epsilon\le\lambda_0; \\
\epsilon\setminus\lambda_j,&\text{if $\epsilon\in(\lambda_j,\lambda_{j+1}]$ for } j < \cf(\lambda); \\
\Lambda\cap\epsilon,&\text{otherwise.}
\end{cases}
\]
Then $E_\epsilon$ is a club subset of $\epsilon$ for all limit $\epsilon \leq \lambda$.
In particular, $E_{\otp(D_\delta)}$ is a club subset of $\otp(D_\delta)$ for all limit $\delta < \lambda^+$.

As in the proof of \cite{rinot19}, we let $\pi_\delta:\otp(D_\delta)\rightarrow D_\delta$ denote the order-preserving bijection,
and then put $D_\delta':= \pi_\delta[E_{\otp(D_\delta)}]$ for every $\delta\in\Gamma$.
Thus for every $\delta \in \Gamma$, $D'_\delta \subseteq D_\delta$ is a club subset of $\delta$,
and $\otp(D'_\delta) \leq \otp(D_\delta) \leq \lambda$.

\medskip

Let $\varphi:\lambda^+\rightarrow\lambda^+$ be a surjection such that for all $\alpha<\lambda^+$,
$\varphi(\alpha)\le\alpha$ and $\varphi^{-1}\{\alpha\}$ is stationary.
Split $\Gamma$ into three sets:
\begin{itemize}
\item $\Gamma_0:=\{\delta\in\Gamma\mid \otp(D_\delta)\le\lambda_0\}$;
\item $\Gamma_1:=\{\delta\in\Gamma\mid \otp(D_\delta)\in(\lambda_j,\lambda_{j+1}]\text{ for some }j<\cf(\lambda)\}$;
\item $\Gamma_2:=\{\delta\in\Gamma\mid \otp(D_\delta)=\lambda_j\text{ for some nonzero limit }j\le\cf(\lambda)\}$.
\end{itemize}

We shall define a sequence $\langle G_\delta\mid \delta<\lambda^+\rangle$ by recursion over $\delta<\lambda^+$.
Let $G_{\delta}=\emptyset$ for all $\delta\in\lambda^+\setminus\Gamma$.
Now, suppose that $\delta\in\Gamma$, and $\langle G_\alpha\mid\alpha<\delta\rangle$ has already been defined.
The definition of $G_\delta$ splits into cases:

\begin{itemize}
\item
If $\delta\in \Gamma_2$, then let $G_\delta:=D_\delta'$.

\item
If $\delta\in \Gamma_1$, then consider the ordinal $\phi_\delta:=\varphi({\pi_\delta(1)})$:
\begin{itemize}
\item
If $\phi_\delta\in\Gamma$ and $|G_{\phi_\delta}|<\lambda$, then let $G_\delta:=G_{\phi_\delta}\cup\{\phi_\delta\}\cup D_\delta'$.

\item
Otherwise,  let $G_\delta:=D_\delta'$.
\end{itemize}

\item
If $\delta\in  \Gamma_0$, then we shall try to define an increasing and continuous sequence of ordinals $\langle \delta^i\mid i<\otp(D_\delta)\rangle$, by recursion over $i<\otp(D_\delta)$.
Let $\delta^0:=0$.
Suppose that $i<\otp(D_\delta)$ and $\delta^i$ has already been defined. If there exists an ordinal $\beta$
such that $\pi_\delta(i)<\beta<\pi_\delta(i+1)$, $G_{\delta^i}\sqsubseteq G_\beta$, $\nacc(G_\beta)\s Z_{\pi_\delta(i+1)}$, and $\otp(G_\beta)=\lambda_{i+1}$,
then put $\delta^{i+1}:=\beta$ for the least such $\beta$. If not, then we shall terminate the recursion and say that ``the $\delta$-process identified a failure at stage $i+1$''.

\begin{itemize}
\item
If the $\delta$-process identified a failure at stage $i+1$, then let $G_\delta:=D_\delta\setminus \pi_\delta(i)$.

\item
Otherwise, let $G_\delta:=\bigcup\{G_{\delta^i}\mid i<\otp(D_\delta)\}$.
\end{itemize}
\end{itemize}

This concludes the definition of $\langle G_\delta\mid\delta<\lambda^+\rangle$.

\begin{claim}\label{claim6135} The sequence $\langle (G_\delta,Z_\delta)\mid \delta\in\Gamma\rangle$ satisfies:
\begin{enumerate}
\item  for every $\delta\in\Gamma$, $G_\delta$ is a club in $\delta$ of order-type $\le\lambda$, and $Z_\delta\s\delta$;
\item if $\delta\in\Gamma$ and $\bar\delta\in\acc(G_\delta)$, then $\bar\delta\in\Gamma$ and $G_\delta\cap\bar\delta=G_{\bar\delta}$;
\item for every subset  $Z\s\lambda^+$ and club $E\s\lambda^+$,
there exists $\delta \in \Gamma$ with $\otp(G_\delta)=\lambda$
such that $\nacc(G_\delta) \s\{ \gamma\in E\mid Z\cap\gamma=Z_\gamma\}$.
\end{enumerate}
\end{claim}
\begin{proof} (1) and (2) are just like the proof of Claim 1 of \cite{rinot19}.

(3) Given $Z$ and $E$ as above, let $X:=\{ \gamma\in E\mid Z\cap\gamma=Z_\gamma\}$.
By the fact that $\left< Z_\gamma \mid \gamma < \lambda^+ \right>$ is a $\diamondsuit(\lambda^+)$-sequence,
we have $X\in[\lambda^+]^{\lambda^+}$.
Then, by the proofs of Claims 2 and 3 of \cite{rinot19}, there exists some $\delta\in\Gamma$ with $\otp(G_\delta)=\lambda$ such that $\nacc(G_\delta)\s X$.
\end{proof}

Let $\sigma<\lambda$ be an arbitrary infinite cardinal.
Using $\ch_\lambda$,
fix a function $\pi:\lambda^+\rightarrow{}^{\lambda^+}{\lambda^+}$ such that $\{ \alpha<\lambda^+\mid f\s \pi(\alpha)\}$
is cofinal in $\lambda^+$ for all $f\in{}^{<\lambda^+}\lambda^+$.
Also fix a function $\psi':\lambda\rightarrow \lambda\times\lambda$
such that $\{ k<\lambda\mid (i,j)=\psi'(k)\}$ has order-type $\lambda$ for all $(i,j)\in \lambda\times\lambda$.
We now relativize the proof of Theorem \ref{thm32} to the sequence $\langle (G_\delta,Z_\delta)\mid \delta\in\Gamma\rangle$, as follows.

Let $\alpha\in\Gamma$ be arbitrary.
Let $o_\alpha: G_\alpha\rightarrow\lambda$ be the unique function satisfying
$\otp(G_\alpha\cap\beta)\in[\sigma\cdot o_\alpha(\beta),\sigma\cdot o_\alpha(\beta)+\omega)$
for each $\beta \in G_\alpha$.
Define $\varphi_\alpha: G_\alpha\rightarrow\alpha$ by letting for all $\beta \in G_\alpha$:
\[
\varphi_\alpha(\beta):=\begin{cases}
\delta, &\text{if } \delta<\beta\ \&\
      \psi'(o_\alpha(\beta))=(\otp(G_\alpha\cap\delta),\psi_{\min(G_\alpha\setminus \delta)}(\delta));\\
0,&\text{otherwise.}
\end{cases}
\]
Define $d_\alpha : G_\alpha \to \lambda^+$ by letting for all $\beta \in G_\alpha$:
\[
d_\alpha(\beta) :=
\begin{cases}
\pi(\min(Z_{\min(G_\alpha\setminus(\beta+1))}\setminus(\beta+1)))(\varphi_\alpha(\beta)),
    &\text{if } Z_{\min(G_\alpha\setminus(\beta+1))}\nsubseteq\beta+1; \\
0, &\text{otherwise.}
\end{cases}
\]
Define $c_\alpha : G_\alpha \to \lambda^+$ by letting for all $\beta \in G_\alpha$:
\[
c_\alpha (\beta) :=
\begin{cases}
d_\alpha (\beta),                             &\text{if } \beta<d_\alpha(\beta)<\min(G_\alpha\setminus(\beta+1)); \\
\min(G_\alpha\setminus(\beta+1)), &\text{otherwise.}
\end{cases}
\]
Finally, let:
\[
G^\bullet_\alpha := \acc(G_\alpha) \cup \{ c_\alpha(\beta) \mid \beta\in G_\alpha\}.
\]

For all $\alpha\in\acc(\lambda^+)\setminus\Gamma$, let $G_\alpha^\bullet$ be a club in $\alpha$ with $\otp(G_\alpha^\bullet)=\cf(\alpha)$ and $\nacc(G_\alpha^\bullet)\s\nacc(\alpha)$.
Let $G_0^\bullet := \emptyset$, and let $G_{\alpha+1}^\bullet:=\{\alpha\}$ for all $\alpha<\lambda^+$.
Then $\langle G^\bullet_\alpha\mid\alpha<\lambda^+\rangle$ witnesses
$\p^-(\lambda^+,2,{\sq_\chi},\lambda^+,\{E^{\lambda^+}_{\cf(\lambda)}\},2,\sigma,\mathcal E_\lambda)$.
\end{proof}

The preceding theorem was focused on limit cardinals. We now establish the same result for $\lambda$ successor.

\begin{thm}\label{boxminus-succ}
Suppose that $\boxminus_{\lambda,\ge\chi}+\ch_\lambda$ holds for a given successor cardinal $\lambda$,
and for some fixed infinite regular cardinal $\chi<\lambda$.
Then:
\begin{enumerate}
\item
$\p(\lambda^+,2,{\sq_\chi},\theta,\{E^{\lambda^+}_\eta\mid \aleph_0 \leq \cf(\eta) = \eta < \lambda\},2,\sigma,{\mathcal E_\lambda})$ holds
for every cardinal $\theta<\lambda$ and every ordinal $\sigma<\lambda$;

\item $\p(\lambda^+, 2, {\sq}_\chi, 1, \{E^{\lambda^+}_\eta\mid \aleph_0 \leq \cf(\eta) = \eta < \lambda \}, 2, \lambda^+,{\mathcal E_\lambda})$ holds.
\end{enumerate}
\end{thm}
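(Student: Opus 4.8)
The plan is to transcribe the proof of Theorem~\ref{thm38} in its ``$\lambda$ regular'' form --- the one resting on \cite[Lemma~2.1]{rinot11} rather than on \cite[Lemma~2.2]{rinot11} --- noting that a successor cardinal is regular, so that there is no singular subcase and no analogue of Clause~(3) of Theorem~\ref{thm38} is being asserted. First, $\lambda$ is uncountable and $\ch_\lambda$ holds, so Fact~\ref{fact922} gives $\diamondsuit(\lambda^+)$, and it suffices to establish the two $\p^-(\dots)$ principles. From $\boxminus_{\lambda,\ge\chi}$, via Lemma~\ref{l24} and the argument of Claim~\ref{claim6131} (which already covers regular $\lambda$, through \cite[Lemma~2.1]{rinot11}), obtain sequences $\langle C_\alpha\mid\alpha\in\Gamma\rangle$ and $\langle(S_i,\gamma_i)\mid i\le\cf(\lambda)\rangle$ with $E^{\lambda^+}_{\ge\chi}\s\Gamma\s\acc(\lambda^+)$, with $\sq$-coherence inside each $S_i$, with $\{\gamma_i\mid i<\cf(\lambda)\}$ cofinal in $\lambda$, and with $\{\alpha\in S_i\mid\otp(C_\alpha)=\gamma_i,\ C_\alpha\s E\}$ stationary for every $i<\cf(\lambda)$ and every club $E\s\lambda^+$.

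Write $\lambda=\mu^+$. The sole step of the proof of Theorem~\ref{thm38} that genuinely exploits the limit-cardinal hypothesis is the passage from $\langle(C_\alpha,S_i,\gamma_i)\rangle$ to a sequence $\langle\Theta_i\mid i<\cf(\lambda)\rangle$ of \emph{regular cardinals} cofinal in $\lambda$ with $\Theta_i\le\gamma_i$ and $\otp(C_\alpha)=\cf(\alpha)=\Theta_i$ for guessing-stationarily-many $\alpha\in S_i$; for $\lambda=\mu^+$ every regular cardinal below $\lambda$ is $\le\mu<\lambda$, so no such cofinal family of cardinals exists. The remedy is to retain the (non-cardinal) limit ordinals $\gamma_i$ themselves: after an increasing re-indexing and after truncating each $C_\alpha$ at its $\gamma_i$-th element for $\alpha\in S_i$ (merging leftover pieces into the reservoir $S_{\cf(\lambda)}$, exactly as in Theorem~\ref{thm38}), we may assume $\otp(C_\alpha)=\gamma_i$ for all $\alpha\in S_i$ and $\langle\gamma_i\rangle$ increasing and cofinal in $\lambda$; this truncation preserves $\sq$-coherence within $S_i$, by the same reasoning Theorem~\ref{thm38} uses when forming the $c_\alpha$. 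With the $\gamma_i$ now in the role of the $\Theta_i$, the analogues of Claims~\ref{437} and~\ref{c6112}, the construction of the $\diamondsuit(\lambda^+)$-sequence $\langle(C^\bullet_\alpha,Z_\alpha)\rangle$, the passage to $\langle D_\alpha\mid\alpha<\lambda^+\rangle$, and the analogue of Claim~\ref{claim383} go through verbatim: those arguments invoke only that $\{\gamma_i\}$ is cofinal in $\lambda$ together with the club-guessing property of the $S_i$, and the few spots where $\Theta_i$ appeared as an index (e.g.\ forming $\varrho_\alpha^{-1}[\Theta_i\times\Theta_i]$) remain valid because $\gamma_i<\lambda$ forces $|\gamma_i|\le\mu<\lambda$.

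It then remains to apply Lemma~\ref{common} to $\langle D_\alpha\mid\alpha<\lambda^+\rangle$ exactly as at the close of the proof of Theorem~\ref{thm38}: by the way parts (1)--(2) of that lemma are phrased, the single sequence $\langle D_\alpha\rangle$ simultaneously witnesses $\p^-(\lambda^+,2,{\sq_\chi},\theta,\{E^{\lambda^+}_\eta\mid\aleph_0\le\cf(\eta)=\eta<\lambda\},2,\sigma,\mathcal E_\lambda)$ for every cardinal $\theta<\lambda$ and ordinal $\sigma<\lambda$, as well as $\p^-(\lambda^+,2,{\sq_\chi},1,\{E^{\lambda^+}_\eta\mid\aleph_0\le\cf(\eta)=\eta<\lambda\},2,\lambda^+,\mathcal E_\lambda)$; adjoining $\diamondsuit(\lambda^+)$ yields Clauses~(1) and~(2). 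Clause~(3) of Theorem~\ref{thm38}, being specific to singular $\lambda$, has no counterpart here.

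The main obstacle is the bookkeeping of the second paragraph, together with one verification that must be spelled out: the construction-and-threading lemmas cited from \cite{rinot11} (Claims 2.5.2, 2.5.4, 3.2.1, 3.2.2 there) were stated under the ambient assumption that $\lambda$ is singular, so one must confirm they are insensitive to this --- which they are, as their proofs use $\square_\lambda$-type coherence, the cofinality of $\{\gamma_i\}$ in $\lambda$, and $\ch_\lambda$, never the singularity of $\lambda$. One must also check that the re-indexing keeps each $S_i$ club-guessing at order-type $\gamma_i$ (so that the analogue of Claim~\ref{c6112}(3) delivers $\cf(\alpha)=\cf(\gamma_i)$ rather than $\cf(\alpha)=\Theta_i$), and that this loss is harmless downstream: it is, because the analogue of Claim~\ref{claim383} produces, for \emph{every} nonzero limit $\Theta<\lambda$, some $\delta\in\Gamma$ with $\otp(D_\delta)=\Theta$, whence $\cf(\delta)=\cf(\Theta)$ can be made any prescribed regular $\eta<\lambda$ irrespective of the cofinalities realized by the $\gamma_i$. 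Once these points are settled, everything else is a transcription of \cite{rinot11} and of Theorem~\ref{thm38}.
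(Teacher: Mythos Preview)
Your outline is plausible, but the paper does \emph{not} adapt the limit-cardinal construction; it follows a genuinely different route, namely the one from \cite[Theorem~3.3]{rinot11} rather than \cite[Theorems~2.5, 3.2]{rinot11}. Concretely: after obtaining the decomposition of Claim~\ref{claim6131} (with $\cf(\lambda)=\lambda$ many threads), the paper fixes \emph{surjections} $\psi_\xi:\lambda\to\xi$ and defines
\[
H^j_\delta:=\left\{ (\eta,\psi_{\min(C_\delta\setminus (\eta+1))}(\iota))\mid \eta\in C_\delta,\ \iota<j \right\},
\]
rather than the sets $(\varrho_\alpha^{-1}[\Theta_j\times\Theta_j])^2$ from Theorem~\ref{thm38}. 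The analogue of Claim~\ref{437} is then \cite[Claim~3.3.1]{rinot11}, not \cite[Claim~2.5.2]{rinot11}. From this the paper constructs $G_\delta=\rng(h_\delta)$ directly, bypassing the whole $C^i_\alpha$/$C^\bullet_\alpha$ machinery of Theorem~\ref{thm38}. A second new ingredient appears: a $\diamondsuit$-matrix $\langle S^\iota_\gamma\mid\iota<\lambda,\gamma<\lambda^+\rangle$ guessing $\lambda$-many sets simultaneously, which is used to pass from $G_\alpha$ to $G^\bullet_\alpha$ via a function $g_\alpha$ that reads off the guessed sets at successor points. Only then is Lemma~\ref{common} invoked.

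What each buys: the paper's route avoids precisely the verification you flag as the ``main obstacle'' --- it never needs to argue that the singular-cardinal Claims~2.5.2, 2.5.4, 3.2.2 of \cite{rinot11} survive the swap of regular cardinals $\Theta_i$ for arbitrary limit ordinals $\gamma_i$, because it cites a parallel argument (\cite[\S3.3]{rinot11}) that was written for the successor case to begin with. Your route, if it works, would give a more unified treatment, but the sentence ``which they are, as their proofs use \dots\ never the singularity of $\lambda$'' is an assertion about proofs you have not reproduced; given that both \cite{rinot11} and the present paper deliberately maintain separate constructions for the two cases, this is the step that needs to be written out rather than claimed.
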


\begin{proof}
As $\lambda$ is uncountable, Fact \ref{fact922} entails $\diamondsuit(\lambda^+)$,
so that we only need to establish the corresponding  $\p^-(\dots)$ principles of Clauses~(1) and~(2).

As in Claim~\ref{claim6131},
we find sequences $\langle C_\alpha\mid \alpha\in \Gamma\rangle$
and $\langle (S_i,\gamma_i)\mid i\le \lambda \rangle$ such that:
\begin{itemize}
\item $E^{\lambda^+}_{\ge\chi}\s\Gamma\s \acc(\lambda^+)$, and $\Gamma=\biguplus_{i\le \lambda}S_i$;
\item if $\alpha\in\Gamma$, then $C_\alpha$ is a club subset of $\alpha$ of order-type $\le\lambda$;
\item if $\alpha\in S_i$ and $\bar\alpha\in \acc(C_\alpha)$,
then $\bar\alpha\in S_i$ and $C_{\bar\alpha} = C_\alpha\cap \bar\alpha$;
\item $\{ \alpha\in S_i\mid \otp(C_\alpha)=\gamma_i, C_\alpha\s E\}$ is stationary for every $i< \lambda$ and every club $E\s\lambda^+$;
\item $\{ \gamma_i\mid i< \lambda\}$ is a cofinal subset of $\lambda$.
\end{itemize}

For every $i < \lambda$,
write $T_i:=\{ \delta\in S_i \mid \otp(C_\delta)=\gamma_i\}$.
We now go along the lines of the proof of Theorem 3.3 from \cite{rinot11}.
By $\ch_\lambda$, let $\{X_\gamma\mid \gamma<\lambda^+\}$ be an enumeration of $[\lambda\times\lambda\times\lambda^+]^{\le\lambda}$.
For all $(j,\tau)\in\lambda\times\lambda$ and $X\s\lambda\times\lambda\times\lambda^+$,
write $\pi_{j,\tau}(X):=\{\varsigma<\lambda^+\mid (j,\tau,\varsigma)\in X\}$.
Fix a sequence of surjections $\langle \psi_\xi : \lambda\rightarrow \xi \mid \xi <\lambda^+\rangle$.

For all $\delta\in\Gamma$ and $j<\lambda$, denote
\[
H^j_\delta:= \left\{ (\eta,\psi_{\min(C_\delta\setminus (\eta+1))}(\iota))\mid \eta\in C_\delta, \iota<j \right\}.
\]
Notice that if $\bar\delta \in \acc(C_\delta)$,
then $H^j_{\bar\delta} = \{(\eta,\gamma)\in H^j_\delta \mid \eta<\bar\delta\}$ for all $j<\lambda$.

\begin{claim}\label{22437} Suppose that $i<\lambda$.

Then there exist $(j,\tau)\in\lambda\times\lambda$ and $Y\s\lambda^+\times\lambda^+$
such that for every club $D\s\lambda^+$ and every
subset $Z\s\lambda^+$, there exists some  $\delta\in T_i$ such that:
\begin{enumerate}
\item $\dom(H^j_\delta\bks Y)=C_\delta\s D$;
\item $H^j_\delta\bks Y\s \{(\eta,\gamma)\mid Z\cap\eta=\pi_{j,\tau}(X_\gamma)\}$.
\end{enumerate}
\end{claim}
\begin{proof} This is Claim 3.3.1 of \cite{rinot11}.
\end{proof}

Let $\langle (j_i,\tau_i,Y_i)\mid i<\lambda\rangle$ be given by the previous claim.
Let $(j_\lambda,\tau_\lambda,Y_\lambda)$ be an arbitrary element of $\lambda\times\lambda\times\mathcal P(\lambda^+\times\lambda^+)$.
Then, for all $i\le\lambda$, $\delta\in S_i$ and $\eta \in C_\delta$, put
\[
X_{\eta,\delta}:=\bigcup\{\pi_{j_i,\tau_i}(X_\gamma)\mid (\eta,\gamma)\in H^{j_i}_\delta\setminus Y_i\}\cap\eta.
\]

Next, for $\delta \in \Gamma$, define
$h_\delta : C_\delta \to \delta$ by setting, for all $\eta \in C_\delta$:
\[
h_\delta(\eta):=\begin{cases}
\min(X_{\eta,\delta}\setminus(\sup(C_\delta\cap\eta)+1)),
        &\text{if } X_{\eta,\delta} \nsubseteq \sup(C_\delta\cap\eta)+1; \\
\eta,&\text{otherwise.}
\end{cases}
\]

Then, for all $\delta\in\Gamma$, put $G_\delta := \rng(h_\delta)$.
For consecutive points $\eta_1 < \eta_2$ in $C_\delta$,
notice that $\eta_1 < h_\delta(\eta_2) \leq \eta_2$.
Also, $h_\delta(\eta) = \eta$ for any $\eta \in \acc(C_\delta)$.
Thus $\acc(G_\delta) = \acc(C_\delta)$ and $\nacc(G_\delta) = h_\delta[\nacc(C_\delta)]$.

Next, we shall use $\diamondsuit(\lambda^+)$ to guess subsets of $\lambda \times \lambda^+$ (rather than subsets of $\lambda^+$).\footnote{See Exercise II.51 of \cite{MR597342}}
More specifically, we fix a matrix  $\langle S^\iota_\gamma\mid\iota<\lambda,\gamma<\lambda^+\rangle$
 with the property that for every sequence $\langle Z_\iota\mid \iota<\lambda\rangle$ of subsets of $\lambda^+$,
the following set is stationary:
\[
\{\gamma<\lambda^+\mid \forall \iota<\lambda(Z_\iota\cap\gamma=S^\iota_{\gamma})\}.
\]

Of course, we may assume that $S^\iota_\gamma\s\gamma$ for all $\iota,\gamma$.

The next claim is analogous to Claim~\ref{claim6135}.
\begin{claim}\label{claim6142}
\begin{enumerate}
\item  for every $\delta\in\Gamma$, $G_\delta$ is a club in $\delta$ of order-type $\le\lambda$;
\item if $\delta\in\Gamma$ and $\bar\delta\in\acc(G_\delta)$, then $\bar\delta\in\Gamma$ and $G_\delta\cap\bar\delta=G_{\bar\delta}$;
\item for every sequence $\langle Z_\iota\mid\iota<\lambda\rangle$ of subsets of $\lambda^+$, every club $E\s\lambda^+$,
and every nonzero limit $\Theta<\lambda$, there exists $\alpha\in\Gamma$ with $\otp(G_\alpha)=\Theta$
such that $\nacc(G_{\alpha}) \s\{ \gamma\in E\mid \forall\iota<\lambda(Z_\iota\cap\gamma=S^\iota_\gamma)\}$.
\end{enumerate}
\end{claim}
\begin{proof}
\begin{description}
\item[(1) \& (2)]
Just like the proof of Claim 3.3.2 from \cite{rinot11}.
\item[(3)]
Given $\langle Z_\iota\mid\iota<\lambda^+\rangle$ and $E$ as above, consider the stationary set $Z:=\{\gamma\in E\mid \forall\iota<\lambda(Z_\iota\cap\gamma=S^\iota_\gamma)\}$.
Denote $D:=\acc^+(Z)$, which is club in $\lambda^+$.
Fix a large enough $i<\lambda$ so that $\gamma_i > \Theta$.
Recalling that the triple $(j_i,\tau_i,Y_i)$ was given by Claim \ref{22437},
we may now fix some $\delta\in T_i$ such that:
\begin{enumerate}
\item $\dom(H^{j_i}_\delta\bks Y_i)=C_\delta\s D$;
\item $H^{j_i}_\delta\bks Y_i\s \{(\eta,\gamma)\mid Z\cap\eta=\pi_{j_i,\tau_i}(X_\gamma)\}$.
\end{enumerate}

Consider any $\eta \in C_\delta$.
Since $\eta \in C_\delta \subseteq D = \acc^+(Z)$,
we have $\sup(Z \cap \eta) = \eta$.
Then, using Clause~(2) and the fact that $\eta \in C_\delta = \dom(H^{j_i}_\delta \setminus Y_i)$,
it follows that
\[
X_{\eta, \delta} = \bigcup \left\{ Z \cap \eta \mid (\eta, \gamma) \in H^{j_i}_\delta \setminus Y_i \right\} \cap \eta
= Z \cap \eta.
\]
In particular, if $\eta \in \nacc(C_\delta)$,
then $\sup(C_\delta \cap \eta) < \eta = \sup(Z \cap \eta) = \sup(X_{\eta,\delta})$,
so that $h_\delta(\eta) \in X_{\eta, \delta} \subseteq Z$.
Altogether, it follows that $\nacc(G_\delta)=h_\delta[\nacc(C_\delta)]\s Z$.

Let $\alpha:= G_\delta(\Theta)$.
Then $\alpha\in\acc(G_\delta)$, $\otp(G_{\alpha})=\Theta$ and $\nacc(G_{\alpha})\s\nacc(G_\delta)\s Z$.
\qedhere
\end{description}
\end{proof}

Let $\alpha\in\Gamma$. Let $\pi_\alpha:\otp(G_\alpha)\rightarrow G_\alpha$ denote the order-preserving bijection.
Define $g_\alpha:\otp(G_\alpha)\rightarrow\alpha$ by stipulating:
\[
g_\alpha(j):=\begin{cases}
\min(S^\iota_{\pi_\alpha(j)}\bks (\pi_\alpha(\iota)+1)),
                     &\text{if } j=\iota+1\ \&\ S_{\pi_\alpha(j)}^{\iota} \nsubseteq \pi_\alpha(\iota)+1; \\
\pi_\alpha(j),&\text{otherwise.}
\end{cases}
\]

Let $G_\alpha^\bullet:=\rng(g_\alpha)$.
For every $i < \otp(G_\alpha)$, $\pi_\alpha(i) < g_\alpha(i+1) \leq \pi_\alpha(i+1)$,
and for every limit $i$, $g_\alpha(i) = \pi_\alpha(i)$.
Thus, for every $\alpha \in \Gamma$, $G_\alpha^\bullet$ is club in $\alpha$,
$\acc(G_\alpha^\bullet) = \acc(G_\alpha)$, and $\otp(G_\alpha^\bullet) =\otp(G_\alpha) \leq \lambda$.
Furthermore, $G_{\bar\alpha}^\bullet = G_\alpha^\bullet \cap \bar\alpha$
for every $\alpha \in \Gamma$ and every $\bar\alpha \in \acc(G^\bullet_\alpha)$.

Let $G_0^\bullet := \emptyset$, and let $G_{\alpha+1}^\bullet:=\{\alpha\}$ for all $\alpha<\lambda^+$.
For all $\alpha\in\acc(\lambda^+)\setminus\Gamma$, let $G_\alpha^\bullet$ be a club subset of  $\alpha$ of order-type $\cf(\alpha)$ with $\nacc(G_\alpha^\bullet)\s\nacc(\alpha)$.

\begin{claim}
For every nonzero limit ordinal $\Theta < \lambda$ and
every sequence $\left< B_\iota \mid \iota < \Theta \right>$ of cofinal subsets of $\lambda^+$,
there exists some $\alpha \in \Gamma$ such that:
\begin{enumerate}
\item $\otp(G^\bullet_\alpha) = \Theta$; and
\item $G^\bullet_\alpha (\iota+1) \in B_\iota$ for all $\iota < \Theta$.
\end{enumerate}
\end{claim}
\begin{proof} Given a sequence $\langle B_\iota\mid\iota<\Theta\rangle$ as in the hypothesis,
let $E:=\bigcap_{\iota<\Theta}\acc^+(B_\iota)$, which is club in $\lambda^+$.
By Claim~\ref{claim6142} (letting $Z_\iota = B_\iota$ for all $\iota < \Theta$),
we now fix $\alpha\in\Gamma$ with $\otp(G_\alpha)=\Theta$
such that $\nacc(G_\alpha) \s\{ \gamma\in E\mid \forall\iota<\Theta(B_\iota\cap\gamma=S^\iota_\gamma)\}$.
In particular, $\otp(G^\bullet_\alpha)=\otp(G_\alpha)=\Theta$. Now, let $\iota<\Theta$ be arbitrary.
Denote $\gamma:=\pi_\alpha(\iota+1)$, and $\gamma^-:=\pi_\alpha(\iota)$.
By definition, $g_\alpha(\iota+1)$
is equal to $\min(S^\iota_{\gamma}\bks (\gamma^-+1))$, provided that the latter is nonempty.
As $\gamma\in\nacc(G_\alpha)$, we know that $\gamma\in E\s\acc^+(B_\iota)$ and $B_\iota\cap\gamma=S^\iota_{\gamma}$.
Consequently,  $G^\bullet_\alpha(\iota+1)=g_\alpha(\iota+1)\in B_\iota$.
\end{proof}

Then, the fact that $\left< G^\bullet_\alpha \mid \alpha < \lambda^+ \right>$ witnesses
$\p^-(\lambda^+,2,{\sq}_\chi,\theta,\{E^{\lambda^+}_\eta\mid \aleph_0 \leq \cf(\eta) = \eta < \lambda\},2,\sigma, \mathcal E_\lambda)$
and
$\p^-(\lambda^+, 2, {\sq}_\chi, 1, \{E^{\lambda^+}_\eta\mid \aleph_0 \leq \cf(\eta) = \eta < \lambda \}, 2, \lambda^+,{\mathcal E_\lambda})$
follows from Lemma~\ref{common}.
\end{proof}

The following two corollaries are generalizations of Corollaries \ref{thm61} and~\ref{cor64}, respectively,
which are the special cases of the following when
$\chi = \aleph_0$.

\begin{cor}\label{boxminus-equiv}
For any infinite cardinals $\theta <\lambda$, any infinite regular cardinal $\chi < \lambda$,
and any ordinal $\sigma<\lambda$, the following are equivalent:
\begin{enumerate}
\item $\boxminus_{\lambda, \geq\chi} + \ch_\lambda$;
\item $\p(\lambda^+,2,{\sq}_\chi,\theta,\{E^{\lambda^+}_\eta\mid \aleph_0 \leq \cf(\eta) = \eta < \lambda\},2,\sigma,{\mathcal E_\lambda})$;
\item $\p(\lambda^+, 2, {\sq}_\chi, 1, \{E^{\lambda^+}_\eta\mid \aleph_0 \leq \cf(\eta) = \eta < \lambda \}, 2, \lambda^+,{\mathcal E_\lambda})$;
\item $\p(\lambda^+,2,{\sq}_\chi,1,\{\lambda^+\},2, 0,{\mathcal E_\lambda})$.
\end{enumerate}
\end{cor}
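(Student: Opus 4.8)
The plan is to establish the cycle $(1)\Rightarrow(2)\Rightarrow(4)\Rightarrow(1)$, together with $(1)\Rightarrow(3)\Rightarrow(4)$, so that all four statements become equivalent. First note that since $\chi$ is an infinite regular cardinal with $\chi<\lambda$, we have $\aleph_0\le\chi<\lambda$, so $\lambda$ is uncountable; in particular the collection $\{E^{\lambda^+}_\eta\mid \aleph_0 \leq \cf(\eta) = \eta < \lambda\}$ is a nonempty collection of stationary subsets of $\lambda^+$ (it contains $E^{\lambda^+}_{\aleph_0}$), so that statements $(2)$ and $(3)$ are well-formed instances of the proxy principle.

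For $(1)\Rightarrow(2)$ and $(1)\Rightarrow(3)$, assume $\boxminus_{\lambda,\ge\chi}+\ch_\lambda$ and split into two cases according to whether $\lambda$ is a limit cardinal or a successor cardinal. If $\lambda$ is a limit cardinal, invoke Theorem~\ref{thm38}: part~(1) yields $(2)$ and part~(2) yields $(3)$. If $\lambda$ is a successor cardinal, invoke Theorem~\ref{boxminus-succ} instead: part~(1) yields $(2)$ and part~(2) yields $(3)$. In both cases the hypotheses that $\theta$ is infinite with $\theta<\lambda$, that $\chi<\lambda$ is infinite regular, and that $\sigma<\lambda$ are precisely what those theorems require. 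Since every uncountable cardinal is either a limit cardinal or a successor cardinal, this covers all relevant $\lambda$.

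For $(2)\Rightarrow(4)$ and $(3)\Rightarrow(4)$, it suffices to observe that a witness $\langle C_\alpha\mid\alpha<\lambda^+\rangle$ to the $\p^-$-part of $(2)$ (respectively $(3)$) is automatically a witness to $\p^-(\lambda^+,2,{\sq_\chi},1,\{\lambda^+\},2,0,\mathcal E_\lambda)$: the first three requirements (club subsets, $\sq_\chi$-coherence, membership in $\dom(\mathcal E_\lambda)$) do not depend on the parameters $\theta,\mathcal S,\sigma$ and are therefore inherited verbatim, while the fourth requirement of the target principle is vacuous, since $\suc_0(C_\alpha\setminus\beta)=\emptyset\subseteq A_0$ reduces the displayed condition to $\sup(C_\alpha)=\alpha$, which holds at every limit $\alpha$. (One could equally appeal directly to parameter monotonicity: each $E^{\lambda^+}_\eta$ is a stationary subset of $\lambda^+$, so ``stationarily many $\alpha\in E^{\lambda^+}_\eta$'' forces ``stationarily many $\alpha<\lambda^+$''.) The $\diamondsuit(\lambda^+)$ conjunct of the full principle $\p$ is shared among $(2)$, $(3)$ and $(4)$, so $(4)$ follows.

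For $(4)\Rightarrow(1)$, note that $\p(\lambda^+,2,{\sq_\chi},1,\{\lambda^+\},2,0,\mathcal E_\lambda)$ includes $\diamondsuit(\lambda^+)$, which for the uncountable cardinal $\lambda$ gives $\ch_\lambda$ by Fact~\ref{fact922}; and its $\p^-$-part, namely $\p^-(\lambda^+,2,{\sq_\chi},1,\{\lambda^+\},2,0,\mathcal E_\lambda)$, is equivalent to $\boxminus_{\lambda,\ge\chi}$ by Lemma~\ref{l24} (applied, say, with $\eta=\chi$). Hence $(1)$ holds, closing the cycle. There is no genuine obstacle here: the substantive content is carried by Theorems~\ref{thm38} and~\ref{boxminus-succ} and by Lemma~\ref{l24}, and the only points requiring a moment's care are the nonemptiness of the index collection in $(2)$ and $(3)$, the legitimacy of the monotonicity reduction to $\mathcal S=\{\lambda^+\}$ and $\sigma=0$, and the invocation of $\diamondsuit(\lambda^+)\Leftrightarrow\ch_\lambda$, which is where uncountability of $\lambda$ enters.
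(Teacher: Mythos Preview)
Your proposal is correct and follows essentially the same route as the paper's proof: both split $(1)\Rightarrow(2)\,\&\,(3)$ into the limit case (Theorem~\ref{thm38}) and the successor case (Theorem~\ref{boxminus-succ}), derive $(2)\Rightarrow(4)$ and $(3)\Rightarrow(4)$ by parameter monotonicity, and obtain $(4)\Rightarrow(1)$ via $\diamondsuit(\lambda^+)\Rightarrow\ch_\lambda$ together with Lemma~\ref{l24}. Your version simply spells out a few details (uncountability of $\lambda$, vacuity of the $\sigma=0$ clause) that the paper leaves implicit.
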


\begin{proof}
\begin{description}
\item[(1) $\implies$ (2) \& (3)]
By Theorem~\ref{thm38} if $\lambda$ is a limit cardinal,
and Theorem~\ref{boxminus-succ} if $\lambda$ is a successor.

\item[(2) $\implies$ (4) and (3) $\implies$ (4)]
Immediate by monotonicity of the parameters.

\item[(4) $\implies$ (1)]
By $\p(\lambda^+,2,{\sq}_\chi,1,\{\lambda^+\},2,0,{\mathcal E_\lambda})$,
we have $\diamondsuit(\lambda^+)$, and hence $\ch_\lambda$ holds.
$\boxminus_{\lambda, \geq\chi}$ follows using Lemma~\ref{l24}.
\qedhere
\end{description}
\end{proof}

\begin{cor}
For every singular cardinal $\lambda$ and any infinite regular cardinal $\chi < \lambda$, the following are equivalent:
\begin{enumerate}
\item $\boxminus_{\lambda, \geq\chi} + \ch_\lambda$;
\item $\p(\lambda^+,2,{\sq}_\chi,\lambda^+,\{E^{\lambda^+}_{\cf(\lambda)}\},2,\sigma,\mathcal E_\lambda)$ for all $\sigma<\lambda$.
\end{enumerate}
\end{cor}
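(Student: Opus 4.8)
The plan is to handle this exactly as the $\chi=\aleph_0$ case was handled in Corollary~\ref{cor64}, splitting into the two implications and citing the machinery of Section~4. Since a singular cardinal is in particular a limit cardinal, Theorem~\ref{thm38} applies with the given $\lambda$ and $\chi$, and its Clause~(3) asserts precisely that $\boxminus_{\lambda,\ge\chi}+\ch_\lambda$ entails $\p(\lambda^+,2,{\sq_\chi},\lambda^+,\{E^{\lambda^+}_{\cf(\lambda)}\},2,\sigma,\mathcal E_\lambda)$ for every $\sigma<\lambda$. So the forward implication $(1)\Rightarrow(2)$ needs no new argument: it is Theorem~\ref{thm38}(3) verbatim.

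For the backward implication $(2)\Rightarrow(1)$, I would first fix a single value of $\sigma$ (say $\sigma=0$, which is legitimate since $0<\lambda$) and then reduce the hypothesis down to Clause~(4) of Corollary~\ref{boxminus-equiv} by monotonicity of the proxy principle in three of its parameters: shrinking the fourth parameter from $\lambda^+$ to $1$; replacing $\mathcal S=\{E^{\lambda^+}_{\cf(\lambda)}\}$ by $\mathcal S=\{\lambda^+\}$, which is legitimate because $E^{\lambda^+}_{\cf(\lambda)}$ is itself a stationary subset of $\lambda^+$, so any stationary set of $\alpha$'s inside it is stationary in $\lambda^+$; and shrinking the seventh parameter $\sigma$ to $0$, which is legitimate because $\suc_{\sigma'}(D)\subseteq\suc_\sigma(D)$ whenever $\sigma'\le\sigma$, so the defining displayed equation becomes easier to satisfy. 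This yields $\p(\lambda^+,2,{\sq_\chi},1,\{\lambda^+\},2,0,\mathcal E_\lambda)$, i.e.\ Clause~(4) of Corollary~\ref{boxminus-equiv}, whence $\boxminus_{\lambda,\ge\chi}+\ch_\lambda$ follows from that corollary. (Alternatively, one may bypass Corollary~\ref{boxminus-equiv}: the $\diamondsuit(\lambda^+)$ built into $\p$ gives $\ch_\lambda$ directly, while Lemma~\ref{l24}, implication $(3)\Rightarrow(1)$, extracts $\boxminus_{\lambda,\ge\chi}$ from the corresponding $\p^-$-sequence.)

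I do not expect a real obstacle: the substance was already carried out in Theorem~\ref{thm38} and Lemma~\ref{l24}, and this corollary merely packages it. The only points requiring care are the bookkeeping of the monotonicity reductions just described—in particular checking that weakening $\sigma$ to $0$ genuinely weakens the principle—and verifying that the ambient hypotheses of the cited results (``$\lambda$ a limit cardinal'' for Theorem~\ref{thm38}, and ``$\theta<\lambda$ infinite, $\chi<\lambda$ infinite regular, $\sigma<\lambda$'' for Corollary~\ref{boxminus-equiv}) are all met under the standing assumption that $\lambda$ is singular and $\chi<\lambda$ is infinite regular, which they are.
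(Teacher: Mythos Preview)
Your proposal is correct and follows the same approach as the paper: the forward implication is Theorem~\ref{thm38}(3), and the backward implication goes through Corollary~\ref{boxminus-equiv}. Your write-up simply spells out the parameter-monotonicity reductions that the paper leaves implicit when citing Corollary~\ref{boxminus-equiv}.
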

\begin{proof} The forward implication follows from Theorem \ref{thm38}(3).
The backward implication follows from Corollary~\ref{boxminus-equiv}.
\end{proof}

By \cite[Claim 27]{Sh:108}, if $\chi$ is a supercompact cardinal, then for every singular cardinal $\lambda$ of cofinality $<\chi$, very weak forms of $\square_\lambda$ (such as $\square^*_\lambda$ and even $\ap_\lambda$) fail.
In contrast, we have the following.

\begin{cor}\label{cor116} Relative to the existence of a supercompact cardinal, it is consistent that there exists a supercompact cardinal $\chi$ such that
$\p(\lambda^+,2,{\sq_\chi},\lambda^+,\{E^{\lambda^+}_\omega\},2,\sigma,\mathcal E_\lambda)$ holds for every $\sigma < \lambda$, where $\lambda:=\chi^{+\omega}$.
\end{cor}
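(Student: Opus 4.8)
## Proof proposal for Corollary~\ref{cor116}

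The plan is to obtain the consistency by a standard Laver-indestructibility argument followed by Prikry forcing to singularize $\chi^{+\omega+1}$... no wait — we want $\lambda = \chi^{+\omega}$ to remain singular of cofinality $\omega$ in the final model, while $\chi$ stays supercompact. So first I would start with a supercompact cardinal $\chi$ in $V$ and apply Laver's forcing to make its supercompactness indestructible under $(<\chi)$-directed-closed forcing. In this model $\lambda := \chi^{+\omega}$ is already a singular cardinal of cofinality $\omega$, below which $\cf(\chi) = \chi$, so the hypothesis of \cite[Claim 27]{Sh:108} applies and indeed $\square^*_\lambda$ fails. The point of the corollary is that, nevertheless, the weak coherence principle $\sq_\chi$ (which is consistent with the failure of $\square^*_\lambda$ precisely because $\sq_\chi$ relaxes coherence on clubs of order-type $<\chi$) can be forced to hold together with the full proxy principle and $\chi$ remaining supercompact.

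The key step is to force $\boxminus_{\lambda,\ge\chi} + \ch_\lambda$ over the Laver model by a $(<\chi)$-directed-closed forcing, so that the indestructibility of $\chi$ is preserved. For $\ch_\lambda$: by GCH considerations (which hold above $\chi$ in the Laver model, or can be arranged) we already have $2^\lambda = \lambda^+$, so $\ch_\lambda$ comes for free, or else is added by $\add(\lambda^+,1)$ which is $(<\chi)$-directed-closed as $\chi < \lambda^+$. For $\boxminus_{\lambda,\ge\chi}$: one uses the natural forcing that adds a $\boxminus_{\lambda,\ge\chi}$-sequence by approximations of size $<\chi$; crucially, since the coherence condition in $\boxminus_{\lambda,\ge\chi}$ is only imposed at accumulation points of clubs of order-type $\ge\chi$, conditions of size $<\chi$ can always be amalgamated, making the poset $(<\chi)$-directed-closed (this is exactly where we need $\chi$ regular and the subscript $\ge\chi$ rather than $\ge\aleph_0$). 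Thus the combined forcing $\mathbb{Q}$ is $(<\chi)$-directed-closed, $\chi$ remains supercompact in $V^{\mathbb{Q}}$ by indestructibility, and $V^{\mathbb{Q}} \models \boxminus_{\lambda,\ge\chi} + \ch_\lambda$ with $\lambda = \chi^{+\omega}$ still singular of cofinality $\omega$ (cardinals and cofinalities $\le\lambda^+$ are preserved since the forcing is $(<\chi)$-directed-closed and of size $\lambda^+$, so $\lambda^{++}$-cc).

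Finally, apply Theorem~\ref{thm38}(3) in $V^{\mathbb{Q}}$: since $\lambda = \chi^{+\omega}$ is a \emph{limit} cardinal (being $\chi^{+\omega}$) of cofinality $\omega = \cf(\lambda)$, and $\chi$ is an infinite regular cardinal $<\lambda$, the hypothesis $\boxminus_{\lambda,\ge\chi} + \ch_\lambda$ gives us that $\p(\lambda^+,2,{\sq_\chi},\lambda^+,\{E^{\lambda^+}_{\cf(\lambda)}\},2,\sigma,\mathcal E_\lambda)$ holds for every $\sigma < \lambda$, and $E^{\lambda^+}_{\cf(\lambda)} = E^{\lambda^+}_\omega$, which is exactly the claimed conclusion. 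The main obstacle I anticipate is verifying that the natural forcing to add a $\boxminus_{\lambda,\ge\chi}$-sequence is genuinely $(<\chi)$-directed-closed and does not collapse cardinals or damage the supercompactness — this requires care in defining the poset so that directed systems of small conditions have a canonical upper bound, exploiting that coherence demands are vacuous on short clubs; one should also double-check via Definition~\ref{def109} that the extended sequence witnesses $\boxminus_{\lambda,\ge\chi}$ in the extension (the generic sequence threads a club of the right order-type through any given club, by a density argument of size $<\chi$). Alternatively, if such a forcing is already in the literature (e.g.\ in \cite{rinot11} or \cite{MR2811288}), I would simply cite it and note its $(<\chi)$-directed-closure.
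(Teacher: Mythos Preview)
Your overall strategy matches the paper's: start with a Laver-indestructible supercompact $\chi$, force $\boxminus_{\lambda,\ge\chi}+\ch_\lambda$ by a $(<\chi)$-directed-closed poset, and then invoke Theorem~\ref{thm38}(3). However, your description of the forcing and your cardinal-preservation argument both contain a real gap.

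The natural forcing $Q(\chi,\lambda)$ does \emph{not} consist of ``approximations of size $<\chi$''. A condition is a partial function $p$ whose domain is a \emph{bounded} subset of $\lambda^+$ with a maximum $m(p)$, such that $\dom(p)\supseteq E^{m(p)}_{\ge\chi}$ and each $p(\alpha)$ is already a full club in $\alpha$ of order-type $\le\lambda$, coherent in the obvious sense. Such conditions have size up to $\lambda$, not $<\chi$. With conditions of size $<\chi$ you could not even specify a single club at a point of cofinality $\ge\chi$, and more seriously, $(<\chi)$-directed-closure together with $\lambda^{++}$-cc does \emph{not} by itself preserve cardinals and cofinalities in the interval $(\chi,\lambda^+)$; your sentence ``cardinals and cofinalities $\le\lambda^+$ are preserved since the forcing is $(<\chi)$-directed-closed and of size $\lambda^+$'' is simply false in general. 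The paper handles this by showing $Q(\chi,\lambda)$ is $(\lambda+1)$-strategically closed (as in \cite[Lemma~6.1]{MR1838355}), which is what actually gives preservation of all cofinalities.

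The $(<\chi)$-directed-closure of the correct poset is then a separate, short argument: given a directed family $D\subseteq Q(\chi,\lambda)$ of size $<\chi$, the union $d=\bigcup D$ either already has a maximal domain element (and is a condition), or else $\delta:=\sup(\dom(d))$ has cofinality $<\chi$, so $\delta\notin E^{\lambda^+}_{\ge\chi}$ and one can extend $d$ by adjoining a trivial club at $\delta+\omega$. Your intuition that ``coherence is vacuous at short points'' is the right idea here, but it applies to the \emph{sup of the directed system}, not to the conditions themselves.
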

\begin{proof} Start with a model where $\chi$ is a Laver-indestructible supercompact cardinal \cite{MR0472529},
and $\ch_\lambda$ holds for $\lambda=\chi^{+\omega}$.
 Denote by $Q(\chi, \lambda)$ the collection of all partial functions $p\colonnn\lambda^+\rightarrow[\lambda^+]^{\leq\lambda}$ satisfying:
\begin{itemize}
\item $\dom(p)$ is a bounded subset of $\lambda^+$ with some maximal element, which we denote by $m(p)$;
\item $\dom(p)\supseteq E^{m(p)}_{\ge\chi}$;
\item for all $\alpha\in\dom(p)$:
\begin{itemize}
\item $p(\alpha)$ is a club subset of $\alpha$ of order-type $\le\lambda$;
\item if $\bar\alpha\in\acc(p(\alpha))$, then $\bar\alpha\in\dom(p)$, and $p(\bar\alpha)=p(\alpha)\cap\bar\alpha$.
\end{itemize}
\end{itemize}
We consider $Q(\chi, \lambda)$ as a notion of forcing where for $p, q \in Q(\chi, \lambda)$, $q$ extends $p$ iff $q\sqsupseteq p$.

By $\ch_\lambda$, we have $|Q(\chi, \lambda)|=\lambda^+$.
By virtually the same proof of \cite[Lemma 6.1]{MR1838355}, $Q(\chi,\lambda)$ is $(\lambda+1)$-strategically closed.
Thus, altogether $Q(\chi, \lambda)$ preserves cofinalities.

\begin{claim} $V^{Q(\chi,\lambda)}\models \p(\lambda^+,2,{\sq_\chi},\lambda^+,\{E^{\lambda^+}_\omega\},2,\sigma,\mathcal E_\lambda)$ holds for every $\sigma < \lambda$.
\end{claim}
\begin{proof}
We have already noticed that $V^{Q(\chi,\lambda)}$ is a $\lambda$-distributive forcing extension of $V$,
and so $V^{Q(\chi,\lambda)}\models\ch_\lambda$.
Thus, in light of Theorem~\ref{thm38}, it suffices to prove that $\forces_{Q(\chi,\lambda)}\boxminus_{\lambda,\geq\chi}$.
That is, it suffices to prove that $D_\alpha:=\{p\in Q(\chi,\lambda)\mid m(p)\ge\alpha\}$ is dense for all $\alpha<\lambda^+$.
We do so by induction:

$\br$ $D_0 = Q(\chi, \lambda)$, which is clearly dense.

$\br$ Suppose that $\alpha<\lambda^+$ and $D_\alpha$ is dense. We shall show that $D_{\alpha+1}$ is dense. Given $p\in Q(\chi, \lambda)$,
we may assume that $p\in D_\alpha$. Now, let $p':=p\cup\{(m(p)+\omega,(m(p),m(p)+\omega))\}$. Then $p'\in D_{\alpha+1}$.

$\br$ Suppose that $\alpha<\lambda^+$ is a nonzero limit ordinal and $D_\beta$ is dense for all $\beta<\alpha$.
Let $p\in Q(\chi,\lambda)$ be arbitrary.
Fix a function $f:\cf(\alpha)\rightarrow\alpha$ whose image is cofinal in $\alpha$.
Clearly, $\dom(f)\le\lambda$.
Now, since $Q(\chi,\lambda)$ is $(\lambda+1)$-strategically closed, use the winning strategy of player \textrm{II} to play a game of length $\dom(f)+1$, producing an increasing sequence of conditions $\langle p_j\mid j<\dom(f)+1\rangle$
so that $p_1\ge p$ and $p_{2j+1}\in D_{f(j)}$ for all $j<\dom(f)$. Then $p_{\dom(f)}$ is an extension of $p$ that belongs to $D_\alpha$,
showing that $D_\alpha$ is dense.
\end{proof}

\begin{claim}
 $Q(\chi, \lambda)$ is $(<\chi)$-directed-closed.
 \end{claim}
 \begin{proof} Suppose that $D\s Q(\chi,\lambda)$ is a directed family of size $<\chi$. So, for all $p,q\in D$, we know that $p\cup q$ is a condition.
Let $d:=\bigcup D$. If $\dom(d)$ has a maximal element, then $d\in Q(\chi,\lambda)$, and we are done.
Otherwise, for all $\alpha<\delta:=\sup(\dom(d))$, we may pick $p_\alpha\in D$ such that $m(p_\alpha)>\alpha$,
which must mean that $\cf(\delta)\le|D|<\chi$. So $d\cup\{(\delta+\omega,(\delta,\delta+\omega))\}$ is a legitimate condition that serves as a bound to all elements in $D$.
\end{proof}

So $\chi$ remains supercompact in the extension.
\end{proof}

\begin{lemma}\label{prop118} Suppose that $\chi<\cf(\kappa)=\kappa$ are infinite cardinals, and $\p^-(\kappa,2,{\sq_\chi})$ holds.

Then every stationary subset of $E^\kappa_{\ge\chi}$ may be partitioned into $\kappa$-many pairwise disjoint stationary sets such that no two of them reflect simultaneously.
\end{lemma}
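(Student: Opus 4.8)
Fix a witnessing sequence $\langle C_\alpha \mid \alpha < \kappa \rangle$ for $\p^-(\kappa, 2, {\sq_\chi}, 1, \{\kappa\}, 2, 1)$, and let $S \subseteq E^\kappa_{\ge\chi}$ be stationary. The first thing I would do is observe that for $\alpha \in E^\kappa_{\ge\chi}$, since $\cf(\alpha) \ge \chi$ we have $\otp(C_\alpha) \ge \cf(\alpha) \ge \chi$, so the alternative clause in the definition of $\sq_\chi$ (``$\otp(C) < \chi$ and $\nacc(C)$ consists only of successor ordinals'') cannot apply; hence for every $\bar\alpha \in \acc(C_\alpha) \cap E^\kappa_{\ge\chi}$ we get genuine end-extension $C_{\bar\alpha} = C_\alpha \cap \bar\alpha$. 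Actually I want more: even for $\bar\alpha \in \acc(C_\alpha)$ with $\cf(\bar\alpha) < \chi$, the coherence $C_{\bar\alpha} \sq_\chi C_\alpha$ holds, which is all I will need for those points. The key structural fact, as in the proof of Lemma~\ref{l1023}, is that the nonaccumulation points $\nacc(C_\alpha)$ are ``spread out'' relative to any cofinal set we choose to guess against.

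Next I would perform the standard partition. Using $\diamondsuit(\kappa)$ — which is not assumed here, so instead I use the guessing built into $\p^-$ directly — I want to split $S$ into $\langle S_i \mid i < \kappa \rangle$, pairwise disjoint and each stationary, with the nonreflection-simultaneity property. The natural route: first use the club-guessing feature of $\p^-(\kappa,2,{\sq_\chi})$ to derive that the restriction of the sequence to $E^\kappa_{\ge\chi}$ is in effect a $\square(\kappa)$-like (partial) club-guessing sequence on $S$; concretely, for every cofinal $A \subseteq \kappa$ there are stationarily many $\alpha \in S$ with $\sup(\nacc(C_\alpha) \cap A) = \alpha$. Then define, for each $i < \kappa$, $$S_i := \{ \alpha \in S \mid \text{$i$-th element of $\nacc(C_\alpha)$ (in some canonical enumeration) codes $i$} \}$$ — more precisely, fix in advance a partition $\langle R_i \mid i<\kappa\rangle$ of $\kappa$ (obtainable from the guessing as in Lemma~\ref{Diamond_H_kappa}, or directly by a bookkeeping argument) and set $S_i := \{\alpha \in S \mid \min(\nacc(C_\alpha)) \in R_i\}$ or a similar assignment that is ``local to $C_\alpha$'' and hence respects coherence. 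The point of locality is: if $\bar\alpha \in \acc(C_\alpha)$ then $C_{\bar\alpha}$ is (essentially) an initial segment of $C_\alpha$, so $\bar\alpha$ and $\alpha$ land in comparable $S_i$'s, which is exactly what forces non-simultaneous reflection.

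Then I would prove the two required properties. \emph{Stationarity of each $S_i$}: given a club $E \subseteq \kappa$, apply the club-guessing clause of $\p^-$ to the cofinal set $A := E \cap R_i$ (assuming $R_i$ is cofinal, which we arrange), getting stationarily many $\alpha \in S$ with $\nacc(C_\alpha) \cap A$ cofinal in $\alpha$; for such $\alpha$ one checks $\alpha \in S_i$ and $\alpha \in \acc^+(E)$, hence $\alpha \in E$. \emph{No two reflect simultaneously}: suppose $i \ne j$ and $S_i, S_j$ both reflect at some $\delta \in E^\kappa_{\ge\chi}$ (reflection only makes sense at points of uncountable cofinality anyway, and the interesting case is $\cf(\delta) \ge \chi$). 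Then $S_i \cap \delta$ and $S_j \cap \delta$ are both stationary in $\delta$. But for all $\alpha \in \acc(C_\delta) \cap E^\kappa_{\ge\chi}$ we have $C_\alpha = C_\delta \cap \alpha$, so the ``local label'' of $\alpha$ is determined by $C_\delta$ and is the \emph{same} for all such $\alpha$ (it depends only on, say, $\min(\nacc(C_\delta))$, which equals $\min(\nacc(C_\alpha))$ once $\alpha$ is past that point). Since $\acc(C_\delta)$ is a club in $\delta$, it meets both stationary sets $S_i \cap \delta$ and $S_j \cap \delta$ — contradiction, since a single $\alpha$ cannot have two labels. For $\delta$ with $\cf(\delta) < \chi$ reflection is vacuous or handled by the $\sq_\chi$ clause giving enough coherence on the tail of $C_\delta$.

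The main obstacle I anticipate is the labelling: I need the assignment $\alpha \mapsto i(\alpha)$ to be simultaneously (a) coherent under the $\sq_\chi$ relation — so that accumulation points of $C_\delta$ inherit the label of $\delta$, giving non-simultaneous reflection — and (b) ``surjective enough'' that each fiber $S_i$ is stationary. The tension is that coherence wants the label read off an \emph{initial} segment of $C_\alpha$ (so it is inherited downward), while stationarity of $S_i$ wants us to be able to steer $C_\alpha$ into $R_i$ using club-guessing, which controls $\nacc(C_\alpha)$ cofinally, not initially. The resolution is presumably to read the label from $\min(\nacc(C_\alpha))$ (or $C_\alpha(1)$), which is both an initial datum (inherited) and controllable: guessing against $A = E \cap R_i$ forces $\nacc(C_\alpha)\cap R_i$ cofinal, but one also needs $\min(\nacc(C_\alpha)) \in R_i$; this can be arranged by a preliminary trimming of the sequence, replacing each $C_\alpha$ by $C_\alpha \setminus \gamma$ for a suitable $\gamma$ read off the guessing, as is done repeatedly in Section~3 — e.g. in the passage from $\p^-$-sequences to $\boxminus$-sequences in Lemma~\ref{l24}. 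I would expect the bulk of the work to be in making this trimming compatible with $\sq_\chi$-coherence, but the techniques are entirely those already deployed in Claim~\ref{claim6131} and Lemma~\ref{l24}.
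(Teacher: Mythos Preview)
Your high-level architecture is correct and matches the paper's: label each $\alpha\in S$ by some coordinate $C_\alpha(i)$ of its club, use $\sq_\chi$-coherence at reflection points $\delta$ (where $\otp(C_\delta)\ge\cf(\delta)>\chi$ forces genuine end-extension) to show all $\alpha\in\acc(C_\delta)\cap S$ carry the same label, and conclude non-simultaneous reflection. That part is fine.

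The genuine gap is in establishing that the labelling has $\kappa$-many stationary fibers. You propose to read the label from $C_\alpha(1)$ (or $\min(\nacc(C_\alpha))$) and to force stationarity of $S_i$ by applying the guessing clause of $\p^-$ to $A:=E\cap R_i$. But the guessing clause only gives you $\sup(\nacc(C_\alpha)\cap A)=\alpha$ --- cofinal hits --- and says nothing about the \emph{initial} coordinate $C_\alpha(1)$. You recognize this and propose ``trimming'' $C_\alpha$ to $C_\alpha\setminus\gamma$; but if $\gamma$ depends on $\alpha$ you destroy coherence, and if $\gamma$ is uniform you have merely shifted the index without gaining any control. The references to Lemma~\ref{l24} and Claim~\ref{claim6131} do not supply the missing argument: those trimmings produce \emph{one} stationary set avoiding accumulation points, not a labelling with unbounded range.

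What the paper actually does is prove that there exists some fixed $i<\kappa$ such that $G_i^\tau:=\{\beta\in S\mid \otp(C_\beta)>i\ \&\ C_\beta(i)\ge\tau\}$ is stationary for all $\tau<\kappa$; a pressing-down then gives $\kappa$-many stationary fibers $H^\tau:=\{\beta\in S\mid C_\beta(i)=\tau\}$. The existence of such an $i$ is the heart of the proof and requires a nontrivial threading argument: assuming no $i$ works, one builds (via the diagonal intersection of witnessing clubs and the coherence) a thread $C$ through the sequence, and then applies the $\p^-$ guessing clause to $A:=\acc(C)$ to reach a contradiction (some $\beta\in\acc(C)$ would need $\nacc(C_\beta)$ to meet $\acc(C)$ cofinally, but $C_\beta=C\cap\beta$). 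This argument --- essentially a non-threadability result, cf.\ Lemma~\ref{l1023} but sharper --- is absent from your proposal, and your suggested workarounds do not substitute for it.
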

\begin{proof} Let $\langle C_\alpha\mid \alpha<\kappa\rangle$ be a witness to $\p^-(\kappa,2,{\sq_\chi})$.
Suppose that $\Gamma$ is some stationary subset of $E^\kappa_{\ge\chi}$.
Following the proof of \cite[Lemma 3.2]{rinot18},
write $G_i^\tau:=\{\beta\in \Gamma\mid \otp(C_\beta)>i\ \&\ C_\beta(i)\ge\tau\}$ for all $i, \tau < \kappa$.
\begin{claim} There exists $i<\kappa$ such that $G_i^\tau$ is stationary for all $\tau<\kappa$.
\end{claim}
\begin{proof} Suppose not.
Then there exists a function $f:\kappa\rightarrow\kappa$ such that $G^{f(i)}_i$ is nonstationary for all  $i<\kappa$.
For each $i<\kappa$, let $D_i$ be a club subset of $\kappa\setminus G^{f(i)}_i$.
Let $D:=\{\delta\in\diagonal_{i<\kappa}D_i\mid f[\delta]\s\delta\}$, which is club in $\kappa$,
and put $S:=\{\beta\in \Gamma\mid  \otp(C_\beta)=\beta\}$.
By the first and third triangular bullets in the proof of \cite[Claim 3.2.1]{rinot18}, the set $B:=\{ \beta\in\acc(D)\cap {S}\mid D\cap\beta\nsubseteq C_\beta\}$ must be empty,
and $\acc(D)\cap S$ must be cofinal in $\kappa$.

By $B=\emptyset$, for every $\alpha<\beta$ both in $\acc(D)\cap{S}$, we have $D \cap \beta \subseteq C_\beta$, so that $\alpha\in \acc(D) \cap \beta \subseteq \acc(C_\beta)$ and hence $C_\alpha\sq_\chi C_\beta$. As $\otp(C_\beta)=\beta\ge\chi$ for all $\beta\in S$,
we infer that $\{ C_\delta\mid\delta\in\acc(D)\cap{S}\}$ is a $\sq$-chain, converging to the club $C:=\bigcup\{ C_\delta\mid \delta\in\acc(D)\cap{S}\}$.
Write $A:=\acc(C)$, which is club in $\kappa$. As $\langle C_\alpha\mid\alpha<\kappa\rangle$ witnesses $\p^-(\kappa,2,{\sq_\chi})$,
let us pick some $\beta\in A$ such that $\sup(A\cap\nacc(C_\beta))=\beta$.

By $\beta\in\acc(C)$, we know that $\beta\in \acc(C_\delta)$ for some $\delta\in\acc(D)\cap{S}$,
and then $C\cap\beta=(C\cap\delta)\cap\beta=C_\delta\cap\beta=C_\beta$.
So $A\cap\nacc(C_\beta)=\acc(C)\cap\nacc(C\cap\beta)=\emptyset$, contradicting the choice of $\beta$.
\end{proof}

Let $i<\kappa$ be given by the preceding claim. Denote $H^\tau:=\{\beta\in \Gamma\mid \otp(C_\beta)>i\ \&\ C_\beta(i)=\tau\}$,
and $\Theta:=\{ \tau<\kappa\mid H^\tau\text{ is stationary}\}$. Note that if $\sup(\Theta)<\kappa$, then a pressing down argument would contradict the choice of $i$.
Thus,  $\{H^\tau \mid \tau \in \Theta\}$ is a partition of a subset of $\Gamma$
into $\kappa$-many pairwise disjoint stationary sets. Let $\{ S_j\mid j<\kappa\}$ be a partition of $\Gamma$ such that $|S_j \mathbin{\Delta} H^{\Theta(j)}|\le1$ for all $j<\kappa$.

Let $j_0<j_1<\kappa$ be arbitrary.
Towards a contradiction, suppose that there exists some $\delta<\kappa$ such that $S_{j_0}\cap\delta$ and $S_{j_1}\cap\delta$ are both stationary.
Let $\ell<2$.
Write $\tau_\ell:=\Theta(j_\ell)$.
Then $H^{\tau_\ell}\cap\delta$ is a stationary subset of $E^\delta_{\ge\chi}$. In particular, $\cf(\delta)>\chi$ and $\acc(C_\delta)$ is a club in $\delta$.
Pick $\beta_\ell\in H^{\tau_\ell}\cap\acc(C_\delta)$. Then $C_{\beta_\ell}\sq_\chi C_\delta$. By $\otp(C_\delta)\ge\cf(\delta)>\chi$, we have $C_{\beta_\ell}\sq C_\delta$,
and hence $C_\delta(i)=C_{\beta_\ell}(i)=\tau_\ell$.

Altogether, $\tau_0=C_\delta(i)=\tau_1$, contradicting the fact that $\tau_0<\tau_1$.
\end{proof}

\section{\texorpdfstring{The coherence relation $\sql$}{Left-subscript coherence}}

In this section, we deal with the coherence relation $\sql$. Of course, this relation is of particular interest where used in the context of $\p(\lambda^+,\mu,\sql,\ldots,\mathcal E_\lambda)$,
because $\mathcal E_\lambda$ ensures that all accumulation points of all involved clubs would then have cofinality $<\lambda$,
therefore yielding $\sql$-coherence for free. But this relation is also useful in the absence of $\mathcal E_\lambda$.
For instance, in \cite{rinot20}, a $\lambda$-complete $\lambda$-free $\lambda^+$-Souslin tree
was constructed assuming $\lambda^{<\lambda}=\lambda$ and
$\p(\lambda^+,\lambda^+,\sql,\lambda^+,\{E^{\lambda^+}_\lambda\},2)$.

\begin{thm}\label{6.22}
Suppose that $\lambda$ is any infinite cardinal, and $S\s\lambda^+$ is stationary.
\begin{enumerate}
\item The following are equivalent:
\begin{enumerate}
\item  $\clubsuit_w(S)$;
\item $\p^-(\lambda^+,2,{\sql},1,\{S\},2,\lambda,\mathcal E_\lambda)$;
\item $\p^-(\lambda^+,2,{\sql},1,\{S\},2,\lambda^+,\mathcal E_\lambda)$;
\item $\p^-(\lambda^+,2, \mathcal R, 1,\{S\},2,\lambda^+)$ for some $\mathcal R$;
\item $\p^-(\lambda^+,2, {\sqleft{\lambda^+}}, 1,\{S\},2,\lambda^+)$.
\end{enumerate}
\item The following are equivalent:
\begin{enumerate}
\item $\diamondsuit(S)$;
\item $\p(\lambda^+,2,{\sql},1,\{S\},2,\lambda,\mathcal E_\lambda)$;
\item $\p(\lambda^+,2,{\sql},1,\{S\},2,\lambda^+,\mathcal E_\lambda)$;
\item $\p(\lambda^+,2, \mathcal R, 1,\{S\},2,\lambda^+)$ for some $\mathcal R$;
\item $\p(\lambda^+,2, {\sqleft{\lambda^+}}, 1,\{S\},2,\lambda^+)$.
\end{enumerate}
\end{enumerate}
\end{thm}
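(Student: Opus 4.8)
For part~(1) the plan is to run the cycle of implications $(a)\Rightarrow(b)\Rightarrow(c)\Rightarrow(e)\Rightarrow(d)\Rightarrow(a)$ through its five items; part~(2) will then follow from part~(1) together with the classical equivalence $\diamondsuit(S)\Leftrightarrow\clubsuit_w(S)\wedge\diamondsuit(\lambda^+)$. Three implications in the cycle are essentially formal. For $(b)\Leftrightarrow(c)$: the eighth parameter $\mathcal E_\lambda$ forces every $C_\alpha$ in a witnessing sequence (hence every tail $C_\alpha\setminus\beta$) to have order-type $\le\lambda$, and for a club $D$ with $0<\otp(D)\le\lambda$ one has $\suc_\lambda(D)=\suc_{\lambda^+}(D)=\nacc(D)\setminus\{\min(D)\}$, so the two $\p^-$-instances unfold to verbatim-identical statements. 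For $(c)\Rightarrow(e)$: the relation $\sqleft{\lambda^+}$ is the full relation on $[\lambda^+]^{<\lambda^+}$ (each member is bounded, so its supremum has cofinality $\le\lambda<\lambda^+$), so $\sql$-coherence trivially upgrades to $\sqleft{\lambda^+}$-coherence, while dropping $\mathcal E_\lambda$ to the default only weakens the hypothesis. And $(e)\Rightarrow(d)$ holds by taking $\mathcal R:=\sqleft{\lambda^+}$.

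For $(d)\Rightarrow(a)$, let $\langle C_\alpha\mid\alpha<\lambda^+\rangle$ witness~(d); the coherence relation plays no role. For limit $\alpha$ put $A_\alpha:=\nacc(C_\alpha)$, cofinal in $\alpha$. Given a cofinal $X\subseteq\lambda^+$, feed the singleton sequence $\langle X\rangle$ to the guessing clause of~(d), obtaining stationarily many $\alpha\in S$ with $\sup\{\beta\in C_\alpha\mid\suc_{\lambda^+}(C_\alpha\setminus\beta)\subseteq X\}=\alpha$; for any such $\alpha$, since $\otp(C_\alpha)<\lambda^+$ we have $\suc_{\lambda^+}(C_\alpha\setminus\beta)=\nacc(C_\alpha\setminus\beta)\setminus\{\min(C_\alpha\setminus\beta)\}$, and a short computation with non-accumulation points of tails of $C_\alpha$ yields $\sup(A_\alpha\setminus X)<\alpha$. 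Hence $\{\alpha\in S\mid\sup(A_\alpha\setminus X)<\alpha\}$ is stationary, i.e.\ $\langle A_\alpha\rangle$ witnesses $\clubsuit_w(S)$.

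The only substantive step is $(a)\Rightarrow(b)$. Fix a $\clubsuit_w(S)$-sequence $\langle A_\alpha\mid\alpha\in S'\rangle$, with $S'\subseteq S$ stationary, each $A_\alpha$ cofinal in $\alpha$, and $\{\alpha\in S'\mid\sup(A_\alpha\setminus X)<\alpha\}$ stationary for every cofinal $X\subseteq\lambda^+$; also fix, for each limit $\alpha$, a map $e^\alpha\colon\cf(\alpha)\to\alpha$ with cofinal image. Build $\langle C_\alpha\mid\alpha<\lambda^+\rangle$ by: $C_0:=\emptyset$, $C_{\alpha+1}:=\{\alpha\}$; for limit $\alpha\in S'$, recurse on $\xi<\cf(\alpha)$ setting $c^\alpha_0:=\min(A_\alpha)$, $c^\alpha_{\xi+1}:=\min(A_\alpha\setminus(\max\{c^\alpha_\xi,e^\alpha(\xi)\}+1))$, $c^\alpha_\xi:=\sup_{\eta<\xi}c^\alpha_\eta$ at limit $\xi$, and $C_\alpha:=\{c^\alpha_\xi\mid\xi<\cf(\alpha)\}$; for limit $\alpha\notin S'$, run the same recursion with $A_\alpha$ replaced by $\alpha$. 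In all cases $C_\alpha$ is a club in $\alpha$ of order-type $\cf(\alpha)\le\lambda$, so $C_\alpha\in\dom(\mathcal E_\lambda)$, and $\nacc(C_\alpha)\subseteq A_\alpha$ when $\alpha\in S'$. Coherence is automatic: any $\bar\alpha\in\acc(C_\alpha)$ equals $c^\alpha_\xi$ for a limit $\xi<\otp(C_\alpha)\le\lambda$, so $\cf(\bar\alpha)=\cf(\xi)<\lambda$ and hence $C_{\bar\alpha}\sql C_\alpha$. The guessing clause follows from the computation of $(d)\Rightarrow(a)$ read in reverse: for $\alpha$ in the stationary set furnished by $\clubsuit_w$ at $X$, the set $\nacc(C_\alpha)\setminus X\subseteq A_\alpha\setminus X$ is bounded below $\alpha$, forcing $\sup\{\beta\in C_\alpha\mid\suc_\lambda(C_\alpha\setminus\beta)\subseteq X\}=\alpha$. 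The one delicate point --- and the main obstacle --- is this construction at limits $\alpha$ with $\cf(\alpha)=\lambda$: one may \emph{not} take the topological closure of a cofinal subset of $A_\alpha$, since that closure can have order-type above $\lambda$ and violate $\mathcal E_\lambda$; the interleaving recursion above keeps $\otp(C_\alpha)=\cf(\alpha)$ precisely by inserting elements of $A_\alpha$ only at successor stages and absorbing all limit behaviour into forced suprema, which are accumulation points and hence do not enter $\suc_\lambda$ and need not belong to $A_\alpha$.

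For part~(2), recall that $\p(\lambda^+,\dots)$ means $\p^-(\lambda^+,\dots)$ together with $\diamondsuit(\lambda^+)$; so by part~(1) each of items (b)--(e) of part~(2) is equivalent to $\clubsuit_w(S)\wedge\diamondsuit(\lambda^+)$, and it remains to prove $\diamondsuit(S)\Leftrightarrow\clubsuit_w(S)\wedge\diamondsuit(\lambda^+)$. Forward: $\diamondsuit(S)$ yields $\diamondsuit(\lambda^+)$ (pad a $\diamondsuit(S)$-sequence by $\emptyset$ off $S$), and letting $A_\alpha:=W_\alpha$ where $\sup(W_\alpha)=\alpha$ (and $A_\alpha$ any cofinal subset of $\alpha$ otherwise) converts a $\diamondsuit(S)$-sequence $\langle W_\alpha\rangle$ into a $\clubsuit_w(S)$-sequence, since for cofinal $X$ the stationary set $\{\alpha\in S\mid W_\alpha=X\cap\alpha\}\cap\acc^+(X)$ consists of $\alpha$ with $A_\alpha=X\cap\alpha\subseteq X$. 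Converse: fix a $\clubsuit_w(S)$-sequence $\langle A_\alpha\mid\alpha\in S'\rangle$ and a $\diamondsuit(\lambda^+)$-sequence $\langle Z_\beta\mid\beta<\lambda^+\rangle$, and for $\alpha\in S'$ let $W_\alpha$ be the unique $W\subseteq\alpha$ for which $\{\beta\in A_\alpha\mid Z_\beta\ne W\cap\beta\}$ is bounded in $\alpha$ if such $W$ exists (it is unique, as $A_\alpha$ is cofinal in $\alpha$), and $W_\alpha:=\emptyset$ otherwise. Then for $X\subseteq\lambda^+$ and a club $E$, the set $X':=\{\beta\in E\mid Z_\beta=X\cap\beta\}$ is cofinal by $\diamondsuit(\lambda^+)$, and $\clubsuit_w$ applied to $X'$ gives stationarily many $\alpha\in S'$ with $\sup(A_\alpha\setminus X')<\alpha$; for each of these, $A_\alpha\cap X'$ is cofinal in $\alpha$, lies in $E$, and forces both $W_\alpha=X\cap\alpha$ and $\alpha\in\acc(E)\subseteq E$. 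So $\langle W_\alpha\mid\alpha\in S'\rangle$ is a $\diamondsuit(S')$-sequence, whence $\diamondsuit(S)$.
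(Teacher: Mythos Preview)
Your proof is correct and follows essentially the same cycle of implications as the paper; part~(2) is handled identically (the paper simply cites Devlin's equivalence $\diamondsuit(S)\Leftrightarrow\diamondsuit(\kappa)+\clubsuit_w(S)$, which you reprove). Two remarks on where you diverge.

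\textbf{The ``delicate point'' in $(a)\Rightarrow(b)$ is not an obstacle.} The paper's construction is simpler than yours: for limit $\alpha\in S$ it sets $C_\alpha:=X_\alpha\cup\acc^+(X_\alpha)$, the topological closure. Your concern that this closure may have order-type $>\lambda$ is unfounded, because $\clubsuit_w(S)$ provides $X_\alpha$ of order-type $\cf(\alpha)$, a \emph{regular} cardinal. The closure of a set of ordinals of order-type a regular cardinal $\mu$ has order-type $\mu$: each accumulation point not already in $X_\alpha$ sits immediately below some $X_\alpha(j)$ with $j$ limit, so the closure embeds order-preservingly into $\mu\times 2$ with the lexicographic order, which has order-type $2\cdot\mu=\mu$. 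Hence $\otp(C_\alpha)=\cf(\alpha)\le\lambda$ and $\mathcal E_\lambda$ is satisfied without any interleaving recursion. Your construction works, but the extra machinery is unnecessary.

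\textbf{A minor omission in $(d)\Rightarrow(a)$.} Setting $A_\alpha:=\nacc(C_\alpha)$ does not in general give a set of order-type $\cf(\alpha)$, which the definition of $\clubsuit_w(S)$ requires (in $(d)$ there is no $\mathcal E_\lambda$ constraint, so $\otp(C_\alpha)$ can be any ordinal $<\lambda^+$). The paper fixes this by letting $X_\alpha$ be a cofinal subset of $\nacc(C_\alpha)$ of order-type $\cf(\alpha)$; this thinning is harmless since the hitting condition $\sup(X_\alpha\setminus X)<\alpha$ is inherited by cofinal subsets. Also, you silently assume the ``stationarily many'' form of $\clubsuit_w(S)$ rather than the ``there exists'' form of the definition; this upgrade holds (via the spacing trick you'd use to interleave with a club, exactly as the paper does in its $(a)\Rightarrow(b)$), but deserves a sentence.
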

\begin{proof}
Clause~(2) follows from Clause~(1) together with the equivalence (1) $\iff$ (2) of Fact~\ref{lemma613}. Thus, let us prove Clause~(1).

\begin{description}
\item[(a) $\implies$ (b)]
Let $\langle X_\alpha\mid\alpha\in S\rangle$ be as in Definition \ref{def_clubsuitw_revised}.
Let $C_\alpha:= X_\alpha \cup \acc^+(X_\alpha)$ for all limit $\alpha\in S$.
Let $C_\alpha$ be a club subset of $\alpha$ of order-type $\cf(\alpha)$
for all limit $\alpha\in \lambda^+ \setminus S$.
Let $C_{\alpha+1} := \{\alpha\}$ for all $\alpha<\lambda^+$.
For all limit $\alpha < \lambda^+$,
it is clear that $C_\alpha$ is a club subset of $\alpha$ of order-type $\cf(\alpha) \leq \lambda$.
If $\alpha < \lambda^+$ and $\bar\alpha \in \acc(C_\alpha)$,
then $\cf(\sup(C_{\bar\alpha})) = \cf(\bar\alpha) \leq \otp(C_\alpha \cap \bar\alpha) < \otp(C_\alpha) = \cf(\alpha)
\leq \lambda$,
so that automatically $C_{\bar\alpha} \sql C_\alpha$.

To see that $\langle C_\alpha\mid\alpha<\lambda^+\rangle$ witnesses
$\p^-(\lambda^+,2,{\sql},1,\{S\},2,\lambda,\mathcal E_\lambda)$,
fix an arbitrary cofinal subset $A_0$ of $\lambda^+$ and some club $D\s\lambda^+$.
Define $f:\lambda^+\rightarrow\lambda^+$ by recursion over $\alpha<\lambda^+$:
\begin{itemize}
\item $f(0):=\min(A_0)$; and
\item for nonzero $\alpha<\lambda^+$, $f(\alpha):=\min(A_0\setminus(\min(D\setminus(\sup(f[\alpha])+1))+1))$.
\end{itemize}
Write $X:=\rng(f)$.
Then $X$ is a cofinal subset of $A_0$ (thus also of $\lambda^+$)
and has the property that for all $\beta<\gamma$ in $X$,
the ordinal-interval $(\beta,\gamma)$ contains an element from $D$.
Pick a limit $\alpha\in S$ such that $\sup(X_\alpha\setminus X) < \alpha$.
In particular, $\alpha\in\acc^+(X)\s\acc^+(D)\s D$.
Let $\gamma = \sup(X_\alpha \setminus X) +1$.
Then $\nacc(C_\alpha) \setminus\gamma = \nacc(X_\alpha) \setminus\gamma \subseteq X_\alpha\setminus\gamma \subseteq X \s A_0$.
Thus, $\suc_{\lambda}(C_\alpha\setminus\beta)\s A_0$ whenever $\gamma\le\beta<\alpha$.

\item[(b) $\implies$ (c)]
Any witness to $\p^-(\ldots,\lambda,\mathcal E_\lambda)$ forms a witness $\p^-(\ldots,\lambda^+,\mathcal E_\lambda)$,
by virtue of the eighth parameter.

\item[(c) $\implies$ (d)]
Immediate, by taking $\mathcal R:=\sql$.
\item[(d) $\implies$ (e)]
The relation $\sqleft{\lambda^+}$ provides no coherence information whatsoever.
\item[(e) $\implies$ (a)]
Let $\langle C_\alpha\mid\alpha<\lambda^+\rangle$ be a witness to
$\p^-(\lambda^+,2, {\sqleft{\lambda^+}}, 1,\{S\},2,\lambda^+)$.
For all limit $\alpha \in S$,
let $X_\alpha$ be a cofinal subset of $\nacc(C_\alpha)$ of order-type $\cf(\alpha)$.
For successor $\alpha \in S$, choose $X_\alpha$ arbitrarily.
Then $\langle X_\alpha\mid\alpha\in S\rangle$ witnesses $\clubsuit_w(S)$.
\qedhere
\end{description}
\end{proof}

Notice that likewise, if $S$ is a stationary subset of an inaccessible cardinal $\kappa$,
then $\clubsuit_w(S)\iff\p^-(\kappa,2,\sqleft{\kappa},1,\{S\},2,\kappa)$,
and $\diamondsuit(S)\iff\p(\kappa,2,\sqleft{\kappa},1,\{S\},2,\kappa)$.

\begin{lemma}\label{reflected-diamond-equivalence}
Suppose that $\lambda$ is an uncountable cardinal, and $S$ is a stationary subset of $E^{\lambda^+}_{\cf(\lambda)}$.
Then the following are equivalent:
\begin{enumerate}
\item $\langle\lambda\rangle^-_S$ holds;
\item There exist  sequences $\langle C_\delta\mid \delta\in S\rangle$ and $\langle A_\beta\mid \beta<\lambda^+\rangle$
such that:
\begin{enumerate}
\item for all $\delta\in S$, $C_\delta$ is  a club subset of $\delta$ of order-type $\lambda$;
\item for every club $D\s\lambda^+$, every subset $A\s\lambda^+$, and every infinite regular $\sigma<\lambda$ with $\sigma\neq\cf(\lambda)$,
there exist stationarily many $\delta\in S$ for which:
$$\otp(\{\beta\in \nacc(C_\delta)\cap D\cap E^{\lambda^+}_\sigma\mid A\cap\beta=A_\beta\})=\lambda.$$
\end{enumerate}
\end{enumerate}

In particular, $\langle\lambda\rangle^-_S$ entails $\diamondsuit(\lambda^+)$.
\end{lemma}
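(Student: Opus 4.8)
The plan is to prove the equivalence $(1)\iff(2)$ directly from the definition of $\langle\lambda\rangle^-_S$ (the \emph{reflected-diamond} principle of \cite{rinot09}, recalled in the appendix), and then read off the ``In particular'' clause by a short argument. I expect the main work to be in translating the diamond-guessing along a reflected ladder into a statement about non-accumulation points of club sets of the required cofinalities; this is the direction $(1)\implies(2)$.

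\emph{Direction $(1)\implies(2)$.} Fix a witnessing sequence for $\langle\lambda\rangle^-_S$, say $\langle (X_\delta,\mathcal D_\delta)\mid\delta\in S\rangle$ together with whatever auxiliary ladder is built into the principle's definition, and fix for each $\delta\in S$ some club $C_\delta\s\delta$ of order-type exactly $\lambda$ (shrinking or re-indexing as needed so that $\otp(C_\delta)=\lambda$; since $\delta\in E^{\lambda^+}_{\cf(\lambda)}$ and $\lambda$ is uncountable, this is possible, and one may moreover arrange that $C_\delta$ passes through the levels of $E^{\lambda^+}_\sigma$ densely for each regular $\sigma<\lambda$, $\sigma\neq\cf(\lambda)$, because $\{\alpha<\delta\mid\cf(\alpha)=\sigma\}$ is stationary in $\delta$). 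Define $\langle A_\beta\mid\beta<\lambda^+\rangle$ from the local guesses of the $\langle\lambda\rangle^-_S$-sequence in the natural way --- setting $A_\beta$ to be the guessed set attached to level $\beta$ when it is a subset of $\beta$, and $\emptyset$ otherwise --- so that for each $\delta\in S$ the collection $\{\beta\in C_\delta\mid A\cap\beta=A_\beta\}$ captures the ``$A$ is guessed densely along $C_\delta$'' content of the principle. Then given $D$, $A$, and $\sigma$ as in (2)(b), use the reflection clause of $\langle\lambda\rangle^-_S$ relative to the club $D$ and the set $A$ to obtain stationarily many $\delta\in S$ along whose ladder $A$ is guessed on a set of full order-type $\lambda$; intersecting with $D\cap E^{\lambda^+}_\sigma$ (which the $C_\delta$ were arranged to meet cofinally often in a way unaffected by passing to $\nacc$) yields $\otp(\{\beta\in\nacc(C_\delta)\cap D\cap E^{\lambda^+}_\sigma\mid A\cap\beta=A_\beta\})=\lambda$, as required. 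The only delicate point is ensuring the guessed levels can be taken in $\nacc(C_\delta)$ and of cofinality $\sigma$; this is handled by a bookkeeping/thinning of the $C_\delta$ at the outset, together with the observation that $\sigma\neq\cf(\lambda)$ guarantees that accumulation points of $C_\delta$ (which have cofinality $<\lambda$ but are \emph{not} of cofinality $\sigma$ generically) do not interfere.

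\emph{Direction $(2)\implies(1)$.} This is the easy direction: given $\langle C_\delta\mid\delta\in S\rangle$ and $\langle A_\beta\mid\beta<\lambda^+\rangle$ as in (2), one extracts a reflected-diamond sequence simply by using $\langle A_\beta\mid\beta<\lambda^+\rangle$ as the guessing sequence and $\langle\nacc(C_\delta)\cap E^{\lambda^+}_\sigma\mid\delta\in S\rangle$ (for any fixed admissible $\sigma$) as the ladder along which guessing occurs; clause (2)(b) applied with $D=\lambda^+$ gives exactly the stationary-reflection requirement in the definition of $\langle\lambda\rangle^-_S$. One should double-check the order-type and cofinality bookkeeping against the precise statement of $\langle\lambda\rangle^-_S$ in the appendix, but no new idea is needed.

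\emph{The ``in particular'' clause.} To see that $\langle\lambda\rangle^-_S\implies\diamondsuit(\lambda^+)$, use the equivalent form (2): the sequence $\langle A_\beta\mid\beta<\lambda^+\rangle$ is already a $\diamondsuit(\lambda^+)$-sequence. Indeed, given any $A\s\lambda^+$ and any club $E\s\lambda^+$, apply (2)(b) with $D:=E$, with this $A$, and with any fixed infinite regular $\sigma<\lambda$, $\sigma\neq\cf(\lambda)$ (such $\sigma$ exists since $\lambda$ is uncountable); this produces some $\delta\in S$ and, a fortiori, some $\beta\in\nacc(C_\delta)\cap E$ with $A\cap\beta=A_\beta$, witnessing that $\{\beta<\lambda^+\mid A\cap\beta=A_\beta\}$ meets $E$. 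Hence that set is stationary, so $\langle A_\beta\mid\beta<\lambda^+\rangle$ witnesses $\diamondsuit(\lambda^+)$.

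The main obstacle I anticipate is purely organizational: getting the passage between ``$C_\delta$'' and ``$\nacc(C_\delta)$'' together with the cofinality restriction $\sigma\neq\cf(\lambda)$ to line up cleanly with the exact phrasing of $\langle\lambda\rangle^-_S$ from \cite{rinot09}, so that no stationarity is lost when one restricts attention to non-accumulation points of prescribed cofinality. Everything else is a routine unwinding of definitions.
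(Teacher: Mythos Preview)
Your proposal has a genuine gap in the direction $(1)\Rightarrow(2)$, and it stems from a misreading of what $\langle\lambda\rangle^-_S$ actually provides.

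Look at Definition~\ref{1007minus}: a witness to $\langle\lambda\rangle^-_S$ consists of clubs $\langle C_\delta\mid\delta\in S\rangle$ together with a \emph{matrix} $\langle A^\delta_i\mid\delta\in S,\ i<\lambda\rangle$ indexed by both $\delta$ and $i$. There is no ``guessed set attached to level $\beta$'' that you can just read off; the guess at the point $C_\delta(i+1)$ is $A^\delta_{i+1}$, which depends on $\delta$. The same ordinal $\beta$ may appear as $C_\delta(i+1)$ and as $C_{\delta'}(j+1)$ for different $\delta,\delta'$, with $A^\delta_{i+1}\neq A^{\delta'}_{j+1}$. So your ``natural'' definition of $A_\beta$ is not well-defined, and any ad hoc choice destroys the guessing property: for a given target $A$ and a given $\delta$, the indices $i$ at which $A^\delta_{i+1}=A\cap C_\delta(i+1)$ need not be the indices at which your chosen $A_{C_\delta(i+1)}$ equals $A\cap C_\delta(i+1)$.

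The paper's route around this is not ``bookkeeping/thinning'' of the clubs; it is a genuine two-step detour. First one shows that $\langle\lambda\rangle^-_S$ forces $\ch_\lambda$ (by checking that every subset of $\lambda$ already occurs among the $A^\delta_{i+1}$). Then, since $\lambda>\aleph_0$, Shelah's theorem (Fact~\ref{fact922}) yields an \emph{independent} global sequence $\langle A_\beta\mid\beta<\lambda^+\rangle$ with $\{\beta\in E^{\lambda^+}_\sigma\mid A\cap\beta=A_\beta\}$ stationary for every $A$ and every regular $\sigma<\lambda$ with $\sigma\neq\cf(\lambda)$. Only then are the clubs modified: each $C_\delta$ is replaced by a club $C^\bullet_\delta$ obtained by shifting each non-accumulation point $\beta$ of $C_\delta$ down to the least element of $A^\delta_{\otp(C_\delta\cap\beta)}$ above the previous point (when possible). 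The old matrix guesses are used only to steer the new non-accumulation points into the stationary set $\{\gamma\in D\cap E^{\lambda^+}_\sigma\mid A\cap\gamma=A_\gamma\}$, not to supply the $A_\beta$ themselves. This is the missing idea in your sketch, and without it the cofinality-$\sigma$ requirement in (2)(b) cannot be met either: your ``thinning so that $C_\delta$ meets $E^{\lambda^+}_\sigma$ densely'' does nothing to ensure that the \emph{guessing} happens at those points.

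For $(2)\Rightarrow(1)$ your instinct is right that it is the easy direction, but the execution is simpler than you indicate: keep the clubs $C_\delta$ as they are and just set $A^\delta_i:=A_{C_\delta(i)}$. There is no need to pass to $\nacc(C_\delta)\cap E^{\lambda^+}_\sigma$ as a new ladder (which would not even be club). Your argument for the ``in particular'' clause, reading $\diamondsuit(\lambda^+)$ directly off the sequence $\langle A_\beta\rangle$ of (2), is fine.
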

\begin{proof}
\begin{description}
\item[(2) $\implies$ (1)]
Let $\langle C_\delta\mid \delta\in S\rangle$ and $\langle A_\beta\mid \beta<\lambda^+\rangle$ be as in (2).
Let $\delta\in S$ be arbitrary.
Put  $A^\delta_i := A_{C_\delta(i)}$ for all $i<\lambda$.
Evidently, $\langle C_\delta\mid\delta\in S\rangle$ and $\langle A^\delta_i \mid \delta \in S, i < \lambda \rangle$
together witness $\langle\lambda\rangle^-_S$.

\item[(1) $\implies$ (2)]
Fix $\langle C_\delta \mid \delta \in S\rangle$ and $\langle A^\delta_i \mid \delta \in S, i < \lambda\rangle$
witnessing $\left<\lambda\right>^-_S$.
We commence by proving that $\mathcal P(\lambda)\s\{ A^\delta_{i+1}\mid \delta\in S, i<\lambda\}$,
thus establishing that $\ch_\lambda$ holds.

Let $A \subseteq \lambda$ be arbitrary.
In particular, $A \subseteq \lambda^+$,
so we can fix $\delta \in S$ above $\lambda$
such that $B:= \{ i<\lambda \mid A \cap (C_\delta(i+1)) = A^\delta_{i+1} \}$ is cofinal in $\lambda$.
Then $\{ C_\delta(i+1) \mid i \in B \}$ is cofinal in $C_\delta$ and therefore in $\delta$,
so we can choose $i \in B$ such that $\lambda < C_\delta(i+1) < \delta$.
Since $A \subseteq \lambda$ and $i \in B$, it follows that $A = A \cap (C_\delta(i+1)) = A^\delta_{i+1}$.

By $\ch_\lambda$, $\lambda>\aleph_0$ and Fact~\ref{fact922}, let us fix a sequence $\langle A_\beta\mid \beta<\lambda^+\rangle$
such that for every $A\s\lambda^+$ and every infinite regular $\sigma<\lambda$ with $\sigma\neq\cf(\lambda)$,
the set $\{ \beta\in E^{\lambda^+}_\sigma\mid A\cap\beta=A_\beta\}$ is stationary.

Let $\delta \in S$ be arbitrary.
Define $d_\delta : C_\delta \to \delta$ by setting, for every $\beta \in C_\delta$:
\[
d_\delta(\beta) := \min(\{\beta\}\cup(A^\delta_{\otp(C_\delta\cap\beta)}\setminus (\sup(C_\delta\cap\beta)+1))).
\]
Then define
\[
C^\bullet_\delta := \{ d_\delta(\beta) \mid \beta \in C_\delta \}.
\]
Clearly $C^\bullet_\delta$ is a club subset of $\delta$ of order-type $\lambda$,
and $\acc(C^\bullet_\delta) = \acc(C_\delta)$.
Furthermore:

\begin{claim}
For every cofinal subset $Z\s\lambda^+$, there exist stationarily many $\delta\in S$ such that
$$\otp(\nacc(C^\bullet_\delta)\cap Z)=\lambda.$$
\end{claim}

\begin{proof}
Let $Z \subseteq \lambda^+$ be an arbitrary cofinal set.
Let $D := \acc^+(Z)$, which is club in $\lambda^+$, and let $A := Z$.
From the fact that $\langle C_\delta \mid \delta \in S\rangle$ and
$\langle A^\delta_i \mid \delta \in S, i < \lambda\rangle$ witness $\left<\lambda\right>^-_S$,
we obtain stationarily many $\delta \in S$ such that $\otp(H) = \lambda$, where
\[
H := \{ \beta \in \nacc(C_\delta) \cap D \mid A \cap \beta = A^\delta_{\otp(C_\delta \cap \beta)} \}.
\]
Let $\beta \in H$ be arbitrary.
Then $Z \cap \beta = A^\delta_{\otp(C_\delta \cap \beta)}$.
Since $\beta \in \nacc(C_\delta)$ and $\beta \in D = \acc^+(Z)$,
the set
$A^\delta_{\otp(C_\delta\cap\beta)} \setminus (\sup(C_\delta\cap\beta) +1)
= Z \cap (\sup(C_\delta \cap \beta), \beta)$
is nonempty, and thus it contains $d_\delta(\beta)$.
It follows that $d_\delta[H] \subseteq \nacc(C^\bullet_\delta) \cap Z$,
showing that $\otp(\nacc(C^\bullet_\delta) \cap Z) = \lambda$, as required.
\end{proof}

Given a club $D\s\lambda^+$, a subset $A\s\lambda^+$, and an infinite regular $\sigma<\lambda$ with $\sigma\neq\cf(\lambda)$,
the set $Z:=\{\beta\in D\cap E^{\lambda^+}_\sigma\mid A\cap\beta=A_\beta\}$ is cofinal in $\lambda^+$ (indeed, even stationary), and hence by the previous claim
there exist stationarily many $\delta\in S$ such that $\otp(\nacc(C^\bullet_\delta)\cap Z)=\lambda$, as sought.
\qedhere
\end{description}
\end{proof}

\begin{thm}\label{thm312} Suppose that $\ch_\lambda$ holds for  a given regular uncountable cardinal $\lambda$,
and $S\s E^{\lambda^+}_\lambda$ is stationary. Then for any uncountable cardinal $\chi\le\lambda$,
$\curlywedge^-(\chi,S)$ entails $\langle\lambda\rangle^-_S$.
\end{thm}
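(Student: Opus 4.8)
The statement to prove is: if $\ch_\lambda$ holds for a regular uncountable $\lambda$, $S \subseteq E^{\lambda^+}_\lambda$ is stationary, and $\chi \le \lambda$ is uncountable, then $\curlywedge^-(\chi, S)$ entails $\langle\lambda\rangle^-_S$. Given Lemma~\ref{reflected-diamond-equivalence}, it suffices to produce sequences $\langle C_\delta \mid \delta \in S\rangle$ and $\langle A_\beta \mid \beta < \lambda^+\rangle$ witnessing clause~(2) of that lemma. The natural approach is to unpack the definition of $\curlywedge^-(\chi, S)$ (the club-guessing principle $\curlywedge^*(\chi,S)$ from \cite{MR2320769}, as recalled in the appendix): it should provide, for each $\delta \in S$, a club $C_\delta \subseteq \delta$ together with a coloring/guessing structure of width $<\chi$ on the nonaccumulation points, such that every club $E \subseteq \lambda^+$ is guessed on a "large" subset of $\nacc(C_\delta)$ for stationarily many $\delta$. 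First I would normalize each $C_\delta$ to have order-type exactly $\lambda$ — permissible since $\delta \in E^{\lambda^+}_\lambda$ — thereby handling clause~(2)(a).

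For clause~(2)(b), the key is to diamond-ize the club guessing. Using $\ch_\lambda$, $\lambda > \aleph_0$, and Fact~\ref{fact922}, fix a sequence $\langle A_\beta \mid \beta < \lambda^+\rangle$ such that for every $A \subseteq \lambda^+$ and every infinite regular $\sigma < \lambda$ with $\sigma \ne \lambda$ (note $\cf(\lambda) = \lambda$ since $\lambda$ is regular), the set $\{\beta \in E^{\lambda^+}_\sigma \mid A \cap \beta = A_\beta\}$ is stationary. Now, given $D$, $A$, and $\sigma$ as in clause~(2)(b), form the stationary set $Z := \{\beta \in D \cap E^{\lambda^+}_\sigma \mid A \cap \beta = A_\beta\}$; in particular $E' := \acc^+(Z)$ is club. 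Apply the club-guessing property of $\curlywedge^-(\chi,S)$ to the club $E'$ (or to an appropriate shrinking of it that records membership in $Z$) to obtain stationarily many $\delta \in S$ at which the guessing succeeds on $\lambda$-many points of $\nacc(C_\delta)$. The trick — borrowed from the proof of Lemma~\ref{reflected-diamond-equivalence}, $(1)\Rightarrow(2)$ — is to define $C^\bullet_\delta$ by pushing each $\beta \in C_\delta$ forward to the least element of $Z$ strictly between $\sup(C_\delta \cap \beta)$ and $\beta$ when one exists (using $\beta \in \acc^+(Z)$); then $\nacc(C^\bullet_\delta) \subseteq Z \subseteq D \cap E^{\lambda^+}_\sigma$, and each such point $\gamma$ satisfies $A \cap \gamma = A_\gamma$ automatically, so the displayed order-type condition $\otp(\{\beta \in \nacc(C^\bullet_\delta) \cap D \cap E^{\lambda^+}_\sigma \mid A \cap \beta = A_\beta\}) = \lambda$ follows from the largeness of the guessed set.

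**The main obstacle.** The delicate point is matching the precise form of the guess that $\curlywedge^-(\chi, S)$ delivers against what we need to feed into the forward-pushing construction. The principle $\curlywedge^*(\chi, S)$ as formulated in \cite{MR2320769} guesses clubs via a sequence of colorings of width $<\chi$ — i.e., for each $\delta$ we are handed not a single $C_\delta$ but a small family, and the guessing says some member of the family, or some fiber of the coloring, lands inside the target club on a cofinal-of-order-type-$\lambda$ subset of its nonaccumulation points. I would need to either (a) select a single $C_\delta$ per $\delta$ up front and verify the guessing still works for that choice, or (b) carry the width-$<\chi$ structure through the construction, producing a width-$<\chi$ family $\langle C^\delta_i \mid i < \chi\rangle$ of pushed-forward clubs and arguing one of them works — but clause~(2) of Lemma~\ref{reflected-diamond-equivalence} wants a single $C_\delta$, so some absorption of the small family into one club (as in the passage from $\curlywedge^-$ to $\langle\lambda\rangle^-_S$ being genuinely a strengthening) is unavoidable. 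The resolution should hinge on the interplay between $\chi \le \lambda$ and the fact that $\otp(C_\delta) = \lambda$ with $\lambda$ regular: a width-$<\chi$ coloring partitions $\lambda$-many points into $<\chi \le \lambda$ classes, so a pressing-down / pigeonhole argument on the regular cardinal $\lambda$ isolates one class of full order-type $\lambda$, and that class can be taken as the nonaccumulation spine of the desired single club $C^\bullet_\delta$. I would spell this pigeonhole step out carefully, as it is where the hypothesis "$\chi$ uncountable, $\chi \le \lambda$" is actually consumed, and then the rest is the routine bookkeeping already rehearsed in Lemma~\ref{reflected-diamond-equivalence}.
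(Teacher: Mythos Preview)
Your plan rests on a misreading of $\curlywedge^-(\chi,S)$. Look at Definition~\ref{1005}: the witness is a matrix $\langle \mathcal C^i_\delta \mid \delta\in S, i<|\delta|\rangle$ where each $\mathcal C^i_\delta$ is a cofinal subset of $[\delta]^{<\chi}$ (a \emph{generalized} club), and the guessed object is a club $\mathcal D\subseteq[\lambda^+]^{<\chi}$. So you are not handed an ordinal club $C_\delta$ with a ``width-$<\chi$ coloring''; you are handed, for each $\delta$, $\lambda$-many (since $|\delta|=\lambda$ for $\delta\ge\lambda$) cofinal families of small sets. Two consequences: first, there is no $C_\delta$ to ``normalize to order-type $\lambda$'' --- the ordinal clubs $C^i_\delta$ must be \emph{built} from the generalized clubs, which is what the paper does via the selection functions $f^i_\delta$ and the recursion defining $c^i_\delta$. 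Second, and more seriously, your proposed pigeonhole is off by a whole cardinal: the index set has size $\lambda$, not $<\chi$, so ``$<\chi$ classes, $\lambda$ regular, hence one class of order-type $\lambda$'' simply does not apply.

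The paper's route is quite different from what you sketch. After constructing the ordinal clubs $C^i_\delta$, it proves a \emph{function-guessing} statement (Claim~\ref{c6241}): for every $f:\lambda^+\to\lambda^+$ and club $D$, some pair $(\delta,i)$ guesses $f$ along $C^i_\delta\cap D\cap E^{\lambda^+}_\lambda$. The crucial step --- and the one your proposal has no analogue for --- is then to \emph{freeze} a single index $i<\lambda$ that works for \emph{all} $(f,D)$ simultaneously. This is done not by pigeonhole but by a diagonalization: assuming failure, for each $i$ pick a counterexample $(f_i,D_i)$, encode the whole sequence $\langle f_i\mid i<\lambda\rangle$ into a single $f$ via $\ch_\lambda$, and apply Claim~\ref{c6241} to reach a contradiction. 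Only after $i$ is frozen can one define a single $D_\delta$ per $\delta$ (by interpolating, between consecutive points of $C^i_\delta$, short sequences of points drawn from the sets that $g^i_\delta$ records), and verify the conclusion via Lemma~\ref{reflected-diamond-equivalence}. Note also that your $C^\bullet_\delta$ is described as depending on $Z$, hence on the target $(A,D,\sigma)$; the sequence $\langle C_\delta\rangle$ must be fixed in advance, so any pushing-forward must use data intrinsic to the matrix, not the target. The paper's Remark after the theorem statement flags that the hard case is $\chi=\lambda$ inaccessible, which is exactly where your pigeonhole would have $\lambda$ classes and yield nothing.
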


\begin{remark}
Recall that Fact~\ref{factRinot09} already dealt with the case where  $\chi < \lambda$ or $\lambda$ is a successor cardinal.
Here, we give a different proof that covers also the hardest case where $\chi = \lambda$ is inaccessible.
\end{remark}

\begin{proof}
Let $\langle \mathcal C_\delta^i\mid \delta\in S, i<\lambda\rangle$ be a witness to $\curlywedge^-(\chi,S)$.
Let $\delta\in S$ and $i<\lambda$ be arbitrary.
Fix $f_\delta^i:\delta\rightarrow[\delta]^{<\chi}$ with the property that
$\{ \alpha, \alpha+1 \} \subseteq f_\delta^i(\alpha)\in\mathcal C_\delta^i$ for all $\alpha<\delta$.
Let $\langle e^j_\delta \mid j < \lambda \rangle$ be the increasing enumeration of some club subset of $\delta$.
Define a function $c^i_\delta:\lambda\rightarrow[\delta]^{<\chi}$ by recursion:
\begin{itemize}
\item $c^i_\delta(0):= \emptyset$;
\item $c^i_\delta(j+1):=f^i_\delta(\sup(c^i_\delta(j)))\setminus\sup(c^i_\delta(j))$;
\item $c^i_\delta(j):= \{ \max\{e_\delta^j,\sup(\bigcup c^i_\delta[j])\} \}$ for nonzero limit $j<\lambda$.
\end{itemize}
Finally, let $C^i_\delta$ denote the closure in $\delta$ of $\bigcup \rng(c^i_\delta)$. Then $C^i_\delta$ is a club in $\delta$ of order-type $\lambda$.

\begin{claim}\label{c6241} For every function $f:\lambda^+\rightarrow\lambda^+$
and every club $D\s\lambda^+$, there exist some $\delta\in S$ and $i<\lambda$ with
$$\sup\{\alpha\in C^i_\delta\cap D\cap E^{\lambda^+}_\lambda\mid f(\alpha)\in f^i_\delta(\alpha)\}=\delta.$$
\end{claim}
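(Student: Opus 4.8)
The plan is to obtain $\delta\in S$ and $i<\lambda$ from the club-guessing feature of $\curlywedge^-(\chi,S)$, applied to data built from $f$ and $D$, and then to read off the required cofinal set directly from the recursion defining $c^i_\delta$. First I would make the standard reductions: shrink $D$ to a sub-club consisting of limit ordinals closed under $f$, under $\gamma\mapsto\min(D\setminus(\gamma+1))$, and under the Skolem functions of a suitable structure. Then, using that $D\cap E^{\lambda^+}_\lambda$ is unbounded in $\lambda^+$, I would introduce the auxiliary function $F\colon\lambda^+\to\lambda^+$, $F(\alpha):=\min\bigl((D\cap E^{\lambda^+}_\lambda)\setminus(\max\{f(\alpha),\alpha\}+1)\bigr)$, so that $F(\alpha)>\alpha$, $F(\alpha)>f(\alpha)$, $F(\alpha)\in D$, and $\cf(F(\alpha))=\lambda$. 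The role of $F$ is that any set which contains $F(\alpha)$ together with $\{\alpha,\alpha+1\}$ automatically contains an ordinal of cofinality $\lambda$ lying in $D$.

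Next I would invoke the guessing feature of $\curlywedge^-(\chi,S)$ for $F$ (and a club parameter $E$ with $\acc(E)$ consisting of ordinals closed under $f$, $F$ and $\gamma\mapsto\min(D\setminus(\gamma+1))$) to produce $\delta\in S$ and $i<\lambda$ for which the sets $f^i_\delta(\,\cdot\,)$ capture $F$ cofinally along the milestones of $C^i_\delta$. Write $\nu_j:=\sup c^i_\delta(j)$; the sequence $\langle\nu_j\mid j<\lambda\rangle$ is increasing and cofinal in $\delta$, each $\nu_j$ lies in $C^i_\delta$, and $c^i_\delta(j+1)=f^i_\delta(\nu_j)\setminus\nu_j$. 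I would arrange (this is where the guessing does its work, either via the function $\alpha\mapsto F(\nu_{j(\alpha)})$ built from the milestone partition, or by a density computation in $(C^i_\delta,\subseteq)$) that for cofinally many $j<\lambda$ one has $F(\nu_j)\in f^i_\delta(\nu_j)$. For each such $j$, putting $\alpha^\ast:=F(\nu_j)$: since $\alpha^\ast\in f^i_\delta(\nu_j)$ and $\alpha^\ast>\nu_j$, we get $\alpha^\ast\in c^i_\delta(j+1)\subseteq C^i_\delta$; by construction of $F$ we have $\alpha^\ast\in D\cap E^{\lambda^+}_\lambda$; and by arranging the guessing also to capture $f$ at $\alpha^\ast$ (which is again one of the ordinals visited by the recursion, inside the block $c^i_\delta(j+1)$) we obtain $f(\alpha^\ast)\in f^i_\delta(\alpha^\ast)$. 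Letting $\nu_j$ run over this cofinal set yields cofinally many such $\alpha^\ast$ below $\delta$, so the supremum in the Claim equals $\delta$.

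The main obstacle is the clause $\alpha\in E^{\lambda^+}_\lambda$. A milestone $\nu_j$ is, for $j$ a successor, the supremum of the set $f^i_\delta(\nu_{j-1})$ of size $<\chi\leq\lambda$, and for $j$ a limit, the supremum of $\langle\nu_{j'}\mid j'<j\rangle$ together with the club-point $e^j_\delta$; in either case, by regularity of $\lambda$, $\cf(\nu_j)<\lambda$. Hence no milestone, and no accumulation point of $C^i_\delta$, has cofinality $\lambda$, so the witnesses demanded by the Claim can only be interior elements of the small sets $f^i_\delta(\nu_j)$, and one cannot simply intersect a guessed cofinal subset of $\delta$ with $E^{\lambda^+}_\lambda$ (which is not club). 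This is exactly why $F$ must steer a cofinality-$\lambda$ ordinal of $D$ into the very set $f^i_\delta(\nu_j)$ that the recursion copies into $C^i_\delta$, and why the two demands --- ``$F$ captured at the milestone $\nu_j$'' and ``$f$ captured at $\alpha^\ast=F(\nu_j)$'' --- must hold simultaneously; coordinating them is the delicate bookkeeping. In the hard case $\chi=\lambda$ inaccessible, the members of $\mathcal{C}^i_\delta$ can have unbounded size below $\lambda$, so they cannot be enumerated uniformly; there I would additionally use inaccessibility to keep all coding ordinals and the relevant closure points strictly below $\delta$, so that these large-but-$<\lambda$ sets still have room to hold the prescribed data.
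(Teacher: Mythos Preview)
Your core idea is correct and is the same as the paper's: force the sets $f^i_\delta(\cdot)$ to be closed under both a ``jump into $D\cap E^{\lambda^+}_\lambda$'' function and under $f$ itself, so that from any milestone $\nu_j$ you can step to an $\alpha^\ast\in C^i_\delta\cap D\cap E^{\lambda^+}_\lambda$ with $f(\alpha^\ast)\in f^i_\delta(\alpha^\ast)$. Your diagnosis of the main obstacle (no milestone or accumulation point of $C^i_\delta$ has cofinality $\lambda$) is exactly right.

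Where the proposal is shaky is the invocation of $\curlywedge^-(\chi,S)$. That principle does not take as input a function on ordinals together with a club $E\subseteq\lambda^+$; it takes a club $\mathcal D\subseteq[\lambda^+]^{<\chi}$ and returns $\delta\in S$ and $i<\lambda$ with $\mathcal C^i_\delta\subseteq\mathcal D$. You never say which $\mathcal D$ you feed in, and the parenthetical ``either via the function $\alpha\mapsto F(\nu_{j(\alpha)})$ \ldots\ or by a density computation in $(C^i_\delta,\subseteq)$'' is hand-waving precisely where the argument must be sharp. Once you see what $\mathcal D$ should be, the ``arrange for cofinally many $j$'' clause becomes ``for all $j$'', and all the extra scaffolding (shrinking $D$, the separate inaccessible case, the club parameter $E$) disappears.

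The paper does this in one stroke. Define a single $g:[\lambda^+]^{<\omega}\to\lambda^+$ by $g(\emptyset)=0$, $g(\{\beta\})=\min(D\cap E^{\lambda^+}_\lambda\setminus(\beta+1))$, and $g(\sigma)=f(\min\sigma)$ for $|\sigma|\ge 2$. Since $\chi$ is uncountable, $\mathcal D:=\{x\in[\lambda^+]^{<\chi}\mid g``[x]^{<\omega}\subseteq x\}$ is a club in $[\lambda^+]^{<\chi}$. Apply $\curlywedge^-(\chi,S)$ to this $\mathcal D$ to get $\delta,i$ with $\mathcal C^i_\delta\subseteq\mathcal D$. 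Now \emph{every} $f^i_\delta(\alpha)$ lies in $\mathcal D$, hence is $g$-closed. Given $\beta<\delta$, pick a nonzero limit $j$ with $e^j_\delta>\beta$, set $\beta':=\sup(c^i_\delta(j))$; since $\beta'\in f^i_\delta(\beta')\in\mathcal D$, the ordinal $\alpha:=g(\{\beta'\})\in D\cap E^{\lambda^+}_\lambda$ lies in $f^i_\delta(\beta')\setminus\beta'=c^i_\delta(j+1)\subseteq C^i_\delta$. And since $\{\alpha,\alpha+1\}\subseteq f^i_\delta(\alpha)\in\mathcal D$, we get $f(\alpha)=g(\{\alpha,\alpha+1\})\in f^i_\delta(\alpha)$. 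No bookkeeping, no case split on $\chi$, no auxiliary club $E$.
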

\begin{proof} Given $f$ and $D$ as in the hypothesis, define $g:[\lambda^+]^{<\omega}\rightarrow\lambda^+$ by stipulating
\[
g(\sigma) :=
\begin{cases}
0,                                              &\text{if } \sigma = \emptyset;  \\
\min (D\cap E^{\lambda^+}_\lambda \setminus (\beta+1)), &\text{if } \sigma=\{\beta\}; \\
f(\min(\sigma)), &\text{otherwise}.
\end{cases}
\]

Since $\chi$ is uncountable, $\mathcal D:=\{ x\in[\lambda^+]^{<\chi}\mid g``[x]^{<\omega}\s x\}$ is a club subset of $[\lambda^+]^{<\chi}$,
so (using the fact that $\langle \mathcal C_\delta^i\mid \delta\in S, i<\lambda\rangle$ witnesses $\curlywedge^-(\chi,S)$)
let us pick $\delta\in S$ and $i<\lambda$ with $\mathcal C^i_\delta\s\mathcal D$.
Let $\beta<\delta$ be arbitrary. We shall find $\alpha\in C^i_\delta\cap D\cap E^{\lambda^+}_\lambda$ above $\beta$ such that $f(\alpha)\in f^i_\delta(\alpha)$.
Fix some nonzero limit $j<\lambda$ such that $e_\delta^j>\beta$. Then $\beta':=\sup(c^i_\delta(j)) \geq e^j_\delta>\beta$,
and $f^i_\delta(\beta')\setminus \beta' = c^i_\delta(j+1)\s C^i_\delta$.

By $\beta'\in f^i_\delta(\beta')\in \mathcal C^i_\delta\s\mathcal D$,
we have that $\alpha:=g(\{\beta'\})=\min(D\cap E^{\lambda^+}_\lambda\setminus(\beta'+1))$ is in $f^i_\delta(\beta')\setminus\beta'$,
and therefore in $C^i_\delta$.
Thus, we have found an $\alpha\in D\cap E^{\lambda^+}_\lambda\cap C^i_\delta$ above $\beta$.
Finally, since $\{ \alpha, \alpha+1 \} \subseteq f_\delta^i(\alpha)\in\mathcal C_\delta^i\s\mathcal D$,
we get $f(\alpha) =  g(\{\alpha, \alpha+1\}) \in g``[f^i_\delta(\alpha)]^{<\omega} \subseteq f^i_\delta(\alpha)$, as sought.
\end{proof}

Invoking $\ch_\lambda$,
let $\{h_\beta\mid\beta<\lambda^+\}$ be some enumeration of ${}^{\lambda}\lambda^+$. For all $\delta\in S$ and $i<\lambda$,
define $g^i_\delta:\delta\rightarrow[\delta]^{<\chi}$ by stipulating:
$$g^i_\delta(\alpha):=\{ h_\beta(i)\mid \beta\in f^i_\delta(\alpha)\}\cap\delta.$$

\begin{claim} There exists $i<\lambda$ such that for every function $f:\lambda^+\rightarrow\lambda^+$
and every club $D\s\lambda^+$, there exist some $\delta\in S$ with
$$\sup\{\alpha\in C^i_\delta\cap D\cap E^{\lambda^+}_\lambda\mid f(\alpha)\in g^i_\delta(\alpha)\}=\delta.$$
\end{claim}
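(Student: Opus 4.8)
The plan is to derive this from Claim~\ref{c6241} by the usual device of pulling the coordinate $i$ out of the quantifiers, where the passage from $f^i_\delta$ to $g^i_\delta$ is bridged by coding $\lambda$-many functions into one via the enumeration $\{h_\beta\mid\beta<\lambda^+\}$. Suppose toward a contradiction that no $i<\lambda$ is as required. Then for every $i<\lambda$ we may fix a function $f_i:\lambda^+\to\lambda^+$ and a club $D_i\s\lambda^+$ witnessing the failure, i.e.\ such that $\sup\{\alpha\in C^i_\delta\cap D_i\cap E^{\lambda^+}_\lambda\mid f_i(\alpha)\in g^i_\delta(\alpha)\}<\delta$ for every $\delta\in S$ (the supremum is $\le\delta$ in any case, since every such $\alpha$ lies in $C^i_\delta\s\delta$).

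Next I would amalgamate the family $\langle f_i\mid i<\lambda\rangle$ into a single function. Using $\ch_\lambda$ we have $|{}^{\lambda}\lambda^+|=(\lambda^+)^\lambda=2^\lambda=\lambda^+$, so the enumeration $\{h_\beta\mid\beta<\lambda^+\}$ of ${}^\lambda\lambda^+$ is onto; hence for each $\alpha<\lambda^+$ there is some $f^*(\alpha)<\lambda^+$ with $h_{f^*(\alpha)}=\langle f_i(\alpha)\mid i<\lambda\rangle$, and this defines $f^*:\lambda^+\to\lambda^+$. Let $E:=\bigcap_{i<\lambda}\bigl(D_i\cap\{\delta<\lambda^+\mid f_i[\delta]\s\delta\}\bigr)$, a club in $\lambda^+$ as an intersection of $\lambda<\cf(\lambda^+)$ many clubs. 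Applying Claim~\ref{c6241} to $f^*$ and $E$ produces $\delta\in S$ and $i^*<\lambda$ with $\sup\{\alpha\in C^{i^*}_\delta\cap E\cap E^{\lambda^+}_\lambda\mid f^*(\alpha)\in f^{i^*}_\delta(\alpha)\}=\delta$; in particular $E\cap\delta$ is cofinal in $\delta$, so $\delta\in E$, whence $f_{i^*}[\delta]\s\delta$ and $\delta\in D_{i^*}$. Now for every $\alpha$ in that cofinal set we have $f^*(\alpha)\in f^{i^*}_\delta(\alpha)$, so $f_{i^*}(\alpha)=h_{f^*(\alpha)}(i^*)\in\{h_\beta(i^*)\mid\beta\in f^{i^*}_\delta(\alpha)\}$, and moreover $f_{i^*}(\alpha)<\delta$ because $\alpha<\delta$ and $\delta$ is closed under $f_{i^*}$; hence $f_{i^*}(\alpha)\in g^{i^*}_\delta(\alpha)$. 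Since also $\alpha\in C^{i^*}_\delta\cap D_{i^*}\cap E^{\lambda^+}_\lambda$ (using $E\s D_{i^*}$), the set $\{\alpha\in C^{i^*}_\delta\cap D_{i^*}\cap E^{\lambda^+}_\lambda\mid f_{i^*}(\alpha)\in g^{i^*}_\delta(\alpha)\}$ has supremum $\delta$, contradicting the choice of $(f_{i^*},D_{i^*})$.

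The only genuinely delicate point — and the step I expect to be the main obstacle — is the truncation ``$\cap\,\delta$'' built into the definition of $g^i_\delta$: one must guarantee that $f_{i^*}(\alpha)$ lands strictly below $\delta$, so that membership in $g^{i^*}_\delta(\alpha)$ is not destroyed by the truncation. This is exactly why I fold the closure-point clubs $\{\delta\mid f_i[\delta]\s\delta\}$ into $E$ before invoking Claim~\ref{c6241}, and why it matters that the ordinal $\delta$ returned by that claim necessarily belongs to the club supplied as input. Everything else is routine quantifier bookkeeping.
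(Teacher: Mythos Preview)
Your proposal is correct and takes essentially the same approach as the paper's own proof: both argue by contradiction, amalgamate the counterexample functions $(f_i)_{i<\lambda}$ into a single $f$ via the enumeration $\{h_\beta\}$, intersect the counterexample clubs together with the closure-point clubs $\{\delta\mid f_i[\delta]\s\delta\}$, apply Claim~\ref{c6241}, and then unwind the definition of $g^i_\delta$ to obtain the contradiction. Your explicit flagging of the ``$\cap\,\delta$'' truncation issue matches exactly the paper's use of $f_i[\delta]\s\delta$ to ensure $f_i(\alpha)<\delta$.
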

\begin{proof} Suppose not, and pick, for every $i<\lambda$, a counterexample $(f_i,D_i)$.
Define $f:\lambda^+\rightarrow\lambda^+$ by letting for all $\alpha<\lambda^+$: $$f(\alpha):=\min\{\beta<\lambda^+\mid h_\beta=\langle f_i(\alpha)\mid i<\lambda\rangle\}.$$
Let $D:=\bigcap_{i<\lambda}\{\delta\in D_i\mid f_i[\delta]\s\delta\}$, which is club in $\lambda^+$.
Using Claim~\ref{c6241}, pick $\delta\in S$ and $i<\lambda$
such that $$\Delta:=\{\alpha\in C^i_\delta\cap D\cap E^{\lambda^+}_\lambda\mid f(\alpha)\in f^i_\delta(\alpha)\}$$
is cofinal in $\delta$.
In particular, $\delta = \sup(\Delta) \in \acc^+(D) \subseteq D$, so that $f_i[\Delta] \subseteq f_i[\delta] \subseteq \delta$.
Consider an arbitrary $\alpha\in \Delta$, and let $\beta:=f(\alpha)$.
Then $\alpha\in D \subseteq D_i$ and $\beta\in f^i_\delta(\alpha)$, so that
$f_i(\alpha) = h_{\beta}(i) \in g^i_\delta(\alpha)$.
Altogether,
$$\{\alpha\in C^i_\delta\cap D_i\cap E^{\lambda^+}_\lambda\mid f_i(\alpha)\in g^i_\delta(\alpha)\}$$
contains the cofinal subset $\Delta$ of $\delta$, contradicting the choice of the pair $(f_i,D_i)$.
\end{proof}
Let $i<\lambda$ be given by the previous claim. For notational simplicity, denote $C^i_\delta$ by $C_\delta$.
For every $\alpha < \lambda^+$,
again invoking $\ch_\lambda$,
let $\{ X_\alpha^\beta\mid \beta<\lambda^+\}$ be some enumeration (possibly with repetition) of all subsets of $\alpha$.

Consider an arbitrary $\delta \in S$.
Let $g_\delta:\delta\rightarrow {}^{<\chi}\delta$ be such that for every $\alpha < \delta$,
$g_\delta(\alpha)$ is a surjection from some cardinal $<\chi$ to the set $\{\beta\in g^{i}_\delta(\alpha)\mid \sup(X^\beta_\alpha)=\alpha\}$.
As $\otp(C_\delta) = \lambda$, we have $C_\delta \cap E^{\lambda^+}_\lambda \subseteq \nacc(C_\delta)$,
so,  for all $\alpha\in C_\delta\cap E^{\lambda^+}_\lambda$, let us define a strictly increasing and continuous function $\varphi_{\delta,\alpha}: (\dom(g_\delta(\alpha)) +1) \rightarrow\alpha$
such that:
\begin{itemize}
\item  $\varphi_{\delta,\alpha}(0):=\sup(C_\delta\cap\alpha)$, and
\item $\varphi_{\delta,\alpha}(\xi+1):=\min(X_\alpha^{g_\delta(\alpha)(\xi)}\setminus (\varphi_{\delta,\alpha}(\xi) +1))$ for all $\xi \in \dom (g_\delta(\alpha))$.
\end{itemize}

Finally, put $D_\delta:=C_\delta\cup \bigcup \{\rng(\varphi_{\delta,\alpha})\mid \alpha\in C_\delta\cap E^{\lambda^+}_\lambda\}$. Clearly, $D_\delta$ is a club in $\delta$
of order-type $\lambda$.

By $\ch_\lambda$, $\lambda>\aleph_0$ and Fact~\ref{fact922}, let us fix a sequence $\langle A_\gamma\mid \gamma<\lambda^+\rangle$
such that for every $A\s\lambda^+$ and every infinite regular $\sigma<\lambda$,
the set $\{ \gamma\in E^{\lambda^+}_\sigma\mid A\cap\gamma=A_\gamma\}$ is stationary.

\begin{claim} For every club $D\s\lambda^+$, every subset $A\s\lambda^+$, and every infinite regular $\sigma<\lambda$,
there exist stationarily many $\delta\in S$ for which:
$$\sup\{\gamma\in \nacc(D_\delta)\cap D\cap E^{\lambda^+}_\sigma\mid A\cap\gamma=A_\gamma\}=\delta.$$
\end{claim}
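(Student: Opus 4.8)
The plan is to derive the claim from the preceding claim (the one producing a single ordinal $i<\lambda$ with its club-guessing property) by feeding it an appropriate function and club, and then extracting the required non-accumulation points from the maps $\varphi_{\delta,\alpha}$. Recall that $i$ is fixed, that $C_\delta=C^i_\delta$, and that $D_\delta$ is built from the maps $\varphi_{\delta,\alpha}$ over $\alpha\in C_\delta\cap E^{\lambda^+}_\lambda$.

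First I would fix a club $D\s\lambda^+$, a subset $A\s\lambda^+$, and an infinite regular $\sigma<\lambda$, and set $Z:=\{\gamma\in D\cap E^{\lambda^+}_\sigma\mid A\cap\gamma=A_\gamma\}$. By the choice of $\langle A_\gamma\mid\gamma<\lambda^+\rangle$ this $Z$ is stationary, hence cofinal, in $\lambda^+$, so $\acc^+(Z)$ is a club. Using that $\{X^\beta_\alpha\mid\beta<\lambda^+\}$ enumerates \emph{all} subsets of $\alpha$, I would define $f:\lambda^+\rightarrow\lambda^+$ by $f(\alpha):=\min\{\beta<\lambda^+\mid X^\beta_\alpha=Z\cap\alpha\}$.

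To get stationarily many $\delta$, I would then fix an arbitrary club $E\s\lambda^+$ and apply the preceding claim to $f$ and the club $D':=E\cap\acc^+(Z)$, obtaining $\delta\in S$ with $\sup\{\alpha\in C_\delta\cap D'\cap E^{\lambda^+}_\lambda\mid f(\alpha)\in g^i_\delta(\alpha)\}=\delta$; since $D'\s E$, automatically $\delta\in\acc^+(E)\s E$, so $\delta\in S\cap E$, and it suffices to show this $\delta$ satisfies the displayed equation. Let $\alpha$ be any element of the set above. From $\alpha\in\acc^+(Z)$ we get $\sup(Z\cap\alpha)=\alpha$, hence $X^{f(\alpha)}_\alpha=Z\cap\alpha$ has supremum $\alpha$; combined with $f(\alpha)\in g^i_\delta(\alpha)$, this places $f(\alpha)$ in the set $\{\beta\in g^i_\delta(\alpha)\mid\sup(X^\beta_\alpha)=\alpha\}$ onto which $g_\delta(\alpha)$ surjects, so $g_\delta(\alpha)(\xi)=f(\alpha)$ for some $\xi\in\dom(g_\delta(\alpha))$. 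Then $\varphi_{\delta,\alpha}(\xi+1)=\min\bigl((Z\cap\alpha)\setminus(\varphi_{\delta,\alpha}(\xi)+1)\bigr)$, which is defined, belongs to $Z$, and lies in the interval $(\sup(C_\delta\cap\alpha),\alpha)$.

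Finally I would verify that $\varphi_{\delta,\alpha}(\xi+1)\in\nacc(D_\delta)$ and that such ordinals are cofinal in $\delta$. For the first point: $\rng(\varphi_{\delta,\alpha})\s[\sup(C_\delta\cap\alpha),\alpha)$; for distinct $\alpha,\alpha'\in C_\delta\cap E^{\lambda^+}_\lambda$ these ranges are pairwise disjoint (their left endpoints already separate them) and meet $C_\delta$ only in their least element; and $\alpha\in\nacc(C_\delta)$ since $\otp(C_\delta)=\lambda$; hence $\sup(D_\delta\cap\varphi_{\delta,\alpha}(\xi+1))=\varphi_{\delta,\alpha}(\xi)<\varphi_{\delta,\alpha}(\xi+1)$. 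For the second point: the relevant $\alpha$'s are cofinal in $\delta$ and $\varphi_{\delta,\alpha}(\xi+1)\ge\sup(C_\delta\cap\alpha)$, while $\sup(C_\delta\cap\alpha)$ tends to $\delta$ along $C_\delta$. Therefore $\nacc(D_\delta)\cap Z$ is cofinal in $\delta$, and since every $\gamma\in Z$ satisfies $\gamma\in D\cap E^{\lambda^+}_\sigma$ with $A\cap\gamma=A_\gamma$, the displayed supremum equals $\delta$. As $E$ ranged over all clubs, the set of such $\delta\in S$ is stationary, which (via Lemma~\ref{reflected-diamond-equivalence}) also completes the proof of the theorem. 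I expect the bookkeeping in this last paragraph — ensuring the jump points $\varphi_{\delta,\alpha}(\xi+1)$ remain non-accumulation points of the \emph{full} club $D_\delta$ rather than merely of $\rng(\varphi_{\delta,\alpha})$ — to be the main obstacle.
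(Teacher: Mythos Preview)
Your proof is correct and follows essentially the same approach as the paper: define the stationary set $Z$ (the paper calls it $G$), choose $f(\alpha)$ to code $Z\cap\alpha$ as some $X^\beta_\alpha$, apply the preceding claim to $f$ and the club $E\cap\acc^+(Z)$, and then read off an element of $\nacc(D_\delta)\cap Z$ from $\varphi_{\delta,\alpha}(\xi+1)$. The paper simply asserts $\varphi_{\delta,\alpha}(\xi+1)\in\nacc(D_\delta)$ without further comment, whereas you supply the bookkeeping argument (disjoint gaps, $\alpha\in\nacc(C_\delta)$) that justifies it; this extra care is correct and does not deviate from the paper's strategy.
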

\begin{proof}
Given arbitrary $A \subseteq \lambda^+$, clubs $D, E \subseteq \lambda^+$ and infinite regular $\sigma<\lambda$,
we shall find $\delta\in E\cap S$ such that $\sup\{ \gamma\in\nacc(D_\delta)\cap D\cap E^{\lambda^+}_\sigma\mid A\cap\gamma=A_\gamma\} = \delta$.
Let $G:=\{\gamma\in D\cap E^{\lambda^+}_\sigma\mid A\cap\gamma=A_\gamma\}$.
By our choice of $\langle A_\gamma\mid \gamma<\lambda^+\rangle$, $G$ is stationary.
Choose $f:\lambda^+\rightarrow\lambda^+$
so that for all $\alpha < \lambda^+$, $f(\alpha)$ is some $\beta<\lambda^+$ such that $G\cap\alpha=X_\alpha^\beta$.
Put $D':=\acc^+(G)\cap E$, which is club. By the choice of $i$, we may find some $\delta\in S$ such that
$$\Delta:=\{\alpha\in C_\delta\cap D'\cap E^{\lambda^+}_\lambda\mid f(\alpha)\in g^i_\delta(\alpha)\}$$
is cofinal in $\delta$.
In particular, $\delta = \sup(\Delta) \in \acc^+(D') \subseteq D' \subseteq E$.
Consider arbitrary $\alpha\in\Delta$.
By $\alpha\in D' \subseteq \acc^+(G)$,
$\alpha = \sup(G \cap \alpha) = \sup (X^{f(\alpha)}_\alpha)$,
and since $\alpha \in \Delta$, we have $f(\alpha) \in g^i_\delta(\alpha)$.
Altogether, $f(\alpha) \in \rng(g_\delta(\alpha))$.
Thus,
there exists some $\xi\in\dom(g_\delta(\alpha))$ such that $g_\delta(\alpha)(\xi)=f(\alpha)$.
Then $\varphi_{\delta,\alpha}(\xi+1)\in\nacc(D_\delta)\cap G\setminus\sup(C_\delta\cap\alpha)$.
Consequently, $\sup(\nacc(D_\delta)\cap G)=\delta$.
\end{proof}

Since $\cf(\delta) = \lambda$,
it now follows from Lemma~\ref{reflected-diamond-equivalence} that  $\langle\lambda\rangle^-_S$ holds.
\end{proof}

\begin{thm}\label{t39} Suppose that $\lambda$ is an uncountable cardinal, and $S\s E^{\lambda^+}_{\cf(\lambda)}$ is stationary.

Then $\langle\lambda\rangle^-_S$ entails $\p(\lambda^+,2,{\sql},\lambda^+,\{S\},2,\sigma,\mathcal E_\lambda)$ for any regular $\sigma<\lambda$.
\end{thm}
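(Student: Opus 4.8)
The statement to prove is Theorem~\ref{t39}: under $\langle\lambda\rangle^-_S$ for $S\s E^{\lambda^+}_{\cf(\lambda)}$ stationary, the principle $\p(\lambda^+,2,{\sql},\lambda^+,\{S\},2,\sigma,\mathcal E_\lambda)$ holds for every regular $\sigma<\lambda$. Since the ninth-parameter analysis in Lemma~\ref{reflected-diamond-equivalence} already shows $\langle\lambda\rangle^-_S$ entails $\diamondsuit(\lambda^+)$, it suffices to produce a witness to $\p^-(\lambda^+,2,{\sql},\lambda^+,\{S\},2,\sigma,\mathcal E_\lambda)$; the $\diamondsuit(\lambda^+)$ half is then automatic. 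So the whole content is the construction of a single coherent-in-the-$\sql$-sense sequence $\langle C_\alpha\mid\alpha<\lambda^+\rangle$ that, for any $\lambda^+$-indexed family $\langle A_i\mid i<\lambda^+\rangle$ of cofinal subsets, admits stationarily many $\alpha\in S$ with $\sup\{\beta\in C_\alpha\mid\suc_\sigma(C_\alpha\setminus\beta)\s A_i\}=\alpha$ for all $i<\alpha$.

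\textbf{First steps.} The natural route is to use the equivalent form (2) of Lemma~\ref{reflected-diamond-equivalence}: fix sequences $\langle C_\delta\mid\delta\in S\rangle$ (each $C_\delta$ a club in $\delta$ of order-type $\lambda$) and $\langle A_\beta\mid\beta<\lambda^+\rangle$ such that for every club $D$, every $A\s\lambda^+$ and every infinite regular $\sigma'<\lambda$ with $\sigma'\neq\cf(\lambda)$, there are stationarily many $\delta\in S$ with $\otp(\{\beta\in\nacc(C_\delta)\cap D\cap E^{\lambda^+}_{\sigma'}\mid A\cap\beta=A_\beta\})=\lambda$. I would then do the usual coding-and-decoding trick of Theorem~\ref{thm32}: split $\lambda$ into $\lambda$-many blocks of length $\sigma$ (or rather length-$\sigma$ intervals in the order-type-$\lambda$ enumeration of $C_\delta$), attach to the $j$-th block a target index $\varphi_\delta(j)<\delta$ via an injection coding pairs $(\otp(C_\delta\cap\delta'),\psi_{\min(C_\delta\setminus\delta')}(\delta'))$, and at each successor point of $C_\delta$ replace it by a point of the relevant $A_i$ read off from $\pi$ applied to a diamond-guessed value $A_\beta$; at accumulation points of $C_\delta$ keep the point and inherit coherence. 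Set $C_\alpha:=\acc(C_\delta)\cup\{\text{modified successor points}\}$ for $\alpha=\delta\in S$, and extend to all limit $\alpha<\lambda^+$ by picking a club of order-type $\cf(\alpha)$ with no accumulation points of the modified clubs (so coherence is vacuous off $S$) — here one must also arrange $\cf(\alpha)<\lambda$ at all accumulation points encountered, which is where $\mathcal E_\lambda$ and the hypothesis $S\s E^{\lambda^+}_{\cf(\lambda)}$, combined with $\otp(C_\delta)=\lambda$, do the job: every $\bar\alpha\in\acc(C_\delta)$ has $\cf(\bar\alpha)=\otp(C_\delta\cap\bar\alpha)<\lambda$, hence $C_{\bar\alpha}\sql C_\delta$ automatically. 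Finally, given a target sequence $\langle A_i\mid i<\lambda^+\rangle$ and a club $D$, I would build the auxiliary function $f:\lambda^+\to\lambda^+$ recursively so that $\pi(f(\beta))(i)$ hits $A_i$ past suitable thresholds in $D$, set $X:=f[\lambda^+]$, and apply clause (2) of Lemma~\ref{reflected-diamond-equivalence} with $A:=X$ and some fixed regular $\sigma'\neq\cf(\lambda)$ below $\lambda$ to pull out a $\delta\in S$ at which all the blocks decode correctly.

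\textbf{The main obstacle.} There is a real tension: Lemma~\ref{reflected-diamond-equivalence}(2) only gives guessing along $E^{\lambda^+}_{\sigma'}$ for $\sigma'\neq\cf(\lambda)$, yet we want the final $\alpha$ to lie in $S\s E^{\lambda^+}_{\cf(\lambda)}$. This is not a contradiction — $\delta$ itself ranges over $S$, while the \emph{non-accumulation points} $\beta\in\nacc(C_\delta)$ carry cofinality $\sigma'$ — but it means the decoding at level $\delta$ must read off everything from the $\sigma'$-cofinality successor points of $C_\delta$, not from $\delta$ directly; one has to be careful that the $\lambda$-many blocks all survive, i.e. that $\otp(\{\beta\in\nacc(C_\delta)\cap D'\mid X\cap\beta=A_\beta\})=\lambda$ really forces $\suc_\sigma(C_\alpha\setminus\gamma)\s A_i$ for cofinally many $\gamma$ and each $i<\alpha$. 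Managing the bookkeeping so that the injection $\varphi_\delta$ coding block-index to target-index $<\delta$ is surjective enough (each $i<\delta$ is hit by $\lambda$-many blocks) while remaining compatible with restriction to $\acc(C_\delta)$ is the delicate part; it is exactly the ordinal-arithmetic juggling of the proof of Theorem~\ref{thm32} transported to this setting, and I expect that to be where all the care goes. The coherence verification, the normality/club properties, and the $\diamondsuit$ bookkeeping should then be routine, citing Lemma~\ref{reflected-diamond-equivalence} and mimicking Theorem~\ref{thm32} almost verbatim.
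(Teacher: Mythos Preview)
Your architecture is right, but the direct transplant of Theorem~\ref{thm32} has a genuine gap that is not merely ordinal bookkeeping. In Theorem~\ref{thm32} the input (coming from $\sd_\lambda$ or Lemma~\ref{thm62}) guarantees that at the chosen $\alpha$, \emph{every} $\gamma\in\nacc(D_\alpha)$ satisfies $X_\gamma=X\cap\gamma$; this is why each position-based $\sigma$-block decodes correctly. Here Lemma~\ref{reflected-diamond-equivalence}(2) only hands you a subset $M_\delta\s\nacc(C_\delta)$ of order-type $\lambda$ at which $A_\beta=A\cap\beta$; the complement may also have order-type $\lambda$, and there is no reason any $\sigma$ position-consecutive points of $\nacc(C_\delta)$ should all lie in $M_\delta$. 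Since your $\sigma$-blocks are determined by position in $C_\delta$ (via an $o_\alpha$ of the form $\lfloor\otp(C_\delta\cap\beta)/\sigma\rfloor$), a single bad point in a block spoils it, and you cannot relocate blocks after the fact because the construction of the modified club must precede the choice of targets $\langle A_i\rangle$. This is a structural mismatch with Theorem~\ref{thm32}, not the same juggling transported.

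The paper's proof resolves this with two different ingredients. First, it does not replace points of $C_\delta$ one-for-one; instead, for each $\beta\in\nacc(C_\delta)$ it inserts an entire closed set $F^\delta_\beta$ of order-type $\le\sigma+1$ into the gap $(\sup(C_\delta\cap\beta),\beta)$, reading $\sigma$-many candidate elements of $A_{g_\delta(\beta)}$ out of $A_\beta$ via a pairing bijection. After replacing $\sigma$ by $\sigma^+$ if necessary so that $\sigma\neq\cf(\lambda)$, the good $\beta$'s lie in $E^{\lambda^+}_\sigma$, so a \emph{single} good $\beta$ already yields a full chunk $\suc_\sigma(D_\delta\setminus\sup(C_\delta\cap\beta))\s A_{g_\delta(\beta)}$; consecutive good points are never required, and $\otp(D_\delta)=(\sigma+1)\cdot\lambda=\lambda$. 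Second, the target assignment $g_\delta$ is not position-based: the counter $o_\delta(\gamma)$ is defined intrinsically as $\otp(\{\beta\in N_\delta\cap E^\gamma_\sigma\mid A_\beta=A_\gamma\cap\beta\})$ for a locally definable set $N_\delta$, so that on $M_\delta$ it coincides with the enumeration of $M_\delta$; composing with a surjection $f_\delta:\lambda\to\delta$ having cofinal fibres then guarantees each $i<\delta$ is hit by cofinally many good $\beta$.
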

\begin{proof}
By Lemma~\ref{reflected-diamond-equivalence}, $\diamondsuit(\lambda^+)$ holds, and so it suffices to establish
$\p^-(\lambda^+,2,{\sql},\lambda^+,\{S\},2,\sigma,\mathcal E_\lambda)$.
Let $\langle C_\delta\mid\delta\in S\rangle$ and $\langle A_\beta\mid\beta<\lambda^+\rangle$ be given by Lemma~\ref{reflected-diamond-equivalence}(2).
Let $\sigma<\lambda$ be an arbitrary infinite regular cardinal.
By replacing $\sigma$ with $\sigma^+$ if necessary,
we may assume that $\sigma\neq\cf(\lambda)$ (and still $\sigma < \lambda$).

Fix a bijection $\pi:\lambda^+\times\lambda^+\leftrightarrow\lambda^+$,
and let $E:=\{\gamma<\lambda^+\mid \pi[\gamma\times\gamma]=\gamma\}$ denote its club of closure points.

Let $D_0 := \emptyset$, and for every $\delta < \lambda^+$, let $D_{\delta+1} := \{\delta\}$.
For every $\delta\in \acc(\lambda^+) \setminus S$, let $D_\delta$ be an arbitrary club subset of $\delta$ of order-type $\cf(\delta)$.

Next, let $\delta\in S$ be arbitrary.
Define
\[
N_\delta := \left\{ \beta\in\nacc(C_\delta)\cap E \mid  \text{for all } i,\gamma<\beta,
\text{ there exists } \tau\in\beta\setminus\gamma \text{ with }
\pi(i,\tau)\in A_\beta\right\}.
\]

Define $o_\delta:\nacc(C_\delta)\rightarrow\lambda$ by letting for all $\gamma\in\nacc(C_\delta)$:
   $$o_\delta(\gamma):=\otp(\{\beta\in N_\delta\cap E^\gamma_\sigma\mid A_\beta=A_\gamma\cap\beta\}).$$

Fix a surjection $f_\delta:\lambda\rightarrow\delta$ such that the preimage of any singleton is cofinal in $\lambda$,
and then define $g_\delta:\nacc(C_\delta)\rightarrow\delta$ by letting for all $\beta \in \nacc(C_\delta)$:
$$g_\delta(\beta):=\begin{cases}f_\delta(o_\delta(\beta)), &\text{if } f_\delta(o_\delta(\beta))<\beta;\\
0,&\text{otherwise.}\end{cases}$$

For all $\beta\in\nacc(C_\delta)$, let $H^\delta_\beta:=\left\{\tau\mid \sup(C_\delta\cap\beta)<\tau<\beta\ \&\ \pi(g_\delta(\beta),\tau)\in A_\beta\right\}$.
Then let $F^\delta_\beta$ be the closure of $\suc_\sigma(H^\delta_\beta)$. Clearly,
 $F^\delta_\beta$ is a closed subset of $(\sup(C_\delta\cap\beta),\beta]$ of order-type $\le\sigma+1$.
Finally, let
$$D_\delta:=C_\delta\cup\bigcup\{ F^\delta_\beta \mid \beta\in\nacc(C_\delta)\}.$$
As $\sigma<\lambda$, $(\sigma+1)\cdot\lambda=\lambda$, and so $D_\delta$ is a club subset of $\delta$ of order-type $\lambda$.

\begin{claim}\label{c6231} $\langle D_\delta \mid\delta<\lambda^+\rangle$ witnesses $\p^-(\lambda^+,2,{\sql},\lambda^+,\{S\},2,\sigma, \mathcal E_\lambda)$.
 \end{claim}
 \begin{proof} As $\otp(D_\delta)\le\lambda$ for all $\delta<\lambda^+$, the verification of $\sql$ becomes trivial.
Now, given a sequence $\langle X_i\mid i < \lambda^+ \rangle$ of cofinal subsets of $\lambda^+$,
we shall seek stationarily many $\delta\in S$ such that for every $i<\delta$,
$$\sup\{ \beta \in D_\delta\mid \suc_\sigma(D_\delta \setminus \beta) \subseteq X_i \} = \delta.$$

Consider the club $D:=E\cap \diagonal_{i<\lambda^+}(\acc^+(X_i))$, and the set $A:=\{ \pi(i,\tau)\mid i<\lambda^+, \tau\in X_i\}$.
Then, the set $G$ of all $\delta\in S$ such that
$$M_\delta:=\{ \beta\in \nacc(C_\delta) \cap D\cap E^{\lambda^+}_\sigma\mid  A\cap\beta=A_\beta\}\text{ has order-type }\lambda,$$
is stationary.

Let $\delta$ be an arbitrary element of the stationary set $G$.
Let us first show that
\[
M_\delta = \{ \beta \in N_\delta \cap E^{\lambda^+}_\sigma \mid A \cap \beta = A_\beta\}:
\]

Let $\beta \in M_\delta$ be arbitrary. Then $\beta \in D \subseteq E$
and $A_\beta = A\cap \beta$.
By $\beta \in D$, we also have  $\beta \in \bigcap_{i<\beta}\acc^+(X_i)$.
Thus, for all $i, \gamma < \beta$, there is some $\tau \in X_i \cap (\beta \setminus \gamma)$,
so that $\pi(i, \tau) \in A$, and (since $\beta \in E$) $\pi(i, \tau) < \beta$,
giving $\pi(i, \tau) \in A \cap \beta = A_\beta$.
Thus $\beta \in N_\delta$.

Conversely, suppose $\beta \in N_\delta \cap E^{\lambda^+}_\sigma$ satisfies $A \cap \beta = A_\beta$.
We have $\beta \in N_\delta \subseteq \nacc(C_\delta) \cap E$,
so it remains to show that $\beta \in \diagonal_{i<\lambda^+}(\acc^+(X_i))$.
Consider any $i, \gamma < \beta$.
Since $\beta \in N_\delta$,
we can fix $\tau\in\beta\setminus\gamma$ such that
$\pi(i,\tau)\in A_\beta= A \cap \beta$.
Then $\tau \in X_i \cap (\beta \setminus \gamma)$, as required.
Thus $\beta \in M_\delta$.

For any $\gamma \in M_\delta$,
\begin{align*}
o_\delta(\gamma)
    &= \otp \left(\left\{ \beta\in N_\delta\cap E^\gamma_\sigma \mid
        A_\beta= A_\gamma\cap \beta \right\} \right)    \\
    &= \otp \left(\left\{ \beta\in N_\delta\cap E^\gamma_\sigma \mid
        A_\beta= (A \cap \gamma) \cap \beta \right\} \right)    \\
    &= \otp (M_\delta \cap \gamma).
\end{align*}
Since $\otp(M_\delta) = \lambda$, it follows that $o_\delta[M_\delta] = \lambda$.
Also, $\sup(M_\delta) = \sup(C_\delta) = \delta$.

Finally, let $i,\eta<\delta$ be arbitrary. We shall find $\beta'\in D_\delta\setminus\eta$ such that $\suc_\sigma(D_\delta\setminus\beta')\s X_i$.
Fix a large enough $\beta\in M_\delta$ such that $i,\eta<\sup(C_\delta\cap\beta)$ and $f_\delta(o_\delta(\beta))=i$. Then $g_\delta(\beta)=i$.
since $\beta\in M_\delta$,
we get that $H^\delta_\beta=\{\tau\mid \sup(C_\delta\cap\beta)<\tau<\beta, \tau\in X_i\cap\beta\}$ is cofinal in $\beta$,
so that $\otp(D_\delta\cap(\sup(C_\delta\cap\beta),\beta))\ge\cf(\beta)=\sigma$.
Set $\beta':=\sup(C_\delta\cap\beta)$.
Then $\beta'\in D_\delta \setminus \eta$ and $\suc_\sigma(D_\delta\setminus\beta') \subseteq H^\delta_\beta\s X_i$, as sought.
\end{proof}
\end{proof}

\begin{cor}\label{c55} For every regular uncountable cardinal $\lambda$ and stationary $S\s E^{\lambda^+}_\lambda$, the following are equivalent:
\begin{enumerate}
\item $\langle\lambda\rangle^-_S$;
\item $\p(\lambda^+,2,{\sqleft{\lambda}},\lambda^+,\{S\},2,\sigma,\mathcal E_\lambda)$ for every regular cardinal $\sigma<\lambda$;
\item $\p(\lambda^+,2,{\sqleft{\lambda}},1,\{S\},2,1,\mathcal E_\lambda)$.
\end{enumerate}
\end{cor}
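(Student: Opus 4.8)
The plan is to establish the cycle $(1)\Rightarrow(2)\Rightarrow(3)\Rightarrow(1)$. For $(1)\Rightarrow(2)$ I would simply invoke Theorem~\ref{t39}: since $\lambda$ is regular we have $E^{\lambda^+}_{\cf(\lambda)}=E^{\lambda^+}_\lambda\supseteq S$, so the hypothesis of that theorem is met, and it delivers $\p(\lambda^+,2,{\sql},\lambda^+,\{S\},2,\sigma,\mathcal E_\lambda)$ for every regular $\sigma<\lambda$, which is precisely~(2). For $(2)\Rightarrow(3)$ the argument is monotonicity of the parameters: pick any regular $\sigma<\lambda$ (e.g.\ $\sigma=\aleph_0$, which is available since $\lambda$ is uncountable) and apply~(2) to get $\p(\lambda^+,2,{\sql},\lambda^+,\{S\},2,\sigma,\mathcal E_\lambda)$; shrinking the fourth parameter from $\lambda^+$ to $1$ weakens the principle (the conclusion must be met for fewer sequences $\langle A_i\rangle$ and fewer indices $i$), and shrinking the seventh parameter from $\sigma$ to $1$ weakens it as well, because $\suc_1(D)\s\suc_\sigma(D)$ for every $D$, while the $\diamondsuit(\lambda^+)$ clause is carried along. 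Hence $\p(\lambda^+,2,{\sql},1,\{S\},2,1,\mathcal E_\lambda)$ holds.

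The step with actual content is $(3)\Rightarrow(1)$. I would let $\langle C_\alpha\mid\alpha<\lambda^+\rangle$ witness $\p^-(\lambda^+,2,{\sql},1,\{S\},2,1,\mathcal E_\lambda)$; since $\mathcal E_\lambda$ forces $\otp(C_\alpha)\le\lambda$ for all $\alpha$ and every $\delta\in S\s E^{\lambda^+}_\lambda$ has $\cf(\delta)=\lambda$, we get $\otp(C_\delta)=\lambda$ for all $\delta\in S$. From the $\diamondsuit(\lambda^+)$ in~(3) we have $\ch_\lambda$, so Fact~\ref{fact922} provides a sequence $\langle A_\gamma\mid\gamma<\lambda^+\rangle$ such that $\{\gamma\in E^{\lambda^+}_\sigma\mid A\cap\gamma=A_\gamma\}$ is stationary for every $A\s\lambda^+$ and every infinite regular $\sigma<\lambda$. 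I would then verify that $\langle C_\delta\mid\delta\in S\rangle$ and $\langle A_\gamma\mid\gamma<\lambda^+\rangle$ satisfy clause~(2) of Lemma~\ref{reflected-diamond-equivalence}, which, as $\cf(\lambda)=\lambda$, yields $\langle\lambda\rangle^-_S$. Clause~(2)(a) is the order-type observation just made. For clause~(2)(b), given a club $D\s\lambda^+$, a set $A\s\lambda^+$, and an infinite regular $\sigma<\lambda$, set $Z:=\{\beta\in D\cap E^{\lambda^+}_\sigma\mid A\cap\beta=A_\beta\}$; this is stationary, hence cofinal in $\lambda^+$, so feeding the singleton sequence $\langle Z\rangle$ to the witness produces stationarily many $\delta\in S$ with $\sup\{\beta\in C_\delta\mid\suc_1(C_\delta\setminus\beta)\s Z\}=\delta$. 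Since $\beta\mapsto\min(C_\delta\setminus(\beta+1))$ is an order-isomorphism of $C_\delta$ onto $\nacc(C_\delta)\bks\{\min(C_\delta)\}$, this reads $\sup(\nacc(C_\delta)\cap Z)=\delta$; as $\nacc(C_\delta)\cap Z\s C_\delta$ with $\otp(C_\delta)=\lambda$ and $\cf(\delta)=\lambda$, its order-type is exactly $\lambda$. Finally $Z\s D\cap E^{\lambda^+}_\sigma$ and every $\beta\in Z$ satisfies $A\cap\beta=A_\beta$, so $\otp(\{\beta\in\nacc(C_\delta)\cap D\cap E^{\lambda^+}_\sigma\mid A\cap\beta=A_\beta\})=\lambda$ for stationarily many $\delta\in S$, which is clause~(2)(b).

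I expect the only delicate point to be this last translation: reading $\sup(\nacc(C_\delta)\cap Z)=\delta$ off the $\suc_1$-formulation of the seventh parameter, and noting that $\cf(\delta)=\lambda$ promotes mere cofinality in $\delta$ to full order-type $\lambda$. Everything else is bookkeeping around the three black-box citations, namely Theorem~\ref{t39}, Fact~\ref{fact922}, and Lemma~\ref{reflected-diamond-equivalence}.
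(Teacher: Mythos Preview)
Your proposal is correct and follows the same route as the paper: $(1)\Rightarrow(2)$ via Theorem~\ref{t39}, $(2)\Rightarrow(3)$ by monotonicity, and $(3)\Rightarrow(1)$ by combining the $\p^-$-witness with a diamond sequence from Fact~\ref{fact922} and verifying clause~(2) of Lemma~\ref{reflected-diamond-equivalence}. You spell out more explicitly than the paper the translation from the $\suc_1$-formulation to $\sup(\nacc(C_\delta)\cap Z)=\delta$ and the promotion of cofinality to order-type $\lambda$ via regularity of $\lambda$, but the argument is otherwise identical.
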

\begin{proof} $(1)\Rightarrow(2)$ By Theorem~\ref{t39}.

$(3)\Rightarrow(1)$
By the hypothesis $\p(\dots)$, $\diamondsuit(\lambda^+)$ holds, so by Fact~\ref{fact922},
let us fix a sequence $\langle A_\beta\mid \beta<\lambda^+\rangle$
such that for every $A\s\lambda^+$ and every infinite regular $\sigma<\lambda$,
the set $\{ \beta\in E^{\lambda^+}_\sigma\mid A\cap\beta=A_\beta\}$ is stationary.
Let  $\langle C_\delta\mid \delta<\lambda^+\rangle$ be a witness to
$\p^-(\lambda^+,2,{\sqleft{\lambda}}, 1,\{S\},2,1,\mathcal E_\lambda)$.
We verify that
$\langle C_\delta\mid \delta\in S\rangle$ and $\langle A_\beta\mid \beta<\lambda^+\rangle$ satisfy Clause~(2) of Lemma~\ref{reflected-diamond-equivalence}:

Given a club $D\s\lambda^+$, a subset $A\s\lambda^+$, and an infinite regular $\sigma<\lambda$,
the set $Z:=\{\beta\in D\cap E^{\lambda^+}_\sigma\mid A\cap\beta=A_\beta\}$ is cofinal in $\lambda^+$
(indeed, even stationary).
Hence,
there exist stationarily many $\delta\in S$ such that $\otp(\nacc(C_\delta)\cap Z)=\lambda$, as sought.
\end{proof}

\begin{thm}\label{thm616a}
Suppose that $\lambda$ is an infinite cardinal, and $S\s E^{\lambda^+}_{\cf(\lambda)}$ is stationary.

If $\diamondsuit(S)$ holds, then so does $\p(\lambda^+,2,{\sql},\lambda^+,\{S\},2,\sigma,\mathcal E_\lambda)$ for every ordinal $\sigma<\lambda$.
\end{thm}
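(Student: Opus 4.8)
The plan is as follows. First, $\diamondsuit(S)$ implies $\diamondsuit(\lambda^+)$, so it suffices to verify $\p^-(\lambda^+,2,{\sql},\lambda^+,\{S\},2,\sigma,\mathcal E_\lambda)$. The structural point I would exploit is that, once the eighth parameter equals $\mathcal E_\lambda$, \emph{$\sql$-coherence is automatic}: if $\langle C_\alpha\mid\alpha<\lambda^+\rangle$ is \emph{any} sequence of clubs with $\otp(C_\alpha)\le\lambda$ (and, say, $C_0:=\emptyset$, $C_{\alpha+1}:=\{\alpha\}$), then for $\bar\alpha\in\acc(C_\alpha)$ we have $\cf(\bar\alpha)=\cf(\otp(C_\alpha\cap\bar\alpha))<\lambda$, whence $C_{\bar\alpha}\sql C_\alpha$ no matter what $C_{\bar\alpha}$ is. Thus the whole problem reduces to manufacturing clubs of order-type $\le\lambda$ that satisfy the $S$-club-guessing clause, and the $\diamondsuit(S)$-sequence is the only tool required.

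When $\lambda$ is regular --- in particular $\lambda=\aleph_0$, where the statement is precisely Theorem~\ref{thm309} --- I would do this by a direct recursion. Fix a bijection $\pi:\lambda^+\times\lambda^+\leftrightarrow\lambda^+$ and a $\diamondsuit(S)$-sequence $\langle Z_\delta\mid\delta\in S\rangle$ with $Z_\delta\s\delta$. For $\delta\in S$ (so $\cf(\delta)=\lambda$), read off $A^\delta_i:=\{\tau<\delta\mid\pi(i,\tau)\in Z_\delta\}$ for $i<\delta$, fix a cofinal map $e_\delta:\lambda\to\delta$ and a surjection $g_\delta:\lambda\to\delta$ each of whose fibres is cofinal in $\lambda$, and build $C_\delta$ as a strictly increasing, continuous-at-limits sequence of length $\lambda$ subdivided into $\lambda$-many consecutive blocks of length $\sigma$: the $\xi$-th block is to consist of $\sigma$-many elements of $A^\delta_{g_\delta(\xi)}$ lying above the maximum of $e_\delta(\xi)$ and the supremum of the part of $C_\delta$ already built (with trivial filler whenever $A^\delta_{g_\delta(\xi)}$ is not cofinal in $\delta$). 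Because $\cf(\delta)=\lambda$ is regular, all partial suprema remain $<\delta$ and $A^\delta_i\cap(\gamma,\delta)$ has order-type $\ge\lambda>\sigma$ for every $\gamma<\delta$, so each block can be chosen bounded below $\delta$; hence $C_\delta$ is a club in $\delta$ of order-type $\lambda$. For the guessing clause, given cofinal $\langle A_i\mid i<\lambda^+\rangle$ and a club $D$, put $A:=\{\pi(i,\tau)\mid i<\lambda^+,\ \tau\in A_i\}$ and let $E$ be the club of $\delta<\lambda^+$ closed under $\pi$ and lying in $\diagonal_{i<\lambda^+}\acc^+(A_i)$; by $\diamondsuit(S)$ there are stationarily many $\delta\in S\cap D\cap E$ with $Z_\delta=A\cap\delta$, and for such $\delta$ one checks $A^\delta_i=A_i\cap\delta$ for all $i<\delta$, so that for every $i<\delta$ the blocks aimed at $A_i$ occupy a cofinal set of positions and therefore $\sup\{\beta\in C_\delta\mid\suc_\sigma(C_\delta\setminus\beta)\s A_i\}=\delta$, as desired.

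When $\lambda$ is singular the preceding construction breaks down: a club of order-type $\le\lambda$ inside $\delta$ with $\cf(\delta)=\cf(\lambda)<\lambda$ cannot host $\sigma$-blocks this freely once $\sigma\ge\cf(\lambda)$, and this is the main obstacle. I would circumvent it by reducing to a regular seventh parameter. Since $\lambda$ is now a limit cardinal, fix a regular cardinal $\sigma'$ with $\sigma\le\sigma'<\lambda$. From $\diamondsuit(S)$ one obtains $\langle\lambda\rangle^-_S$ by the standard reflected-diamond coding (for club-many $\delta$ the set of successful guesses extractable from $Z_\delta$ has size $\lambda$, hence admits a cofinal subset of $\delta$ of order-type $\lambda$ whose closure can serve as the $\delta$-th club); then Theorem~\ref{t39} gives $\p^-(\lambda^+,2,{\sql},\lambda^+,\{S\},2,\sigma',\mathcal E_\lambda)$, and finally monotonicity in the seventh parameter --- any witness for $\sigma'$ witnesses $\sigma$ as well, because $\suc_\sigma(C)\s\suc_{\sigma'}(C)$ whenever $\sigma\le\sigma'$ --- delivers the required $\p^-(\lambda^+,2,{\sql},\lambda^+,\{S\},2,\sigma,\mathcal E_\lambda)$. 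Beyond this singular-cardinal issue, the only genuinely fiddly part is the bookkeeping inside the direct construction that guarantees each $A_i$ is targeted by blocks occupying a cofinal set of positions.
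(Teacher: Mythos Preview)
Your proposal is correct, but the organisation differs from the paper's.  The paper first disposes of $\lambda=\aleph_0$ via Theorem~\ref{thm309}, and then for \emph{all} uncountable $\lambda$ (regular or singular) gives a single argument: from $\diamondsuit(S)$ it builds an intermediate sequence $\langle D_\alpha\mid\alpha<\lambda^+\rangle$ with the property that for every $X\subseteq\lambda^+$ and club $E$ there is $\alpha\in S$ with $\otp(D_\alpha)=\lambda$ and $\nacc(D_\alpha)\subseteq\{\gamma\in E\cap S\mid X\cap\gamma=X_\gamma\}$, and then feeds this sequence through the machinery of Theorem~\ref{thm32} to obtain the final witness.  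Your split into regular/singular cases trades this uniformity for a more elementary proof when $\lambda$ is regular: the direct $\sigma$-block construction you describe avoids both the intermediate sequence and the Theorem~\ref{thm32} apparatus entirely, and is a genuine simplification in that case.  For singular $\lambda$, your route through $\langle\lambda\rangle^-_S$ and Theorem~\ref{t39} is essentially a repackaging of what the paper does inline --- the paper's intermediate $D_\alpha$-construction is precisely the content of ``$\diamondsuit(S)\Rightarrow\langle\lambda\rangle^-_S$'', and its invocation of Theorem~\ref{thm32} plays the role of Theorem~\ref{t39}.

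One small point deserves care in your singular-case sketch.  You write that the set of successful guesses ``has size $\lambda$, hence admits a cofinal subset of $\delta$ of order-type $\lambda$''.  The ``hence'' is not automatic: a cofinal subset of $\delta$ of cardinality $\lambda$ need not contain a cofinal subset of order-type exactly $\lambda$ (consider $\lambda=\aleph_\omega$ and a cofinal set in $\delta$ of order-type $\aleph_\omega+\omega$).  What makes this work --- and what the paper uses --- is restricting further to the club of $\delta$ where the order-type of the set of successful guesses equals $\delta$ and $\delta$ is divisible by $\lambda$; under that restriction the desired cofinal subset of order-type $\lambda$ does exist.  This is an easy fix, not a structural problem with your approach.
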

\begin{proof} Recalling Theorem~\ref{thm309}, we may assume that $\lambda$ is uncountable.

For regular cardinals $\sigma<\lambda$, we could have simply used Fact~\ref{factRinot09} together with Theorem~\ref{t39},
but let us a give a proof that works for all cases.

Let $\lambda$ be an arbitrary uncountable cardinal, and let $\sigma<\lambda$ be some nonzero ordinal. By $\diamondsuit(S)$, we have $\diamondsuit(\lambda^+)$, and it remains to establish
$\p^-(\lambda^+,2,{\sql},\lambda^+,\{S\},2, \sigma,\mathcal E_\lambda)$.

Using $\diamondsuit(S)$,
fix a sequence $\langle (X_\alpha, Y_\alpha) \mid \alpha <\lambda^+ \rangle$
such that for all $X, Y \subseteq \lambda^+$ the set
$\{ \alpha \in S \mid X \cap \alpha = X_\alpha \ \&\  Y \cap \alpha = Y_\alpha \}$ is stationary.
Of course, we may assume that $X_\alpha$ and $Y_\alpha$ are subsets of $\alpha$. Denote $$H_\alpha:=\{ \gamma\in Y_\alpha \mid X_\alpha\cap\gamma=X_\gamma\ \&\ Y_\alpha\cap\gamma=Y_\gamma \}.$$

Let $D_0:=\emptyset$ and $D_{\alpha+1}:=\{\alpha\}$ for all $\alpha<\lambda^+$.
Next, for every nonzero limit $\alpha<\lambda^+$, we do the following.
If there exists a club $D_\alpha$ in $\alpha$ of order-type $\lambda$ such that $\nacc(D_\alpha)\s H_\alpha$,
we let $D_\alpha$ be such a club. Otherwise,
we let $D_\alpha$ be any club subset of $\alpha$ of order-type $\cf(\alpha)$.
Just as in Theorem~\ref{thm32}, write $\chi:=\omega\cdot\lambda$.
Of course, since we have assumed $\lambda$ is uncountable, it follows that $\chi = \lambda$.

\begin{claim}\begin{enumerate}
\item for every limit $\alpha<\lambda^+$, $D_\alpha$ is a club in $\alpha$ of order-type $\le\chi$;
\item if $\bar\alpha\in\acc(D_\alpha)$, then $D_{\bar\alpha}\sql D_\alpha$;
\item for every subset $X\s \lambda^+$ and club $E\s \lambda^+$, there exists a limit $\alpha\in S$ such that $\otp(D_\alpha)=\chi$ and $\nacc(D_\alpha)\s\{\gamma\in E\cap S\mid X\cap\gamma=X_\gamma\}$.
\end{enumerate}
\end{claim}

\begin{proof} (1) is trivial.

(2) follows from (1) and the fact that $\chi=\lambda$.

(3)
Fix $X \subseteq \lambda^+$ and club $E \subseteq \lambda^+$.
Define
\[
G := \{ \gamma \in E \cap S \mid X \cap \gamma = X_\gamma\ \&\  E \cap S \cap \gamma = Y_\gamma \}.
\]
By applying our diamond sequence to $X$ and $Y := E \cap S$,
we find that $G$ is a stationary subset of $\lambda^+$,
being the intersection of the club set $E$ with a stationary set.
Thus, in particular, $Z:=\{\alpha<\lambda^+\mid \otp(G\cap\alpha)=\alpha\text{ is divisible by }\lambda\}$ is club in $\lambda^+$,
and it follows that $G \cap Z$ is stationary in $\lambda^+$.
Choose $\alpha \in G\cap Z$.
Clearly, $\alpha \in G \subseteq S \subseteq E^{\lambda^+}_{\cf(\lambda)}$ and $G\cap\alpha=H_\alpha$.
Since $\cf(\alpha) = \cf(\lambda)$ and $\alpha = \otp(H_\alpha)$ is divisible by $\lambda$,
it follows that $D_\alpha$ is a club of order-type $\lambda$ such that
$\nacc(D_\alpha) \subseteq H_\alpha \subseteq G$.
Then, as in Claim~\ref{DinG}, it follows that $\nacc(D_\alpha) \subseteq G$, as well as $\otp(D_\alpha)=\lambda$,
giving the required result.
\end{proof}

Now, continue with the very same construction of Theorem~\ref{thm32}
to get a sequence $\langle C_\alpha\mid\alpha<\lambda^+\rangle$
such that $C_\alpha$ is a club in $\alpha$ of order-type $\le\lambda$, and
for every sequence $\langle A_\delta\mid \delta<\lambda^+\rangle$ of cofinal subsets of $\lambda^+$
and every club $D \subseteq \lambda^+$,
there exists $\alpha\in S \cap  D$ such that
\[
\otp(\{\beta\in \acc(C_\alpha) \mid \suc_\sigma(C_\alpha\setminus\beta)\s A_\delta\})=\lambda
\text{ for every }\delta<\alpha.
\]

Then $\langle C_\alpha\mid\alpha<\lambda^+\rangle$
witnesses $\p^-(\lambda^+,2,{\sql},\lambda^+,\{S\},2,\sigma,\mathcal E_\lambda)$.
\end{proof}
\begin{thm}\label{thm313} Suppose that $\lambda$ is a given uncountable cardinal.

If $\ch_\lambda + \lambda^{<\lambda}=\lambda$,
then $V^{\add(\lambda,1)}\models\p(\lambda^+,2,{\sql},\lambda^+,\{S\s E^{\lambda^+}_\lambda\mid S\text{ is stationary}\},2,\sigma, \mathcal E_\lambda)$ for every cardinal $\sigma<\lambda$.
\end{thm}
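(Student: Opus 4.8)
The plan is to run the construction from the proof of Theorem~\ref{thm67a} essentially verbatim, deleting every appeal to the hypothesis $\square_\lambda$ and paying for its loss only by weakening the conclusion's coherence relation from $\sq$ to $\sql$. First, note that since $\lambda^{<\lambda}=\lambda$ and $\lambda$ is uncountable, $\lambda$ is regular, so $\add(\lambda,1)$ is $(<\lambda)$-directed-closed and, by $\lambda^{<\lambda}=\lambda$, has the \emph{$\lambda^+$-cc}; hence it preserves cofinalities and adds no new bounded subsets of $\lambda$, so $\lambda^{<\lambda}=\lambda$ persists. A routine nice-name count shows that $\ch_\lambda$ is preserved (each nice name for a subset of $\lambda$ is a $\lambda$-sequence of antichains, of which there are at most $(\lambda^+)^\lambda=\lambda^+$), so by Fact~\ref{fact922} we get $V^{\add(\lambda,1)}\models\diamondsuit(\lambda^+)$. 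Thus it suffices to produce, in $V^{\add(\lambda,1)}$, a witness to $\p^-(\lambda^+,2,{\sql},\lambda^+,\{S\s E^{\lambda^+}_\lambda\mid S\text{ is stationary}\},2,\sigma,\mathcal E_\lambda)$.

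Next, as in the implication $(a)\Rightarrow(b)$ of Theorem~\ref{6.22}, the presence of the eighth parameter $\mathcal E_\lambda$ renders $\sql$-coherence automatic for any sequence $\langle D_\alpha\mid\alpha<\lambda^+\rangle$ of clubs with $\otp(D_\alpha)\le\lambda$ throughout: if $\bar\alpha\in\acc(D_\alpha)$ then $\cf(\bar\alpha)\le\otp(D_\alpha\cap\bar\alpha)<\otp(D_\alpha)\le\lambda$, whence $D_{\bar\alpha}\sql D_\alpha$ regardless of what $D_{\bar\alpha}$ is. Moreover, since the fifth parameter ranges over \emph{all} stationary $S\s E^{\lambda^+}_\lambda$, it is enough to build $\langle D_\alpha\mid\alpha<\lambda^+\rangle$ with $\otp(D_\alpha)\le\lambda$ always, and $\otp(D_\alpha)=\lambda$ when $\cf(\alpha)=\lambda$, such that for every sequence $\langle A_i\mid i<\lambda^+\rangle$ of cofinal subsets of $\lambda^+$ there is a \emph{club} $E\s\lambda^+$ with $\sup\{\beta\in D_\alpha\mid\suc_\sigma(D_\alpha\setminus\beta)\s A_i\}=\alpha$ for all $\alpha\in E\cap E^{\lambda^+}_\lambda$ and all $i<\alpha$; intersecting $E$ with an arbitrary stationary $S\s E^{\lambda^+}_\lambda$ then yields stationarily many witnesses in $S$.

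For the construction, work in $V$ and fix, for each limit $\alpha<\lambda^+$, an \emph{arbitrary} club $c_\alpha\s\alpha$ with $\otp(c_\alpha)=\lambda$ if $\cf(\alpha)=\lambda$ and $\otp(c_\alpha)=\cf(\alpha)$ otherwise, and fix surjections $\psi_\alpha:\lambda\setminus\{0\}\rightarrow\alpha$ (possible since $|\alpha|\le\lambda$). Let $g:\lambda\rightarrow\lambda$ be $\add(\lambda,1)$-generic over $V$, and from $\langle c_\alpha\rangle$, $\langle\psi_\alpha\rangle$, and $g$ form the objects $1^g_\alpha$, $g_\alpha$, $C^g_\alpha$, and $D^g_\alpha$ by the very formulas of the proof of Theorem~\ref{thm67a}, with $c_\alpha$ in place of the $\square_\lambda$-sequence member $C_\alpha$ used there. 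No appeal to $\square_\lambda$ is made or needed: by inspection each $D^g_\alpha$ is a club in $\alpha$ of order-type $\le\lambda$, and of order-type exactly $\lambda$ when $\cf(\alpha)=\lambda$ (either $C^g_\alpha$ is cofinal in $\alpha$, hence of order-type $\lambda$, or else $D^g_\alpha$ is a cofinal tail of $c_\alpha$). So $\langle D^g_\alpha\mid\alpha<\lambda^+\rangle$ is a legitimate $\sql$-coherent, $\mathcal E_\lambda$-respecting $C$-sequence, and it only remains to verify the guessing clause.

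That verification is the crux, and here I would import Claims~2.3.1 and~2.3.3 of \cite{rinot12} (equivalently, the density argument proving clause~(1) of Theorem~\ref{thm67a}): for every cofinal $A\s\lambda^+$ in $V^{\add(\lambda,1)}$ there is a club $D_A\s\lambda^+$ with $\sup\{\beta\in D^g_\alpha\mid\suc_\sigma(D^g_\alpha\setminus\beta)\s A\}=\alpha$ for all $\alpha\in D_A\cap E^{\lambda^+}_\lambda$, and then one sets $E:=\diagonal_{i<\lambda^+}D_{A_i}$. The mechanism is that, through $\psi_{c_\alpha(j)}$, a generic value $g(j)$ may name \emph{any} ordinal below $c_\alpha(j)$, so a genericity argument drives $C^g_\alpha$ to be cofinal in $\alpha$ and to contain, cofinally often, $\sigma$-blocks of successive points lying inside $A$; and since $\lambda^{<\lambda}=\lambda$ and the forcing is $\lambda^+$-cc, every cofinal $A$ in the extension covers a cofinal $A'\in V$ (as already used in Theorem~\ref{thm67a}), so it suffices to run the density computation for $\check A'$ with $A'\in V$. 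The delicate, truly load-bearing step is exactly this density argument: arranging that a single generic $g$ serves all cofinal sets simultaneously via the bounded-support reduction to $V$, and checking that the seventh-parameter $\sigma$-block pattern is genuinely forced into $A'$. I expect this to go through with strictly less effort than in Theorem~\ref{thm67a}, since we no longer demand coherence of $\langle D^g_\alpha\rangle$ — which was the main use of $\square_\lambda$ there — so the entire substance of the present theorem over Theorem~\ref{thm67a} reduces to observing that the $\square_\lambda$-free scaffold $\langle c_\alpha\rangle$ still feeds the same machinery.
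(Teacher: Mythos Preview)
Your proposal is correct and takes essentially the same approach as the paper: the paper's entire proof is the single sentence ``This is the same proof as of Theorem~\ref{thm67}. Only this time, we do not have to care about coherence,'' and you have faithfully unpacked precisely that, observing that $\mathcal E_\lambda$ makes $\sql$-coherence automatic so that an arbitrary sequence of short clubs can replace the $\square_\lambda$-sequence in the machinery of Theorem~\ref{thm67a}.
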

\begin{proof} This is the same proof as of Theorem \ref{thm67}. Only this time, we do not have to care about coherence.
\end{proof}

\section{\texorpdfstring{The coherence relation $\sqx^*$}{Left-subscript star-coherence}}

A construction of a Souslin tree using the relation $\sqx^*$ may be found in \cite{rinot23}.

\begin{lemma}\label{l11}
Suppose that $\boxminus_{\lambda,\ge\chi}+\ch_\lambda$ holds
for given infinite regular cardinals $\chi < \lambda$.

Then, there exist $S\s E^{\lambda^+}_\chi\s E^{\lambda^+}_{\ge\chi}\s\Gamma\s\acc(\lambda^+)$, and sequences $\langle c_\gamma\mid\gamma\in\Gamma\rangle$,
$\langle C_\gamma\mid\gamma<\lambda^+\rangle$, and $\langle X_{\beta}\mid\beta<\lambda^+\rangle$
that satisfy the following:
\begin{enumerate}
\item if $\gamma\in\acc(\lambda^+)$, then $C_\gamma$ is a club in $\gamma$;
\item if $\gamma\in\acc(\lambda^+)$ and ${\bar\gamma}\in\acc(C_\gamma)$,
then ${\bar\gamma}\notin S$ and $C_{\bar\gamma} \sqx^* C_\gamma$;
\item if $\gamma\in\Gamma$, then $c_\gamma$ is a club in $\gamma$ with $\otp(c_\gamma)\le\lambda$;
\item if $\gamma\in\Gamma$ and ${\bar\gamma}\in\acc(c_\gamma)$, then  ${\bar\gamma}\in\Gamma\setminus S$, $c_{\bar\gamma}=c_\gamma\cap{\bar\gamma}$, and $C_{\bar\gamma}=C_\gamma\cap{\bar\gamma}$;
\item for every subset $X\s\lambda^+$ and every club $D\s\lambda^+$, there exists some $\gamma\in E^{\lambda^+}_\lambda$
such that $\min(C_\gamma)\in D$, and
$$\sup(\{\beta\in\nacc(C_\gamma)\cap D\cap S\mid X_\beta=X\cap\beta\})=\gamma.$$
\end{enumerate}
\end{lemma}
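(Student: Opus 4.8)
The plan is to follow the template already established in this section for massaging a $\boxminus_{\lambda,\ge\chi}$-sequence into a sequence carrying guessing information, then to extract from it the $\sqx^*$-coherent data. Concretely, I would start from Lemma~\ref{l24} applied to a suitable stationary set, but more directly I would mimic the proof of Theorem~\ref{thm38}: as in Claim~\ref{claim6131}, fix sequences $\langle C_\alpha\mid\alpha\in\Gamma\rangle$ and $\langle(S_i,\gamma_i)\mid i\le\cf(\lambda)\rangle$ with $E^{\lambda^+}_{\ge\chi}\s\Gamma\s\acc(\lambda^+)$, $\Gamma=\biguplus S_i$, each $C_\alpha$ a club of order-type $\le\lambda$, strong coherence inside each $S_i$, and stationarily many $\alpha\in S_i$ with $\otp(C_\alpha)=\gamma_i$ and $C_\alpha\s E$ for each club $E$. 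These $C_\gamma$ will serve (after extending to all of $\acc(\lambda^+)$ by arbitrary clubs of order-type $\cf(\gamma)$ off $\Gamma$) as the family in clauses~(1),(2): the point is that $\sqx^*$ is a very weak coherence relation, so off $\Gamma$ there is nothing to check, and on $\Gamma$ full $\sq$-coherence within each block $S_i$ gives $\sqx^*$ a fortiori, provided we arrange that the accumulation points $\bar\gamma$ of $C_\gamma$ avoid $S$.

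Next I would run the $\diamondsuit(\lambda^+)$-coding machinery exactly as in Theorem~\ref{thm38} (the passage building $\langle(C^\bullet_\alpha,Z_\alpha)\mid\alpha\in\Gamma\rangle$ via Claims~\ref{437}--\ref{c6112}), obtaining a $\sq_\chi$-coherent sequence $\langle D_\alpha\mid\alpha<\lambda^+\rangle$ of local clubs of order-type $\le\lambda$ together with a $\diamondsuit(\lambda^+)$-sequence $\langle Z_\alpha\rangle$ guessing along non-accumulation points. From this I would define $S$ to be a stationary subset of $E^{\lambda^+}_\chi$ living in the appropriate block $T_i$ where $\Theta_i=\chi$, set $c_\gamma:=D_\gamma$ for $\gamma\in\Gamma$ (truncated, as in the passage to $c_\alpha$ in the proof of Lemma~\ref{l24}, so that accumulation points of $c_\gamma$ avoid $S$), rename $Z_\beta$ as $X_\beta$, and take $\langle C_\gamma\mid\gamma<\lambda^+\rangle$ from the first paragraph. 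Clauses~(3) and~(4) are then read off directly: $c_\gamma$ has order-type $\le\lambda$ and is $\sq$-coherent inside its block, hence its accumulation points land back in $\Gamma\setminus S$ with $c_{\bar\gamma}=c_\gamma\cap\bar\gamma$ and $C_{\bar\gamma}=C_\gamma\cap\bar\gamma$ since both families were built coherently over the same partition.

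For clause~(5), given $X\s\lambda^+$ and a club $D$, I would apply the guessing property of $\langle(D_\alpha,Z_\alpha)\rangle$ — precisely the analogue of Claim~\ref{c6112}(3) or Claim~\ref{22437} — to the pair $(X,D')$ where $D'$ thins $D$ to a club of points that are simultaneously closure points of $D$ and lie in $S$ as densely as the $\diamondsuit$-sequence allows; this yields $\gamma\in E^{\lambda^+}_\lambda$ (choosing the block with $\Theta_i=\lambda$, which is legitimate since $\lambda$ is a limit cardinal) with $C_\gamma\s D$, so $\min(C_\gamma)\in D$, and with $\nacc(C_\gamma)\cap D\cap S$ covering a set on which $X_\beta=X\cap\beta$ cofinally in $\gamma$. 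The main obstacle I anticipate is bookkeeping: ensuring that the \emph{same} ordinal $\gamma$ simultaneously sees $\cf(\gamma)=\lambda$, $\min(C_\gamma)\in D$, and cofinally-many good $\beta\in\nacc(C_\gamma)\cap D\cap S$ — this requires interleaving the block structure $\langle\Theta_i\rangle$ of Theorem~\ref{thm38} with the diamond-guessing so that the witness lands in the block of order-type $\lambda$; I expect this to go through verbatim by the ordinal-arithmetic reindexing already performed after Claim~\ref{claim6131}, together with the observation that $E^{\lambda^+}_\chi$ and $E^{\lambda^+}_\lambda$ sit in different blocks so the two coherence demands never collide.
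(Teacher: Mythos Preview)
Your approach has a genuine gap. The machinery of Theorem~\ref{thm38} and Claim~\ref{claim6131} is built so that the blocks $\Theta_i$ are regular cardinals \emph{strictly below} $\lambda$ (they form a set cofinal in the limit cardinal $\lambda$, with $\Theta_i\ge\chi$). There is no ``block with $\Theta_i=\lambda$'': the whole point of that partition is to produce witnesses in $E^{\lambda^+}_\eta$ for $\eta<\lambda$, never in $E^{\lambda^+}_\lambda$. But clause~(5) of the lemma demands a witness $\gamma\in E^{\lambda^+}_\lambda$, so your plan to read off~(5) by ``choosing the block with $\Theta_i=\lambda$'' simply does not go through. You also assert that $\lambda$ is a limit cardinal, whereas the hypothesis only says $\lambda$ is regular; the Section~4 machinery you invoke splits precisely along the limit/successor dichotomy, and neither branch delivers cofinality-$\lambda$ witnesses.

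The paper's proof is structurally different: it is the Kojman--Shelah recursive refinement argument. One starts with $\langle c_\gamma\mid\gamma\in\Gamma\rangle$ from Lemma~\ref{l24} and $C^0_\gamma:=c_\gamma$, then attempts to verify~(5). If it fails, the counterexample $(X^n,D^n)$ is used to shrink $S^n$ to $S^{n+1}$ and to ``fill in'' $C^n_\gamma$ to $C^{n+1}_\gamma$ by splicing in $C^{n+1}_\beta$ at nacc points $\beta\notin S^{n+1}$. Either some stage succeeds, or the $\omega$-recursion runs forever; in the latter case one intersects all the $D^n$, uses the $\diamondsuit(S')$-matrix (indexed by $n<\omega$) to find a stationary $S''=\bigcap_n S^{n+1}$, and derives a contradiction at a single $\gamma\in E^{\lambda^+}_\lambda\cap\acc^+(S'')$ by a minimality argument on the decreasing sequence $\beta_n:=\min(C^n_\gamma\setminus\beta)$. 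The $\sqx^*$-coherence in clause~(2), and the fact that $\acc(C_\gamma)$ avoids $S$, emerge from this splicing construction (Claim~\ref{c251}), not from any a priori block structure. Your direct-construction plan does not capture this try--fail--refine mechanism, which is where the real content of the lemma lies.
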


\begin{proof} This is the argument of \cite[Theorem~3]{KjSh:449}, modulo various adjustments.

By applying Lemma \ref{l24} with $S = E^{\lambda^+}_\chi$ and $\eta = \lambda$,
let us fix a sequence $\langle c_\gamma\mid\gamma\in \Gamma\rangle$ and a stationary subset $S'\s E^{\lambda^+}_\chi$ such that:
\begin{itemize}
\item $E^{\lambda^+}_{\ge\chi}\s\Gamma\s \acc(\lambda^+)$;
\item if $\gamma\in\Gamma$, then $c_\gamma$ is a club subset of $\gamma$ of order-type $\le\lambda$;
\item if $\gamma\in\Gamma$ and  $\bar\gamma\in\acc(c_\gamma)$, then $\bar\gamma\in\Gamma\setminus S'$ and $c_{\bar\gamma}=c_\gamma\cap \bar\gamma$;
\item for every club $D\s\lambda^+$, there exist stationarily many $\gamma\in E^{\lambda^+}_\lambda$ such that $\min(c_\gamma)\in D$.
\end{itemize}

Next, by $\ch_\lambda$
and the fact that $S'$ is a stationary subset of $E^{\lambda^+}_{\neq\cf(\lambda)}$, we get from Fact \ref{fact922} that $\diamondsuit(S')$ holds.
We shall use $\diamondsuit(S')$ to guess subsets of $\omega \times \lambda^+$ (rather than subsets of $\lambda^+$).
More specifically, we fix a matrix
$\mathbb X=\langle X_\beta^n\mid n<\omega, \beta<\lambda^+ \rangle$
such that for every sequence $\langle X^n\mid n<\omega\rangle$ of subsets of $\lambda^+$, there exist stationarily many $\beta\in S'$
such that $\bigwedge_{n<\omega}X^n_\beta=X^n\cap\beta$.

We now attempt to construct, recursively, sequences $\langle S^n \mid n < \omega \rangle$ and
$\langle \langle C^n_\gamma \mid \gamma<\lambda^+ \rangle \mid n<\omega \rangle$
satisfying the following properties for all $n < \omega$ and all nonzero limit ordinals $\gamma < \lambda^+$:
\begin{enumerate}
\item[(a)] $S^{n+1} \subseteq S^n \subseteq S'$;
\item[(b)] $C^{n}_\gamma \s C^{n+1}_\gamma\s\gamma$;
\item[(c)] $C_\gamma^n$ is a club in $\gamma$, and if $\gamma \in \Gamma$ then $\min(C^n_\gamma)=\min(c_\gamma)$;
\item[(d)] if $\gamma\in\Gamma$ and  ${\bar\gamma}\in\acc(c_\gamma)$, then $C^n_{\bar\gamma}=C^n_\gamma\cap{\bar\gamma}$;
\item[(e)] if ${\bar\gamma}\in\acc(C^n_\gamma)$, then ${\bar\gamma}\notin S^n$ and $C^n_{\bar\gamma} \sqx^* C^n_\gamma$.
\end{enumerate}

Let $S^0:=S'$, and
let $C^0_\gamma:=c_\gamma$ for all $\gamma \in \Gamma$.
Let $C^0_0 = \emptyset$ and let $C^0_{\gamma+1}:=\{\gamma\}$ for all $\gamma<\lambda^+$.
For all $\gamma\in\acc(\lambda^+)\setminus\Gamma$,
let $C^0_\gamma$ be some club subset of $\gamma$ of order-type $\cf(\gamma)$.
Then $\acc(C^0_\gamma)\cap E^{\lambda^+}_{\ge\chi}=\emptyset$ for all $\gamma\in \lambda^+ \setminus \Gamma$.
In particular,  $S'\cap \acc(C^0_\gamma)=\emptyset$ for all $\gamma<\lambda^+$.
Thus,
it is clear that $S^0$ and $\langle C^0_\gamma \mid \gamma < \lambda^+ \rangle$ satisfy the above properties.

Now, fix $n<\omega$, and suppose that $S^n$ and $\langle C^n_\gamma\mid\gamma<\lambda^+\rangle$
have been constructed to satisfy the above properties.
It is clear that $S^n$, $\langle c_\gamma\mid\gamma\in \Gamma\rangle$,
$\langle C^n_\gamma\mid\gamma<\lambda^+\rangle$,
and $\langle X^n_\beta \mid\beta<\lambda^+\rangle$
satisfy properties (1)--(4) of the conclusion of the lemma.
If property~(5) is satisfied as well, then the lemma is proven and we abandon the recursive construction at this point.

Otherwise, there must exist a subset $X^n\s\lambda^+$ and a club $D^n\s\lambda^+$ such that
for all $\gamma\in E^{\lambda^+}_\lambda$, either $\min(C^n_\gamma)\notin D^n$, or  $\sup(\nacc(C^n_\gamma)\cap S^{n+1})<\gamma$, where we define:
$$S^{n+1}:=\{\beta\in S^n\cap D^n\mid X^n_\beta=X^n\cap\beta\}.$$

Define $C^{n+1}_\gamma$ by recursion over $\gamma<\lambda^+$, as follows. Let $C^{n+1}_0=\emptyset$, and $C^{n+1}_{\gamma+1}:=\{\gamma\}$.
Now, if $\gamma<\lambda^+$ is a nonzero limit ordinal, and $\langle C^{n+1}_{\bar\gamma}\mid {\bar\gamma}<\gamma\rangle$ has already been defined, we let
$$C^{n+1}_\gamma:=C^n_\gamma\cup \bigcup \{C^{n+1}_\beta\setminus\sup(C^n_\gamma\cap\beta)\mid \beta\in\nacc(C^n_\gamma)\setminus S^{n+1}, \beta>\min(C^n_\gamma)\}.$$

\begin{claim}\label{c251}
$S^{n+1}$ and $\langle C^{n+1}_\gamma \mid \gamma<\lambda^+ \rangle$
satisfy properties (a)--(e) of the recursion.
\end{claim}
\begin{proof}
(a)--(d) are trivial.

(e) Suppose not, and let $\gamma$ denote the least counterexample.
Let $\bar\gamma\in\acc(C^{n+1}_\gamma)$ be arbitrary. We address the two possible cases.

$\br$ Suppose that  $\bar\gamma\in S^{n+1}$.
Since $S^{n+1}\s S^n$ and (by the previous step of the recursion) $\acc(C_\gamma^n)\cap S^n=\emptyset$,
we get from ${\bar\gamma}\in\acc(C^{n+1}_\gamma)$ and the definition of $C_\gamma^{n+1}$, the existence of $\beta\in\nacc(C^n_\gamma)\setminus S^{n+1}$
such that either ${\bar\gamma}\in\acc(C^{n+1}_\beta\setminus\sup(C^n_\gamma\cap\beta))$ or $\beta={\bar\gamma}$.
By minimality of $\gamma$, we have $\acc(C^{n+1}_\beta)\cap S^{n+1}=\emptyset$ for all $\beta<\gamma$,
and hence it must be the case that ${\bar\gamma}=\beta$. So ${\bar\gamma}\in\nacc(C^n_\gamma)\setminus S^{n+1}$, contradicting the fact that ${\bar\gamma}\in S^{n+1}$.

$\br$ Suppose that $C^{n+1}_{\bar\gamma} \nsqx^* C^{n+1}_\gamma$.
Then $\cf(\bar\alpha) \geq\chi$ and $C^{n+1}_{\bar\gamma} \not\sqsubseteq^* C^{n+1}_\gamma$,
so that
for no $\alpha<\bar\gamma$ do we have $C^{n+1}_{\bar\gamma}\cap(\alpha,{\bar\gamma})=C^{n+1}_\gamma\cap(\alpha,{\bar\gamma})$.
Clearly, $\bar\gamma$ cannot be in $\acc(C^n_{\gamma})$.
Let $\beta\in\nacc(C^n_\gamma)\setminus S^{n+1}$ be such that either ${\bar\gamma}\in\acc(C^{n+1}_\beta\setminus\sup(C^n_\gamma\cap\beta))$ or $\beta={\bar\gamma}$.
If $\beta=\bar\gamma$, then $C^{n+1}_{\bar\gamma}\cap(\alpha,{\bar\gamma})=C^{n+1}_\gamma\cap(\alpha,{\bar\gamma})$ for $\alpha:=\sup(C^n_\gamma\cap\beta)$.
So $\bar\gamma<\beta$, and $\beta$ contradicts the minimality of $\gamma$.
\end{proof}

If we reach the end of the above recursive process, then we are equipped with a sequence
$\langle (\langle C^n_\gamma\mid\gamma<\lambda^+\rangle,S^n, D^n,X^n)\mid n<\omega\rangle$,
from which we shall derive a contradiction.
The set $\bigcap_{n<\omega}D^n$ is club in $\lambda^+$.
Thus, by the choice of $\mathbb X$, the following set must be stationary:
$$S'' :=\bigcap_{n<\omega} S^n=\left\{ \beta\in S'\cap\bigcap_{n<\omega}D^n\mid \bigwedge_{n<\omega}X^n_\beta=X^n\cap\beta\right\}.$$
Thus, $\acc^+(S'')$ is a club in $\lambda^+$, and we may
pick $\gamma\in E^{\lambda^+}_\lambda\cap\acc^+(S'')$ such that $\min(c_\gamma)\in\bigcap_{n<\omega}D^n$.
For all $n<\omega$, put $\gamma_n:=\sup(\nacc(C^n_\gamma)\cap S^{n+1})$.
By $\min(C_\gamma^n)\in D^n$, we have $\gamma_n<\gamma$.
By $\cf(\gamma)=\lambda>\aleph_0$, we get $\gamma^*<\gamma$,
where $\gamma^*:=\sup_{n<\omega}\gamma_n$.

\begin{claim}\label{empty-int}
$C^n_\gamma \cap S^{n+1} \cap (\gamma^*, \gamma) =  \emptyset$ for all $n<\omega$.
\end{claim}
\begin{proof}
Fix any $n < \omega$, and
suppose $\beta \in S^{n+1} \cap (\gamma^*, \gamma)$.
Then $\beta \in S^{n+1} \subseteq S^n$.
By Clause~(e) of the recursion, it follows that $\beta \notin \acc(C^n_\gamma)$.
But also $\beta > \gamma^* \geq \gamma_n$,
so that $\beta \notin \nacc(C^n_\gamma) \cap S^{n+1}$,
and it follows that $\beta \notin \nacc(C^n_\gamma)$.
Thus, altogether, $\beta \notin C^n_\gamma$.
\end{proof}

Since $S'' \cap \gamma$ is cofinal in $\gamma$,
let us pick $\beta\in S'' \cap(\gamma^*,\gamma)$ above $\min(c_\gamma)$.

For all $n<\omega$, let $\beta_n:=\min(C^n_\gamma\setminus\beta)$.
As $\{ C^n_\gamma\mid n<\omega\}$ is an increasing chain, $\langle \beta_n\mid n<\omega\rangle$
is weakly decreasing, and hence stabilizes. Fix $n<\omega$ such that $\beta_n=\beta_{n+1}$.
Since $\beta \in S'' \subseteq S^{n+1}$,
Claim~\ref{empty-int} gives $\beta \notin C^n_\gamma$, so that $\beta_n > \beta$.
In particular, $\beta_n = \min(C^n_\gamma \setminus (\beta+1))$,
so that $\beta_n\in\nacc(C^n_\gamma)$.
By Claim~\ref{empty-int} again and $\gamma > \beta_n > \beta > \gamma^*$,
it follows that $\beta_n\in\nacc(C^n_\gamma)\setminus S^{n+1}$.
Recalling that also $\beta_n >\beta >\min(c_\gamma) = \min(C^n_\gamma)$,
we infer from the definition of $C^{n+1}_\gamma$ that
$$C^{n+1}_\gamma\cap[\sup(C^n_\gamma\cap\beta_n),\beta_n)=C^{n+1}_{\beta_n}\cap[\sup(C^n_\gamma\cap\beta_n),\beta_n),$$
and by $\beta_n=\min(C^n_\gamma\setminus\beta)$, we have $\sup(C^n_\gamma\cap\beta_n)\le\beta$, and hence
$$C^{n+1}_\gamma\cap[\beta,\beta_n)=C^{n+1}_{\beta_n}\cap[\beta,\beta_n).$$

Since $\beta < \beta_n$, it follows that
$\beta_{n+1} = \min (C^{n+1}_\gamma \setminus \beta) = \min(C^{n+1}_{\beta_n} \setminus \beta) < \beta_n$, contradicting the choice of $n$.
\end{proof}

\begin{cor}\label{c93}
Suppose that $\boxminus_{\lambda,\ge\chi}+\ch_\lambda$ holds
for given infinite regular cardinals $\chi < \lambda$.
Then both of the following hold:
\begin{enumerate}
\item $\p(\lambda^+,2,{\sqx^*},1,\{E^{\lambda^+}_{\lambda}\},2,\chi)$;
\item $\p(\lambda^+,2,{\sqx^*},\chi,\{E^{\lambda^+}_{\lambda}\},2,1)$.
\end{enumerate}
\end{cor}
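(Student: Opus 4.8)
Since $\lambda>\chi\ge\aleph_0$, the cardinal $\lambda$ is uncountable, so $\ch_\lambda$ (which is part of the hypothesis) yields $\diamondsuit(\lambda^+)$ by Fact~\ref{fact922}; hence in each of the two clauses it suffices to exhibit a witness to the corresponding $\p^-(\dots)$ principle, and in fact I expect a \emph{single} sequence $\langle D_\gamma\mid\gamma<\lambda^+\rangle$ to work for both. The plan is to run the conclusion of Lemma~\ref{l11} through a coding/decoding construction in the spirit of the proof of Theorem~\ref{thm32}, adapted to the weaker coherence relation $\sqx^*$. First I would invoke Lemma~\ref{l11} to obtain $S\s E^{\lambda^+}_\chi\s E^{\lambda^+}_{\ge\chi}\s\Gamma$ together with $\langle C_\gamma\mid\gamma<\lambda^+\rangle$ and $\langle X_\beta\mid\beta<\lambda^+\rangle$, relying on the features that $\langle C_\gamma\rangle$ is $\sqx^*$-coherent with $\acc(C_\gamma)\cap S=\emptyset$, that every element of $S$ has cofinality $\chi$, and that $\langle X_\beta\rangle$ is a $\diamondsuit$-like sequence whose guesses for any $X\s\lambda^+$ and club $D\s\lambda^+$ are realized along $\nacc(C_\gamma)\cap D\cap S$ for some $\gamma\in E^{\lambda^+}_\lambda$ with $\min(C_\gamma)\in D$. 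I would also fix a G\"odel pairing $\pi\colon\lambda^+\times\lambda^+\leftrightarrow\lambda^+$ and a ladder system $\langle e_\delta\mid\delta\in E^{\lambda^+}_\chi\rangle$ with each $e_\delta$ cofinal in $\delta$ of order-type $\chi$.

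For $\beta\in S$ I would decode $X_\beta$ into $\langle A^\beta_k\mid k<\chi\rangle$ via $A^\beta_k:=\{\tau<\beta\mid\pi(k,\tau)\in X_\beta\}$. The witness sequence is defined by recursion on $\gamma$: $D_0:=\emptyset$, $D_{\gamma+1}:=\{\gamma\}$; for limit $\gamma\notin\Gamma$, take any club of order-type $\cf(\gamma)$ with non-accumulation points among $\nacc(\gamma)$; for limit $\gamma\in S$, let $D_\gamma$ be the closure in $\gamma$ of a block $\langle\eta^\gamma_k\mid k<\chi\rangle$ built by a recursion that requires $\eta^\gamma_k\in A^\gamma_k$ and $\eta^\gamma_k>\max\{e_\gamma(k),\sup\{\eta^\gamma_j\mid j<k\}\}$ (falling back on $e_\gamma$ itself when the recursion breaks down); and for limit $\gamma\in\Gamma\setminus S$ (which includes every $\gamma\in E^{\lambda^+}_\lambda$), form $D_\gamma$ by thickening $C_\gamma$: into each interval $(\sup(C_\gamma\cap\beta),\beta)$ with $\beta\in\nacc(C_\gamma)\cap S$ insert a block $\langle\eta^{\gamma,\beta}_k\mid k<\chi\rangle$ built by the same recursion but with its first term forced above $\sup(C_\gamma\cap\beta)$, and then close off. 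Because the data inserted near $\beta$ depends only on $\beta$, on $X_\beta$, on $e_\beta$, and on $\sup(C_\gamma\cap\beta)$ — and this last quantity agrees with $\sup(C_{\bar\gamma}\cap\beta)$ for all large $\beta$ whenever $C_{\bar\gamma}\sqsubseteq^*C_\gamma$, while the $e_\beta$-bound eventually dominates and so the shifted and unshifted blocks agree on a tail of $\beta$ — the $\sqx^*$-coherence of $\langle C_\gamma\rangle$ lifts to $\langle D_\gamma\rangle$: the only new accumulation points are closure points of inserted blocks, which have cofinality $<\chi$ so $\sqx^*$-coherence is automatic, and the members $\beta\in S$ themselves, at which $D_\beta$ was designed to carry a block agreeing with the one inside $D_\gamma$ on a tail of $\beta$.

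For the guessing, given $\langle A_i\mid i<\theta\rangle$ — in Clause~(1), where $\theta=1$, set $A_i:=A_0$ for all $i<\chi$; in Clause~(2), $\theta=\chi$ — put $X:=\{\pi(i,\tau)\mid i<\chi,\ \tau\in A_i\}$ and let $D$ be the club of closure points of $\pi$ lying in $\bigcap_{i<\chi}\acc^+(A_i)$. Applying Lemma~\ref{l11}(5) to $X$ and $D$ furnishes $\gamma\in E^{\lambda^+}_\lambda$ realizing the guessing, so that at each of the cofinally many good $\beta$ one has $A^\beta_k=A_k\cap\beta$, the recursion succeeds, and the inserted block genuinely consists of points of the $A_i$'s. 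In Clause~(1) every block lies in $A_0$, and taking $\beta':=\eta^{\gamma,\beta}_0$ one checks by an index computation that $\suc_\chi(D_\gamma\setminus\beta')$ consists exactly of the successor-indexed elements of the block (all points of $A_0$), the closure points of the block sitting at limit indices and hence being skipped. In Clause~(2), the $D_\gamma$-successor of $\eta^{\gamma,\beta}_k$ is $\eta^{\gamma,\beta}_{k+1}\in A_{k+1}$ and the $D_\gamma$-successor of $\sup(C_\gamma\cap\beta)$ is $\eta^{\gamma,\beta}_0\in A_0$, so every $A_i$ picks up cofinally many $\suc_1$-witnesses. In both cases the witnessing ordinals are cofinal in $\gamma$, and since the guessing $\gamma$ may be demanded inside any prescribed club, there are stationarily many of them.

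I expect the main obstacle to be precisely the tension just described: the decoding must be rich enough that a \emph{single} level $\gamma$ simultaneously meets the $\suc_\sigma$-requirement for every one of the $\theta$ given sets, while the thickening must remain local — depending on the ambient $C_\gamma$ only through a tail near each $\beta$ — so that the base $\sqx^*$-coherence survives. This is what dictates the design choice of forcing each block to begin strictly above $\sup(C_\gamma\cap\beta)$ (so that \emph{all} of it survives the insertion, and, in Clause~(2), so that a witness for $A_0$ is produced), and it is what makes the order-type/index bookkeeping in Clause~(1) — verifying that $\suc_\chi$ captures only the coded points and never a closure point — the part that must be carried out carefully. The auxiliary checks, namely that no spurious accumulation point of cofinality $\ge\chi$ is created, and that the fallback cases ($\gamma\notin\Gamma$, or the recursion at $\beta$ breaking down against the fixed $X_\beta$) do no harm, are routine but should be spelled out.
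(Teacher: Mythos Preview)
Your overall strategy --- invoke Lemma~\ref{l11}, then thicken each $C_\gamma$ by inserting, at every $\beta\in\nacc(C_\gamma)\cap S$, a club of order-type $\chi$ whose nonaccumulation points are coded via $X_\beta$ and the pairing $\pi$ --- is exactly the paper's, and your verification of the guessing clause (including the index bookkeeping for $\suc_\chi$ and $\suc_1$) is correct. The paper, too, handles Clauses~(1) and~(2) by the same thickening, differing only in how the inserted club $D_\beta$ is defined.

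There is, however, a genuine gap in your coherence verification at the new accumulation points $\beta\in\nacc(C_\gamma)\cap S$. You define $D_\beta$ as the closure of an unshifted block $\langle\eta^\beta_k\rangle$, but insert into $D_\gamma$ a \emph{different}, shifted block $\langle\eta^{\gamma,\beta}_k\rangle$ whose first term is forced above $\sup(C_\gamma\cap\beta)$; you then assert that ``the $e_\beta$-bound eventually dominates and so the shifted and unshifted blocks agree on a tail of $\beta$.'' This is false in general. Since both recursions always choose $\eta_k>e_\beta(k)$, the $e_\beta$-bound never catches up: one can arrange $A^\beta_k$ (for an arbitrary $X_\beta$, which is all you control) so that the shifted recursion stays permanently ahead. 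Concretely, with $\chi=\aleph_0$, $\beta=\omega^2$, $e_\beta(k)=\omega k$, shift~$=\omega\cdot3$, and $A^\beta_k=\{\omega k+1\}\cup[\omega(k+10),\beta)$, one computes $\eta^\beta_k=\omega k+1$ while $\eta^{\gamma,\beta}_k=\omega(k+10)$ for all $k$; the two sequences are disjoint on every tail, so $D_\beta\not\sqsubseteq^*D_\gamma$, and since $\cf(\beta)=\chi$ this violates $\sqx^*$-coherence.

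The fix is simple and is what the paper does: do \emph{not} rerun a shifted recursion. Define $D_\beta$ once (depending only on $\beta$, $X_\beta$, $e_\beta$), and into $D_\gamma$ insert the literal tail $D_\beta\setminus\sup(C_\gamma\cap\beta)$. Then $D_\beta\setminus(\sup(C_\gamma\cap\beta)+1)=D_\gamma\cap(\sup(C_\gamma\cap\beta),\beta)$ by construction, so $D_\beta\sqsubseteq^*D_\gamma$ trivially; and since $\otp(D_\beta)=\chi$ while the removed initial segment is bounded below $\beta$, the surviving tail still has order-type $\chi$, so your $\suc_\chi$ computation for Clause~(1) goes through unchanged.
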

\begin{proof} As $\lambda$ is uncountable, Fact~\ref{fact922} entails $\diamondsuit(\lambda^+)$,
so that we only need to establish the corresponding  $\p^-(\dots)$ principles of Clauses~(1) and~(2).

Let $S\s E^{\lambda^+}_\chi$ along with sequences $\langle C_\beta\mid\beta<\lambda^+\rangle$ and $\langle X_\beta\mid\beta<\lambda^+\rangle$
be given by Lemma \ref{l11}.
We may assume that $X_\beta \subseteq \beta$ for all $\beta < \lambda^+$.

(1) We shall define a sequence $\langle C_\alpha^\bullet\mid \alpha<\lambda^+\rangle$ that witnesses
$\p^-(\lambda^+,2,{\sqx^*},1,\{E^{\lambda^+}_{\lambda}\},2,\chi)$.
First, for all limit $\beta<\lambda^+$
let $D_\beta$ be some club in $\beta$ of order-type $\cf(\beta)$,
with the additional constraint that if $\sup(X_\beta)=\beta$, then $\nacc(D_\beta)\s X_\beta$.
Then, for all limit $\alpha<\lambda^+$, let
\[
C^\bullet_\alpha:=\begin{cases}
C_\alpha\cup \bigcup \{ D_\beta\setminus\sup(C_\alpha\cap\beta)\mid \beta\in\nacc(C_\alpha)\cap S\},
                &\text{if }\alpha\in E^{\lambda^+}_{\ge\chi}\setminus S;\\
D_\alpha,&\text{otherwise.}
\end{cases}
\]

It is clear that $C_\alpha^\bullet$ is a club subset of $\alpha$.
Define $C_{\alpha+1}^\bullet := \{\alpha\}$ for all $\alpha < \lambda^+$.

Fix $\alpha < \lambda^+$ and $\bar\alpha \in \acc(C_\alpha^\bullet)$.
To show that $C_{\bar\alpha}^\bullet \sqx^* C_\alpha^\bullet$, we consider two possibilities:
\begin{itemize}
\item
If $\alpha \notin E^{\lambda^+}_{\ge\chi}\setminus S$,
then $C_\alpha^\bullet = D_\alpha$, and in particular (since $S \subseteq E^{\lambda^+}_\chi$)
$\alpha \in E^{\lambda^+}_{\leq\chi}$,
so that
$\cf(\sup(C_{\bar\alpha}^\bullet)) = \cf(\bar\alpha) \leq \otp(C_\alpha^\bullet \cap \bar\alpha)
< \otp(C_\alpha^\bullet) = \otp(D_\alpha) = \cf(\alpha) \leq \chi$,
and it follows that $C_{\bar\alpha}^\bullet \sqx^* C_\alpha^\bullet$.
\item
If $\alpha \in E^{\lambda^+}_{\ge\chi}\setminus S$,
then there are three cases to consider:
\begin{itemize}
\item
If $\bar\alpha \in \acc(C_\alpha)$,
then by Clause~(2) of Lemma~\ref{l11} we have $\bar\alpha \notin S$ and
$C_{\bar\alpha} \sqx^* C_\alpha$.
If $\cf(\bar\alpha) < \chi$ then we automatically have $C_{\bar\alpha}^\bullet \sqx^* C_\alpha^\bullet$.
Thus we assume $\cf(\bar\alpha) \geq\chi$, so that $C_{\bar\alpha} \sqsubseteq^* C_\alpha$.
Fix $\gamma < \bar\alpha$ such that $C_{\bar\alpha} \setminus \gamma \sqsubseteq C_\alpha \setminus \gamma$.
Since $\alpha, \bar\alpha \in E^{\lambda^+}_{\geq\chi} \setminus S$,
it is clear from the definition of $C_\alpha^\bullet$ in this case that
$C_{\bar\alpha}^\bullet \setminus \gamma \sqsubseteq C_\alpha^\bullet \setminus \gamma$,
so that $C_{\bar\alpha}^\bullet \sqsubseteq^* C_\alpha^\bullet$.

\item
If $\bar\alpha \in \nacc(C_\alpha) \cap S$,
then $C_{\bar\alpha}^\bullet = D_{\bar\alpha}$,
so that, letting $\gamma = \sup(C_\alpha \cap \bar\alpha) +1$, we have
$C_{\bar\alpha}^\bullet \setminus \gamma = D_{\bar\alpha} \setminus \gamma
\sqsubseteq C_\alpha^\bullet \setminus \gamma$,
so that again $C_{\bar\alpha}^\bullet \sqsubseteq^* C_\alpha^\bullet$.

\item
If $\bar\alpha \in \acc(D_\beta)$ for some $\beta \in \nacc(C_\alpha) \cap S$,
then in particular $\cf(\beta)  = \chi$,
so that
$\cf(\sup(C_{\bar\alpha}^\bullet)) = \cf(\bar\alpha) \leq \otp(D_\beta \cap \bar\alpha)
< \otp(D_\beta) = \cf(\beta) = \chi$,
and it follows that $C_{\bar\alpha}^\bullet \sqx^* C_\alpha^\bullet$.
\end{itemize}
\end{itemize}

Thus, to verify that $\langle C^\bullet_\alpha\mid\alpha<\lambda^+\rangle$ witnesses
$\p^-(\lambda^+, 2, {\sqx^*}, 1, \{E^{\lambda^+}_{\lambda}\}, 2, \chi)$,
consider any cofinal subset $A_0 \subseteq \lambda^+$ and any club $E \subseteq \lambda^+$.
Let $\alpha \in E^{\lambda^+}_\lambda$ be given by Clause~(5) of Lemma~\ref{l11}
applied with $X := A_0$ and $D := E \cap \acc^+(A_0)$.
Define
\[
Z := \{ \beta \in \nacc(C_\alpha) \cap E \cap \acc^+(A_0) \cap S \mid X_\beta = A_0 \cap \beta \},
\]
so that $\sup(Z) = \alpha$.
Since $Z \subseteq E$ and $E$ is club, it follows that $\alpha \in \acc^+(E) \subseteq E$.

Since $S \subseteq E^{\lambda^+}_\chi$ and $\chi < \lambda =\cf(\alpha)$,
we clearly have $\alpha \in E^{\lambda^+}_{\geq\chi} \setminus S$.
Now, consider any $\beta \in Z$, and let $\beta^- := \sup(C_\alpha \cap \beta)$.
Since $\beta \in \nacc(C_\alpha) \cap S$, it follows that $\beta^- \in C_\alpha \cap \beta$ and
$D_\beta \cap (\beta^-, \beta) = C_\alpha^\bullet \cap (\beta^-, \beta)$.
Furthermore, since $\beta \in S \subseteq E^{\lambda^+}_\chi$, we have $\otp(D_\beta) = \cf(\beta) = \chi$.
Thus also $\otp(D_\beta \cap (\beta^-, \beta)) = \chi$,
and it follows
that $\suc_\chi(C_\alpha^\bullet \setminus \beta^-) \subseteq \nacc(D_\beta)$.
Since $\beta \in Z \subseteq \acc^+(A_0)$, we have $\sup(X_\beta) = \sup(A_0 \cap \beta) = \beta$,
so that $\nacc(D_\beta) \subseteq X_\beta$.
Altogether, we have
$\suc_\chi(C_\alpha^\bullet \setminus \beta^-) \subseteq \nacc(D_\beta) \subseteq X_\beta \subseteq A_0$.
Since $\sup(Z) = \alpha$, it is clear that also $\sup\{ \beta^- \mid \beta \in Z \} = \alpha$.

(2) The definition of $\langle C_\alpha^\bullet\mid \alpha<\lambda^+\rangle$ that witnesses
$\p^-(\lambda^+, 2, {\sqx^*}, \chi, \{E^{\lambda^+}_{\lambda}\}, 2, 1)$ is as in the previous case, modulo the fact that the clubs $D_\beta$ need to be chosen somewhat differently.
Fix a bijection $\pi:\chi\times\lambda^+\leftrightarrow\lambda^+$. Denote $X_\beta^i:=\{\alpha<\beta\mid \pi(i,\alpha)\in X_\beta\}$.
Then, for all limit $\beta<\lambda^+$, let $D_\beta$ be some club in $\beta$ of order-type $\cf(\beta)$,
with the additional constraint that if $\cf(\beta)=\chi$, then $\sup(\nacc(D_\beta)\cap X_\beta^i)=\beta$ for all $i<\chi$ such that $\sup(X_\beta^i)=\beta$.
For any sequence $\langle A_i \mid i < \chi \rangle$ of cofinal subsets of $\lambda^+$
and any club $E \subseteq \lambda^+$,
the verification includes applying Clause~(5) of Lemma~\ref{l11} with
$X := \bigcup_{i<\chi} \pi [ \{i\} \times A_i ]$ and
$D := E \cap \bigcap_{i<\chi} \acc^+(A_i) \cap \{ \beta < \lambda^+ \mid \pi[\chi\times\beta] = \beta \}$.
\end{proof}

\begin{thm}\label{thm317} If $\ch_\lambda$ holds for a given regular uncountable cardinal $\lambda$,
and there exists a nonreflecting stationary subset of $E^{\lambda^+}_{<\lambda}$, then
$\p(\lambda^+,2,{\sql}^*,\theta,\{E^{\lambda^+}_{\lambda}\},2,\sigma)$ holds for all regular cardinals
$\theta,\sigma<\lambda$.
\end{thm}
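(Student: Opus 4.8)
The plan is to strip off the $\diamondsuit(\kappa)$ clause at once, and then build the $C$-sequence so that the ${\sql}^*$-coherence clause is essentially automatic and all the work is concentrated in the guessing clause at $E^{\lambda^+}_\lambda$ — which is precisely where the nonreflecting set is exploited. Since $\lambda$ is uncountable and $\ch_\lambda$ holds, Fact~\ref{fact922} supplies $\diamondsuit(\lambda^+)$ (so the $\diamondsuit$-clause is free) and, as $\lambda$ is regular, also $\diamondsuit(S)$ for every stationary $S\s E^{\lambda^+}_{\ne\cf(\lambda)}=E^{\lambda^+}_{\ne\lambda}$. Thus it suffices to produce a witness $\langle C_\alpha\mid\alpha<\lambda^+\rangle$ to $\p^-(\lambda^+,2,{\sql}^*,\theta,\{E^{\lambda^+}_\lambda\},2,\sigma)$. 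Fix a nonreflecting stationary $S\s E^{\lambda^+}_{<\lambda}$; passing to a stationary subset (a stationary subset of a nonreflecting set is nonreflecting) we may assume $S\s E^{\lambda^+}_\mu$ for one fixed regular $\mu<\lambda$. Fix a $\diamondsuit(S)$-sequence $\langle Z_\beta\mid\beta\in S\rangle$ guessing subsets of $\lambda^+$, a bijection $\pi\colon\theta\times\lambda^+\leftrightarrow\lambda^+$, and for $\beta\in S$ write $A_i^\beta:=\{\nu<\beta\mid\pi(i,\nu)\in Z_\beta\}$ for $i<\theta$.

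\textbf{The construction.} Define $C_\alpha$ by recursion on $\alpha<\lambda^+$, maintaining throughout the single invariant $\otp(C_\alpha)\le\lambda$. This invariant already discharges the coherence clause: if $\bar\alpha\in\acc(C_\alpha)$ then $\cf(\bar\alpha)=\cf(\otp(C_\alpha\cap\bar\alpha))<\lambda$, so $C_{\bar\alpha}{\sql}^*C_\alpha$ holds by the second disjunct in the definition of ${\sql}^*$, regardless of what $C_{\bar\alpha}$ is. For $\alpha$ non-limit, or limit with $\cf(\alpha)<\lambda$, let $C_\alpha$ be any club in $\alpha$ of order-type $\cf(\alpha)$. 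Now let $\alpha\in E^{\lambda^+}_\lambda$. If $S\cap\alpha$ is bounded in $\alpha$, let $C_\alpha$ be any club in $\alpha$ of order-type $\lambda$. Otherwise, invoking the nonreflection of $S$, fix once and for all a club $b_\alpha\s\alpha$ with $\acc(b_\alpha)\cap S=\emptyset$, to serve as an unambiguous scaffold along which to read off the $\diamondsuit(S)$-guesses; then build $C_\alpha$, by traversing $b_\alpha$, as a concatenation of $\lambda$-many successive blocks, each of order-type $\sigma$ (plus a bounding point), the $\zeta$-th block aimed at coordinate $i(\zeta)<\theta$ for a fixed map $\zeta\mapsto i(\zeta)$ with cofinal fibres, where inside the $\zeta$-th block the $\sigma$ successor-indexed entries are chosen step by step inside $A^\beta_{i(\zeta)}$ for the most recently passed $\beta\in S$, reverting to climbing along $b_\alpha$ whenever that prescription is unavailable. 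In all cases $\otp(C_\alpha)\le\lambda$ and $C_\alpha$ is a club in $\alpha$.

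\textbf{Verification of the guessing clause.} Given cofinal $\langle A_i\mid i<\theta\rangle$ and a club $D\s\lambda^+$, put $Z:=\{\pi(i,\nu)\mid i<\theta,\ \nu\in A_i\}$ and let $E\s D$ be the club of $\beta<\lambda^+$ that are closed under $\pi$ and lie in $\bigcap_{i<\theta}\acc^+(A_i)$. By $\diamondsuit(S)$, the set $E':=\{\beta\in S\cap E\mid Z_\beta=Z\cap\beta\}$ is stationary; pick an increasing $\langle\beta_\xi\mid\xi<\lambda\rangle$ inside $E'$ and set $\alpha:=\sup_\xi\beta_\xi$, so $\alpha\in E^{\lambda^+}_\lambda\cap D$ and $S\cap\alpha$ is cofinal in $\alpha$. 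At cofinally many $\beta\in S\cap\alpha$ — including all $\beta_\xi$ — the guess is correct, whence $A^\beta_i=A_i\cap\beta$, which is cofinal in $\beta$ since $\beta\in E$. One then checks that at stage $\alpha$ the block-construction succeeds in placing, cofinally often below $\alpha$, a full $\suc_\sigma$-block inside $A_i\cap\alpha\s A_i$, for each $i<\theta$; that is, $\sup\{\beta\in C_\alpha\mid\suc_\sigma(C_\alpha\setminus\beta)\s A_i\}=\alpha$ for all $i<\theta=\min\{\alpha,\theta\}$. As $D$ was arbitrary, such $\alpha$ meet every club, so the set of good $\alpha\in E^{\lambda^+}_\lambda$ is stationary, and $\langle C_\alpha\mid\alpha<\lambda^+\rangle$ is the desired witness.

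\textbf{The main obstacle.} The delicate step is the dovetailing at the chosen $\alpha$: the scaffold $b_\alpha$ furnished by nonreflection, the points of $S$, and the $\diamondsuit(S)$-guesses must be interlocked so that ``cofinally many $S$-points below $\alpha$ carry the correct guess'' is genuinely upgraded to ``cofinally many blocks of $C_\alpha$ are fully and correctly placed in each of the $\theta$ coordinates'', rather than merely to ``cofinally many partially placed blocks''. It is exactly here that nonreflection is essential — without it one cannot canonically reconcile the many potentially conflicting coherent threads through $S\cap\alpha$ — and this is the same mechanism underlying the theorems of Gregory~\cite{MR485361} and Kojman--Shelah~\cite{KjSh:449} already used in the proof of Lemma~\ref{l11}. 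The remaining ordinal bookkeeping can be modeled on the argument there, with the simplification that, unlike the $\sqx^*$ case, ${\sql}^*$ imposes no coherence constraint beyond the order-type bound $\otp(C_\alpha)\le\lambda$.
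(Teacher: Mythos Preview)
Your observation that $\otp(C_\alpha)\le\lambda$ forces every $\bar\alpha\in\acc(C_\alpha)$ to satisfy $\cf(\bar\alpha)<\lambda$, thereby trivializing the ${\sql}^*$-coherence clause, is correct. But this simplification comes at a price you do not pay: the verification of the guessing clause does not go through.

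The gap is precisely in the sentence ``One then checks that at stage $\alpha$ the block-construction succeeds\ldots''. Your $C_\alpha$ is built from the fixed scaffold $b_\alpha$ together with $\diamondsuit(S)$-guesses read at certain $S$-points --- call them $\gamma^\alpha_\zeta$ --- that are determined entirely by $b_\alpha$ and the recursion, \emph{before} any $\langle A_i\rangle$ or $D$ is given. In the verification you then choose $\alpha=\sup_\xi\beta_\xi$ where the $\beta_\xi$ are correct guesses in $E'$. But the $\gamma^\alpha_\zeta$'s and the $\beta_\xi$'s are two unrelated cofinal subsets of $S\cap\alpha$; there is no reason whatsoever for $\{\zeta\mid\gamma^\alpha_\zeta\in E'\}$ to be cofinal in $\lambda$. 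Two cofinal $\lambda$-sequences in an ordinal of cofinality $\lambda$ can be disjoint. So the block-construction may consult \emph{only wrong} guesses at your chosen $\alpha$, and the hitting fails.

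This is not mere bookkeeping. The paper's proof invokes Theorem~3 of Kojman--Shelah \cite{KjSh:449} to obtain a sequence $\langle C_\alpha\mid\alpha\in E^{\lambda^+}_\lambda\rangle$ with genuine $\sqsubseteq^*$-coherence at $\lambda$-cofinal accumulation points \emph{and} a built-in hitting property: for every $Z$ and club $D$, there \emph{exists} $\alpha\in E^{\lambda^+}_\lambda$ with $\sup(\nacc(C_\alpha)\cap D\cap S\cap\{\gamma\mid Z_\gamma=Z\cap\gamma\})=\alpha$. The coherence there is not incidental --- it is the mechanism by which the recursion (over $n<\omega$, refining $S^n$ and thickening $C^n_\alpha$) terminates and delivers the hitting property. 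You cannot ``model on the argument there'' while discarding that coherence; the Kojman--Shelah $C_\alpha$'s need not have order-type $\le\lambda$, so your order-type trick does not apply to them, and conversely an arbitrary local choice of $b_\alpha$ provides no global hitting. The paper then uses the genuine $\sqsubseteq^*$-coherence once more to handle the case $\bar\alpha,\alpha\in E^{\lambda^+}_\lambda$ with $\bar\alpha\in\acc(D_\alpha)$, a case that your order-type bound would indeed eliminate --- but only after the hitting property is already in hand.
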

\begin{proof} Suppose that $S\s E^{\lambda^+}_{<\lambda}$ is stationary and nonreflecting.
By Fact \ref{fact922}, $\ch_\lambda$ entails $\diamondsuit(S)$.
Recalling that $E^{\lambda^+}_\lambda$ is always a nonreflecting stationary subset of $\lambda^+$
and $S \cap E^{\lambda^+}_\lambda = \emptyset$,
we apply Theorem 3 of \cite{KjSh:449} with $S^* = E^{\lambda^+}_\lambda$
to obtain sequences $\langle C_\alpha\mid\alpha\in E^{\lambda^+}_\lambda\rangle$ and
$\langle Z_\gamma\mid\gamma<\lambda^+\rangle$
that satisfy the following:
\begin{enumerate}
\item $C_\alpha$ is a club in $\alpha$ for all $\alpha\in E^{\lambda^+}_\lambda$;
\item if $\alpha\in E^{\lambda^+}_\lambda$ and $\bar\alpha\in\acc(C_\alpha)\cap E^{\lambda^+}_\lambda$,
then $C_{\bar\alpha} \sqsubseteq^* C_\alpha$;
\item for every subset $Z\s\lambda^+$ and every club $D\s\lambda^+$, there exists some $\alpha\in E^{\lambda^+}_\lambda$
such that$$\sup(\{\gamma\in\nacc(C_\alpha)\cap D\cap S\mid Z_\gamma=Z\cap\gamma\})=\alpha.$$
\end{enumerate}

For every $\alpha\in E^{\lambda^+}_\lambda$, let
$$C_\alpha^\bullet:=\{ \min((Z_\gamma\cup\{\gamma\})\setminus\sup(C_\alpha\cap\gamma))\mid \gamma\in C_\alpha\}.$$
Then:
\begin{enumerate}
\item $C^\bullet_\alpha$ is a club in $\alpha$ for every $\alpha\in E^{\lambda^+}_\lambda$,
with $\acc(C^\bullet_\alpha) = \acc(C_\alpha)$;
\item if $\alpha\in E^{\lambda^+}_\lambda$ and $\bar\alpha\in\acc(C^\bullet_\alpha)\cap E^{\lambda^+}_\lambda$,
then  $C^\bullet_{\bar\alpha} \sqsubseteq^* C^\bullet_\alpha$;
\item for every cofinal subset $Z\s\lambda^+$, there exists some $\alpha\in E^{\lambda^+}_\lambda$
such that $$\sup(\nacc(C^\bullet_\alpha)\cap Z)=\alpha.$$
\end{enumerate}
(To see (3), let $D := \acc^+(Z)$.)

By Fact~\ref{fact922},
$\ch_\lambda$ entails $\diamondsuit(E^{\lambda^+}_\theta)$ for every regular cardinal $\theta < \lambda$.
Thus, we can
fix a matrix $\langle X_\gamma^i\mid i<\lambda,\gamma<\lambda^+\rangle$ with the property that for every sequence $\langle A_i\mid i<\lambda\rangle$ of subsets of $\lambda^+$,
and every regular $\theta<\lambda$, the following set is stationary:
$$\{\gamma\in E^{\lambda^+}_\theta\mid \forall i<\lambda(A_i\cap\gamma=X^i_\gamma)\}.$$

\medskip

We shall construct $D_\alpha \subseteq \alpha$ for every $\alpha < \lambda^+$, as follows:
First, let $D_{\alpha+1} = \{\alpha\}$ for all $\alpha < \lambda^+$.

Then, consider any limit $\alpha\in E^{\lambda^+}_{<\lambda}$.

$\br$ If there exists some club $d$ in $\alpha$ such that $\otp(d)<\lambda$ and for all $i<\cf(\alpha)$
$$\sup\{\beta\in d\mid \suc_{\cf(\alpha)}(d\setminus\beta)\s X_\alpha^i\}=\alpha,$$
then pick such a $d$, and call it $D_\alpha$.

$\br$ Otherwise, let $D_\alpha$ be an arbitrary
club subset of $\alpha$ of order-type $\cf(\alpha)$.

Finally, consider any $\alpha\in E^{\lambda^+}_\lambda$. Put
$$D_\alpha:=C^\bullet_\alpha\cup \bigcup \{ D_\gamma\setminus\sup(C^\bullet_\alpha\cap\gamma)\mid \gamma\in\nacc(C^\bullet_\alpha)\cap E^{\lambda^+}_{<\lambda}\}.$$
Clearly, $D_\alpha$ is a club subset of $\alpha$ for every limit $\alpha < \lambda^+$.

Fix $\alpha < \lambda^+$ and $\bar\alpha \in \acc(D_\alpha)$.
If $\cf(\bar\alpha) < \lambda$, then automatically $D_{\bar\alpha} \sql^* D_\alpha$.
If $\cf(\alpha) < \lambda$, then by construction of $D_\alpha$ we have $\otp(D_\alpha) < \lambda$,
so that $\cf(\bar\alpha) \leq \otp(D_\alpha \cap \bar\alpha) < \otp(D_\alpha) < \lambda$,
and again $D_{\bar\alpha} \sql^* D_\alpha$.
Otherwise, we have $\bar\alpha, \alpha \in E^{\lambda^+}_\lambda$,
so that $C^\bullet_{\bar\alpha} \sqsubseteq^* C^\bullet_\alpha$,
and it follows from the definition of $D_\alpha$ that $D_{\bar\alpha} \sqsubseteq^* D_\alpha$ in this case.

\begin{claim} For all regular cardinals $\sigma,\theta<\lambda$
and every sequence $\langle A_i\mid i<\theta\rangle$ of cofinal subsets of $\lambda^+$,
there exist stationarily many $\alpha\in E^{\lambda^+}_\lambda$ such that for all $i<\theta$:
$$\sup\{\beta\in D_\alpha\mid \suc_\sigma(D_\alpha\setminus\beta)\s A_i\}=\alpha.$$
\end{claim}
\begin{proof} By increasing $\sigma$ and $\theta$ if necessary,
we may assume that $\sigma=\theta$ is an infinite regular cardinal.

Given $\langle A_i\mid i<\theta\rangle$ and a club $E\s\lambda^+$, consider the stationary set
$$Z:=\{\gamma\in E^{\lambda^+}_\theta\cap E\cap\bigcap_{i<\theta}\acc^+(\acc^+(A_i)\cap E^{\lambda^+}_{\sigma})\mid \forall i<\theta(A_i\cap\gamma=X^i_\gamma)\}.$$
Fix $\alpha\in E^{\lambda^+}_\lambda$ such that $\sup(\nacc(C^\bullet_\alpha)\cap Z)=\alpha$.
Then $\alpha\in \acc^+(Z) \subseteq \acc^+(E) \subseteq E$.
So, it suffices to show that for every $\gamma\in\nacc(C^\bullet_\alpha)\cap Z$
and every $i<\theta$, there exists some $\beta>\sup(C^\bullet_\alpha\cap\gamma)$ for which $\suc_\sigma(D_\alpha\setminus\beta)\s A_i$.
Fix $\gamma\in\nacc(C^\bullet_\alpha)\cap Z$.
As $\alpha \in E^{\lambda^+}_\lambda$ and $\gamma\in\nacc(C^\bullet_\alpha)\cap E^{\lambda^+}_{<\lambda}$,
recalling the definition of $D_\alpha$, it then suffices to prove that for all $i<\theta$
$$\sup\{\beta\in D_\gamma\mid \suc_\sigma(D_\gamma\setminus\beta)\s A_i\}=\gamma.$$

Fix a surjection $f:\theta\rightarrow\theta$ such that the preimage of any singleton has size $\theta$.
Since $\gamma\in E^{\lambda^+}_\theta \cap \bigcap_{i<\theta} \acc^+(\acc^+(A_i)\cap E^{\lambda^+}_{\sigma})$,
let us pick a strictly increasing, continuous, and cofinal function $g:\theta\rightarrow \gamma$ with the property that
$g(j+1)\in\acc^+(A_{f(j)})\cap E^{\lambda^+}_{\sigma}$ for all $j<\theta$.
Next, for all $j<\theta$, fix $Y_j\s A_{f(j)}\cap(g(j),g(j+1))$ of order-type $\sigma$, and let $d$ denote the closure of $\bigcup_{j<\theta}Y_j$.
Then $d$ is a club in $\gamma$ of order-type $\sigma \cdot \theta <\lambda$, and for all $i<\theta$,
$$\sup\{\beta\in d\mid \suc_\sigma(d\setminus\beta)\s X_\gamma^i\}=\gamma,$$
so $D_\gamma$ was chosen to satisfy our needs.
\end{proof}
Thus, $\langle D_\alpha\mid\alpha<\lambda^+\rangle$ witnesses simultaneously
$\p^-(\lambda^+,2,{\sql}^*,\theta,\{E^{\lambda^+}_{\lambda}\},2,\sigma)$
for all regular cardinals $\theta,\sigma < \lambda$.
\end{proof}

\begin{Q}
Does $\square_\lambda+\ch_\lambda$ (for $\lambda$ regular) entail
$\p(\lambda^+,2,{\sql}^*,\lambda,\{E^{\lambda^+}_{\lambda}\},2,\omega)$?
\end{Q}
\begin{thm}\label{cor329} If $\lambda$ is a successor of a regular cardinal $\theta$, and $\ns\restriction E^{\lambda}_{\theta}$ is saturated,
then $\ch_\lambda$ entails $\p(\lambda^+,2,{\sql}^*,\theta,\{E^{\lambda^+}_{\lambda}\},2,\theta)$.

\end{thm}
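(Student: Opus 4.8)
The plan is to run the proof of Theorem~\ref{thm317} essentially verbatim, with the one nontrivial change being the source of its basic building block. Recall that $E^{\lambda^+}_\lambda$ is automatically a nonreflecting stationary subset of $\lambda^+$, so the only external ingredient of Theorem~\ref{thm317} not available here for free is a nonreflecting stationary subset of $E^{\lambda^+}_{<\lambda}$; and there that ingredient is used only through the Kojman--Shelah construction \cite[Theorem~3]{KjSh:449}, which turns it into a sequence $\langle C_\alpha\mid\alpha\in E^{\lambda^+}_\lambda\rangle$ of clubs with $C_{\bar\alpha}\sqsubseteq^*C_\alpha$ for every $\bar\alpha\in\acc(C_\alpha)\cap E^{\lambda^+}_\lambda$, together with a diamond-guessing sequence $\langle Z_\gamma\mid\gamma<\lambda^+\rangle$ for which, for every $Z\s\lambda^+$ and club $E\s\lambda^+$, there are stationarily many $\alpha$ with $\sup\{\gamma\in\nacc(C_\alpha)\cap E\cap E^{\lambda^+}_\theta\mid Z\cap\gamma=Z_\gamma\}=\alpha$. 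The first, and hardest, step is to produce such a pair from the saturation of $\ns\restriction E^\lambda_\theta$ in place of a nonreflecting set. The point is that, since $\theta=\cf(\theta)<\lambda=\theta^+$, a club $C_\alpha$ of order-type $\lambda$ carries, via its increasing enumeration, a canonical copy of $E^\lambda_\theta$ among its accumulation points of cofinality $\theta$; running Shelah's refinement procedure for club guessing and tracing, level by level, the clubs that ``kill'' successive candidates back along these copies, a failure of the guessing property would yield a family of $\lambda^+$ stationary subsets of $E^\lambda_\theta$ pairwise disjoint modulo $\ns$ --- contradicting saturation. (Note also that saturation of $\ns\restriction E^\lambda_\theta$ forces $2^\theta=\theta^+$, so that $\diamondsuit(E^\lambda_\theta)$, and via $\ch_\lambda$ and Fact~\ref{fact922} the matrix $\langle X^i_\gamma\mid i<\lambda,\ \gamma<\lambda^+\rangle$ and the sequences $\langle Z_\gamma\rangle$ used below, are all at our disposal.)

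Granting this skeleton, I would repeat the surgery of the proof of Theorem~\ref{thm317} without change. Using $\ch_\lambda$, fix the matrix $\langle X^i_\gamma\rangle$ guessing $\lambda$-indexed families of subsets of $\lambda^+$ along $E^{\lambda^+}_{<\lambda}$; set $D_{\gamma+1}:=\{\gamma\}$; for limit $\gamma\in E^{\lambda^+}_{<\lambda}$ pick a club $D_\gamma$ of order-type $\cf(\gamma)<\lambda$ which, whenever possible, satisfies $\sup\{\beta\in D_\gamma\mid\suc_\theta(D_\gamma\setminus\beta)\s X^i_\gamma\}=\gamma$ for all $i<\cf(\gamma)$; and for $\alpha\in E^{\lambda^+}_\lambda$ set
\[
D_\alpha:=C^\bullet_\alpha\cup\bigcup\{D_\gamma\setminus\sup(C^\bullet_\alpha\cap\gamma)\mid\gamma\in\nacc(C^\bullet_\alpha)\cap E^{\lambda^+}_{<\lambda}\},
\]
where $C^\bullet_\alpha$ is the thinning of $C_\alpha$ against $\langle Z_\gamma\rangle$ defined exactly as in the proof of Theorem~\ref{thm317}. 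Since $\acc(\lambda^+)=E^{\lambda^+}_{<\lambda}\cup E^{\lambda^+}_\lambda$, these cases are exhaustive. The relation $\sql^*$ at an accumulation point $\bar\alpha\in\acc(D_\alpha)$ is free when $\cf(\bar\alpha)<\lambda$ and follows from $C^\bullet_{\bar\alpha}\sqsubseteq^*C^\bullet_\alpha$ when $\bar\alpha\in E^{\lambda^+}_\lambda$, exactly as in Theorem~\ref{thm317}; this justifies the eighth-parameter default.

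It remains to verify the guessing clause of $\p^-(\lambda^+,2,{\sql}^*,\theta,\{E^{\lambda^+}_\lambda\},2,\theta)$ together with the conjunct $\diamondsuit(\lambda^+)$. For the former, given $\langle A_i\mid i<\theta\rangle$ cofinal in $\lambda^+$ and a club $E\s\lambda^+$, one fixes a surjection $f:\theta\to\theta$ with $\theta$-sized fibres, codes $\langle A_i\mid i<\theta\rangle$ as a single subset $Z$ of $\lambda^+$, feeds $E\cap\diagonal_{i<\theta}\acc^+(A_i)$ and $Z$ to the skeleton's guessing, and obtains $\alpha\in E^{\lambda^+}_\lambda\cap E$ along whose $C^\bullet_\alpha$ cofinally many $\gamma\in\nacc(C^\bullet_\alpha)\cap E^{\lambda^+}_\theta$ carry the correct guess; each such $\gamma$ contributes, through $D_\gamma$, a run of $\theta$ consecutive $\suc_\theta$-points of $D_\alpha$ lying inside the $A_i$ dictated by $f$ on that block, and since every fibre of $f$ is cofinal in $\theta$, every $A_i$ is served cofinally below $\alpha$. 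For $\diamondsuit(\lambda^+)$ one reads off, along the same $\alpha\in E^{\lambda^+}_\lambda$, a guessing sequence from the $\langle Z_\gamma\rangle$ threaded through the $C^\bullet_\alpha$'s, as in the passage from $\p^-$ to $\p$ in Theorem~\ref{thm317}. The main obstacle is squarely the first step: carrying out the Kojman--Shelah-style construction with saturation of $\ns\restriction E^\lambda_\theta$ replacing a nonreflecting stationary set, and in particular arranging the bookkeeping along the enumerations of the $C_\alpha$'s so that a would-be counterexample genuinely manifests as a $\lambda^+$-antichain in $\mathcal P(E^\lambda_\theta)/\ns$. A secondary point is to confirm that saturation delivers the guessing in its $\nacc$-form (rather than merely $\sup(C_\alpha\cap E)=\alpha$), since it is the $\nacc$-points that the surgery consumes.
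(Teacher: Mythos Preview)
Your plan takes a much harder route than the paper and leaves its central step unfinished. You propose to redo the Kojman--Shelah construction of \cite{KjSh:449}, replacing the nonreflecting stationary set by the saturation hypothesis, and you explicitly flag this as ``the main obstacle'' without carrying it out. That is a genuine gap: the bookkeeping you describe (tracing failure clubs back along the copies of $E^\lambda_\theta$ inside the $C_\alpha$'s to produce a $\lambda^+$-antichain in $\mathcal P(E^\lambda_\theta)/\ns$) is far from routine, and nothing in the paper provides it.

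The paper avoids this difficulty entirely by a dichotomy. If there exists a nonreflecting stationary subset of $E^{\lambda^+}_\theta\subseteq E^{\lambda^+}_{<\lambda}$, then Theorem~\ref{thm317} already gives the conclusion. So one may assume that \emph{every} stationary subset of $E^{\lambda^+}_\theta$ reflects. Under that assumption, saturation is used only in a short Shelah-style argument: fix a $\diamondsuit(E^{\lambda^+}_\theta)$-sequence $\langle Z_\beta\rangle$ (available from $\ch_\lambda$ via Fact~\ref{fact922}), and for each $\delta\in E^{\lambda^+}_\lambda$ let $\mathcal S_\delta$ be the set of $Z\subseteq\delta$ with $\{\beta\in E^\delta_\theta\mid Z\cap\beta=Z_\beta\}$ stationary in $\delta$. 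Distinct members of $\mathcal S_\delta$ give almost-disjoint stationary subsets of $E^\delta_\theta$, so saturation bounds $|\mathcal S_\delta|\le\lambda$; reflection guarantees that for every $Z\subseteq\lambda^+$ there are stationarily many $\delta\in E^{\lambda^+}_\lambda$ with $Z\cap\delta\in\mathcal S_\delta$. Thus $\langle\mathcal S_\delta\rangle$ witnesses $\diamondsuit^-(E^{\lambda^+}_\lambda)$, hence $\diamondsuit(E^{\lambda^+}_\lambda)$ holds, and Theorem~\ref{thm616a} then yields $\p(\lambda^+,2,{\sql},\lambda^+,\{E^{\lambda^+}_\lambda\},2,\theta,\mathcal E_\lambda)$, which is stronger than required.

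In short: rather than rebuild the $\sqsubseteq^*$-coherent skeleton from saturation, the paper reduces to either Theorem~\ref{thm317} or Theorem~\ref{thm616a} via a case split, and the saturation hypothesis is consumed only in a three-line derivation of $\diamondsuit(E^{\lambda^+}_\lambda)$ in the reflecting case.
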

\begin{proof} Recalling Theorem~\ref{thm317}, we may assume that every stationary subset of $E^{\lambda^+}_\theta$ reflects.
Recalling Theorem~\ref{thm616a}, it suffices to prove that $\diamondsuit(E^{\lambda^+}_\lambda)$ holds.
Using~Fact~\ref{fact922}, let us fix a sequence, $\langle Z_\beta\mid \beta\in E^{\lambda^+}_\theta\rangle$
such that $G_Z:=\{ \beta\in E^{\lambda^+}_\theta\mid Z\cap\beta=Z_\beta\}$ is stationary for every $Z\s\lambda^+$.
As in \cite{sh:e4}, for every $\delta\in E^{\lambda^+}_\lambda$, we let $$\mathcal S_\delta:=\{ Z\s\delta\mid G_Z\cap\delta\text{ is stationary in }\delta\}.$$
Of course, if $Z,Z'$ are distinct elements of $\mathcal S_\delta$ then $G_{Z}\cap\delta$ and $G_{Z'}\cap\delta$ are stationary subsets of $E^\delta_\theta$ with $\sup(G_Z\cap G_{Z'})<\delta$.
As $\cf(\delta)=\lambda$ and $\ns\restriction E^{\lambda}_{\theta}$ is saturated, it follows that $|\mathcal S_\delta|\le\lambda$.

\begin{claim} For every $Z\s\lambda^+$, there exist stationarily many  $\delta\in E^{\lambda^+}_\lambda$ for which $Z\cap\delta\in\mathcal S_\delta$.
\end{claim}
\begin{proof} Let $Z\s\lambda^+$ be arbitrary. Let $D\s\lambda^+$ be some club.
Then $G:=G_Z\cap D$ is a stationary subset of $E^{\lambda^+}_\theta$.
As every stationary subset of $E^{\lambda^+}_\theta$ reflects,
pick $\delta\in E^{\lambda^+}_\lambda$ such that $G\cap\delta$ is stationary in $\delta$. Then $\delta\in D$ and $Z\cap\delta\in\mathcal S_\delta$.
\end{proof}

So $\langle\mathcal S_\delta\mid \delta\in E^{\lambda^+}_\lambda\rangle$ forms a $\diamondsuit^-(E^{\lambda^+}_\lambda)$-sequence,
and hence $\diamondsuit(E^{\lambda^+}_\lambda)$ holds.\footnote{See \cite{MR597342} for the definition of $\diamondsuit^-(S)$
and the proof that it implies $\diamondsuit(S)$.}
\end{proof}

\section{Appendix:  Combinatorial Principles}\label{Preliminaries}

\begin{definition}[Jensen, \cite{MR309729}]\label{def_diamond} For a regular uncountable cardinal $\kappa$, and a stationary subset $S\s\kappa$,
$\diamondsuit(S)$ asserts the existence of a sequence $\langle Z_\beta \mid \beta \in S \rangle$
such that
\begin{itemize}
\item $Z_\beta \subseteq \beta$ for every $\beta\in S$;
\item for every $Z\subseteq \kappa$, the set $\{\beta \in S \mid Z \cap \beta = Z_\beta \}$ is stationary in $\kappa$.
\end{itemize}
\end{definition}

\begin{fact}[Shelah, \cite{Sh:922}]\label{fact922}
For every infinite cardinal $\lambda$ and every stationary subset $S \subseteq E^{\lambda^+}_{\neq\cf(\lambda)}$,
the following are equivalent:
\begin{enumerate}
\item $\ch_\lambda$;
\item $\diamondsuit(S)$.
\end{enumerate}

In particular, $\ch_\lambda$ is equivalent to $\diamondsuit(\lambda^+)$  for every uncountable cardinal $\lambda$.
\end{fact}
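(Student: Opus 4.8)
The plan is to prove the two implications separately; $(2)\Rightarrow(1)$ is a routine counting argument, while $(1)\Rightarrow(2)$ is the substantive content, due to Shelah~\cite{Sh:922}. For $(2)\Rightarrow(1)$, fix a $\diamondsuit(S)$-sequence $\langle Z_\beta\mid\beta\in S\rangle$: given $Z\s\lambda$, the stationary set $\{\beta\in S\mid Z\cap\beta=Z_\beta\}$ meets $\lambda^+\setminus(\lambda+1)$, and for such $\beta$ we get $Z=Z_\beta$; hence $Z\mapsto\min\{\beta\in S\mid Z_\beta=Z\}$ injects $\mathcal P(\lambda)$ into $\lambda^+$, so $2^\lambda\le\lambda^+$, i.e.\ $\ch_\lambda$. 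For the final clause, $\diamondsuit(\lambda^+)\Rightarrow\ch_\lambda$ by the same computation; and once $\lambda$ is uncountable there is a regular $\theta<\lambda$ with $\theta\neq\cf(\lambda)$, so $E^{\lambda^+}_\theta$ is a stationary subset of $E^{\lambda^+}_{\neq\cf(\lambda)}$, and a $\diamondsuit(E^{\lambda^+}_\theta)$-sequence obtained from the first part, padded arbitrarily off $E^{\lambda^+}_\theta$, witnesses $\diamondsuit(\lambda^+)$.

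It remains to prove $\ch_\lambda\Rightarrow\diamondsuit(S)$ for stationary $S\s E^{\lambda^+}_{\neq\cf(\lambda)}$. First reduce: since extending a $\diamondsuit(S')$-sequence by arbitrary choices off $S'$ witnesses $\diamondsuit(S)$ whenever $S'\s S$, and since $E^{\lambda^+}_{\neq\cf(\lambda)}=\bigcup\{E^{\lambda^+}_\theta\mid\theta<\lambda\text{ regular},\ \theta\neq\cf(\lambda)\}$ implies $S\cap E^{\lambda^+}_\theta$ is stationary for some such $\theta$, it suffices to show that if $\theta<\lambda$ is regular with $\theta\neq\cf(\lambda)$ and $S\s E^{\lambda^+}_\theta$ is stationary, then $\ch_\lambda$ entails $\diamondsuit(S)$. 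By Kunen's reduction of $\diamondsuit$ to $\diamondsuit^-$ (see \cite{MR597342}), it then suffices to construct $\langle\mathcal A_\delta\mid\delta\in S\rangle$ with $\mathcal A_\delta\s\mathcal P(\delta)$, $|\mathcal A_\delta|\le\lambda$, and $\{\delta\in S\mid X\cap\delta\in\mathcal A_\delta\}$ stationary for every $X\s\lambda^+$.

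For the core construction one fixes a well-ordering $<^*$ of $H_{\lambda^+}$ and, for each $\delta\in S$, an increasing continuous cofinal sequence $\langle\gamma^\delta_i\mid i<\theta\rangle$ in $\delta$; then $X\cap\delta$ is coded by the trace-sequence $\langle X\cap[\gamma^\delta_i,\gamma^\delta_{i+1})\mid i<\theta\rangle$. As each interval has size $\le\lambda$ and $2^\lambda=\lambda^+$, there are at most $\lambda^+$ such trace-sequences, which is one cardinal too many to enumerate into $\mathcal A_\delta$. The crux of Shelah's argument is to close this gap: using $2^\lambda=\lambda^+$ one attaches to each $\delta$ a family of $\le\lambda$ ``approximating'' elementary submodels and shows that, for a \emph{fixed} target $X$, the way $X$ meets this family determines $X\cap\delta$ on a stationary set of $\delta\in S$, so that enumerating into $\mathcal A_\delta$ the $\le\lambda$ reconstructions supplied by that family suffices. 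I expect this to be the main obstacle; it is the step where $\ch_\lambda$ does real work, and where $\theta\neq\cf(\lambda)$ is indispensable — the restriction cannot be dropped, as the model of Example~\ref{prop112} (in which $\ch_{\aleph_1}$ holds but $\diamondsuit(E^{\aleph_2}_{\aleph_1})$ fails) shows. Lacking a self-contained rendition, this last step may be imported verbatim from \cite{Sh:922}.
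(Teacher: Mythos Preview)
The paper does not prove this statement at all: it is stated as a Fact in the appendix, attributed to Shelah~\cite{Sh:922}, with no proof given. Your proposal therefore goes strictly beyond the paper's treatment. The easy direction $(2)\Rightarrow(1)$ and the final clause are correctly handled, and your reductions for $(1)\Rightarrow(2)$ (passing to $S\s E^{\lambda^+}_\theta$ for a single regular $\theta\neq\cf(\lambda)$, then to $\diamondsuit^-$) are standard and sound. For the core step you ultimately defer to \cite{Sh:922}, which is exactly what the paper does; so in substance your treatment and the paper's agree, with yours supplying a more detailed roadmap.
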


\begin{definition}[Ostaszewski, \cite{MR0438292}]\label{def_clubsuit} For a stationary $S\s\omega_1$, $\clubsuit(S)$ asserts the existence of a sequence $\langle X_\alpha\mid\alpha\in S\rangle$ such that:
\begin{itemize}
\item for all nonzero limit $\alpha\in S$, $X_\alpha$ is a cofinal subset of $\alpha$ of order-type $\omega$;
\item for every uncountable $X\s\omega_1$, there exists a nonzero limit $\alpha\in S$ such that $X_\alpha\s X$.
\end{itemize}
\end{definition}

In \cite{MR1489304}, Fuchino, Shelah and Soukup studied a weakening of the preceding principle, which they denoted by $\clubsuit_w(S)$.
Their principle admits a natural generalization to arbitrary regular uncountable cardinals $\kappa$, as follows.
\begin{definition}\label{def_clubsuitw_revised} For a regular uncountable cardinal $\kappa$, and a stationary $S\s\kappa$,
$\clubsuit_w(S)$ asserts the existence of a sequence $\langle X_\alpha\mid\alpha\in S\rangle$ such that:
\begin{itemize}
\item for all limit $\alpha\in S$, $X_\alpha$ is a cofinal subset of $\alpha$ of order-type $\cf(\alpha)$;
\item for every cofinal  $X\s\kappa$, there exists a limit $\alpha\in S$ such that $\sup(X_\alpha\setminus X)<\alpha$.
\end{itemize}
\end{definition}

\begin{fact}[Devlin, \cite{MR0438292} and \cite{MR0491194}]\label{lemma613} For every regular uncountable cardinal $\kappa$ and stationary $S\s\kappa$, the following are equivalent:
\begin{enumerate}
\item $\diamondsuit(S)$;
\item $\diamondsuit(\kappa)+\clubsuit_w(S)$;
\item There exists a partition $\langle S_i\mid i<\kappa\rangle$ of $S$ such that $\diamondsuit(S_i)$ holds for all $i<\kappa$.
\end{enumerate}
\end{fact}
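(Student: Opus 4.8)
The plan is to establish $(1)\Leftrightarrow(3)$ and $(1)\Leftrightarrow(2)$, disposing of the two easy implications first. For $(3)\Rightarrow(1)$: if $\langle S_i\mid i<\kappa\rangle$ partitions $S$ with each $\diamondsuit(S_i)$ holding, then already a $\diamondsuit(S_0)$-sequence $\langle Z_\beta\mid\beta\in S_0\rangle$, extended by $Z_\beta:=\emptyset$ for $\beta\in S\bks S_0$, witnesses $\diamondsuit(S)$ — for any target $Z$, the stationary set $\{\beta\in S_0\mid Z\cap\beta=Z_\beta\}$ witnesses stationarity of the agreement set. Likewise $(1)\Rightarrow(2)$ splits in two: extending a $\diamondsuit(S)$-sequence by $\emptyset$ off $S$ gives $\diamondsuit(\kappa)$; and for $\clubsuit_w(S)$, set (for limit $\alpha\in S$) $X_\alpha$ to be a cofinal subset of $Z_\alpha$ of order-type $\cf(\alpha)$ whenever $\sup(Z_\alpha)=\alpha$, and an arbitrary such subset of $\alpha$ otherwise. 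Given cofinal $X\s\kappa$, the club $\acc^+(X)$ meets the stationary set $\{\alpha\in S\mid Z_\alpha=X\cap\alpha\}$ at some $\alpha$, where $X_\alpha\s Z_\alpha=X\cap\alpha\s X$, so $\sup(X_\alpha\bks X)=0<\alpha$.

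For $(1)\Rightarrow(3)$ I would run the classical ``splitting a diamond'' argument. Using the bijective coding of a pair $(Z,i)\in\mathcal P(\kappa)\times\kappa$ as $\{2\delta\mid\delta\in Z\}\cup\{2\delta+1\mid\delta<i\}$ and decoding it at each $\beta\in S$, one converts a $\diamondsuit(S)$-sequence into a sequence $\langle(Z_\beta,i_\beta)\mid\beta\in S\rangle$ with $Z_\beta\s\beta$, $i_\beta<\kappa$, such that for all $Z\s\kappa$ and $i<\kappa$ the set $\{\beta\in S\mid Z\cap\beta=Z_\beta\ \&\ i_\beta=i\}$ is stationary (intersect the stationary set of $\beta$'s guessing the coded pair with the club of limit $\beta>i$, on which the decoding returns $(Z\cap\beta,i)$). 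Putting $S_i:=\{\beta\in S\mid i_\beta=i\}$ yields a partition of $S$; each $S_i$ is stationary (take $Z=\emptyset$ above), and $\langle Z_\beta\mid\beta\in S_i\rangle$ witnesses $\diamondsuit(S_i)$.

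The main work is in $(2)\Rightarrow(1)$. Fix a $\diamondsuit(\kappa)$-sequence $\langle A_\beta\mid\beta<\kappa\rangle$, a $\clubsuit_w(S)$-sequence $\langle X_\alpha\mid\alpha\in S\rangle$, and a pairing function $\pi:\kappa\times\kappa\leftrightarrow\kappa$ with $\pi(\delta',\delta)\ge\delta'$ (so $\{\beta<\kappa\mid\pi[\beta\times\beta]=\beta\}$ is a club). For limit $\alpha\in S$ set $W^*_\alpha:=\bigcup_{x\in X_\alpha}A_x$ and
\[
W_\alpha:=\{\delta<\alpha\mid\{\delta'<\alpha\mid\pi(\delta',\delta)\in W^*_\alpha\}\text{ is cofinal in }\alpha\},
\]
and $W_\alpha:=\emptyset$ for successor $\alpha\in S$. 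The reason for taking the union of guesses along $X_\alpha$ is that, for any $\widehat W\s\kappa$, whenever a \emph{tail} of $X_\alpha$ is contained in the stationary set $G:=\{\beta<\kappa\mid A_\beta=\widehat W\cap\beta\}$ — exactly what $\clubsuit_w(S)$ delivers when applied to a cofinal subset of $G$ — one gets $W^*_\alpha=(\widehat W\cap\alpha)\cup J$ with $J$ bounded in $\alpha$, the set $J$ being the union of the contributions $A_x\s x$ from the bounded initial part of $X_\alpha$. \textbf{The crux, and the main obstacle, is this bounded error:} $W^*_\alpha$ agrees with $\widehat W\cap\alpha$ only modulo a bounded set, whereas $\diamondsuit(S)$ demands exact agreement and stationarily many witnesses. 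I would resolve this by a redundancy trick: given the genuine target $W\s\kappa$ and a club $D\s\kappa$, apply the above to $\widehat W:=\{\pi(\delta',\delta)\mid\delta\in W,\ \delta\le\delta'\}$ (which records ``$\delta\in W$'' at the cofinally many positions $\pi(\delta',\delta)$, $\delta'\ge\delta$) and to a cofinal subset of $G\cap\acc^+(D)\cap\{\beta\mid\pi[\beta\times\beta]=\beta\}$, so that the resulting $\alpha\in S$ lies in $D$ and is closed under $\pi$. Since $J$ is bounded, say below $\gamma<\alpha$, and $\pi(\delta',\delta)\ge\delta'$, for each $\delta\notin W$ only boundedly many $\delta'$ satisfy $\pi(\delta',\delta)\in W^*_\alpha$; while for $\delta\in W$ every $\delta'\in[\delta,\alpha)$ works, as $\pi(\delta',\delta)\in\widehat W\cap\alpha\s W^*_\alpha$. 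Hence $W_\alpha=W\cap\alpha$, and as $D$ was arbitrary, $\langle W_\alpha\mid\alpha\in S\rangle$ witnesses $\diamondsuit(S)$, closing the cycle.
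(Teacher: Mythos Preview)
The paper states this result as a \emph{Fact} attributed to Devlin, citing \cite{MR0438292} and \cite{MR0491194}, and gives no proof of its own; there is therefore nothing in the paper to compare your argument against.

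Your proof is correct and follows the classical route. The implications $(3)\Rightarrow(1)$ and $(1)\Rightarrow(2)$ are handled cleanly, and your $(1)\Rightarrow(3)$ is the standard even/odd coding argument. For the substantive direction $(2)\Rightarrow(1)$, your idea is exactly the right one: use $\clubsuit_w(S)$ to land a tail of $X_\alpha$ inside the $\diamondsuit(\kappa)$-guessing set for a suitably coded target $\widehat W$, so that $W^*_\alpha$ agrees with $\widehat W\cap\alpha$ modulo a bounded error, and then design $\widehat W$ with enough redundancy (each $\delta\in W$ witnessed at cofinally many positions $\pi(\delta',\delta)$) that the bounded error is washed out when you pass to $W_\alpha$. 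The verification that $W_\alpha=W\cap\alpha$ is correct, and the conclusion that the guessing set is stationary (since the club $D$ was arbitrary) is sound.

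One small technical point worth flagging: you assume a bijection $\pi:\kappa\times\kappa\leftrightarrow\kappa$ satisfying $\pi(\delta',\delta)\ge\delta'$. Such bijections do exist (one can build them by a straightforward recursion, or alternatively run the argument with a weaker object such as a surjection $\kappa\to\kappa\times\kappa$ each of whose fibres is unbounded), but it is not a property enjoyed by an arbitrary pairing function, so a word justifying its existence would tighten the write-up.
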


\begin{definition}[K\"onig-Larson-Yoshinobu, \cite{MR2320769}]
Suppose that $\chi<\kappa$ are uncountable cardinals with $\kappa$ regular, and $S\s \kappa$ stationary.

$\curlywedge^*(\chi,S)$ asserts the existence of a sequence $\langle \mathcal C_\delta\mid \delta\in S\rangle$ such that:
\begin{enumerate}
\item for every $\delta\in S$,  $\mathcal C_\delta$ is a club in $[\delta]^{<\chi}$;
\item for every club $\mathcal D$ in $[\kappa]^{<\chi}$, there exists  a club $C\s\kappa$ such that for all $\delta\in C\cap S$, there is $x_\delta\in[\delta]^{<\chi}$
with $\{ y\in \mathcal C_\delta\mid x_\delta\s y\}\s\mathcal D$.
\end{enumerate}
\end{definition}
\begin{definition}[Rinot, \cite{rinot09}]\label{1005}
Suppose that $\chi<\kappa$ are uncountable cardinals with $\kappa$ regular, and $S\s \kappa$ is stationary.

 $\curlywedge^-(\chi,S)$ asserts the existence of a matrix $\langle \mathcal C_\delta^i\mid \delta\in S, i<|\delta|\rangle$ such that:
\begin{enumerate}
\item for every $\delta\in S$ and $i<|\delta|$, $\mathcal C^i_\delta$ is cofinal in $[\delta]^{<\chi}$;
\item for every club $\mathcal D$ in $[\kappa]^{<\chi}$, the following set is stationary:
$$ \{ \delta\in S\mid \exists i<|\delta|\ \mathcal C^i_\delta\s\mathcal  D\}.$$
\end{enumerate}
\end{definition}

\begin{definition}[Rinot, \cite{rinot09}]\label{1007} Suppose that $\lambda$ is a regular uncountable cardinal, $T$ is a stationary subset of $\lambda$,
and $S$ is a stationary subset of $E^{\lambda^+}_\lambda$.

The \emph{reflected-diamond} principle, denoted $\langle T\rangle_S$, asserts the existence of
a sequence $\langle C_\delta\mid \delta\in S\rangle$ and a matrix $\langle A^\delta_i\mid \delta\in S, i<\lambda\rangle$
such that:
\begin{enumerate}
\item for all $\delta\in S$, $C_\delta$ is  a club subset of $\delta$ of order-type $\lambda$;
\item for every club $D\s\lambda^+$ and every subset $A\s\lambda^+$, there exist stationarily many $\delta\in S$ for which:
$$\{ i\in T\mid C_\delta(i+1)\in D\ \&\ A\cap (C_\delta(i+1))=A^{\delta}_{i+1}\}\text{ is stationary in }\lambda.$$
\end{enumerate}
\end{definition}
\begin{fact}[Rinot, \cite{rinot09}]\label{factRinot09}
Suppose that $\lambda$ is a regular uncountable cardinal, and $S\s E^{\lambda^+}_\lambda$ is stationary.
\begin{itemize}
\item {\cite[Remark on p.~567]{rinot09}} $\curlywedge^*(\lambda,S)$ entails $\curlywedge^-(\lambda,S)$;
\item {\cite[Theorem~2.5]{rinot09}}
$\diamondsuit(S)$ entails $\curlywedge^-(\chi,S)$ for all uncountable cardinals $\chi\le\lambda$;
\item {\cite[Theorem~2.4]{rinot09}}
if $\chi<\lambda$ is an uncountable cardinal or $\chi=\lambda$ is a successor cardinal,
then $\curlywedge^-(\chi,S) + \ch_\lambda$ entails $\langle T\rangle_S$ for every stationary $T\s\lambda$.
\end{itemize}
\end{fact}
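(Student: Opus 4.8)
The three clauses are quoted verbatim from \cite{rinot09}, so the plan is to reconstruct each argument directly from the definitions of $\curlywedge^*$, $\curlywedge^-$, and $\langle T\rangle_S$. Throughout I would write $\kappa:=\lambda^+$ and use that every $\delta\in S\s E^{\lambda^+}_\lambda$ has $\cf(\delta)=\lambda$, hence $|\delta|=\lambda$; thus the index set $i<|\delta|$ of a $\curlywedge^-$-matrix is exactly $\lambda$, and a witness to $\langle T\rangle_S$ is indexed over the same $\lambda$.

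For the first clause I would fix a $\curlywedge^*(\lambda,S)$-witness $\langle\mathcal C_\delta\mid\delta\in S\rangle$ and, for each $\delta\in S$, thread through $\mathcal C_\delta$ a $\s$-increasing continuous chain $\langle x^i_\delta\mid i<\lambda\rangle$ that is cofinal in $\mathcal C_\delta$ (possible since $\mathcal C_\delta$ is closed under unions of increasing chains of length $<\lambda$ and $\cf([\delta]^{<\lambda},\s)=\lambda$). Setting $\mathcal C^i_\delta:=\{y\in\mathcal C_\delta\mid x^i_\delta\s y\}$ gives, for each $i$, a cofinal subset of $[\delta]^{<\lambda}$. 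Given a club $\mathcal D$ in $[\kappa]^{<\lambda}$, the $\curlywedge^*$-property yields a club $C\s\kappa$ and witnesses $x_\delta$ for $\delta\in C\cap S$ with $\{y\in\mathcal C_\delta\mid x_\delta\s y\}\s\mathcal D$; choosing $i$ with $x_\delta\s x^i_\delta$ then gives $\mathcal C^i_\delta\s\mathcal D$, so the guessing set of $\curlywedge^-$ covers the stationary set $C\cap S$. This clause is routine.

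For the second clause I would exploit that (up to a club) a club $\mathcal D$ in $[\kappa]^{<\chi}$ is the set of points closed under some function $F:[\kappa]^{<\omega}\to\kappa$, and that for any such $F$ the family of $F$-closed elements of $[\delta]^{<\chi}$ is cofinal in $[\delta]^{<\chi}$. Since $\chi\le\lambda$, the restriction $F\restriction[\delta]^{<\omega}$ is coded by a subset of $\delta$, so a single $\diamondsuit(S)$-sequence guesses it; at stationarily many $\delta$ the guessed closure matches $F\restriction\delta$, whence the cofinal family of $F$-closed sets lies inside $\mathcal D$. Defining $\mathcal C^i_\delta$ uniformly from the guessed function (padding across $i<\lambda$ so that each row is cofinal) produces the required matrix. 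The uncountability of $\chi$ enters only to guarantee that ``$F$-closed of size $<\chi$'' is cofinal.

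The third clause is where the real work lies, and I expect it to be the main obstacle. Here one must manufacture, from a $\curlywedge^-(\chi,S)$-matrix together with $\ch_\lambda$, both a club sequence $\langle C_\delta\mid\delta\in S\rangle$ of order-type $\lambda$ and a matrix $\langle A^\delta_i\mid\delta\in S, i<\lambda\rangle$ realizing the reflected-diamond guessing, in which the inner demand is itself \emph{stationary in $\lambda$} and reflects to the prescribed stationary $T\s\lambda$. The plan is to use $\ch_\lambda$ (equivalently $\diamondsuit(\lambda^+)$, via Fact~\ref{fact922}) to fix a guessing sequence for subsets of $\lambda^+$ along the relevant cofinality, to enumerate $C_\delta$ from a cofinal element of one of the rows $\mathcal C^i_\delta$ so that the initial segments $C_\delta(i+1)$ sweep out a club in $[\delta]^{<\chi}$, and then to read off $A^\delta_{i+1}$ from the $\ch_\lambda$-guess attached to $C_\delta(i+1)$. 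The delicate point is synchronizing the two layers of guessing so that, for a given pair $(A,D)$, the set of indices $i\in T$ with both $C_\delta(i+1)\in D$ and $A\cap(C_\delta(i+1))=A^\delta_{i+1}$ is forced to be stationary in $\lambda$; this is precisely where the hypothesis splits into the cases $\chi<\lambda$ and $\chi=\lambda$ a successor, since the cofinal $[\delta]^{<\chi}$-structure must be convertible into a length-$\lambda$ thread whose order type realizes stationarily many reflection points in $T$. For this synchronization I would follow the bookkeeping of \cite[Theorem~2.4]{rinot09}.
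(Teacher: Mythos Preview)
The paper does not prove this statement: it is stated as a \emph{Fact} in the appendix, with each clause attributed directly to a specific result in \cite{rinot09} (a remark on p.~567, Theorem~2.5, and Theorem~2.4, respectively), and no argument is supplied. There is therefore no proof in the present paper to compare your proposal against.

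That said, a brief remark on your sketch. Your treatment of the first two clauses is along standard lines and looks fine. For the third clause, however, you have not actually proposed a proof: you describe the shape of the desired construction and then defer the crucial synchronization step to ``the bookkeeping of \cite[Theorem~2.4]{rinot09}''. Since that synchronization \emph{is} the content of the theorem, this amounts to citing the result rather than reproving it. If you intend to reconstruct the argument, you would need to spell out how the case split (either $\chi<\lambda$, or $\chi=\lambda$ a successor) is used to thread the $\curlywedge^-$-matrix into a length-$\lambda$ enumeration in which stationarily many indices land in the prescribed $T\subseteq\lambda$; without that, the third clause remains a black box.
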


We now introduce a variation of $\langle T\rangle_S$ that makes sense also in the context of $\lambda$ singular.\footnote{Note that  $\langle\lambda\rangle^-_S$ is equivalent to $\langle\lambda\rangle_S$ for every successor cardinal $\lambda$.}
\begin{definition}\label{1007minus} Suppose that $\lambda$ is an infinite cardinal, $T$ is a cofinal subset of $\lambda$,
and $S$ is a stationary subset of $E^{\lambda^+}_{\cf(\lambda)}$.

$\langle T\rangle^-_S$ asserts the existence of
a sequence $\langle C_\delta\mid \delta\in S\rangle$ and a matrix $\langle A^\delta_i\mid \delta\in S, i<\lambda\rangle$
such that:
\begin{enumerate}
\item for all $\delta\in S$, $C_\delta$ is  a club subset of $\delta$ of order-type $\lambda$;
\item for every club $D\s\lambda^+$ and every subset $A\s\lambda^+$, there exist stationarily many $\delta\in S$ for which:
$$\otp(\{ i\in T\mid C_\delta(i+1)\in D\ \&\ A\cap (C_\delta(i+1))=A^{\delta}_{i+1}\})=\lambda.$$
\end{enumerate}
\end{definition}

\begin{definition}[Jensen, \cite{MR309729}]\label{def64} For an infinite cardinal $\lambda$, $\square_\lambda$ asserts the existence of a sequence $\langle C_\alpha\mid\alpha<\lambda^+\rangle$ such that:
\begin{itemize}
\item $C_\alpha$ is a club in $\alpha$ for all limit $\alpha<\lambda^+$;
\item if $\bar\alpha\in\acc(C_\alpha)$, then $C_{\bar\alpha}=C_\alpha\cap\bar\alpha$;
\item $\otp(C_\alpha)\le\lambda$ for all $\alpha<\lambda^+$.
\end{itemize}
\end{definition}

We now give three variations of the preceding. The first one is essentially due to Baumgartner (cf., \cite[$\S2.2$]{MR1740483}).

\begin{definition}\label{def109} For infinite cardinals $\chi\le\lambda$, $\boxminus_{\lambda,\ge\chi}$ asserts the existence of a sequence $\langle C_\alpha\mid\alpha\in E^{\lambda^+}_{\ge\chi}\rangle$ such that:
\begin{itemize}
\item $C_\alpha$ is a club in $\alpha$ for all $\alpha\in E^{\lambda^+}_{\ge\chi}$;
\item $C_\alpha\cap\gamma=C_\beta\cap\gamma$ for all $\alpha,\beta\in E^{\lambda^+}_{\ge\chi}$ and $\gamma\in\acc(C_\alpha)\cap\acc(C_\beta)$;
\item $\otp(C_\alpha)\le\lambda$ for all $\alpha\in E^{\lambda^+}_{\ge\chi}$.
\end{itemize}
\end{definition}

\begin{definition}[Todorcevic, \cite{MR908147}] For a regular uncountable cardinal $\kappa$, $\square(\kappa)$ asserts the existence of a sequence $\langle C_\alpha\mid\alpha<\kappa\rangle$ such that:
\begin{itemize}
\item $C_\alpha$ is a club in $\alpha$ for all limit $\alpha<\kappa$;
\item if $\bar\alpha\in\acc(C_\alpha)$, then $C_{\bar\alpha}=C_\alpha\cap\bar\alpha$;
\item there exists no club $C\s\kappa$ such that $C_\alpha=C\cap\alpha$ for all $\alpha\in\acc(C)$.
\end{itemize}
\end{definition}

\begin{definition}[Rinot, \cite{rinot11}] For an uncountable cardinal $\lambda$, $\sqc_\lambda$ asserts the existence of a $\square_\lambda$-sequence $\langle C_\alpha \mid \alpha < \lambda^+ \rangle$
having the following additional feature.
For every sequence $\langle A_i \mid \iota < \lambda \rangle$ of cofinal subsets of $\lambda^+$,
every limit ordinal $\theta < \lambda$, and every club $D \subseteq \lambda^+$,
there exists some limit ordinal $\alpha < \lambda^+$ such that:
\begin{enumerate}
\item[(i)] $\otp(C_\alpha) = \theta$;
\item[(ii)] $C_\alpha (i+1) \in A_i$ for all $i < \theta$;
\item[(iii)] for every $i < \theta$ there is some $\beta \in D$ such that $C_\alpha(i) < \beta < C_\alpha(i+1)$.
\end{enumerate}
\end{definition}

\begin{fact}[{Rinot, \cite{rinot11}}]\label{fact_rinot11} For every uncountable cardinal $\lambda$, $\square_\lambda+\ch_\lambda$ entails $\sqc_\lambda$.
\end{fact}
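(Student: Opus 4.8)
The plan is to manufacture the desired $\sqc_\lambda$-sequence by marrying the rigid coherence supplied by $\square_\lambda$ with the guessing supplied by $\diamondsuit(\lambda^+)$, the latter coming for free from $\ch_\lambda$ since $\lambda$ is uncountable (Fact~\ref{fact922}). The crucial point to keep in mind throughout is that $\sqc_\lambda$ demands a \emph{single} sequence serving \emph{all} target data $(\langle A_i\mid i<\theta\rangle,\theta,D)$ at once, so every modification must be global and target-independent. First I would fix a $\square_\lambda$-sequence and recall that $\square_\lambda$ is just $\boxminus_{\lambda,\ge\aleph_0}$, so that the preliminary analysis of \cite[Lemmas~2.1--2.3]{rinot11} (whose $\square$-case is the situation of Lemma~\ref{l24} with $\chi=\aleph_0$) applies. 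Its purpose is to pass from the given sequence to a coherent sequence $\langle C_\alpha\mid\alpha\in\Gamma\rangle$ with a far stronger genericity feature: for every club $E\s\lambda^+$ and every limit ordinal $\theta<\lambda$ there are stationarily many $\alpha$ with $\otp(C_\alpha)=\theta$ and $C_\alpha\s E$. For $\lambda$ regular this is obtained by a verbatim re-threading; for $\lambda$ singular it requires the same $\Gamma_0/\Gamma_1/\Gamma_2$ splitting and cofinal-gluing seen in the proof of Theorem~\ref{thm38}(3), which keeps all order-types below $\lambda$.

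Next I would run the matrix form of $\diamondsuit(\lambda^+)$ in tandem with this club-threading, in order to produce, \emph{at common levels}, both the landing $C_\alpha\s E$ and a guess $Z_\alpha=A\cap\alpha$ of an auxiliary set $A$ coded from the target data; this simultaneous catching is the content of \cite[Claim~2.5.4]{rinot11} and is the technical heart of the argument. With such a coherent, guessing sequence in hand, the final step is a local, coherence-preserving \emph{push}, defined uniformly from the guessed sets so as to yield once and for all a genuine $\square_\lambda$-sequence. Concretely, given $\langle A_i\mid i<\theta\rangle$, a limit $\theta<\lambda$, and a club $D$, I would first build—exactly as in the recursive definition of the function $f$ in Theorem~\ref{thm32}—a club $E\s\bigcap_{i<\theta}\acc^+(A_i)$ whose successive points are separated by elements of $D$. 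Then I pick $\alpha$ with $\otp(C_\alpha)=\theta$, $C_\alpha\s E$, and the correct guess, and replace each non-accumulation point $\eta=C_\alpha(i+1)$ by an element of $A_i$ lying in the gap $(\sup(C_\alpha\cap\eta),\eta)$, which is nonempty precisely because $\eta\in\acc^+(A_i)$. As in the verification that $\acc(G_\delta)=\acc(C_\delta)$ in the proof of Theorem~\ref{boxminus-succ}, moving only non-accumulation points within their own gaps fixes all accumulation points and all order-types, so the globally modified sequence is again a $\square_\lambda$-sequence; meanwhile at $\alpha$ it delivers $C_\alpha(i+1)\in A_i$ together with the $D$-separation, i.e.\ clauses~(i)--(iii).

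The main obstacle I anticipate is the orchestration in the second step: arranging a single coherent sequence that, at one and the same level of order-type \emph{exactly} $\theta$, both lands inside the prescribed club $E$ and correctly guesses the coded target—since the landing set and the guessing set are each only stationary, their intersection must be forced by building the guess $Z_\alpha$ as a derived object of the very clubs being threaded, rather than hoping two independent stationary sets meet. Tied to this is the requirement that the push map depend only on the local data $C_\delta\cap(\eta+1)$ and the guesses below $\eta$, so that it commutes with restriction to accumulation points and hence preserves coherence \emph{globally} and not merely at the catching level $\alpha$. The $\diamondsuit$-based push itself is routine once these landing-plus-guessing levels are produced; it is the production of those levels from $\square_\lambda+\ch_\lambda$—and, for singular $\lambda$, the bookkeeping needed to keep order-types below $\lambda$ while gluing along cofinal chains—that carries the real weight, and for which I would lean on the detailed recursion of \cite[\S2]{rinot11}.
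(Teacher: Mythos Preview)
The paper does not prove this statement: it is recorded as a \emph{Fact} with a bare citation to \cite{rinot11}, and no argument is given here. So there is no ``paper's own proof'' to compare against. Your outline is a faithful high-level summary of the proof in \cite{rinot11} itself, and you have correctly identified that the present paper reproduces large portions of that machinery---the club-threading of Claim~\ref{claim6131}, the diamond-enhanced guessing of Claims~\ref{437} and~\ref{c6112}, and the coherence-preserving push of Claim~\ref{claim383}---in the course of proving the more general Theorems~\ref{thm38} and~\ref{boxminus-succ} for $\boxminus_{\lambda,\ge\chi}$. Your identification of the main obstacle (arranging simultaneous landing-inside-$E$ and correct guessing at a single level of prescribed order-type, with the guess built from the threaded clubs themselves rather than hoped-for intersection of stationary sets) is exactly right, and is what \cite[Claim~2.5.2]{rinot11} resolves.
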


\begin{definition}[Gray, \cite{MR2940957}]\label{def_gray}
For a regular uncountable cardinal $\lambda$, $\sd_\lambda$ asserts the existence of a sequence
$\langle (D_\alpha,X_\alpha)\mid \alpha<\lambda^+\rangle$
such that:
\begin{enumerate}
\item  for every limit $\alpha<\lambda^+$, $D_\alpha$ is a club in $\alpha$ of order-type $\le\lambda$, and $X_\alpha\s\alpha$;
\item if $\bar\alpha\in\acc(D_\alpha)$, then
$D_\alpha\cap\bar\alpha=D_{\bar\alpha}$ and $X_\alpha\cap\bar\alpha=X_{\bar\alpha}$;
\item for every subset  $X\s\lambda^+$ and club $E\s\lambda^+$,
there exists a limit $\alpha<\lambda^+$ with $\cf(\alpha)=\lambda$
such that $X\cap\alpha=X_\alpha$ and $\acc(D_\alpha)\s E$.
\end{enumerate}
\end{definition}

We take the liberty of generalizing the preceding, as follows.\footnote{The generalization from $\lambda$ regular uncountable to $\lambda$ singular was done in \cite[$\S2$]{AShS:221},
by replacing $\cf(\alpha)=\lambda$ with $\otp(D_\alpha)=\lambda$ in Item (3) of Definition \ref{def_gray}. Here, we also address the overlooked case $\lambda=\omega$,
by taking into account Item (2)(d) of \cite[Definiton 1.4]{rinot11}.
Recalling the proof of \cite[Lemma 2.8]{rinot11}, this extra requirement is not entirely trivial.
Nevertheless, one would expect that $\fsd_\omega$ should follow from $\diamondsuit(\omega_1)$.
This is indeed the case, as can be seen by combining \cite[Claim 4]{rinot19} with Lemma \ref{thm62} below.}

\begin{definition}\label{def-sd}
For an infinite cardinal $\lambda$, $\sd_\lambda$ asserts the existence of a sequence
$\langle (D_\alpha,X_\alpha)\mid \alpha<\lambda^+\rangle$
such that:
\begin{enumerate}
\item  for every limit $\alpha<\lambda^+$, $D_\alpha$ is a club in $\alpha$ of order-type $\le\omega\cdot\lambda$, and $X_\alpha\s\alpha$;
\item if $\bar\alpha\in\acc(D_\alpha)$, then
$D_\alpha\cap\bar\alpha=D_{\bar\alpha}$ and $X_\alpha\cap\bar\alpha=X_{\bar\alpha}$;
\item for every subset  $X\s\lambda^+$ and club $E\s\lambda^+$,
there exists a limit $\alpha<\lambda^+$ with $\otp(\acc(D_\alpha))=\lambda$
such that $X\cap\alpha=X_\alpha$ and $\acc(D_\alpha)\s E$.
\end{enumerate}
\end{definition}

\begin{fact}[Rinot, \cite{rinot11} and \cite{rinot19}]\label{fact_rinot19} For every singular cardinal $\lambda$, the following are equivalent:
\begin{itemize}
\item $\square_\lambda+\ch_\lambda$;
\item $\sqc_\lambda$;
\item $\sd_\lambda$.
\end{itemize}
\end{fact}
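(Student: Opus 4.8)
The plan is to establish the cycle $(3)\Rightarrow(1)$, $(1)\Rightarrow(2)$, $(2)\Rightarrow(1)$, $(1)\Rightarrow(3)$, with only the last implication requiring real work. For $(3)\Rightarrow(1)$, fix a witness $\langle(D_\alpha,X_\alpha)\mid\alpha<\lambda^+\rangle$ to $\sd_\lambda$. As $\lambda$ is singular, hence uncountable, we have $\omega\cdot\lambda=\lambda$, so Clauses~(1) and~(2) of Definition~\ref{def-sd} say precisely that $\langle D_\alpha\mid\alpha<\lambda^+\rangle$ is a $\square_\lambda$-sequence; in particular $\square_\lambda$ holds. For $\ch_\lambda$, put $X_\alpha:=\emptyset$ for non-limit $\alpha$, and note that $\otp(\acc(D_\alpha))=\lambda$ forces $\acc(D_\alpha)$ to be cofinal in $\alpha$, so that $\acc(D_\alpha)\s E$ forces $\alpha=\sup(\acc(D_\alpha))\in E$ for any club $E\s\lambda^+$. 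Thus Clause~(3) of Definition~\ref{def-sd} amounts to saying that $\{\alpha<\lambda^+\mid X\cap\alpha=X_\alpha\}$ meets every club, for every $X\s\lambda^+$; that is, $\langle X_\alpha\mid\alpha<\lambda^+\rangle$ is a $\diamondsuit(\lambda^+)$-sequence, and therefore $2^\lambda=\lambda^+$.

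The implication $(1)\Rightarrow(2)$ is exactly Fact~\ref{fact_rinot11}. For $(2)\Rightarrow(1)$, fix a $\sqc_\lambda$-sequence $\overrightarrow C=\langle C_\alpha\mid\alpha<\lambda^+\rangle$; being by definition a $\square_\lambda$-sequence, it already witnesses $\square_\lambda$. To derive $\ch_\lambda$, observe that $\overrightarrow C$ satisfies the hypotheses of Lemma~\ref{common} with $\kappa:=\lambda^+$ and the parameter ``$\lambda$'' of that lemma equal to our $\lambda$ (legitimate, since $\lambda\le\cf(\lambda^+)=\lambda^+$ and both are uncountable): hypothesis~(ii) there follows from the defining property of $\sqc_\lambda$ applied to the sequence $\langle A_i\mid i<\lambda\rangle$ given by $A_i:=B_i$ for $i<\Theta$ and $A_i:=\lambda^+$ for $\Theta\le i<\lambda$, together with the limit ordinal $\Theta$ and the club $D:=\lambda^+$. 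Hence Lemma~\ref{common}(3) gives $(\lambda^+)^{<\lambda}=\lambda^+$, so $(\lambda^+)^\mu=\lambda^+$ for every cardinal $\mu<\lambda$; in particular $2^{<\lambda}\le\lambda^+$ and $(\lambda^+)^{\cf(\lambda)}=\lambda^+$, and the standard identity $2^\lambda=(2^{<\lambda})^{\cf(\lambda)}$ for singular $\lambda$ now yields $2^\lambda\le(\lambda^+)^{\cf(\lambda)}=\lambda^+$, i.e., $\ch_\lambda$.

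The remaining implication $(1)\Rightarrow(3)$ is the main theorem of \cite{rinot19} (``putting a diamond inside the square''), and this is the hard part. I would prove it by first using $(1)\Rightarrow(2)$ to fix a $\sqc_\lambda$-sequence $\overrightarrow C$, and Fact~\ref{fact922} (via $\ch_\lambda$ and $\lambda>\aleph_0$) to fix a $\diamondsuit(\lambda^+)$-sequence $\langle Z_\beta\mid\beta<\lambda^+\rangle$, and then building $\langle(D_\alpha,X_\alpha)\mid\alpha<\lambda^+\rangle$ by recursion on $\alpha$. At a limit $\alpha$ with $\cf(\alpha)=\cf(\lambda)$ one attempts to choose a set $X_\alpha\s\alpha$ and a club $d\s\alpha$ of order-type $\lambda$ such that the previously constructed pieces $D_{\bar\alpha}$, for $\bar\alpha\in\acc(d)$, glue coherently into $d$ and satisfy $X_{\bar\alpha}=X_\alpha\cap\bar\alpha$, while $Z_\gamma=X_\alpha\cap\gamma$ holds for cofinally many $\gamma\in\nacc(d)$; if such a pair $(X_\alpha,d)$ exists one sets $D_\alpha:=d$, and otherwise one falls back on $D_\alpha:=C_\alpha$, $X_\alpha:=\emptyset$. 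Coherence (Clause~(2) of Definition~\ref{def-sd}) is then automatic from this ``glue previously built pieces'' design, exactly as in the proof of Lemma~\ref{thm62}, and the interleaving feature built into a $\sqc_\lambda$-sequence is what lets one thread $d$ through any prescribed club $E\s\lambda^+$, as required by Clause~(3) of Definition~\ref{def-sd}. The genuine obstacle — and the reason one invokes the full strength of \cite{rinot19} rather than a direct adaptation of Lemma~\ref{thm62} — is the bookkeeping for singular $\lambda$ of uncountable cofinality: one must chain the $D_{\bar\alpha}$'s so that their order-types sum to exactly $\lambda$ (not merely to some ordinal $\le\lambda$) while keeping the diamond guesses coherent all along the chain.
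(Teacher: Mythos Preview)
The paper does not give a proof of this statement; it is recorded in the appendix as a Fact with attribution to \cite{rinot11} and \cite{rinot19}, and no argument is supplied. Your outline is correct in substance and correctly identifies the sources for the two nontrivial implications: $(1)\Rightarrow(2)$ is Fact~\ref{fact_rinot11} (from \cite{rinot11}), and $(1)\Rightarrow(3)$ is the main theorem of \cite{rinot19}. Your sketches of the easy directions $(3)\Rightarrow(1)$ and $(2)\Rightarrow(1)$ are fine, and your use of Lemma~\ref{common}(3) to extract $(\lambda^+)^{<\lambda}=\lambda^+$ from a $\sqc_\lambda$-sequence is a pleasant observation.

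One minor correction: the identity $2^\lambda=(2^{<\lambda})^{\cf(\lambda)}$ is not valid for every singular $\lambda$; it can fail when the continuum function is eventually constant below $\lambda$, in which case $2^\lambda=2^{<\lambda}$. Your conclusion is unaffected, however: once you have $2^{<\lambda}\le\lambda^+$ and $(\lambda^+)^{\cf(\lambda)}=\lambda^+$, both cases yield $2^\lambda\le\lambda^+$.
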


\section*{Acknowledgements}
This work was partially supported by German-Israeli Foundation for Scientific Research and Development,
Grant No.~I-2354-304.6/2014. Some of the results of this paper were announced by
the second author at the \emph{P.O.I Workshop in pure and descriptive set theory}, Torino, September 2015,
and by the first author at the \emph{8th Young Set Theory Workshop}, Jerusalem, October 2015.
The authors thank the organizers of the corresponding meetings for providing a joyful and stimulating environment.

\def\germ{\frak} \def\scr{\cal} \ifx\documentclass\undefinedcs
  \def\bf{\fam\bffam\tenbf}\def\rm{\fam0\tenrm}\fi
  \def\defaultdefine#1#2{\expandafter\ifx\csname#1\endcsname\relax
  \expandafter\def\csname#1\endcsname{#2}\fi} \defaultdefine{Bbb}{\bf}
  \defaultdefine{frak}{\bf} \defaultdefine{=}{\B}
  \defaultdefine{mathfrak}{\frak} \defaultdefine{mathbb}{\bf}
  \defaultdefine{mathcal}{\cal}
  \defaultdefine{beth}{BETH}\defaultdefine{cal}{\bf} \def\bbfI{{\Bbb I}}
  \def\mbox{\hbox} \def\text{\hbox} \def\om{\omega} \def\Cal#1{{\bf #1}}
  \def\pcf{pcf} \defaultdefine{cf}{cf} \defaultdefine{reals}{{\Bbb R}}
  \defaultdefine{real}{{\Bbb R}} \def\restriction{{|}} \def\club{CLUB}
  \def\w{\omega} \def\exist{\exists} \def\se{{\germ se}} \def\bb{{\bf b}}
  \def\equivalence{\equiv} \let\lt< \let\gt>

\end{document}